\title[Regularity at the Neumann boundary]{Boundary regularity for conformally invariant variational problems with Neumann data}
\author{Armin Schikorra}
\address[Armin Schikorra]{Mathematisches Institut, 
Abt. f\"ur Reine Mathematik, 
Albert-Ludwigs-Universit\"at,
Eckerstra\ss{}e 1,
79104 Freiburg im Breisgau,
Germany}
\email{armin.schikorra@math.uni-freiburg.de}
\def\eps{\varepsilon}
\def\N{{\mathbb N}}
\def\S{{\mathbb S}}
\renewcommand{\div}{\dv}
\newcommand{\Err}{\mathscr{E}}
\newcommand{\err}{\epsilon}
\newtheorem{theorem}{Theorem}
\newtheorem{lemma}[theorem]{Lemma}
\newtheorem{corollary}[theorem]{Corollary}
\newtheorem{proposition}[theorem]{Proposition}
\theoremstyle{definition}
\newtheorem{remark}[theorem]{Remark}
\newtheorem{example}[theorem]{Example}
\newtheorem{cond}[theorem]{Condition}
\def\dist{{\rm dist\,}}
\def\curl{{\rm curl\,}}
\def\supp{{\rm supp\,}}
\newcommand{\R}{\mathbb{R}}
\newcommand{\brac}[1]{\left (#1 \right )}
\newcommand{\Ep}{\bigwedge\nolimits}
\newcommand{\ve}[1]{{ \boldsymbol{#1}}}
\newcommand{\tail}[5]{{\operatorname{Tail}}_{#1}(#2; #3, #4, #5)}
\newcommand{\barint}{
\rule[.036in]{.12in}{.009in}\kern-.16in \displaystyle\int }
\newcommand{\barcal}{\mbox{$ \rule[.036in]{.11in}{.007in}\kern-.128in\int $}}
\newcommand{\solU}{{\bf U}}
\newcommand{\solu}{{\bf u}}
\newcommand{\solW}{{\bf W}}
\newcommand{\solV}{{\bf V}}
\def\mvint_#1{\mathchoice
          {\mathop{\vrule width 6pt height 3 pt depth -2.5pt
                  \kern -8pt \intop}\nolimits_{\kern -3pt #1}}%
%%%% P.S., 01/03/2001
% old definition had ...\nolimits_{#1}}
% \kern -3pt makes nicer distances between the integral sign
% and the domain of integration
%%%%
          {\mathop{\vrule width 5pt height 3 pt depth -2.6pt
                  \kern -6pt \intop}\nolimits_{#1}}%
          {\mathop{\vrule width 5pt height 3 pt depth -2.6pt
                  \kern -6pt \intop}\nolimits_{#1}}%
          {\mathop{\vrule width 5pt height 3 pt depth -2.6pt
                  \kern -6pt \intop}\nolimits_{#1}}}
\numberwithin{theorem}{section} \numberwithin{equation}{section}
\newcommand{\lap}{\Delta }
\newcommand{\laph}{\laps{1}}
\newcommand{\lapv}{(-\lap)^{\frac{1}{4}}}
\newcommand{\aleq}{\precsim}
\newcommand{\ageq}{\succsim}
\newcommand{\aeq}{\approx}
\newcommand{\Rz}{\mathscr{R}}
\newcommand{\Hz}{\mathscr{H}}
\newcommand{\Max}{\mathscr{M}}
\newcommand{\mfdM}{\mathcal{M}}
\newcommand{\hardy}{{\mathscr{H}^1}}
\newcommand{\laps}[1]{(-\lap) ^{\frac{#1}{2}}}
\newcommand{\lapms}[1]{I^{#1}}
\newcommand{\lapmv}{\lapms{\frac{1}{2}}}
\newcommand{\dv}{\operatorname{div}}
\begin{document}
\setcounter{tocdepth}{1}

%\dsp

\begin{abstract}
We study boundary regularity of maps from two-dimensional domains into manifolds which are critical with respect to a generic conformally invariant variational functional and which, at the boundary, enter perpendicularly into a support manifold. For example, harmonic maps, or $H$-surfaces, with a partially free boundary condition.

In the interior it is known, by the celebrated work of Rivi\`{e}re, that these maps satisfy a system with an antisymmetric potential, from which one can derive regularity of the solution. We show that these maps satisfy along the boundary a system with a nonlocal antisymmetric boundary potential which contains information from the interior potential and the geometric Neumann boundary condition. We then proceed to show boundary regularity for solutions to such systems.
\end{abstract}

\maketitle
\tableofcontents

\nocite{*}
\section{Introduction}
In the last three decades the interior regularity theory for maps between a surface and a manifold which are critical with respect to a conformally invariant variational energy has seen tremendous progress. One example of such an energy is the Dirichlet energy,
\[
 \mathscr{E}(\solU) := \frac{1}{2} \int_{D} |\nabla \solU|^2
\]
acting on maps $\solU: D \subset \R^2 \to \mfdM$, where $\mfdM\subset \R^N$ is a smooth, compact submanifold without boundary. Indeed, this energy is conformally invariant, since for conformal transforms $\tau: D \to \tau(D)$,
\[
 \int_{D} |\nabla (\solU\circ \tau)|^2 := \int_{\tau(D)} |\nabla \solU|^2.
\]

Maps $\solU$ which are critical with respect to the energy $\mathscr{E}$ in the class of maps from $D$ into $\mfdM$ are called (weakly) \emph{harmonic maps} from $D$ into $\mfdM$, and they are characterized by the \emph{harmonic map equation}
\begin{equation}\label{eq:harmonicmapeq}
 \lap \solU \perp T_\solU \mfdM \quad \mbox{in $D$}.
\end{equation}
For the case of a target sphere $\mfdM = \S^{N-1}$ one can rewrite this equation
\begin{equation}\label{eq:harmmapeqSphere}
 -\lap \solU = \solU |\nabla \solU|^2 \quad \mbox{in $D$}.
\end{equation}
This is a vectorial equation which can be equivalently written as
\[
 -\lap \solU^i = \solU^i |\nabla \solU|^2, \quad \mbox{in $D$,}\quad i \in \{1,\ldots,N\}.
\]
Equation \eqref{eq:harmmapeqSphere} is critical in two dimensions, and for a long time was not accessible to classical potential regularity theory, since for a finite-energy solution $\solU \in H^1(D,\R^N)$ the right-hand side seems to belongs only to $L^1(D)$. Indeed, it is not possible to conclude boundedness or continuity from the growth properties of this equation: for example the classical counterexample $u(x) := \log \log 2/|x|$ is an $H^1$-solution on $B(0,1)$ to an equation of the same growth properties,
\[
 |\lap u| = |\nabla u|^2 \quad \mbox{in $B(0,1) \subset \R^2$}
\]
Also a priori assumptions on the boundedness of the solution yield no advantage, similar equations are satisfied by $u(x) := \sin \log \log 2/|x|$. On the other hand, any \emph{continuous} solution of \eqref{eq:harmmapeqSphere} is as smooth as the manifold allows, and in contrast to the initial regularity this fact follows directly from the growth of the equation, see \cite{Tomi-1969}. The equations \eqref{eq:harmonicmapeq}, \eqref{eq:harmmapeqSphere} are thus \emph{critical} and proving initial regularity such as continuity for solutions is the only analytic obstacle to a full regularity theory.

For sphere-valued harmonic maps in two dimensions this initial regularity was obtained by H\'elein in \cite{Helein-1990}. For $D \subset \R^2$, he showed that any solution $\solU \in H^1(D,\S^{N-1})$ to \eqref{eq:harmmapeqSphere} is continuous (and thus smooth). He used the conservation laws for sphere-valued harmonic maps discovered by Shatah \cite{Shatah-1988}: for $\solU \in H^1(D,\S^{N-1})$ Equation~\eqref{eq:harmmapeqSphere} is equivalent to
\begin{equation}\label{eq:divomegaz}
 \div(\Omega_{ij}) = 0 \quad \mbox{for all }i,j =1,\ldots,N,
\end{equation}
where
\[
 \Omega_{ij} := \solU^i\nabla \solU^j-\solU^j \nabla \solU^i \in L^2(D,\R^2)\quad i,j =1,\ldots,N
\]
This allows to rewrite equation~\eqref{eq:harmmapeqSphere},
\begin{equation}\label{eq:harmmapsphererew}
 -\lap \solU^i = \solU^i \nabla \solU^j\cdot\nabla \solU^j = \Omega_{ij}\cdot \nabla \solU^j \quad i =1,\ldots,N,
\end{equation}
since $\solU^j \nabla \solU^j = \frac{1}{2} \nabla |\solU|^2 = 0$.

Observe that in view of \eqref{eq:divomegaz}, $\Omega_{ij}\cdot \nabla \solU^j \in L^1(D)$ is the product of a divergence-free and curl-free quantity. This, in turn, implies that $\Omega_{ij}\cdot \nabla \solU^j$ belongs to a strictly smaller space than $L^1(D)$, the Hardy space $\hardy(D)$. The latter was shown by Coifman, Lions, Meyer, Semmes \cite{CLMS-1993} which gave a concluding explanation for the ``compensation effects'' that had been observed for Jacobians as early as Wente's \cite{Wente69} and Reshetnyak's work \cite{Reshetnyak-1967}, then later in terms of the so-called Wente inequality \cite{BrC84,Tartar-1982} and $L^1 \log L$-integrability \cite{Mueller-1990}. Since the Riesz potential $\lapms{2}$  maps $\hardy(\R^2)$ into the continuous functions $C^0(\R^2)$, solutions of \eqref{eq:harmmapsphererew} are indeed continuous.

For harmonic maps into general smooth, closed manifolds $\mfdM$ H\'elein developed the so-called moving frame technique in \cite{Helein-1991}, see also his monograph \cite{Helein-book}: if there is a frame $(e_\sigma)_{\sigma=1}^{\dim \mfdM}$, that is a smooth orthonormal basis of the tangent vector fields in $T\mfdM$, then \eqref{eq:harmonicmapeq} becomes
\begin{equation}\label{eq:movingframe}
 -\div(\langle e_\sigma(\solU), \nabla \solU \rangle_{\R^N}) = \langle \nabla e_\sigma(\solU), e_\tau(\solU) \rangle_{\R^N}\cdot \langle e_\tau(\solU), \nabla \solU \rangle_{\R^N}.
\end{equation}
Of course there is a degree of freedom to choose such vector fields (once one exists), and H\'elein showed that one can find one frame $(e_\sigma)_{\sigma=1}^{\dim \mfdM}$ such that 
\[
 \div \langle \nabla e_\sigma(\solU), e_\tau(\solU) \rangle = 0.
\]
Therefore \eqref{eq:movingframe} becomes again an equation with a div-curl term on the right-hand side (up to the multiplicative $e_\tau(\solU) \in H^1\cap L^\infty(D,\R^N)$). Using again that such a div-curl term belongs to the Hardy space $\mathcal{H}^1(D)$, one can show continuity of solutions $\solU$ to \eqref{eq:movingframe}.

Another example is the \emph{prescribed mean curvature equation}. Let $\solU \in H^1(D,\R^3)$ be a solution to
\begin{equation}\label{eq:Hsurfaces}
 \lap \solU = 2H(\solU)\, \solU_x \wedge \solU_y \quad \mbox{in $D$}
\end{equation}
where $\wedge$ denotes the wedge product for vectors in $\R^3$ and $H: \R^3 \to \R$ is a bounded map. If one additionally assumes that $\solU$ is conformal, i.e. $|\solU_x| = |\solU_y|$ and $\solU_x \perp \solU_y$ then $\solU$ parametrizes a surfaces $\solU(D) \subset \R^3$ with mean curvature $H(\solU(x))$ in $\solU(x)$, hence the name of the equation. From a variational point of view, solutions $\solU$ to \eqref{eq:Hsurfaces} appear as critical points of the functional
\begin{equation}\label{eq:Qharmmap}
 \mathscr{E}_Q(\solU) = \int_{D} \frac{1}{2}|\nabla \solU|^2 + Q(\solU) \cdot \solU_x \wedge \solU_y
\end{equation}
where $Q \in C^1(\R^3,\R^3)$ is a vector field with the property $2H \equiv \div Q$. Again \eqref{eq:Hsurfaces} is a critical equation: initial regularity does not follow from direct potential methods, but once continuity is obtained, the solution is as smooth as the $H$ allows. Using the div-curl theory, Bethuel \cite{Bethuel-1992} showed initial regularity under the assumption that $H$ is bounded and Lipschitz. An elementary computation shows that 
\[
 \solU_x \wedge \solU_y = \left (\begin{array}{c}\det(\nabla \solU^{2}, \nabla \solU^{3})\\ \det(\nabla \solU^{1}, \nabla \solU^{3})\\ \det(\nabla \solU^{1}, \nabla \solU^{2})\\ \end{array} \right )
\]
That is, \eqref{eq:Hsurfaces} is a system with Jacobians on the right-hand side, up to the multiplicative $H(\solU)$. If $H$ is bounded and Lipschitz then $H(\solu) \in H^1(D,\R^3)$, and thus, as above in \eqref{eq:movingframe} we find a Jacobian (i.e. div-curl term) in the Hardy-space up to multiplicative function in $H(\solU) \in H^1\cap L^\infty(D,\R^3)$, and one can prove regularity.

Both, harmonic maps and $H$-surfaces, are special cases of critical points of conformally invariant variational functionals of the form
\begin{equation}\label{eq:conformallyinvariantvf}
 \mathscr{E}(\solU) = \int_{D} \frac{1}{2} |\nabla \solU|^2 + \solU^\ast(\lambda)
\end{equation}
where $\lambda \in C^1(\Ep^2\R^3)$ is a two-form and $\solU^\ast(\lambda)$ denotes the pullback of $\lambda$ under $\solU$. Indeed, Gr\"uter \cite{Grueter-1984} proved that under some ``natural conditions'' all elliptic, conformally invariant elliptic variational functional in two dimensions are of the form \eqref{eq:conformallyinvariantvf}, up to a conformal transform of the domain.

It was Rivi\`ere \cite{Riviere-2007} who discovered that the Euler-Lagrange equation of the Dirichlet functional, the $H$-surface functional \eqref{eq:Qharmmap}, and in general all generic conformally invariant variational functionals such as \eqref{eq:conformallyinvariantvf}, possibly restricted to maps into a closed manifold $\mfdM$, share a crucial regularizing structure: the right-hand side may not be a Jacobian, but it can always be seen as an \emph{antisymmetric} potential acting on the solution $\solU$. More precisely, any critical point $\solU \in H^1(D,\mfdM)$ of a variational functional of the form \eqref{eq:conformallyinvariantvf}, where $\mfdM$ can be a closed manifold or $\R^N$, solves an equation of the form
\begin{equation}\label{eq:rivieresystem}
 \lap \solU^i = \Omega_{ij} \cdot \nabla \solU^j \quad \mbox{in $D$},
\end{equation}
where $\Omega_{ij} \in L^2(D,\R^2)$ is antisymmetric:
\[
 \Omega_{ij}  = - \Omega_{ji} \quad \mbox{almost everywhere in $D$}.
\]
He then showed interior initial regularity for solutions of \eqref{eq:rivieresystem}. More precisely we have
\begin{theorem}[Rivi\`{e}re \cite{Riviere-2007}]
Let $\solU \in H^1(D,\R^N)$ be a solution of \eqref{eq:rivieresystem}. Then $\solU$ is continuous in $D$. 
\end{theorem}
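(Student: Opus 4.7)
The plan is to adopt Rivi\`ere's gauge-theoretic strategy, which converts the a priori non-divergence-form system \eqref{eq:rivieresystem} into a conservation law whose right-hand side carries a div-curl (Jacobian) structure. Once this is achieved, the Coifman--Lions--Meyer--Semmes theorem \cite{CLMS-1993} places the right-hand side in $\hardy_\loc$, and standard potential-theoretic arguments for Laplacians with Hardy-space data deliver continuity, exactly as in the sphere-valued case recalled around \eqref{eq:harmmapsphererew}.

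By absolute continuity of the $L^2$ norm I may restrict to a small ball $B_r \subset D$ on which $\|\Omega\|_{L^2(B_r)} < \eps_0$ for a universal threshold $\eps_0 = \eps_0(N)$. On such a ball the key step is to produce a \emph{gauge pair}
\[
 A \in H^1 \cap L^\infty(B_r, GL(N)), \qquad B \in H^1(B_r, M_N(\R) \otimes \R^2),
\]
with $A$ uniformly close to the identity, satisfying
\begin{equation}\label{eq:gaugeeq}
 \nabla A + A\, \Omega \;=\; \nabla^\perp B \qquad \text{in } B_r.
\end{equation}
I would construct this pair via an Uhlenbeck-type Coulomb gauge: minimize the functional $P \mapsto \|P^T \nabla P + P^T \Omega P\|_{L^2(B_r)}^2$ over $P \in H^1(B_r, SO(N))$, use the antisymmetry of $\Omega$ together with a Hodge decomposition on $B_r$ to conclude that the minimizer $P$ satisfies $\div(P^T\nabla P + P^T\Omega P) = 0$, and exploit smallness of $\|\Omega\|_{L^2}$ to keep $P$ close to the identity in $H^1 \cap L^\infty$. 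A second Hodge decomposition applied to the divergence-free matrix-valued one-form $P^T\nabla P + P^T\Omega P$ produces the potential $B$, and setting $A = P^T$ (up to a small correction) yields \eqref{eq:gaugeeq}.

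Given \eqref{eq:gaugeeq}, multiplying \eqref{eq:rivieresystem} by $A$ and substituting $\nabla A = \nabla^\perp B - A\,\Omega$ yields, after a short computation, the conservation law
\[
 \div(A\, \nabla \solU) \;=\; \nabla^\perp B \cdot \nabla \solU \qquad \text{in } B_r.
\]
Since $B, \solU \in H^1(B_r)$, the right-hand side is a sum of $2\times 2$ Jacobians and therefore lies in $\hardy_\loc(B_r)$ by \cite{CLMS-1993}. As $A$ is a small $L^\infty$-perturbation of the identity and hence uniformly invertible, Hodge decomposition reduces this to an elliptic equation for $\solU$ with $\hardy_\loc$ right-hand side, and the continuity theorem for the two-dimensional Laplacian with Hardy-space data forces $\solU \in C^0_\loc(B_r)$. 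A standard covering argument extends continuity to all of $D$.

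The main obstacle is the gauge construction \eqref{eq:gaugeeq}: this is a nonlinear problem on $SO(N)$-valued maps rather than a direct elliptic estimate. The antisymmetry $\Omega = -\Omega^T$ is essential for the Coulomb-gauge functional to admit a usable Euler--Lagrange equation, and the smallness $\|\Omega\|_{L^2} < \eps_0$ is needed both to keep a minimizing sequence $H^1$-close to the identity (and thereby avoid loss of compactness) and to guarantee that the resulting $A$ is uniformly invertible with an a priori $L^\infty$ bound on $A^{-1}$. Everything downstream of \eqref{eq:gaugeeq} is a direct instance of the Wente / div-curl mechanism already illustrated in the introduction for sphere-valued harmonic maps.
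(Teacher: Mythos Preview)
The paper does not prove this theorem; it is quoted in the introduction as a known result of Rivi\`ere \cite{Riviere-2007} and used as background for the boundary problem that is the paper's actual subject. Your sketch is essentially Rivi\`ere's original argument: localize so that $\|\Omega\|_{L^2}$ is small, find a gauge pair $(A,B)$ satisfying $\nabla A + A\Omega = \nabla^\perp B$, rewrite the system as $\div(A\nabla\solU)=\nabla^\perp B\cdot\nabla\solU$, and conclude via the div--curl/Hardy-space mechanism. That is the correct route.

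Two remarks on the one part you gloss over. First, the minimization approach you describe for producing $P$ with $\div(P^T\nabla P + P^T\Omega P)=0$ is the variant from \cite{Schikorra-2010} (recorded in the present paper as Proposition~\ref{pr:gaugenormal}), not Rivi\`ere's original Uhlenbeck-type continuity argument; both work, and the variational one has the advantage of not needing smallness at this stage. Second, your phrase ``setting $A=P^T$ (up to a small correction)'' hides a genuine step: with $A=P^T$ the divergence-free quantity is $\Omega^P P^T = -\nabla P^T + P^T\Omega$, which has the wrong sign relative to the target $\nabla A + A\Omega$. One must solve a further linear elliptic problem (essentially a Wente-type system) to pass from $(P,\xi)$ to $(A,B)$, and it is precisely here that the smallness of $\|\Omega\|_{L^2}$ is used to guarantee invertibility of $A$ with uniform $L^\infty$ bounds on $A$ and $A^{-1}$. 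You correctly identify this as the main obstacle, but it deserves more than a parenthetical.
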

Rivi\`ere's interior regularity result has seen many extensions, among them generalizations to biharmonic maps \cite{Lamm-Riviere-2008}, to half-harmonic maps \cite{DaLio-Riviere-1Dmfd}, polyharmonic maps \cite{DaLio-nmfd}, and to Schroedinger-type systems \cite{Riviere-2011}. Again these equations are all critical: once \emph{initial} regularity is obtained, \emph{higher} regularity follows from a bootstrap argument, see \cite{Sharp-Topping-2013,Moser-2015,Schikorra-eps}. 

It is natural to investigate the regularity up to the boundary for these kind of geometric equations. In \cite{Mueller-Schikorra-2009} M\"uller and the author considered the Dirichlet problem and showed
\begin{theorem}[M\"uller-S. \cite{Mueller-Schikorra-2009}] Let $D$ be a smoothly bounded domain. Then any solution $\solU \in H^1(D,\R^N)$ of
\[
\begin{cases}
 \lap \solU^i = \Omega_{ij} \cdot \nabla \solU^j \quad &\mbox{in $D$},\\
 \solU = \solu \quad &\mbox{on $\partial D$},
 \end{cases}
\]
is continuous up to the boundary, $\solU \in C^0(\overline{D})$, if $\solu$ is continuous on $\partial D$.
\end{theorem}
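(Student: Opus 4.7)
My plan is to reduce the statement to Rivi\`ere's interior theorem by a reflection argument, after first peeling off the Dirichlet data via a harmonic extension. Since continuity is a local property, it suffices to establish $\solU \in C^0$ near an arbitrary boundary point $x_0 \in \partial D$. I localize via a smooth boundary chart and reduce to the model situation of the half-ball $B_1^+ \subset \R^2$ with flat boundary piece $T_1 := B_1 \cap \{x_2 = 0\}$; under such a change of variables the equation keeps its form with a new antisymmetric $L^2$-potential (absorbing lower-order divergence terms produced by the chart).

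I then introduce $\solV \in C^0(\overline{B_1^+}) \cap H^1(B_1^+)$, the harmonic extension of $\solu|_{T_1}$ together with some smooth auxiliary data on $\partial B_1^+ \setminus T_1$; classical potential theory provides continuity up to $T_1$ because $\solu$ is continuous there. Setting $\solW := \solU - \solV$, the function $\solW$ vanishes on $T_1$ and satisfies
\[
\lap \solW^i = \Omega_{ij} \cdot \nabla \solW^j + \Omega_{ij} \cdot \nabla \solV^j \quad \mbox{in } B_1^+.
\]
Because $\solW$ has vanishing trace on $T_1$, I can odd-reflect it across $T_1$ to obtain $\widetilde{\solW} \in H^1(B_1,\R^N)$. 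Simultaneously I extend $\Omega$ to $\widetilde{\Omega}$ on $B_1$ by reflecting its tangential ($x_1$-) component evenly and its normal ($x_2$-) component oddly; this preserves antisymmetry and $L^2$-integrability. A direct computation then gives
\[
\lap \widetilde{\solW}^i = \widetilde{\Omega}_{ij} \cdot \nabla \widetilde{\solW}^j + g^i \quad \mbox{in } B_1,
\]
where $g$ is a specific reflection of $\Omega \cdot \nabla \solV$.

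Once $g$ can be written as $\lap h$ for some continuous $h$ on $\overline{B_1}$, the difference $\widetilde{\solW} - h$ solves a pure Rivi\`ere system on $B_1$, so Rivi\`ere's interior regularity result yields continuity of $\widetilde{\solW}-h$, and hence of $\widetilde{\solW}$, at the origin. Undoing the reflection and the chart produces $\solU = \solW + \solV \in C^0$ in a neighborhood of $x_0$, and varying $x_0 \in \partial D$ concludes.

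The main obstacle is controlling the inhomogeneity $g$. Since $\nabla \solV$ is only in $L^2$, a priori $g \in L^1(B_1)$, which is not strong enough to produce a continuous potential via $I_2$. My plan to overcome this is to exploit the continuity of $\solV$: on small balls $B_r(x_0)$ one replaces $\solV$ by $\solV - \solV(x_0)$, which is uniformly small in $L^\infty$, and pairs this smallness with the Uhlenbeck gauge decomposition of $\Omega$ from Rivi\`ere's interior theory. This rewrites $\Omega \cdot \nabla \solV$ as (almost) a div-curl product, so that it lies in $\hardy(B_1)$ modulo a harmless correction. The Riesz-potential bound $I_2 : \hardy(\R^2) \to C^0$ then delivers a continuous $h$ with $\lap h = g$, closing the scheme. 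An alternative route, conceptually the same, is to run Rivi\`ere's Morrey-decay bootstrap directly on the perturbed system, absorbing $g$ via the oscillation smallness of $\solV$ at each scale.
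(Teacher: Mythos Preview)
The paper does not prove this statement; it is quoted from \cite{Mueller-Schikorra-2009} as background. The closest analogue inside the present paper is the interior decay estimate Proposition~\ref{pr:West} (and in particular Lemma~\ref{la:West:2}), which handles exactly the term $\Omega\cdot\nabla\solV$ that you isolate. Your ``alternative route'' is essentially that argument; your primary route, however, has a genuine gap.

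The step ``write $g=\lap h$ with $h\in C^0$, then $\widetilde{\solW}-h$ solves a pure Rivi\`ere system'' is false. Subtracting $h$ gives
\[
 \lap(\widetilde{\solW}-h)=\widetilde{\Omega}\cdot\nabla\widetilde{\solW}
 =\widetilde{\Omega}\cdot\nabla(\widetilde{\solW}-h)+\widetilde{\Omega}\cdot\nabla h,
\]
which is again a perturbed system with an inhomogeneity of the same type as $g$. And you gain nothing on the inhomogeneity: from $g\in\hardy(\R^2)$ you only get $\nabla h=\nabla\lapms{2}g\in L^2$ (since $\lapms{1}:\hardy(\R^2)\to L^2(\R^2)$), so $\widetilde{\Omega}\cdot\nabla h$ sits in exactly the same class as $\widetilde{\Omega}\cdot\nabla\solV$. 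The subtraction trick cycles and cannot close.

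What works is to run the Morrey decay estimate directly on the perturbed system, as you suggest in your alternative. This is not a black-box call to Rivi\`ere's theorem; one must redo the iteration. After choosing the gauge $P$ with $\div(\nabla P\,P^T+P\Omega P^T)=0$, the term $P\Omega\cdot\nabla\solV$ acquires a div--curl structure because $\solV$ is harmonic (so $\nabla\solV=\nabla^\perp\widetilde{\solV}$ for a harmonic conjugate); this is precisely the mechanism in Lemma~\ref{la:V:err4} and Lemma~\ref{la:West:2}. The smallness at scale $r$ that drives the iteration comes from $\|\Omega\|_{2,B_r}\to 0$ together with $[\solV]_{BMO,B_r^+}\to 0$, the latter being where the continuity of $\solu$ enters. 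Your description ``replace $\solV$ by $\solV-\solV(x_0)$, uniformly small in $L^\infty$'' captures the right input, but it must be fed into the div--curl estimate rather than into a potential-subtraction scheme.
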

In this work we investigate boundary regularity for prescribed geometric Neumann data, namely when the solution $\solU$ at the boundary penetrates a support manifold $\mathcal{N}$ perpendicularly. 
\begin{theorem}\label{th:geometryreg}
Let $D \subset \R^2$ be a smoothly bounded domain and $\mathcal{N}$ be a closed manifold in $\R^N$. Assume that $\solU \in H^1(D,\R^{N})$ with trace $\solu \in H^{\frac{1}{2}}(\partial D,\mathcal{N})$ is a solution to
\begin{equation}\label{eq:omegafreebd}
 \begin{cases}
\lap \solU = \Omega \cdot \nabla \solU \quad &\mbox{in $D$}\\
\partial_\nu \solU \perp T_{\solu} \mathcal{N} \quad &\mbox{on $\partial D$}
 \end{cases}
\end{equation}
where $\Omega \in L^2(D)$ is antisymmetric.

Then $\solU$ is H\"older continuous in $D$ up to the boundary $\partial D$.
\end{theorem}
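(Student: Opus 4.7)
\textbf{Sketch of proof of Theorem~\ref{th:geometryreg}.} The plan is to translate the Neumann boundary condition, combined with the antisymmetric interior system, into an equation for the trace $\solu$ on $\partial D$ that carries an antisymmetric structure analogous to Rivi\`ere's but of nonlocal (half-Laplacian) type. Regularity for such fractional antisymmetric systems, in the spirit of Da Lio--Rivi\`ere \cite{DaLio-Riviere-1Dmfd}, then yields continuity of $\solu$. Once the trace is continuous, the Dirichlet boundary regularity theorem (Theorem~1.2) upgrades $\solU$ to $C^0(\overline{D})$, and H\"older continuity follows from an $\eps$-regularity bootstrap in the style of \cite{Sharp-Topping-2013,Schikorra-eps}.

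\emph{Step 1 (Localization and flattening).} Working near an arbitrary boundary point and exploiting the conformal invariance of the problem, we may assume locally that $D$ is a half-disk and $\partial D$ contains a straight segment $\partial^0 D \subset \R \times \{0\}$. A suitable cutoff supported near the boundary point localizes the problem; the cutoff produces smooth error terms supported away from $\partial^0 D$ that play no role in what follows.

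\emph{Step 2 (Decomposition into harmonic extension and source part).} Write $\solU = \solV + \solW$, where $\solV$ is the harmonic extension of $\solu$ to the half-disk (with suitable conditions on the curved part) and $\solW$ solves
\[
 \lap \solW = \Omega \cdot \nabla \solU \quad \text{in } D, \qquad \solW\big|_{\partial^0 D} = 0.
\]
The Dirichlet-to-Neumann operator on the flat boundary gives $\partial_\nu \solV|_{\partial^0 D} = -\laps{1}\solu$ modulo smoothing terms, while $\partial_\nu \solW|_{\partial^0 D}$ is a nonlocal integral of the $L^1$-source $\Omega\cdot \nabla \solU$.

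\emph{Step 3 (The nonlocal antisymmetric boundary system).} In a tubular neighborhood of $\mathcal{N}$ choose an orthonormal moving frame $(e_\sigma)_{\sigma=1}^{\dim \mathcal{N}}$ of $T\mathcal{N}$. The geometric Neumann condition $\partial_\nu \solU \perp T_{\solu}\mathcal{N}$ reads $\langle \partial_\nu \solU, e_\sigma(\solu)\rangle = 0$ on $\partial^0 D$. Substituting Step~2 yields
\[
 \langle \laps{1} \solu,\, e_\sigma(\solu) \rangle \;=\; \langle \partial_\nu \solW,\, e_\sigma(\solu) \rangle \quad \text{on } \partial^0 D.
\]
Three-commutator/compensation estimates (as in \cite{DaLio-Riviere-1Dmfd}) rewrite the left-hand side as $\laps{1}$ applied to a favorable expression plus a term of the form $\omega^{\text{bd}}_{\sigma\tau}\cdot \solu^\tau$ with $\omega^{\text{bd}}$ antisymmetric -- the antisymmetry coming from $\langle \nabla e_\sigma, e_\tau\rangle = -\langle e_\sigma, \nabla e_\tau\rangle$. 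The right-hand side, via the nonlocal representation of $\partial_\nu \solW$ together with the div--curl structure of $\Omega\cdot \nabla \solU$, contributes a second antisymmetric nonlocal potential $\omega^{\text{int}}$ that encodes the antisymmetry of $\Omega$ plus a remainder in a favorable (Hardy- or Lorentz-type) space. Altogether, the trace $\solu$ satisfies a system of the form
\[
 \laps{1}\solu \;=\; \omega \cdot \solu \;+\; \mathscr{E}, \qquad \omega = -\omega^T,
\]
on $\partial^0 D$, where $\omega$ is a nonlocal antisymmetric operator and $\mathscr{E}$ is controllable.

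\emph{Step 4 (Fractional regularity on the boundary).} Apply the regularity theory for half-harmonic systems with antisymmetric potential (adapted to the nonlocal operator $\omega$) to obtain $\solu \in C^0(\partial^0 D)$. Covering $\partial D$ gives $\solu \in C^0(\partial D)$.

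\emph{Step 5 (Conclusion).} Theorem~1.2 then yields $\solU \in C^0(\overline D)$. A standard $\eps$-regularity/Morrey decay argument for Rivi\`ere's antisymmetric system, combined with the now-known boundary regularity, promotes continuity to H\"older continuity up to $\partial D$.

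\emph{Main obstacle.} The crux is Step~3: extracting a genuinely antisymmetric nonlocal potential on $\partial D$ from the combination of the moving frame acting on $\laps{1}\solu$ and the interior contribution $\partial_\nu \solW$, and showing that the remainder lands in a function space compatible with the half-harmonic regularity framework. This requires delicate commutator/compensated-compactness estimates that mesh the interior antisymmetry of $\Omega$ with the boundary Dirichlet-to-Neumann operator; without this, the nonlocal contribution from $\Omega\cdot \nabla \solU \in L^1$ would destroy the structure needed to invoke the fractional analogue of Rivi\`ere's theorem.
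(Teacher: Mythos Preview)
Your outline captures the correct strategy --- localize and flatten, split $\solU = \solV + \solW$ with $\solV$ the Poisson extension of $\solu$, and convert the Neumann condition into a fractional equation on the boundary carrying an antisymmetric potential that encodes both the geometry of $\mathcal N$ and the interior $\Omega$. This is precisely the route the paper takes (Theorem~\ref{th:transform} combined with Theorem~\ref{th:mastereq}).

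There is, however, a genuine gap in Steps~4--5. You propose to first apply Da~Lio--Rivi\`ere-type regularity to the boundary equation \emph{alone} to get $\solu\in C^0$, and only then invoke the Dirichlet result for $\solU$. This decoupling cannot work as stated: the remainder $\mathscr E$ in your boundary equation is \emph{not} in a fixed favorable space a~priori. It contains the contribution $\Pi(\solu)\,\partial_\nu\solW$ coming from $\Omega\cdot\nabla\solU$ in the interior, and even after an interior gauge has been chosen to tame $\Omega\cdot\nabla\solW$, the resulting error terms are only controlled by local quantities of the form $\|\nabla\solW\|_{(2,\infty),B^+(x_0,\rho)}$ (this is exactly the content of the paper's Condition~\ref{cond:err} and Lemmata~\ref{la:V:err1}--\ref{la:V:err4}). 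Symmetrically, the interior decay of $\nabla\solW$, since $\lap\solW=\Omega\cdot\nabla\solV+\Omega\cdot\nabla\solW+\Err$, depends on $\|\lapv\solu\|_{(2,\infty)}$ through the $\nabla\solV$ term (Lemma~\ref{la:West:2}). The two equations are genuinely coupled, and the paper resolves this by proving a \emph{joint} Morrey-type decay for
\[
 \mathscr G(x_0,r) \;:=\; \|\nabla \solW\|_{(2,\infty),B^+(x_0,r)} \;+\; \|\lapv \solu\|_{(2,\infty),I(x_0,r)}
\]
(Proposition~\ref{pr:decay}), using an interior Rivi\`ere gauge for $\Omega$ together with a separate \emph{nonlocal} gauge for $\omega$ (Theorem~\ref{th:gaugenonlocal}). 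H\"older continuity of $\solu$ and of $\solW$ (hence of $\solU=\solV+\solW$) then follows simultaneously from this decay; the Dirichlet theorem of \cite{Mueller-Schikorra-2009} is invoked only afterwards as a convenience, not as the mechanism that transfers boundary regularity to the interior.

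A minor point on Step~3: a global orthonormal frame of $T\mathcal N$ need not exist. The paper sidesteps this by working with the extrinsic tangent projection $\Pi(\solu)$ and $\Pi^\perp(\solu)$ rather than a moving frame; the antisymmetry on the boundary then comes from $\Pi^h(\solu)\,\Omega\,\Pi^h(\solu)$ and from commutator identities for $\Pi^\perp(\solu)\lapv\solu$ as in the half-harmonic map literature.
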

A Neumann condition for systems with antisymmetric potential has been considered already in \cite{Sharp-Zhu-2016} where $\partial_\nu \solU$ solves a subcritical equation along the boundary, which is elliptic in the sense of boundary equations as studied by \cite{Agmon-Douglis-Nirenberg-1959}. But observe that in our case, $\partial_{\nu} \solU \in L^1(\partial D)$ only. That is, \eqref{eq:omegafreebd} is a critical equation both, in the interior and at the boundary.

Let us also mention the special case $\Omega \equiv 0$. Then the boundary equation 
\begin{equation}\label{eq:frbdcond}\partial_\nu \solU \perp T_{\solu} \mathcal{N} \quad \mbox{on $\partial D$}\end{equation}
is equivalent to the half-harmonic map equation
\[\laph_{\partial D} \solu \perp T_{\solu} \mathcal{N} \quad \mbox{in $\partial D$},\] 
where $\solu$ is the trace of $\solU$ at the boundary $\partial D$ and $\laph_{\partial D}$ denotes the half-laplacian along $\partial D$. The regularity theory for the half-harmonic map equation was proven by Da Lio and Rivi\`ere in their seminal work \cite{DaLio-Riviere-1Dmfd}. In particular they showed, that this equation also exhibits an antisymmetric potential on the right-hand side.

It is thus reasonable to suspect that the equation \eqref{eq:omegafreebd} can be reformulated into two coupled systems, one in the interior and one at the boundary, which both exhibit an antisymmetric potential on the right-hand side. We confirm this suspicion, and reduce \eqref{eq:omegafreebd} to an interior equation coupled with an equation along the boundary, and then show that the boundary equation exhibits a nonlocal antisymmetric potential which contains information of the Neumann condition \eqref{eq:frbdcond} and the interior potential $\Omega$. More precisely,  Theorem~\ref{th:geometryreg} will be a consequence of the following result, see Theorem~\ref{th:transform} below.
\begin{theorem}\label{th:mastereq}
Let $\solU \in H^1(\R^2_+,\R^N)$ and its trace $\solu \in H^{\frac{1}{2}}(\R,\R^N)$ be a solution to
\[
 \begin{cases}
 \lap \solU^i  = \Omega_{ij}\cdot \nabla \solU^j + \Err^i(\solU) \quad &\R^2_+\\
 \laph \solu^i = \omega_{ij}(\lapv \solu^j) + \err^i(\solU) \quad &\mbox{in }(-2,2).
 \end{cases}
\]
Here $\Err^i$ and $\err^i$ are benign error terms satisfying conditions~\ref{cond:Err} and \ref{cond:err}, respectively. 

Moreover, $\Omega$ is a pointwise antisymmetric potential ${\Omega}_{ij} = -{\Omega}_{ij} \in L^2(\R^2_+,\R^2)$, and $\omega_{ij}$ is a nonlocal potential, $\omega_{ij}: L^2(\R) \to L^1(\R)$, which is a linear operator given as
\[
  \int_{\R} \omega_{ij}(f)\ \varphi := \int_{\R}\int_{\R} \omega_{ij}(x,y)\, f(y)\ \varphi(x)\ dy\, dx
\]
whose kernel is antisymmetric, $\omega_{ij}(x,y) = -\omega_{ji}(x,y)$, and satisfies the boundedness and localization conditions~\ref{cond:omega} below.

Then $\solU$ is H\"older continuous in $\R^2_+ \cup \brac{(-1,1)\times \{0\}}$. 
\end{theorem}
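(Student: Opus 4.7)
The plan is to adapt Rivi\`ere's gauge-theoretic scheme to the coupled interior/boundary setting, where the gauge must simultaneously flatten the interior antisymmetric potential $\Omega$ and the nonlocal boundary potential $\omega$. The antisymmetry of $\omega$ is precisely the nonlocal analogue of the structure exploited by Da Lio--Rivi\`ere \cite{DaLio-Riviere-1Dmfd} for half-harmonic maps, so both pieces admit a Coulomb-type gauge in isolation; the heart of the argument is to carry the two constructions out \emph{consistently} on the half-space $\R^2_+$.

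Concretely, I would construct $P \in H^1(\R^2_+, SO(N))$ with two properties. (i) In the interior, a Rivi\`ere--Uhlenbeck-type Coulomb identity
\[
P^{-1}\nabla P + P^{-1}\Omega P = \nabla^\perp \xi
\]
holds with $\xi \in H^1(\R^2_+)$ controlled by $\|\Omega\|_{L^2}$, produced by minimizing $\int_{\R^2_+}|P^{-1}\nabla P + P^{-1}\Omega P|^2$ under a smallness hypothesis on $\|\Omega\|_{L^2}$. (ii) The trace $p := P\big|_{\{y=0\}} \in H^{1/2}(\R,SO(N))$ gauges $\omega$ in the sense that $p^{-1}\omega p + p^{-1}[\laph, p]$ can be expressed as a three-term commutator whose output lies in a Hardy-Lorentz space via the compensation estimates of \cite{DaLio-Riviere-1Dmfd}. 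Realizing (i) and (ii) simultaneously is the principal obstacle: one strategy is to first produce an interior gauge $P_0$ for $\Omega$ and then correct it by the harmonic extension of a half-harmonic gauge $q: \R \to SO(N)$ solving the gauge equation for the transferred boundary potential $p_0^{-1}\omega p_0$. The antisymmetry $\omega_{ij}(x,y) = -\omega_{ji}(x,y)$ together with the localization hypothesis in Condition \ref{cond:omega} is what allows this correction step to be closed under small-energy assumptions.

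Once $P$ is in place, the interior equation rewrites in conservation form
\[
\div\bigl(P^{-1}\nabla \solU\bigr) = (\nabla^\perp \xi)\cdot \bigl(P^{-1}\nabla \solU\bigr) + P^{-1}\Err(\solU),
\]
whose leading term is a div-curl product in the Hardy space $\hardy(\R^2_+)$ by the Coifman-Lions-Meyer-Semmes theorem, while the boundary equation becomes a half-Laplace equation whose right-hand side is a sum of three-term commutators plus the small error $\err(\solU)$. Conditions \ref{cond:Err} and \ref{cond:err} are presumably calibrated to make these errors subcritical. The final step is a Morrey/Campanato decay estimate at boundary points: on a half-ball $B_r^+(x_0)$ with $x_0 \in (-1,1)\times\{0\}$, decompose $\solU = \solV + \solW$, where $\solV$ is the harmonic function matching $\solU$ on the curved part of $\partial B_r^+$ with Neumann data $\partial_\nu \solV = 0$ on the flat part, and $\solW$ absorbs the compensated right-hand side via the half-space Neumann Green's function and duality between $\hardy$ and $BMO$. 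Iterating the resulting decay inequality on small scales produces Morrey growth of $|\nabla \solU|^2$, and Campanato's embedding yields the claimed H\"older continuity on $\R^2_+ \cup (-1,1)\times\{0\}$.
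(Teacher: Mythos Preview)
Your ``principal obstacle''---constructing a single gauge $P$ that simultaneously satisfies an interior Coulomb condition and whose trace gauges the nonlocal boundary potential $\omega$---is self-imposed; the paper never faces it. The proof runs on \emph{two independent gauges}. Write $\solU = \solV + \solW$ with $\solV(x,t) = p_t \ast \solu(x)$ the global Poisson extension of the trace (not a local harmonic comparison on half-balls as you propose), so that $\solW \equiv 0$ on $\R\times\{0\}$. The decay quantity
\[
\mathscr{G}(x_0,r) = \|\nabla \solW\|_{(2,\infty),B^+(x_0,r)} + \|\lapv \solu\|_{(2,\infty),I(x_0,r)}
\]
is then estimated in two separate pieces. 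For $\nabla\solW$ one takes an interior gauge $P\in H^1(\R^2_+,SO(N))$ with $P \equiv I$ on $\R\times\{0\}$ (Proposition~\ref{pr:gaugenormal}, obtained variationally without any smallness assumption on $\|\Omega\|_2$); the conditions $P=I$ and $\solW=0$ on the boundary kill all boundary terms in the integrations by parts, so the interior div-curl argument closes on its own. For $\lapv\solu$ one takes a \emph{separate} nonlocal gauge $p \in \dot{H}^{1/2}(\R,SO(N))$ minimizing a boundary energy built from the kernel $\omega$ (Theorem~\ref{th:gaugenonlocal}), applied only to the intrinsic trace equation $\laph\solu = \omega(\lapv\solu)+\err$; this gauge never sees $\Omega$. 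The only cross-talk between interior and boundary is the dependence of $\err(\solU)$ on $\solW$, and Condition~\ref{cond:err} is calibrated precisely so that this contribution is subcritical in the decay iteration.

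The gap in your plan is therefore not a wrong idea but a missing one: you have not recognized that the system decouples once $\nabla\solW$ and $\lapv\solu$ are tracked as separate Morrey quantities, and you have not actually carried out your coupled-gauge correction step. Adjusting an interior Coulomb gauge by the harmonic extension of a boundary gauge would generically destroy the interior divergence-free condition, and reclosing that under smallness is not obviously easier than the original problem. The paper's route sidesteps this entirely by forcing $P\equiv I$ on the boundary and handling the trace equation with its own gauge.
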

Since we reformulated equation \eqref{eq:omegafreebd} into a system of local and nonlocal equations with antisymmetric potential, it should be possible to base higher regularity arguments on the related results for nonlocal equations, see \cite{Schikorra-eps}. This will be a future project of study.

Before we comment in the next Section more on the strategy of the proof for Theorem~\ref{th:geometryreg} and Theorem~\ref{th:mastereq}, let us have a look at consequences: the conditions $\solu: \partial D \to \mathcal{N}$ and $\partial_\nu \solU \perp T_{\solu} \mathcal{N}$ is motivated by partially free boundary problems. The first example is known from the Plateau problem.
\begin{corollary}
Assume that $\solU$ is a critical point of 
\[
 \int_{D} |\nabla \solU|^2 \quad \mbox{s.t. $\solu = \solU\Big|_{\partial D} \subset \mathcal{N}$}
\]
Then
\[
 \begin{cases}
\lap \solU = 0 \quad &\mbox{in $D$}\\
\partial_\nu \solU \perp T_{\solu} \mathcal{N} \quad &\mbox{on $\partial D$}
 \end{cases}
\]
and consequently, $\solU$ is H\"older continuous in $D$ up to the boundary.
\end{corollary}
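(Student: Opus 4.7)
The plan is to extract the Euler--Lagrange system in the form required by Theorem~\ref{th:geometryreg} and then apply that theorem directly with $\Omega \equiv 0$. Since $\mathcal{N}$ is a smooth closed submanifold of $\R^N$, the constraint $\solU|_{\partial D} \subset \mathcal{N}$ defines an admissible class of variations: those $\varphi \in C^\infty(\overline{D},\R^N)$ whose boundary trace satisfies $\varphi(x) \in T_{\solu(x)}\mathcal{N}$ for all $x \in \partial D$. Using the nearest-point projection $\Pi_\mathcal{N}$ onto a tubular neighborhood of $\mathcal{N}$, the one-parameter family $\solU_t(x) := \solU(x) + t\varphi(x)$ in the interior together with $\solU_t|_{\partial D} := \Pi_\mathcal{N}(\solu + t\varphi|_{\partial D})$ yields genuine competitors in $H^1(D,\R^N)$ with boundary trace in $\mathcal{N}$ (by standard arguments, since $\solu \in H^{1/2}(\partial D,\mathcal{N})$ and the projection is smooth).

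Computing $\frac{d}{dt}\big|_{t=0}\mathscr{E}(\solU_t) = 0$ and integrating by parts gives
\[
 0 = \int_D \nabla \solU \cdot \nabla \varphi\, dx = -\int_D \lap \solU \cdot \varphi\, dx + \int_{\partial D} \partial_\nu \solU \cdot \varphi\, d\mathcal{H}^1.
\]
First I would test against $\varphi \in C_c^\infty(D,\R^N)$, which is automatically admissible since its boundary trace vanishes; this yields $\lap \solU = 0$ in $D$ in the distributional sense. Once the interior equation holds, the first boundary term drops from the identity above, leaving
\[
 \int_{\partial D} \partial_\nu \solU \cdot \varphi\, d\mathcal{H}^1 = 0 \quad \text{for every admissible } \varphi,
\]
where $\partial_\nu \solU$ is understood in the weak sense (e.g.\ as the element of $H^{-1/2}(\partial D)$ defined by Green's formula, which is well-defined since $\lap \solU = 0$). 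Because the admissible boundary traces span, at a.e.\ point of $\partial D$, the tangent space $T_{\solu}\mathcal{N}$, this forces $\partial_\nu \solU \perp T_{\solu}\mathcal{N}$ along $\partial D$.

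The pair $(\lap \solU = 0,\ \partial_\nu \solU \perp T_{\solu}\mathcal{N})$ is exactly system \eqref{eq:omegafreebd} with the antisymmetric potential $\Omega \equiv 0 \in L^2(D,\R^2)$. Theorem~\ref{th:geometryreg} therefore applies and yields H\"older continuity of $\solU$ up to $\partial D$. The only nontrivial point in the argument is the admissibility of the variations; this is the usual subtlety for partially free boundary problems, but it is handled in a standard way through the tubular neighborhood construction recalled above, so no genuinely new obstacle appears beyond the content already provided by Theorem~\ref{th:geometryreg}.
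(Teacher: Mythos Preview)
Your proposal is correct and follows essentially the same approach as the paper: the corollary is stated there without proof as an immediate consequence of Theorem~\ref{th:geometryreg} with $\Omega\equiv 0$, and you have simply supplied the standard derivation of the Euler--Lagrange system for the free boundary problem. The only minor quibble is that your construction of $\solU_t$ defines the interior and boundary values separately in a way that is not literally compatible; the cleaner formulation is to build a single one-parameter family $\solU_t\in H^1(D,\R^N)$ with $\solU_t|_{\partial D}\in\mathcal{N}$ and $\frac{d}{dt}\big|_{t=0}\solU_t=\varphi$ (e.g.\ by composing with the nearest-point projection in a collar neighborhood of $\partial D$), but this is a cosmetic point and does not affect the argument.
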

As an interesting side remark, let us mention that Douglas' proof of the Plateau problem is actually related to our approach of computing an intrinsic nonlocal equation along the boundary, cf. \cite[equation (1.4)]{Douglas-1931}.
 
From Theorem~\ref{th:geometryreg} we also recover the regularity for harmonic maps with partially free boundary, which was originally obtained by Scheven, \cite{Scheven-2006}. Actually Scheven even proved partial regularity in dimensions $n \geq 3$ for domains $D \subset \R^n$.
\begin{corollary}
Assume that $\solU: D \to \mfdM \subset \R^N$ is a harmonic map with free boundary in $\mathcal{N} \subset \mfdM$, where $\mfdM$, $\mathcal{N}$ are smooth, closed manifolds in $\R^N$. That is,
let $\solU \in H^1(D,\mfdM)$ with trace $\solu \in H^{\frac{1}{2}}(\partial D,\mathcal{N})$ be a critical point of the Dirichlet energy
\[
 \int_{D} |\nabla \solU|^2: \quad \mbox{s.t. } \solU \in H^1(D,\mfdM) \mbox{ with trace }\solu = \solU \Big |_{\partial D} \in H^{\frac{1}{2}}(\partial D,\mathcal{N}).
\]
Then $\solU$ satisfies an equation of the form \eqref{eq:omegafreebd} and consequently $\solU$ is H\"older continuous in $D$ up to the boundary.
\end{corollary}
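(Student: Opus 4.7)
The plan is to reduce the statement to Theorem~\ref{th:geometryreg}. To do so, I need to verify two facts about a harmonic map $\solU$ with free boundary in $\mathcal{N}$: first, that $\solU$ satisfies an interior equation of the form $\lap \solU^i = \Omega_{ij}\cdot \nabla \solU^j$ with $\Omega \in L^2(D,\R^2)$ pointwise antisymmetric, and second, that the geometric Neumann condition $\partial_\nu \solU \perp T_{\solu}\mathcal{N}$ holds in the weak sense on $\partial D$.

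The interior equation is the classical content of Rivi\`ere's theorem. Since $\solU \in H^1(D,\mfdM)$ is a critical point of the Dirichlet energy among maps into $\mfdM$, it satisfies $\lap \solU \perp T_{\solU}\mfdM$ in $D$, which in components reads $\lap \solU^i = A^i(\solU)(\nabla \solU,\nabla \solU)$, where $A$ is the second fundamental form of $\mfdM$. Rivi\`ere's observation is that one may split the symmetric quadratic form $A^i_{jk}\nabla \solU^j\cdot \nabla \solU^k$ into a part that can be absorbed into a genuinely antisymmetric matrix-valued one-form; concretely one takes
\[
\Omega_{ij} := \bigl(A^i_{jk}(\solU)-A^j_{ik}(\solU)\bigr)\,\nabla \solU^k \in L^2(D,\R^2),
\]
which is antisymmetric in $i,j$ and reproduces $\lap \solU^i = \Omega_{ij}\cdot \nabla \solU^j$ after using $\solU_x\cdot \solU^j_y = \solU_y\cdot \solU^j_x$ type symmetrizations. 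This is standard and I would simply cite \cite{Riviere-2007}.

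For the Neumann condition I would use the first variation with admissible vector fields. A variation $\solU_t = \Pi_{\mfdM}(\solU + tV)$ preserves the constraint $\solU_t(D) \subset \mfdM$ for any $V \in H^1\cap L^\infty(D,\R^N)$ whose boundary trace satisfies $V(x) \in T_{\solu(x)}\mathcal{N}$ for a.e.\ $x \in \partial D$, where $\Pi_{\mfdM}$ is the nearest point projection. Differentiating $\mathscr{E}(\solU_t)$ at $t=0$ and integrating by parts gives
\[
0 = -\int_D \lap \solU\cdot V + \int_{\partial D} \partial_\nu \solU\cdot V.
\]
Taking first $V$ compactly supported in $D$ and tangent to $\mfdM$ recovers $\lap \solU \perp T_{\solU}\mfdM$; feeding this back and choosing arbitrary $V$ tangent to $\mathcal{N}$ at the boundary yields $\int_{\partial D}\partial_\nu \solU\cdot V = 0$, hence $\partial_\nu \solU \perp T_{\solu}\mathcal{N}$ on $\partial D$.

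Combining these two items, $(\solU,\solu)$ satisfies system \eqref{eq:omegafreebd} in the sense required by Theorem~\ref{th:geometryreg}, which delivers H\"older continuity of $\solU$ up to $\partial D$. There is essentially no obstacle beyond these reductions; the only point requiring mild care is ensuring that the free-boundary variation above is admissible despite the double constraint $\solU\in \mfdM$ in $D$ and $\solu\in \mathcal{N}$ on $\partial D$, which is handled by composing the nearest-point projection onto $\mfdM$ with a tubular projection onto $\mathcal{N}$ near $\partial D$, using $\mathcal{N}\subset \mfdM$. The genuinely substantive work is hidden in Theorem~\ref{th:geometryreg} itself (equivalently Theorem~\ref{th:mastereq}), not in this corollary.
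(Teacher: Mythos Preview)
Your proposal is correct and follows exactly the approach the paper intends: the corollary is stated without proof in the paper, as an immediate consequence of Theorem~\ref{th:geometryreg} once one knows (i) Rivi\`ere's rewriting of the harmonic map equation with an antisymmetric potential and (ii) the free-boundary first variation yielding $\partial_\nu \solU \perp T_{\solu}\mathcal{N}$. Your derivation of both ingredients is standard and adequate; the only cosmetic point is that your explicit formula for $\Omega_{ij}$ together with the vague ``symmetrizations'' remark is not quite self-contained, but since you correctly defer to \cite{Riviere-2007} for the precise construction this is harmless.
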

For $H$-surfaces, Theorem~\ref{th:geometryreg} implies regularity at a partially free boundary.
\begin{corollary}
Let $\mathcal{N}$ be a closed two-dimensional submanifold of $\R^3$. Assume that $\solU \in H^1(D,\R^3)$ is a critical point of the energy \eqref{eq:Qharmmap} subject to the partial free boundary condition $\solu = \solU \Big|_{\partial D} \subset \mathcal{N}$, i.e., let $\solU$ be a solution to
\[
 \begin{cases}
  \lap \solU = 2H(\solU)\, \solU_x \wedge \solU_y \quad &\mbox{in $D$}\\
  \partial_\nu \solU \perp T_\solu \mathcal{N} \quad &\mbox{on $\partial D$}.
 \end{cases}
\]
If $H = \frac{1}{2}\div Q$ is bounded, $H \in L^\infty(\R^3)$ and $Q$ satisfies the condition
\begin{equation}\label{eq:Qncondition}
 Q(p) \cdot {\bf n}(p) = 0 \quad \mbox{for all $p \in \mathcal{N}$}
\end{equation}
where ${\bf n}(p)$ denotes the unit normal of $\mathcal{N}$,
then $\solU$ is H\"older continuous in $D$ up to the boundary $\partial D$.
\end{corollary}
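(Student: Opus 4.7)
The plan is to put $\solU$ into the framework of Theorem~\ref{th:geometryreg} and invoke that result directly. Two things have to be verified: that the interior prescribed mean curvature equation admits the Rivi\`ere form with antisymmetric potential $\Omega \in L^2$, and that the boundary condition $\partial_\nu \solU \perp T_\solu \mathcal{N}$ really is the natural boundary condition produced by criticality of $\mathscr{E}_Q$ subject to the trace lying in $\mathcal{N}$. The assumption $H \in L^\infty$ is what drives the first, while the structural condition \eqref{eq:Qncondition} is what drives the second.

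For the interior rewriting I would set $\Omega_{ij} := H(\solU)\,\varepsilon_{ijk}\,(\partial_y\solU^k,\,-\partial_x\solU^k) \in \R^2$, possibly up to a sign convention. Antisymmetry of the Levi-Civita symbol in $(i,j)$ gives $\Omega_{ij} = -\Omega_{ji}$ pointwise; the bound $|\Omega| \leq C\|H\|_\infty |\nabla\solU|$ places $\Omega$ in $L^2(D,\R^2)$; and a short index manipulation using the identity $(\solU_x\wedge\solU_y)^i = \varepsilon_{ijk}\partial_x\solU^j\partial_y\solU^k$ together with the antisymmetry of $\varepsilon$ yields $\Omega_{ij}\cdot\nabla\solU^j = 2H(\solU)(\solU_x\wedge\solU_y)^i$. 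The interior PDE is thus exactly of the form $\lap\solU^i = \Omega_{ij}\cdot\nabla\solU^j$ required by \eqref{eq:omegafreebd}.

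For the boundary, I would reinterpret the $Q$-part of $\mathscr{E}_Q$ as the pullback $\int_D \solU^\ast\omega$ of the $2$-form $\omega$ on $\R^3$ defined by $\omega(V,W) := Q\cdot(V\times W)$; one checks $d\omega = (\div Q)\,dx\wedge dy\wedge dz = 2H\,d\mathrm{vol}$, consistent with the bulk equation. The hypothesis \eqref{eq:Qncondition} is precisely the statement that $\omega$ restricts to zero on $\mathcal{N}$: for $V,W\in T_p\mathcal{N}$ the vector $V\times W$ is normal to $\mathcal{N}$, and $Q(p)\perp{\bf n}(p)$ then annihilates $\omega(V,W)$. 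Taking the first variation of $\mathscr{E}_Q$ along a perturbation $\solV$ with trace $\solv \in T_\solu\mathcal{N}$, the Cartan formula $\mathcal{L}_\solV\omega = i_\solV d\omega + d(i_\solV\omega)$ together with Stokes' theorem splits $\delta\int_D \solU^\ast\omega$ into the bulk contribution $\int_D 2H(\solU)(\solU_x\wedge\solU_y)\cdot\solV$ and a boundary contribution whose density at $x\in\partial D$ is $\omega(\solv,\solu_\tau)$. Because both $\solv$ and $\solu_\tau$ lie in $T_\solu\mathcal{N}$, this density vanishes by the restriction property of $\omega$ just established. The only surviving boundary integral is then $\int_{\partial D}\partial_\nu\solU\cdot\solv\,d\sigma$, and requiring it to vanish for every tangent $\solv$ yields $\partial_\nu\solU \perp T_\solu\mathcal{N}$.

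With both ingredients in hand, $\solU$ fits the scope of \eqref{eq:omegafreebd} with antisymmetric $\Omega \in L^2$, and Theorem~\ref{th:geometryreg} then immediately delivers H\"older continuity of $\solU$ on $\overline{D}$. The conceptual heart of the argument, and the only place where \eqref{eq:Qncondition} is used, is the vanishing of the boundary $Q$-term in the first variation: without it the natural boundary condition would pick up an extra twist by a cross-product with $Q$ and would not fit the framework of Theorem~\ref{th:geometryreg} in any obvious way. The interior rewriting is otherwise a routine index computation, and the boundedness of $H$ is precisely the integrability needed to put $\Omega$ in $L^2$.
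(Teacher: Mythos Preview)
Your proposal is correct and follows exactly the route the paper intends: verify that the $H$-surface equation has Rivi\`ere's antisymmetric form with $\Omega\in L^2$ (using $H\in L^\infty$), justify via \eqref{eq:Qncondition} that the boundary term from the $Q$-part of the first variation vanishes so that criticality yields the clean condition $\partial_\nu\solU\perp T_\solu\mathcal{N}$, and then invoke Theorem~\ref{th:geometryreg}. The paper does not spell out a proof for this corollary, but your derivation is the standard one and your observation that \eqref{eq:Qncondition} is precisely the orthogonal angle condition \eqref{eq:anglecondition} specialized to $\lambda=\omega$ with $\omega(V,W)=Q\cdot(V\times W)$ makes the connection to the subsequent general corollary explicit.
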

Condition \eqref{eq:Qncondition} was already used to to show regularity up to the free boundary under the assumption of conformal parametrization of $\solU$ in \cite{Grueter-Hildebrandt-Nitsche-1986} and \cite{Mueller-2005}.

Finally, Theorem~\ref{th:geometryreg} also implies the following free boundary version of Rivi\`ere's regularity theorem for critical maps of conformally invariant variational functionals, \cite[Theorem I.2]{Riviere-2007}.
\begin{corollary}
Let $\mfdM$ be a $C^2$-submanifold of $\R^N$, and $\lambda$ a $C^1$ 2-form on $\mfdM$ such that the $L^\infty$-norm of $d\lambda$ in bounded on $\mfdM$. Assume that $\mathcal{N} \subset \mfdM$ is a closed submanifold of $\mfdM$. Assume moreover that, similarly to \eqref{eq:Qncondition}, $\lambda$ satisfies the \emph{orthogonal angle condition} 
\begin{equation}\label{eq:anglecondition}
|\lambda(\ve{z})(\ve{v},\ve{w})| = 0 \quad \quad \forall\ \ve{v},\, \ve{w} \in T_{\ve{z}} \mathcal{N},\ \ve{z} \in \mathcal{N}.
\end{equation}
Then any critical point $\solU \in H^1(D,\mfdM)$, $\solu = \solU \Big|_{\partial D} \in H^{\frac{1}{2}}(\partial D,\mathcal{N})$ of the energy
\[
 \int_{D} |\nabla \solU|^2 + \solU^\ast \lambda,
\]
satisfies an equation of the form \eqref{eq:omegafreebd}, and therefore $\solU$ is H\"older continuous in $D$ up to the boundary $\partial D$.
\end{corollary}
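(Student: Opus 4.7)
The plan is to read off the Euler--Lagrange system of the constrained problem, verify that it is of the form \eqref{eq:omegafreebd}, and then invoke Theorem~\ref{th:geometryreg}. To generate admissible one-parameter variations $\solU_t$ of $\solU$, I would start from a smooth test field $\phi:\overline{D}\to\R^N$ with $\phi(x)\in T_{\solU(x)}\mfdM$ for $x\in D$ and $\phi(x)\in T_{\solu(x)}\mathcal{N}$ for $x\in\partial D$, and set $\solU_t=\Pi(\solU+t\phi)$, where $\Pi$ denotes nearest-point projection onto $\mfdM$ away from $\partial D$ and onto $\mathcal{N}$ in a tubular neighbourhood of $\partial D$, glued by a cutoff. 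Using Cartan's formula $\mathcal{L}_\phi\lambda=d(\iota_\phi\lambda)+\iota_\phi\,d\lambda$ together with Stokes' theorem, the vanishing of the first variation rewrites as
\begin{equation*}
0=-2\int_D \lap\solU\cdot\phi+2\int_{\partial D}\partial_\nu\solU\cdot\phi+\int_D\solU^{\ast}(\iota_\phi\,d\lambda)+\int_{\partial D}\solu^{\ast}(\iota_\phi\lambda).
\end{equation*}

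Testing first against $\phi\in C_c^\infty(D,\R^N)$ tangent to $\mfdM$ makes both boundary terms drop, and the remaining interior identity is precisely the one treated by Rivi\`ere in \cite[Theorem~I.2]{Riviere-2007}: expressing the $\mfdM$-normal component of $\lap\solU$ via the second fundamental form of $\mfdM$, and combining it with the antisymmetric contribution of $d\lambda$ on $T_\solU\mfdM\times T_\solU\mfdM$, produces
\begin{equation*}
\lap\solU^i=\Omega_{ij}\cdot\nabla\solU^j\quad\text{in }D,\qquad \Omega_{ij}=-\Omega_{ji}\in L^2(D,\R^2).
\end{equation*}

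Next I would test against $\phi$ that no longer vanish at the boundary but still satisfy $\phi|_{\partial D}\in T_\solu\mathcal{N}$. The interior contribution already vanishes by the previous step, so only the boundary terms survive:
\begin{equation*}
0=\int_{\partial D}\bigl(2\,\partial_\nu\solU\cdot\phi+\lambda(\solu)(\phi,\partial_\tau\solu)\bigr)\,d\tau.
\end{equation*}
Since $\solu$ takes values in $\mathcal{N}$, its tangential derivative $\partial_\tau\solu$ lies pointwise in $T_\solu\mathcal{N}$; together with $\phi\in T_\solu\mathcal{N}$ and the orthogonal angle condition \eqref{eq:anglecondition}, this kills the $\lambda$-term and leaves $\int_{\partial D}\partial_\nu\solU\cdot\phi=0$ for every admissible $\phi$. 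Equivalently, $\partial_\nu\solU\perp T_\solu\mathcal{N}$ on $\partial D$, so $\solU$ solves \eqref{eq:omegafreebd} and Theorem~\ref{th:geometryreg} yields H\"older regularity up to the boundary. The main technical obstacle I anticipate is not the algebraic computation but rigorously producing an $H^1$-family of variations that respects both constraints simultaneously; this is handled by the standard cutoff-plus-projection construction, using that $\mathcal{N}$ and $\mfdM$ are smooth so that tubular neighbourhoods and smooth nearest-point projections are available.
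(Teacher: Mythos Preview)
Your proposal is correct and is precisely the argument the paper has in mind: the corollary is stated without proof as an immediate consequence of Theorem~\ref{th:geometryreg}, the interior antisymmetric structure being exactly Rivi\`ere's \cite[Theorem~I.2]{Riviere-2007}, and the free boundary condition following from the angle hypothesis \eqref{eq:anglecondition} since both $\phi|_{\partial D}$ and $\partial_\tau\solu$ lie in $T_\solu\mathcal{N}$. One small refinement to your construction of admissible variations: projecting onto $\mathcal{N}$ in a full neighbourhood of $\partial D$ over-constrains the interior variation, so it is cleaner to first prescribe $\solu_t=\Pi_{\mathcal{N}}(\solu+t\phi|_{\partial D})$ on $\partial D$ and then extend into $D$ as an $H^1(D,\mfdM)$-map (e.g.\ by $\Pi_{\mfdM}$ applied to an $H^1$-extension of $\solu_t$); this guarantees $\solU_t\in\mfdM$ in the interior and $\solU_t|_{\partial D}\in\mathcal{N}$ without any gluing, and the first variation computation goes through unchanged.
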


\subsection*{Outline of the paper}
In the next section, Section~\ref{s:strategy}, we state in Theorem~\ref{th:transform} the reduction result which related Theorem~\ref{th:geometryreg} and Theorem~\ref{th:mastereq}; moreover we introduce our notation. In Section~\ref{s:proofthtransform} we give the proof of Theorem~\ref{th:transform}. In Section~\ref{s:proofoftheoremmaster} we proof Theorem~\ref{th:mastereq} using a suitable gauge for certain nonlocal antisymmetric functionals. This gauge is constructed in Section~\ref{s:optimalgauge}. Finally, in Section~\ref{s:extcom} we introduce new commutator estimates for extension operators, which are needed in the proof of Theorem~\ref{th:mastereq}, but which are interesting in their own right.

\section{The master equation with antisymmetric potentials: Proof of Theorem~\ref{th:geometryreg}}\label{s:strategy}
Recall the Dirichlet-to-Neumann property: Assume that $\solV \in H^1(\R^2_+,\R^N)$ is harmonic in the upper half plane $\R^2_+$,
\begin{equation}
\label{eq:Vdef}
 \begin{cases}
  \lap \solV = 0 \quad &\mbox{in }\R^2_+,\\
  \solV = \solu \quad &\mbox{on }\R \times \{0\}.\\
 \end{cases}
\end{equation}
with zero boundary conditions at infinity. That is, $\solV$ is the Poisson extension of $\solu$,
\[
 \solV(x,t)= p_t \ast \solu(x) := c \int_{\R} \frac{t}{|x-z|^2+t^2}\ \solu(z)\ dz.
\]
Here, $c$ is some dimensional constant (and in general $c$ may change from line to line).
In semigroup language, $\solV(x,t) = e^{-t \laph} \solu(x)$. Here, the $\alpha$-Laplacian is defined as
\[
 \laps{\alpha} \solu(x) := c \int_{\R} \frac{\solu(x)-\solu(y)}{|x-y|^{1+\alpha}}\ dx\ dy.
\]
Then we have the Dirichlet-to-Neumann property
\begin{equation}\label{eq:dtn}
 \partial_\nu \solV(x,0) \equiv -\partial_t \solV(x,0) =  \laph \solu(x) \quad x \in \R.
\end{equation}
That is, a condition on $\partial_\nu \solV$ at the boundary is simply a condition on $\laph \solu$.

This was used, e.g., by Millot and Sire in \cite{Millot-Sire}, to re-interpret the half-harmonic map equation 
\[
 \laph \solu \perp T_\solu \mathcal{N} \quad \mbox{in }\R
\]
as a minimal surface $\solV: \R^2_+ \to \R^N$ with partial free boundary
\[
 \begin{cases}
\lap \solV = 0 \quad \mbox{in $\R^2_+$}\\
\partial_\nu \solV \perp T_\solu \mathcal{N} \quad \mbox{in $\R \times \{0\}$}.
 \end{cases}
\]
This way they obtained partial regularity for half-harmonic maps from the work of Scheven~\cite{Scheven-2006}.

Our strategy is the reverse. In order to study solutions $\solU \in H^1(\R^2_+,\R^N)$ of the equation
\[ \begin{cases}
\lap \solU = \Omega \cdot \nabla \solU \quad &\mbox{in $\R^2_+$}\\
\partial_\nu \solU \perp T_{\solu} \mathcal{N} \quad &\mbox{on $\R \times \{0\}$},
 \end{cases}
\]
we interpret $\partial_\nu \solU$ as a nonlocal operator along the boundary. Observe, however, that since $\solU$ is not harmonic in the interior, the relation \eqref{eq:dtn} fails for $\solU$, and in general
\[
 \partial_\nu \solU(x,0) \neq \laph \solu(x) 
\]
Nevertheless we obtain the following
\begin{theorem}\label{th:transform}
Let $D \subset \R^2$ be a smoothly bounded domain and $\mathcal{N}$ be a closed manifold in $\R^N$. If $\tilde{\solU} \in H^1(D,\R^{N})$ with trace $\tilde{\solu} \in H^{\frac{1}{2}}(\partial D,\mathcal{N})$ is a solution to
\[
 \begin{cases}
\lap \tilde{\solU} = \tilde{\Omega} \cdot \nabla \tilde{\solU} \quad &\mbox{in $D$}\\
\partial_\nu \tilde{\solU} \perp T_{\tilde{\solu}} \mathcal{N} \quad &\mbox{on $\partial D$}
 \end{cases}
\]
where $\tilde{\Omega}_{ij} = -\tilde{\Omega}_{ji} \in L^2(D,\R^2)$ is antisymmetric, then for any point $x_0 \in \partial D$ we find a small radius $\rho > 0$, $\solU \in H^1(\R^2_+,\R^N)$, and a diffeomorphism $\tau: B(0,1) \to \R^2$ with
\[
\tilde{\solU} = \solU\circ \tau^{-1} \quad \mbox{in $B(x_0,\rho) \cap \overline{D}$},
\]
so that $\solU$ satisfies the following conditions:

The map $\solU \in H^1(\R^2_+,\R^N)$ has compact support. $\solU$ and its trace $\solu = \solU \Big|_{\R \times \{0\}} \in L^\infty \cap H^{\frac{1}{2}}(\R,\R^N)$ are a solution to
\begin{equation}\label{eq:trU}
 \begin{cases}
 \lap \solU^i  = \Omega_{ij}\cdot \nabla \solU^j + \Err^i \quad &\R^2_+\\
 \solU = {\solu} \quad &\mbox{on $\R \times \{0\}$},
 \end{cases}
\end{equation}
for some $\Err^i$ satisfying the conditions~\ref{cond:Err} below. Here $\Omega$ is a pointwise antisymmetric potential ${\Omega}_{ij} = -{\Omega}_{ij} \in L^2(\R^2_+,\R^2)$.

On the other hand, the trace $\solu \in L^\infty \cap H^{\frac{1}{2}}(\R,\R^N)$ satisfies
\begin{equation}\label{eq:transformedeqV}
%  \begin{cases}
%   \lap {\solV} = 0 \quad &\mbox{in }\R^2_+,\\
%   {\solV} = {\solu} \quad &\mbox{on }\R \times \{0\}\\
  \laph \solu^i = \omega_{ij}(\lapv \solu^j) + \err^i(\solU) \quad \mbox{in }(-2,2)
%  \end{cases}  
\end{equation}
for the nonlocal, boundary antisymmetric potential $\omega_{ij} = -\omega_{ji}: \dot{H}^{\frac{1}{2}}(\R) \to L^1(\R)$ which is a linear operator given via
\[
  \int_{\R} \omega_{ij}(f)\ \varphi := \int_{\R}\int_{\R} \omega_{ij}(x,y)\, f(y)\ \varphi(x)\ dy\, dx
\]
whose kernel $\omega_{ij}(x,y)$ satisfies the boundedness and localization conditions~\ref{cond:omega} below. Moreover $\err(\solU)$ depends on $\solU$, i.e. on interior \emph{and} boundary values, but is an benign error term satisfying the conditions~\ref{cond:err} below.
\end{theorem}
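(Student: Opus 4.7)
The plan is to proceed in three steps: straighten the boundary by a change of variables, split $\solU$ into a harmonic extension plus a Dirichlet correction so as to isolate the Dirichlet-to-Neumann contribution, and extract the antisymmetric structure of the boundary potential via commutators with a normal frame of $\mathcal{N}$. In the first step I localize near a boundary point $x_0\in\partial D$ and pick a conformal diffeomorphism from the half-disk $B(0,1)\cap \R^2_+$ onto a neighbourhood of $x_0$ in $\overline{D}$, sending the diameter into $\partial D$. Since the two-dimensional Laplacian transforms conformally, setting $\solU := \tilde\solU \circ\tau^{-1}$ preserves the form of the interior equation with a new antisymmetric $\Omega\in L^2(\R^2_+)$, and the outward normal derivative transforms into a positive multiple of $\partial_t \solU|_{t=0}$, still orthogonal to $T_\solu\mathcal{N}$. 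A cutoff produces a compactly supported $\solU$ on $\R^2_+$; the commutator of $\lap$ with the cutoff is supported away from the basepoint and lives in a better space, hence can be absorbed into the interior error term $\Err^i$ of condition~\ref{cond:Err}.

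Next I split $\solU = \solV + \solW$ with $\solV = p_t\ast\solu$ the Poisson extension and $\solW$ solving the zero-Dirichlet problem $\lap \solW = \Omega\cdot\nabla \solU + \Err$ in $\R^2_+$. The Dirichlet-to-Neumann identity \eqref{eq:dtn} gives $-\partial_t \solV|_{t=0} = \laph \solu$, so on the boundary
\[
  \laph \solu^i = \partial_\nu \solU^i + \partial_t \solW^i|_{t=0}.
\]
The $\solW$-term is controlled by elliptic regularity for the zero-Dirichlet problem against $\Omega\cdot\nabla\solU + \Err$ and, after multiplying by a boundary cutoff $\eta$ supported in $(-2,2)$, satisfies the bounds required of $\err^i(\solU)$ in condition~\ref{cond:err}. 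The Neumann condition $\partial_\nu \solU\perp T_\solu\mathcal{N}$ together with the symmetric projection $P^\perp_{ij}(\solu) = \sum_\alpha \nu_\alpha^i(\solu)\,\nu_\alpha^j(\solu)$ onto the normal bundle of $\mathcal{N}$ then reduces the identity to
\[
  \laph \solu^i = P^\perp_{ij}(\solu)\,\laph \solu^j + \err^i(\solU).
\]

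The hardest step is to rewrite $P^\perp_{ij}(\solu)\,\laph \solu^j$ as a nonlocal operator of the form $\omega_{ij}(\lapv\solu^j)$ with kernel antisymmetric in $i,j$. Following the commutator strategy of Da Lio--Rivi\`ere \cite{DaLio-Riviere-1Dmfd}, I would write $\laph = \lapv\circ\lapv$, use the identity $f\,\laph g = \lapv(f\,\lapv g) - [\lapv,f]\,\lapv g$ with $f = \nu_\alpha^i(\solu)$ and $g = \solu^j$, and exploit the pointwise orthogonality $\sum_j \nu_\alpha^j(\solu)\,e_\sigma^j(\solu)=0$ for any tangent frame $e_\sigma$ of $\mathcal{N}$ to cancel the symmetric-in-$(i,j)$ pieces of $\sum_\alpha \nu_\alpha^i(\solu)\,\laph(\nu_\alpha^j(\solu)\,\solu^j)$. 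What remains is a linear map acting on $\lapv\solu^j$ whose kernel $\omega_{ij}(x,y)$ is manifestly antisymmetric in $(i,j)$, while the off-diagonal decay and the local logarithmic growth demanded by condition~\ref{cond:omega} should follow from $\nu_\alpha(\solu)\in H^{1/2}\cap L^\infty$ and standard commutator estimates for $\lapv$. I expect the main obstacle to lie in the careful bookkeeping of which commutator remainders must be absorbed into $\err^i(\solU)$ versus which genuinely carry the antisymmetric structure, since the frame $\nu_\alpha(\solu)$ only has $H^{1/2}$ regularity along the boundary.
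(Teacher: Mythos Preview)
Your second step contains the central gap. You claim that $\partial_t\solW|_{t=0}$ (equivalently, $-\Pi(\solu)\,\partial_\nu\solW$ after using the Neumann condition) is controlled by ``elliptic regularity for the zero-Dirichlet problem'' and can be absorbed into $\err^i(\solU)$. But $\lap\solW = \Omega\cdot\nabla\solU+\Err$ has right-hand side only in $L^1(\R^2_+)$, so no elliptic gain is available for the normal trace. Concretely, testing against $\varphi\in C_c^\infty((-2,2))$ and integrating by parts,
\[
 \int_\R \Pi(\solu)\,\partial_\nu\solW\,\varphi
 = \int_{\R^2_+}\nabla\solW\cdot\nabla\bigl(\Pi^h(\solu)\,\varphi^h\bigr)
   + \int_{\R^2_+}\bigl(\Omega\cdot\nabla\solU+\Err\bigr)\,\Pi^h(\solu)\,\varphi^h,
\]
and since $\solU=\solV+\solW$ the last integral contains $\int_{\R^2_+}\Omega\cdot\nabla\solV\,\Pi^h(\solu)\,\varphi^h$. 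Because $\nabla\solV = T(\lapv\solu)$ with $T$ built from the Poisson kernel, this is a \emph{critical} nonlocal functional of $\lapv\solu$: it does not satisfy the smallness bound $\eps\,\|\lapv\solu\|_{(2,\infty),I}$ of condition~\ref{cond:err}, because the square-function estimate for $T$ only delivers local $L^2$ control, not $L^{(2,\infty)}$. In the paper this term is \emph{not} an error; after symmetrising via $\Pi^h(\solu)\,\Omega\,\Pi^h(\solu)$ it becomes the second, genuinely nonlocal, antisymmetric kernel $\omega^2_{ij}(x,y)$ (Section~\ref{ss:boundaryaction}, Proposition~\ref{pr:omega2correct}). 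Your $\omega$, built only from $P^\perp(\solu)\,\laph\solu$ \`a la Da~Lio--Rivi\`ere, corresponds to the paper's $\omega^1$ alone and misses the entire contribution of the interior potential $\Omega$ to the boundary equation.

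There is a second, smaller gap. Even the interior-action piece $\int_{\R^2_+}\Omega\cdot\nabla\solW\,\Pi^h(\solu)\,\varphi^h$ is not an admissible error for free: a naive Cauchy--Schwarz gives $\|\Omega\|_2\|\nabla\solW\|_2\|\varphi\|_\infty$, with no $\eps$. The paper first chooses an \emph{interior} gauge $P\in H^1(\R^2_+,SO(N))$ with $P\equiv I$ on $\R\times\{0\}$ and $\dv(\nabla P\,P^T+P\Omega P^T)=0$ (Proposition~\ref{pr:gaugenormal}); the resulting div-curl structure, combined with the fact that $\solW$ (and the factor $P-I$) vanishes on $\R\times\{0\}$, is what allows the half-space Hardy--BMO estimate (Theorem~\ref{th:clmshalfspace}) to produce the required $\eps\,\|\nabla\solW\|_{(2,\infty),B}$ bound (Lemmata~\ref{la:V:err1}--\ref{la:V:err4}). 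Without this gauge step your $\err$ would not meet condition~\ref{cond:err}, and the iteration of Proposition~\ref{pr:decay} would not close.
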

\eqref{eq:trU} is a consequence of the usual flattening of the boundary argument. The main work is to obtain the boundary condition \eqref{eq:transformedeqV}. Thus, we successfully reformulated the Neumann boundary equation \eqref{eq:omegafreebd} into a coupled system, the interior system \eqref{eq:trU} which is local, and the boundary system \eqref{eq:transformedeqV}. Both equations are critical, but with antisymmetric potentials.
Theorem~\ref{th:geometryreg} is then a consequence of the regularity theorem Theorem~\ref{th:mastereq} for systems with antisymmetric potential in the interior and at the boundary. \qed

\subsection{Notation and conditions on $\omega$, $\Err$, $\err$}
We denote by $I(x_0,r) \subset \R$ one-dimensional open balls and with $B(x_0,r) \subset \R^2$ two-dimensional open balls each centered at $x_0$ with radius $r$. For $x_0 \in \R \times \{0\}$ we define $B(x_0,r)^+$ the upper semi-ball $B(x_0,r) \cap \R^2_+$.

We will use the notion of Lorentz spaces, for a gentle introduction we refer to \cite{GrafakosC}. 
For measurable functions $f: \R^n \rightarrow \mathbb R$ and $\Omega \subset \R^n$ the \emph{decreasing rearrangement} of $f$ is
\[
 f^\ast (t):= \inf\left\{s>0: \left |\{x \in \R^n:\ |f(x)|> s\} \right |\leq t\right\}.
\]
Then the Lorentz space $L^{(p,q)}$ is induced by the pseudo-norm $\|\cdot\|_{(p,q),\R^n)}$, defined for $p \in [1,\infty)$, $q \in [1,\infty)$ by
\[
 \|f\|_{(p,q)} \equiv \|f\|_{(p,q),\R^n} := \left(\int\limits_0^\infty \left (t^{\frac{1}{p}} f^\ast (t) \right) ^q \frac {dt}{t}\right)^{\frac{1}{q}}.
\]
For $q = \infty$ we define
\[
\|f\|_{(p,\infty)} \equiv  \|f\|_{(p,\infty),\R^n} := \sup\limits_{t>0} t^{\frac{1}{p}}\, f^\ast (t)
\]
Note that $\|\cdot\|_{(p,q),\R^n}$ does not satisfy the triangular inequality with constant one, but otherwise it is a norm. For a measurable subset $D \subset \R^n$,
\[
  \|f\|_{(p,q),D} :=  \|\chi_D\, f\|_{(p,q)}.
\]
The Lorentz space provide a finer scale of Lebesgue spaces, in particular it holds $L^{(p,p)}(D) = L^p(D)$ with equivalent norms. We also have the embedding $L^{p,q_1}(D) \subset L^{p,q_2}(D)$ for any $D \subset \R^n$ if $q_1 > q_2$. Indeed,
\begin{equation}\label{eq:lorentzembedding}
 \|f\|_{(p,q_2)} \aleq C\ \|f\|_{(p,q_1)}.
\end{equation}
Moreover the Lorentz space version of H\"older inequality holds: for $p_1,p_2,p \in [1, \infty)$ and $q_1,q_2,q \in [1,\infty]$ with $1/p_1 + 1/p_2 = 1/p$ and $1/q_1 + 1/q_2 = 1/q$ we have
\begin{equation}\label{eq:hoelder}
  \|fg\|_{(p,q)} \aleq \|f\|_{(p_1,q_1)}\ \|g\|_{(p_2,q_2)}.
\end{equation}
Also, we have a version of Young inequality away from $L^1$ and $L^\infty$: for $p_1, p_2, p \in (1,\infty)$ and $q_1,q_2 \in [1, \infty]$
with $1/p_1 + 1/p_2 = 1 /p + 1$ and $1/q_1 +1/q_2 = 1/q$ we have
\begin{equation}\label{eq:young}
 \|f \ast g\|_{(p,q)} \aleq \|f\|_{(p_1,q_1)}\; \|g\|_{(p_2,q_2)}.
\end{equation}
Since we are working with nonlocal quantities, tails cannot be avoided. We write
\[
\tail{\sigma}{\|f\|_{(2,\infty)}}{x_0}{R}{k_0} = \sum_{k=k_0}^\infty 2^{-\sigma k} \|f\|_{(2,\infty),I(x_0,2^{k} R)},
\]
or
\[
\tail{\sigma}{\|f\|_{(2,\infty)}}{x_0}{R}{k_0} = \sum_{k=k_0}^\infty 2^{-\sigma k} \|f\|_{(2,\infty),B(x_0,2^{k} R)},
\]
depending on the dimension.

Here, $\sigma >0$ is a constant that will change from line to line.

\begin{cond}[Conditions on $\omega$]\label{cond:omega}
The kernel $\omega(x,y): \R \times \R \to \R$ is admissible if it measurable, and bounded in the following sense
\[
 \int_{\R} \int_{\R}f(y)\, \varphi(x)\ \omega(x,y)\ dx\, dy\ \aleq \|f\|_{L^2(\R)}\ \|\varphi\|_{L^\infty(\R)}.
\]
Moreover we require the following \emph{localization properties}:

For any $\eps > 0$, there is an $R \in (0,1)$ so that for any sufficiently large $k_0 \geq 2$ and any $r \in (0,2^{-k_0} R)$ the following holds for some uniform $\sigma > 0$.

For any $\|g\|_{\infty} + \|\lapv g\|_{2} \leq 1$ and any $\varphi \in C_c^\infty(I(x_0, r))$ with $\|\varphi\|_{\infty} + \|\lapv \varphi\|_{2} \leq 1$ and 
\begin{equation}\label{eq:oa:1}
\begin{split}
 &\int_{\R} \int_{\R} f(y)\, g(x)\, (\varphi(x)-\varphi(y))\, \omega_{kj} (x,y)\, dx\, dy\\
 \aleq & 
 \ \eps \|f\|_{(2,\infty),I(x_0,2^{k_0})} + \tail{\sigma}{\|f\|_{(2,\infty)}}{x_0}{r}{k_0}.
 \end{split}
\end{equation}
For any $\|g\|_{\infty} + \|\lapv g\|_{2} \leq 1$, any $\|\varphi\|_{\infty} + \|\lapv \varphi\|_{2} \leq 1$, and any $f \in C_c^\infty(I(x_0, r))$
\begin{equation}\label{eq:oa:2}
 \int_{\R} \int_{\R} f(y)\, g(x)\, (\varphi(x)-\varphi(y))\, \omega_{kj} (x,y)\, dx\, dy \aleq 
 \eps \|f\|_{(2,\infty)},
\end{equation}
and for any $\|g\|_{\infty} + \|\lapv g\|_{2} \leq 1$, any $\varphi \in C_c^\infty(I(x_0, r))$ and any $f \in C_c^\infty(\R)$
\begin{equation}\label{eq:oa:3}
 \int_{\R} \int_{\R} f(y)\, g(x)\, (\varphi(x)-\varphi(y))\, \omega_{kj} (x,y)\, dx\, dy \leq 
 \eps \|f\|_{2,\R}\ \|\lapv \varphi\|_{(2,\infty)}
\end{equation}
\end{cond}

\begin{example}
For our setup, $\omega$ will be a composition of the following examples
\begin{enumerate}
\item A first example of a kernel $\omega(x,y)$ satisfying Condition~\ref{cond:omega} is \[\omega(x,y) :=\tilde{\omega}(x)\, \delta_{x,y} \] for some $\tilde{\omega} \in L^2(\R)$. Indeed, all the localization conditions are trivially satisfied since the left-hand sides are zero.
% \item Another example is \[
%                     \omega(x,y) = \int_{\R} \Omega(z)\ \kappa(z-x)\ \eta(z-y)
%                    \]
% for some $\Omega \in L^2(\R)$, and $\kappa, \eta \in C_c^\infty(B(0,1))$. Then
% \[
%  \int_{\R} \int_{\R} \omega(x,y)\ f(y)\, \varphi(x) dx\ dy= \int_{\R} \Omega(z)\ \kappa \ast \varphi(z)\ \eta \ast f(z)\ dz.
% \]
\item A second example is
 \[
 \int_{\R} \int_{\R} \omega(x,y)\, f(y)\, \varphi(x)\ dx\, dy= \int_{\R^2_+} \Omega(z,t)\,  \kappa_t \ast f(z)\, p_t\ast \varphi(z)\, dz,
\]
that is
\[
 \omega(x,y) = \int_{\R^2_+} \Omega(z,t)\, \kappa_t(z-y)\,   p_t(z-x),
\]
where $\Omega \in L^2(\R^2_+)$, $\kappa,p \in C^\infty\cap L^1 \cap L^\infty(\R)$ and $\int \kappa = 0$. Here $\kappa_t(z-x) = t^{-1}\kappa((z-x)/t)$, and $p_t(z-x) = t^{-1}p((z-x)/t)$. In our application, $p$ is the Poisson kernel and $\kappa$ is a derivative of the Poisson kernel.
The proof can be found below in Section~\ref{ss:boundaryaction}.
\end{enumerate}
\end{example}

\begin{cond}[Conditions on $\Err$]\label{cond:Err}
\[
 \Err = \Err_1 + \Err_2
\]
where 
\begin{itemize} 
\item $\Err_1 \in L^p(\R^2_+)$ for some $p > 1$ and $\Err_1$ has compact support.
\item For any $\Phi \in C^\infty(\overline{\R^2_+})$ with compact support
\[
 \int \Err_2 \Phi \aleq \|\nabla \Phi\|_{2,\R^2_+}
\]
and if $\supp \Phi \subset B(x_0,r)$ for some $x_0 \in \R \times \{0\}$, then
\[
 \int \Err_2 \Phi \aleq r^\sigma\ \|\nabla \Phi\|_{2,\R^2_+}
\]
\end{itemize}
\end{cond}

\begin{cond}[Conditions on $\err$]\label{cond:err}
For $\solU \in H^1(\R^2_+,\R^N)$ and $\solu \in L^\infty H^{\frac{1}{2}}(\R,\R^N)$ and $\solW := \solU - p_t \ast \solu$ we assume that $\err(\solU)$ is so that the following holds.

For any $\eps > 0$, there is an $R \in (0,1)$ so that for any large enough $k_0 \geq 2$ and any $r \in (0,2^{-k_0} R)$ the following holds for some uniform $\sigma > 0$.
For $\varphi \in C_c^\infty(I(x_0,r))$ for any ball $I(x_0,r) \subset (-2,2)$ so that $\|\varphi\|_{\infty,\R} + \|\lapv \varphi\|_{2,\R} \leq 1$, 
\[
\begin{split}
 \int \err(\solU) \varphi \aleq& \eps\, \brac{\|\lapv \solu\|_{(2,\infty),I(x_0,2^{k_0} r)} + \|\nabla \solW\|_{(2,\infty),B(2^{k_0}r,x_0)}}\\
 &+ \tail{\sigma}{\|\lapv \solu\|_{(2,\infty)}}{x_0}{r}{k_0}+\tail{\sigma}{\|\nabla \solW\|_{(2,\infty)}}{x_0}{r}{k_0}.\\
 &+\brac{2^{k_0} r}^\sigma.
 \end{split}
\]
\end{cond}

\section{Antisymmetric nonlocal boundary potentials: Proof of Theorem~\ref{th:transform}}\label{s:proofthtransform}
Let $D \subset \R^2$ be a smoothly bounded domain and $\mathcal{N}$ be a closed manifold in $\R^N$. In order to prove Theorem~\ref{th:transform} we need to transform the following equation
for $\tilde{\solU} \in H^1(D,\R^{N})$ with trace $\tilde{\solu} \in H^{\frac{1}{2}}(\partial D,\mathcal{N})$,
\begin{equation}\label{eq:utildeeq}
 \begin{cases}
\lap \tilde{\solU} = \tilde{\Omega} \cdot \nabla \tilde{\solU} \quad &\mbox{in $D$},\\
\partial_\nu \tilde{\solU} \perp T_{\tilde{\solu}} \mathcal{N} \quad &\mbox{on $\partial D$},
 \end{cases}
\end{equation}
where $\tilde{\Omega}_{ij} = -\tilde{\Omega}_{ji} \in L^2(D,\R^2)$ is antisymmetric.

First, by a standard argument around any point $x_0 \in \partial D$ we can transform the equation into an equation of the half-space.
\begin{lemma}\label{la:transformed}
For any $x_0 \in \partial D$ we find a small radius $\rho > 0$, some $\solU \in H^1(\R^2_+,\R^N)$ and a diffeomorphism $\tau: B(0,1) \to \R^2$ with
\[
\tilde{\solU} = \solU\circ \tau^{-1} \quad \mbox{in $B(x_0,\rho) \cap \overline{D}$},
\]
so that $\solU$ satisfies the following equation
\[
\begin{cases}
 \lap \solU^i  = \Omega_{ij}\cdot \nabla \solU^j + \Err^i\quad& \mbox{in $\R^2_+$},\\
 \solU{(-5,5)\times \{0\}} \subset \mathcal{N}\\
 \partial_\nu\solU \perp T_\solu \mathcal{N} \quad &\mbox{in $(-5,5)\times \{0\}$},\\
 \solU(x) = 0 \quad \mbox{for $|x| > 10$}.
\end{cases}
\]
Here $\Err^i \in L^1 \cap L^p(\R^2_+)$ for any $p < 2$, and $\Err^i$ and $\Omega$ have support in $\overline{B(0,10)^+}$. Moreover, 
and $g \in C^2(\overline{\R^2_+},GL(2))$, and $g_{\alpha\beta} \equiv \delta_{\alpha \beta}$ on $\R \times \{0\}$.
\end{lemma}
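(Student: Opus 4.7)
The proof is a standard ``flatten the boundary and truncate'' argument. Around $x_0 \in \partial D$ I choose a $C^2$ diffeomorphism $\tau: B(0,1) \to \R^2$ with $\tau(0) = x_0$, $\tau(B(0,1)\cap(\R\times\{0\})) \subset \partial D$ and $\tau(B(0,1)^+) \subset D$, arranged so that the pull-back metric $g := (D\tau)^T D\tau \in C^2$ satisfies $g_{\alpha\beta} \equiv \delta_{\alpha\beta}$ on $\R \times \{0\}$. This can be done via classical isothermal coordinates combined with an arc-length reparametrization of $\partial D$, or directly via the boundary-normal chart $\tau(s,t) = \gamma(s) + t\,n(s)$ with $\gamma$ arc-length on $\partial D$ and $n$ the inward unit normal; either way $g$ extends smoothly to $\overline{\R^2_+}$ and gives the required $g \in C^2(\overline{\R^2_+}, GL(2))$.

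Set $\solU_0 := \tilde\solU \circ \tau$ on the domain of the chart. The chain rule applied to $\lap \tilde\solU = \tilde\Omega \cdot \nabla \tilde\solU$, together with $(\nabla \tilde\solU)\!\circ\!\tau = \nabla\solU_0 \,(D\tau)^{-1}$, yields
\[
 \lap \solU_0^i = (\Omega_0)_{ij}\cdot \nabla \solU_0^j + B \cdot \nabla \solU_0,
\]
where $(\Omega_0)_{ij}$ is built from $\tilde\Omega_{ij}\!\circ\!\tau$ by multiplication with factors that do not mix the $(i,j)$ indices, hence is pointwise antisymmetric in $(i,j)$, and $B$ is a bounded $C^0$ matrix collecting $\lap\tau$, $D\tau$, and, when $\tau$ is not strictly isothermal, the non-conformal part of the coordinate change. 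The identity $g \equiv \delta$ on $\R \times \{0\}$ forces $\partial_t \tau(s,0)$ to be the inward unit normal to $\partial D$ at $\tau(s,0)$, so $\partial_t\solU_0(s,0)$ is proportional to $\partial_\nu^{\mathrm{in}} \tilde\solU(\tau(s,0))$; the orthogonality to $T_{\solu_0}\mathcal{N}$, as well as the constraint $\solu_0 \in \mathcal{N}$, therefore transfer directly from $\partial D$ to $\R \times \{0\}$.

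Finally I fix $\eta \in C_c^\infty(B(0,10))$ with $\eta \equiv 1$ on $B(0,5)$ and set $\solU := \eta\,\solU_0$, extended by zero. A short Leibniz computation gives
\[
 \lap \solU = \Omega \cdot \nabla \solU + \Err,
\]
with $\Omega := \eta\,\Omega_0$ still antisymmetric and compactly supported in $\overline{B(0,10)^+}$, and
\[
 \Err = \eta\, B \cdot \nabla \solU_0 \;-\; \eta\,\Omega_0 \cdot \solU_0 \nabla \eta \;+\; 2 \nabla\eta \cdot \nabla \solU_0 \;+\; (\lap \eta)\,\solU_0,
\]
each summand supported in $\overline{B(0,10)^+}$ and bounded by a product of a bounded or $L^2$ coefficient with $|\solU_0| + |\nabla\solU_0|$. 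Hence $\Err \in L^2(\R^2_+)$ with compact support, and in particular $\Err \in L^1 \cap L^p(\R^2_+)$ for every $p < 2$. Moreover $\solU = \tilde\solU \circ \tau^{-1}$ on $\tau(\{\eta \equiv 1\}\cap\overline{\R^2_+})$, vanishes outside $\overline{B(0,10)^+}$, and inherits the boundary constraints on $(-5,5) \times \{0\}$ from $\solU_0$ because $\eta \equiv 1$ there.

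The only genuinely delicate ingredient is the chart with $g \equiv \delta$ on the flat boundary: strict conformality together with a prescribed arc-length boundary parametrization is obstructed, so one either uses Korn--Lichtenstein and absorbs the resulting small non-conformal defect into the bounded coefficient $B$, or uses boundary-normal coordinates from the start and pays the same price. All other steps are routine bookkeeping with the Leibniz rule.
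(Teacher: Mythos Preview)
Your chart construction and the cut-off procedure are fine and match the paper's boundary-normal chart $\tau(s,t)=\gamma(s)+t\,n(s)$. The gap is in the line
\[
 \lap \solU_0^i = (\Omega_0)_{ij}\cdot \nabla \solU_0^j + B \cdot \nabla \solU_0
\]
with a \emph{bounded} coefficient $B$. Transforming $\lap_D\tilde{\solU}$ through a non-conformal chart gives
\[
 (\lap_D\tilde{\solU})\circ\tau \;=\; g^{\alpha\beta}\,\partial_\alpha\partial_\beta\solU_0 \;+\; (\text{first order in }\solU_0),
\]
so that
\[
 \lap\solU_0 \;=\; (\delta^{\alpha\beta}-g^{\alpha\beta})\,\partial_\alpha\partial_\beta\solU_0 \;+\; \Omega_0\cdot\nabla\solU_0 \;+\; (\text{first order}).
\]
The second-order term $(\delta^{\alpha\beta}-g^{\alpha\beta})\partial_\alpha\partial_\beta\solU_0$ is \emph{not} of the form $B\cdot\nabla\solU_0$; for $\solU_0\in H^1$ it is only a distribution, and in particular your conclusion ``$\Err\in L^2(\R^2_+)$'' is not justified. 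The remark that one ``absorbs the resulting small non-conformal defect into the bounded coefficient $B$'' is exactly where this fails: smallness of $\delta-g^{-1}$ near the boundary does not make $D^2\solU_0$ integrable.

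The paper's fix is to keep this piece in divergence form. Writing $A_{\alpha\beta}=\sigma\,\partial_\alpha(\tau^{-1})^\beta\circ\tau$, one has $A^TA=g^{-1}$ up to the constant $\sigma^2$, and the defect is
\[
 \Err_2^i \;:=\; \lap\solU^i - \partial_\gamma\bigl(A_{\alpha\gamma}A_{\alpha\beta}\,\partial_\beta\solU^i\bigr),
\]
which is only tested against $\Phi$ via
\[
 \int_{\R^2_+}\Err_2^i\,\Phi \;=\; \int_{\R^2_+}\bigl(A_{\alpha\gamma}A_{\alpha\beta}-\delta_{\gamma\beta}\bigr)\,\partial_\beta\solU^i\,\partial_\gamma\Phi.
\]
The boundary term vanishes because $A^TA=I$ on $\R\times\{0\}$ (this is precisely your $g\equiv\delta$ there), and the Lipschitz vanishing of $A^TA-I$ at the boundary yields $\bigl|\int\Err_2\Phi\bigr|\aleq r\,\|\nabla\Phi\|_{2}$ for $\supp\Phi\subset B(x_0,r)$. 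This is the second half of Condition~\ref{cond:Err}; $\Err_2$ is \emph{not} an $L^p$ function. Your $L^1\cap L^p$ part only covers the genuinely first-order errors $\Err_1$ coming from the cut-off and the lower-order coordinate terms. Once you separate $\Err$ this way, the rest of your outline goes through.
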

\begin{proof}
Fix $x_0 \in \partial D$. Since $\partial D$ is a smooth manifold, there exists a small neighborhood $B(\delta,x_0)$ of $x_0$ in $\R^2$ where the orthogonal projection $\pi_{\partial D}: B(x_0,\delta) \to \partial D$ is well-defined. 
We may assume that we have a parametrization of $\partial D$ around $x_0$. For possibly smaller $\delta$ let $\varphi: [-20,20] \to \R^2$ be this parametrization of $\partial D$ around $x_0$, say of constant speed $|\varphi '| \equiv \sigma$.

Then we have the following diffeomorphism $\tau: [-20,20]^2 \to \R^2$
\[
 \tau(x,t) := \varphi(x) - \sigma\, t\, \nu(\varphi(x)).
\]
We have
\begin{equation}\label{eq:nablatauinSON}
 \frac{1}{\sigma} \nabla \tau (x,0) = (\frac{1}{\sigma}\varphi'(x),\nu(\varphi(x)) \in  SO(N).
\end{equation}
Also $\tau$ maps the upper cylinder $[-20,20]\times (0,20)$ into $D$ and the lower cylinder $[-20,20]\times (-20,0)$ is mapped into $\R^2 \backslash D$.
Finally we have
\begin{equation}\label{eq:pttau}
 \partial_t \tau(x,t) \Big |_{t = 0} = -\sigma \nu(\varphi(x)).
\end{equation}
Set $\solU := \eta\, \tilde{\solU} \circ \tau$ for a smooth cutoff function $\eta \in C_c^\infty([-10,10]^2)$ with $\eta \equiv 1$ on $[-7,7]^2$. 

Let 
\[
 A_{\alpha \beta} := \sigma\ \partial_\alpha \brac{\tau^{-1}}^\beta \circ \tau,
\]
then from
\[
\sigma(\partial_\alpha f) \circ \tau = A_{\alpha \beta}\, \partial_\beta (f \circ \tau)
\]
we find
\[
 A_{\alpha \gamma} \partial_\gamma \brac{A_{\alpha \beta}  \partial_\beta (\tilde{\solU} \circ \tau)} = \sigma^2\brac{\lap \tilde{\solU}}\circ \tau.
\]
Thus we have from \eqref{eq:utildeeq}
\[
 A_{\alpha \gamma} \partial_\gamma \brac{A_{\alpha \beta}  \partial_\beta (\tilde{\solU}^i \circ \tau)} = \sigma\, \tilde{\Omega}_{ij}^\alpha\circ \tau\ A_{\alpha \beta}\, \partial_\beta \brac{\tilde{\solU}^j\circ \tau}\quad \mbox{in $(-20,20)\times (0,20)$}.
\]
Multiplying this with $\eta$ and using the product rule we find
\[
\partial_\gamma \brac{A_{\alpha \gamma} A_{\alpha \beta}  \partial_\beta \solU^i}= \Omega_{ij}\cdot \tilde{\solU}^j+ \Err^i_1\quad \mbox{in $\R^2_+$}.
\]
for 
\[
 \Omega_{ij}^\beta := \chi_{B(0,10)}\, \sigma A_{\alpha \beta}\, \tilde{\Omega}_{ij}^\alpha\circ \tau \in L^2(\R^2_+),
\]
which satisfies $\Omega_{ij} = -\Omega_{ji}$, and where pointwise a.e.
\[
 |\Err_1| \aleq \chi_{B(0,10)}\ \brac{|\tilde{\solU}\circ \tau| + |\nabla \tilde{\solU}\circ \tau| + |\tilde{\Omega} \circ \tau|\ |\tilde{\solU} \circ \tau|}
\]
Setting
\[
 \Err_2^i := \lap \solU^i-\partial_\gamma \brac{A_{\alpha \gamma} A_{\alpha \beta}  \partial_\beta \solU^i},
\]
we found
\[
 \lap \solU^i= \Omega_{ij}\cdot \tilde{\solU}^j+ \Err^i_1+\Err^i_{2}\quad \mbox{in $\R^2_+$}.
\]
In order to show that $\Err^i_{2}$ satisfies the condition~\ref{cond:Err}, observe that from \eqref{eq:nablatauinSON} we have 
\[
 A_{\alpha \gamma} A_{\alpha \beta} - \delta_{\gamma \beta} = 0 \quad \mbox{on $(-20,20) \times \{0\}$}
\]
and the support we can assume that this holds on $\R \times \{0\}$ without changing the equation for $\solU$, because of the support of $\solU$.
Thus, for any $\Phi \in C_c^\infty(\R^2)$ the boundary terms vanish in the following integration by parts,
\[
 \int_{\R^2_+} \Err_2^i\ \Phi = \int_{\R^2_+} \brac{A_{\alpha \gamma} A_{\alpha \beta} - \delta_{\gamma \beta}} \partial_\beta \solU^i\ \partial_\gamma \Phi.
\]
Consequently, for any $\Phi \in C_c^\infty(B(x_0,r))$ for some $x_0 \in \R \times \{0\}$
\[
 \int_{\R^2_+} \Err_2^i\ \Phi \aleq \|A_{\alpha \gamma} A_{\alpha \beta} - \delta_{\gamma \beta}\|_{L^\infty(B(x_0,r)}\ \|\nabla \solU\|_{2,\R^2_+}\ \|\nabla \Phi\|_{2,\R^2_+}
\]
But since $A_{\alpha \gamma} A_{\alpha \beta} - \delta_{\gamma \beta} = 0$ on $\R \times \{0\}$ we have
\[
 \|A_{\alpha \gamma} A_{\alpha \beta} - \delta_{\gamma \beta}\|_{L^\infty(B(x_0,r)} \aleq r\ \|\nabla \Phi\|_{2,\R^2_+}.
\]
Thus $\Err_2^i$ satisfies condition~\ref{cond:Err}.

Finally from \eqref{eq:pttau} and the fact that $\solU = \tilde{\solU} \circ \tau$ on $[-7,7] \times [0,1)$,
\[
 -\partial_t \solU = \sigma \partial_\nu \tilde{\solU} \circ \tau \perp T_{\solU} \mathcal{N} \quad \mbox{on }(-5,5) \times \{0\}.
\]
\end{proof}
From Lemma~\ref{la:transformed} we obtained a compactly supported $\solU \in H^1(\R^2_+,\R^N)$ with trace $\solu \in H^{\frac{1}{2}}(\R,\R^N)$ satisfying \eqref{eq:trU}. 

It remains to compute \eqref{eq:transformedeqV}. For this we denote again with $\solV$ the harmonic Poisson extension to $\R^2_+$ of $\solu$,
\[
 \solV(x,t) := p_t \ast \solu(x),
\]
that is the solution to
\[
\begin{cases}
 \lap \solV = 0\quad& \mbox{in $\R^2_+$}\\
 \solV = \solu&\mbox{in $\R \times \{0\}$}\\
 \lim_{|x| \to \infty} V(x) = 0.
\end{cases}
\]
In view of the Dirichlet-to-Neumann property \eqref{eq:dtn} for \eqref{eq:transformedeqV} we need to show
\begin{equation}\label{eq:trafoVgoal}
 \partial_\nu \solV^i = \omega_{ij}(\lapv \solu^j) + \err^i(\solU) \quad (-2,2) \times \{0\}.
\end{equation}
Since we only have information about $\partial_\nu \solU$, we also introduce $\solW := \solU - \solV$ which solves
\begin{equation}\label{eq:Weq}
\begin{cases}
 \lap \solW^i =  \Omega_{ij}\cdot \nabla \solU^j+\Err^i(\solU)& \mbox{in $\R^2_+$}\\
 \solW = 0 \quad &\mbox{on $\R \times \{0\}$}\\
\lim_{|x| \to \infty} \solW(x) = 0.
\end{cases}
\end{equation}
By $\Pi: \mathcal{N} \to \R^{N \times N}$ we denote the projection on the tangent plane of $\mathcal{N}$, that is for $\solu \in \mathcal{N}$ we have a matrix $\Pi(\solu) \in \R^{N \times N}$ which is symmetric, $\Pi(\solu)^2 = \Pi(\solu)$ and 
\[
 \operatorname{im}(\Pi(\solu)) = T_\solu \mathcal{N}, \quad \ker(\Pi(\solu)) = (T_{\solu} \mathcal{N})^\perp.
\]
By the nearest point projection $\pi$ from a tubular neighborhood $B_\delta(\mathcal{N})$ into $\mathcal{N}$ we can assume $\Pi$ to be defined first in this small neighborhood and then extended to all of $\R^N$, $\Pi : \R^N \to \R^{N\times N}$ and $\Pi \in C^\infty \cap W^{1,\infty}(\R^N,\R^N\times N)$. W.l.o.g. $\Pi(0) = 0$. With $\Pi^\perp(\solu)$ we denote $I-\Pi(\solu)$ for the identity matrix $I \in \R^{N \times N}$.
 
From the condition $\partial_\nu \solU \perp T_\solu \mathcal{N}$ in $(-5,5) \times \{0\}$ we then have
\begin{equation}\label{eq:solueq}
\laph \solu \equiv \partial_\nu \solV = \partial_\nu (\solU-\solW) =  \Pi^\perp(\solu) \partial_\nu \solV - \Pi(\solu)\partial_\nu \solW \quad \mbox{in $(-2,2) \times \{0\}$}.
\end{equation}
The first term $\Pi^\perp(\solu) \partial_\nu \solV = \Pi^\perp(\solu) \laph \solu$ is essentially known from the theory of half-harmonic maps into manifolds. We will treat it in Section~\ref{ss:halfharmpart}.
The second term $\Pi(\solu)\partial_\nu \solW$ takes into account the interior equation \eqref{eq:trU}. It involves the antisymmetric action \[\Omega \cdot \nabla \solU = \Omega \cdot \nabla \solW + \Omega \cdot \nabla \solV.\]
The interior action part $\Omega \cdot \nabla \solW$ can essentially be estimated as the antisymmetric system treated in Rivi\`{e}re's celebrated \cite{Riviere-2007}. We will treat the interior action part in Section~\ref{ss:interioraction}. The remaining part, the boundary action part $\Omega \cdot \nabla \solW$, induces a (nonlocal) antisymmetric potential acting on the trace $\solu$. We will treat the boundary action part in Section~\ref{ss:boundaryaction}. 
\subsection{The half-harmonic map part, $\Pi^\perp(\solu) \partial_\nu \solV$}\label{ss:halfharmpart}
We begin with the first term \[\Pi^\perp(\solu) \partial_\nu \solV \equiv \Pi^\perp(\solu) \laph \solu.\] An antisymmetric structure from this term was first derived by Da Lio and Rivi\`{e}re \cite{DaLio-Riviere-1Dmfd} who studied the equation
\[
 \laph \solu \perp T_\solu \mathcal{N} \quad \mbox{in $\R$},
\]
or equivalently,
\[
 \laph \solu = \Pi^\perp(\solu) \laph \solu.
\]
There are several antisymmetric potentials that can be derived for this equation, the one that was found in \cite{DaLio-Riviere-1Dmfd}, see also \cite{DaLio-Riviere-CAG}, or a nonlocal one as in \cite{Mazowiecka-Schikorra-2017}. Here we have a slightly different one from all of those.
\begin{lemma}\label{la:piperplaphu}
For $\varphi \in C_c^\infty((-2,2))$,
\[
 \int_{\R}\ \varphi\ \Pi^\perp(\solu)\ \partial_\nu \solV = \int_{\R}\int_{\R}  \varphi(x)\, \omega^1(x,y)\, \lapv \solu(y)\  dx\ dy + \err(\solU)
\]
where $\omega^1$ satisfies conditions~\ref{cond:omega} and $\err$ satisfies conditions~\ref{cond:Err}.
\end{lemma}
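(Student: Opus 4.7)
The strategy will be to convert the boundary integral into a bulk integral via the Poisson extension of $\varphi$, extract an antisymmetric-in-indices potential from the algebraic identity $\Pi \Pi^\perp = 0$, and collect the rest as a benign error.

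Step 1 (bulk reformulation). Letting $\Phi(x,t) = p_t \ast \varphi(x)$ be the Poisson extension of $\varphi$, the Dirichlet-to-Neumann identity \eqref{eq:dtn} together with integration by parts (using that $\solV$ is harmonic in $\R^2_+$) yields
\[
\int_{\R} \varphi\, \Pi^\perp(\solu)\, \partial_\nu \solV = -\int_{\R^2_+} \nabla \bigl(\Phi\, \Pi^\perp(\solV)\bigr) \cdot \nabla \solV,
\]
where $\Pi^\perp$ is smoothly extended to a tubular neighborhood of $\mathcal{N}$. Expanding via the product rule splits the right-hand side into $I_1 := -\int \Pi^\perp(\solV) \nabla \Phi \cdot \nabla \solV$ and $I_2 := -\int \Phi\, D\Pi^\perp(\solV)[\nabla \solV] \cdot \nabla \solV$.

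Step 2 (antisymmetric structure in the bulk). Differentiating $\Pi \Pi^\perp = 0$ along $\solV$ gives the algebraic relation $\Pi^\perp D\Pi + D\Pi^\perp \Pi = 0$ that underlies H\'elein's moving-frame trick. Exploiting it, $I_2$ decomposes pointwise as
\[
\bigl(D\Pi^\perp(\solV)[\nabla \solV]\bigr)_{ij}\, \nabla \solV^j = \Omega^{b}_{ij}(z,t) \cdot \nabla \solV^j + R_{ij},
\]
where $\Omega^{b}_{ij} = -\Omega^{b}_{ji} \in L^2(\R^2_+)$ is antisymmetric in the matrix indices, and $R_{ij}$ is a symmetric div--curl (Jacobian-type) remainder. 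The term $I_1$ is treated analogously after splitting $\Pi^\perp(\solV) = \Pi^\perp(\solu) + \bigl(\Pi^\perp(\solV) - \Pi^\perp(\solu)\bigr)$: the first summand contributes the pointwise $\tilde\omega(x)\delta_{x,y}$-type part of $\omega^1$ (Example~1 of Condition~\ref{cond:omega}), while the second vanishes as $t \to 0$ with $O(t)$-rate and is absorbed into the error.

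Step 3 (packaging as an Example~2 kernel, and error collection). Writing $\partial_\alpha \solV^j(z,t) = \kappa_t \ast \solu^j(z)$ for a suitable derivative $\kappa$ of the Poisson kernel (so that Littlewood--Paley identities recover $\lapv \solu$ from the $t$-integration), the bulk contribution from $\Omega^b$ is reorganized as
\[
\int\int \varphi(x)\, \omega^1_{ij}(x,y)\, \lapv \solu^j(y)\, dx\, dy, \qquad \omega^1_{ij}(x,y) = \int_{\R^2_+} \Omega^{b}_{ij}(z,t)\, \kappa_t(z-y)\, p_t(z-x)\, dz\, dt,
\]
which exactly fits Example~2 of Condition~\ref{cond:omega}; antisymmetry of $\omega^1_{ij}$ in $(i,j)$ is inherited from $\Omega^b$. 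The leftover pieces---the remainder $R_{ij}$, the boundary/bulk discrepancy $\Pi^\perp(\solV) - \Pi^\perp(\solu)$, and three-term commutators produced by fractional product rules when comparing $\laph(\Pi^\perp(\solu) \solu)$ with $\Pi^\perp(\solu) \laph \solu$---are packaged into $\err(\solU)$; their required smallness on small balls follows from Lorentz-scale refinements of the Coifman--Lions--Meyer--Semmes div--curl estimate together with the extension-commutator bounds developed in Section~\ref{s:extcom}, matching Condition~\ref{cond:err}.

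The principal obstacle is Step~2: passing from the pointwise geometric identity $\Pi \Pi^\perp = 0$ to an honest $L^2(\R^2_+)$ antisymmetric bulk potential $\Omega^b$ whose associated kernel verifies the localization inequalities \eqref{eq:oa:1}--\eqref{eq:oa:3}, while simultaneously showing that the symmetric Jacobian remainder $R_{ij}$ has enough compensation (via Hardy-space / Lorentz-space improvements) to produce the $\eps$-smallness on small balls demanded by Condition~\ref{cond:err}.
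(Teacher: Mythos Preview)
Your approach is genuinely different from the paper's, and as written it has a real gap.

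\textbf{What the paper does.} The paper never goes to the bulk for this term. It uses $\partial_\nu\solV=\laph\solu$ and works entirely on $\R$, following Da Lio--Rivi\`ere: write $\Pi^\perp(\solu)\laph\solu=\Pi^\perp(\solu)\lapv(\Pi(\solu)\lapv\solu)+\Pi^\perp(\solu)\lapv(\Pi^\perp(\solu)\lapv\solu)$; the second piece is an error because $\Pi^\perp(\solu)\lapv\solu$ is small on $(-2,2)$ (Lemma~\ref{la:Piperplapvu}, which crucially uses $\solu(x)\in\mathcal N$). After moving $\Pi^\perp(\solu)$ through $\lapv$ via $H_{1/2}$ and using $\Pi^\perp\Pi=0$, the main term is $\int\varphi\,\lapv\Pi^\perp_{ij}(\solu)\,\Pi_{jk}(\solu)\,\lapv\solu^k$, which is then brute-force antisymmetrized: $\tilde\omega_{ik}:=\lapv\Pi^\perp_{ij}\Pi_{jk}-\lapv\Pi^\perp_{kj}\Pi_{ji}$ gives a \emph{purely local} Example~1 kernel $\omega^1(x,y)=\tilde\omega(x)\delta_{xy}$, and the leftover symmetric piece $\Pi_{ij}\lapv\Pi^\perp_{jk}\lapv\solu^k$ is an error again via $\Pi\Pi^\perp=0$, $H_{1/2}$, and Lemma~\ref{la:Piperplapvu}.

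\textbf{Where your argument breaks.} Your Step~2 claims a pointwise decomposition $(D\Pi^\perp(\solV)[\nabla\solV])_{ij}\nabla\solV^j=\Omega^b_{ij}\cdot\nabla\solV^j+R_{ij}$ with $\Omega^b$ antisymmetric and $R$ a controllable div--curl remainder. This is the H\'elein/Rivi\`ere mechanism for harmonic maps, but it relies on $\nabla u$ being \emph{tangent}, i.e.\ $\Pi(\solu)\nabla\solu=\nabla\solu$. In the bulk $\solV=p_t\ast\solu$ does not map into $\mathcal N$, so $\nabla\solV$ has no tangency and $\Pi^\perp(\solV)\nabla\solV$ is not small there; the identity $\Pi\Pi^\perp=0$ alone does not produce the claimed decomposition. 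If you instead antisymmetrize by hand, the symmetric remainder in the bulk is exactly of the type the paper has to fight in Section~\ref{ss:boundaryaction} (Proposition~\ref{pr:V:err5}), which requires the extension commutator Theorem~\ref{th:extensioncomm} to compare $(\Pi^\perp(\solu))^h\nabla\solV$ with $T(\Pi^\perp(\solu)\lapv\solu)$ and then invoke Lemma~\ref{la:Piperplapvu}. So your route could perhaps be rescued, but only by importing the full machinery of Section~\ref{ss:boundaryaction}; this is considerably more work than the paper's direct boundary argument, and it yields an Example~2 kernel where the paper gets away with an Example~1 kernel. Also, your handling of $I_1$ is unclear: the splitting $\Pi^\perp(\solV)=\Pi^\perp(\solu)+(\Pi^\perp(\solV)-\Pi^\perp(\solu))$ is ill-posed inside a bulk integral since $\solu$ lives on $\R$.
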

\begin{proof}
We have
\[
 \Pi^\perp(\solu) \laph \solu= \Pi^\perp(\solu) \lapv \brac{\Pi(\solu) \lapv \solu} + \err_1(\solU)
\]
where 
\[
 \err_1(\solU) := \Pi^\perp(\solu) \lapv \brac{\Pi^\perp(\solu) \lapv \solu}.
\]
The fact that $\err_1(\solU)$ satisfies the conditions~\ref{cond:Err}, follows from known arguments, see \cite{DaLio-Riviere-1Dmfd,Schikorra-2012,Blatt-Reiter-Schikorra-2016}. Essentially one localizes the following estimates. We only mention the global steps, and skip the details. Firstly,
\[
\begin{split}
 \int \err_1(\solU)  \varphi =& \int \brac{\Pi^\perp(\solu) \lapv \solu} \lapv (\Pi^\perp(\solu) \varphi) \aleq \|\Pi^\perp(\solu) \lapv \solu\|_{2,\R}\ \|\lapv (\Pi^\perp(\solu) \varphi)\|_{2}.
\end{split}
 \]
Since $\solu$ maps into a manifold, in view of Lemma~\ref{la:Piperplapvu},
\[
  \|\Pi^\perp(\solu) \lapv \solu\|_{2,(-2,2)} \aleq \|\lapv \solu\|_{(2,\infty),\R}\ \|\lapv \solu\|_{2,\R}.
\]
Moreover,
\[
\begin{split}
 \|\lapv (\Pi^\perp(\solu) \varphi)\|_{2} \aleq& \brac{\|\Pi(\solu)\|_{\infty} + \|\lapv \Pi^\perp(\solu)\|_{2}}\ \brac{\|\varphi\|_{\infty} + \|\lapv \varphi\|_{2}}\\
 \aleq& \brac{1 + \|\lapv \solu\|_{2}}\ \brac{\|\varphi\|_{\infty} + \|\lapv \varphi\|_{2}}.
\end{split}
 \]
Working with cutoff arguments and using the pseudo-locality of the fractional Laplacian, see, e.g., \cite[Lemma A.1]{Blatt-Reiter-Schikorra-2016}, one obtains
\[
\begin{split}
 \int \err_1  \varphi \aleq& \brac{\|\lapv \solu\|_{2,I(x_0,2^{k_0}r)} \|\lapv \solu\|_{(2,\infty),I(x_0,2^{k_0}r)} + \tail{\sigma}{\|\lapv \solu\|_{(2,\infty)}}{x_0}{R}{k_0} } \\
 &\quad \cdot \brac{1 + \|\lapv \solu\|_{2}}\ \brac{\|\varphi\|_{\infty} + \|\lapv \varphi\|_{2}}
\end{split}
 \]
Thus, for any ball in $(-2,2)$ with radius $2^{k_0}r$ small enough so that
\[
 \|\lapv \solu\|_{2,I(x_0,2^{k_0}r)} \brac{1 + \|\lapv \solu\|_{2}} \leq \eps
\]
we have the estimate as required for condition~\ref{cond:Err}.

For the remaining term, we denote by
\[
 H_{\frac{1}{2}} (a,b) := \lapv (a b) - a\, \lapv b-b\, \lapv a.
\]
Then, since $\Pi^\perp(\solu) \Pi(\solu) \equiv 0$,
\[
\begin{split}
 &\int \varphi\ \Pi^\perp(\solu) \lapv \brac{\Pi(\solu) \lapv \solu}\\
 =&\int \lapv (\varphi\ \Pi^\perp(\solu)) \brac{\Pi(\solu) \lapv \solu}\\
 =&\int (\varphi\ \lapv \Pi^\perp(\solu)) \brac{\Pi(\solu) \lapv \solu} + \int H_{\frac{1}{2}}(\varphi, \Pi^\perp(\solu)) \brac{\Pi(\solu) \lapv \solu}\\
 =:&\int (\varphi\ \lapv \Pi^\perp(\solu)) \brac{\Pi(\solu) \lapv \solu} + \err_2.
\end{split}
 \]
The error term $\err_2$ again satisfies condition~\ref{cond:err} as above.
Now we go into coordinates, and find for
\[
 \tilde{\omega}_{ik} := \lapv \Pi^\perp_{ij}(\solu) \ \Pi(\solu)_{jk} - \lapv \Pi^\perp_{kj}(\solu) \ \Pi(\solu)_{ji} \in L^2(\R^2),
\]
and
\[
 \err_3 := \Pi(\solu)_{ij}  \lapv \Pi^\perp_{jk}(\solu) \ \lapv \solu^k
\]
the representation
\[ 
 \lapv \Pi^\perp_{ij}(\solu) \ \Pi(\solu)_{jk} \lapv \solu^k = \tilde{\omega}_{ik}\, \lapv \solu^k + \err_3.
 \]
Clearly 
\[
 \omega_{ik}(x,y) := \tilde{\omega}_{ik}(x) \delta_{x,y}
\]
is antisymmetric and satisfies the conditions \ref{cond:omega}.

As for the last error term $\err_3$, again since $\Pi^\perp(\solu) \Pi(\solu) \equiv 0$,
\[
  \err_3 = -H_{\frac{1}{2}}(\Pi_{ij}(\solu),\Pi^\perp(\solu)_{jk})\ \lapv \solu^k - \lapv \Pi(\solu)_{ij}\ \Pi^\perp(\solu)_{jk} \lapv \solu.
\]
From the three-commutator estimates, see \cite[Theorem 7.1.]{Lenzmann-Schikorra-commutators},
\[
 \|\err_3\|_{L^1(\R)} \aleq \|\lapv \Pi(\solu)\|_{2}^2\ \|\lapv \solu\|_{(2,\infty)} + \|\lapv \Pi(\solu)_{ij}\|_{2}\ \|\Pi^\perp(\solu)_{jk} \lapv \solu\|_{2}.
\]
With the help of Lemma~\ref{la:Piperplapvu} and a suitable localization we find that $\err_3$ satisfies the conditions~\ref{cond:err}.
\end{proof}
\subsection{The interior action $\Omega \cdot \nabla \solW$}\label{ss:interioraction}
It remains to reformulate for $\varphi \in C_c^\infty((-2,2))$
\[
\int_{\R} \Pi_{\ell j}(\solu)\, \partial_\nu \solW^j\ \varphi.
\]
Observe that the antisymmetric potential in \eqref{eq:Weq}, $\Omega \cdot \nabla \solU$, acts on $\solU=\solV + \solW$. So we have to control the interior action of $\Omega$, namely $\Omega \cdot \nabla \solW$, and the boundary action of $\Omega$, namely $\Omega \cdot \nabla \solV$.

Clearly, the interior action $\Omega \cdot \nabla \solW$ can, in general, \emph{not} be represented as an antisymmetric potential of the boundary data $\solu$. It is a purely interior object. 

Since we are in the process to find a reformulation for $\partial_\nu \solV$ on the boundary, i.e., we are reformulating the boundary equation, we will see that this seemingly critical term $\Omega \cdot \nabla \solW$ is actually subcritical with respect to its influence on the boundary. Note that we did not choose any gauge on the boundary yet, so this subcriticality might be surprising at first. The reason is that we can choose an \emph{interior} gauge which only transforms the interior equation and does not touch the boundary. This is the subject of this section.

The remaining action $\Omega \cdot \nabla \solV$ involves the boundary data $\solu$ and has an antisymmetric structure, and we will treat it in Section~\ref{ss:boundaryaction}.

From Proposition~\ref{pr:gaugenormal} we find an interior gauge adapted to $\Omega \in L^2(\R^2_+,so(N)\otimes \R^2)$, namely $P \in \dot{H}^1(\R^2_+,SO(N))$, $P \equiv I$ on $\R \times \{0\}$ satisfying
\begin{equation}\label{eq:V:Pchoice}
 \dv (\nabla P\, P^T + P \Omega P^T) = 0 \quad \mbox{in $\R^2_+$}.
\end{equation}
Observe that $\Pi(\solu) \in L^2(\R)$ since $\solu \in H^{\frac{1}{2}}\cap L^\infty(\R,\R^n)$, $\solu$ has compact support, and $\Pi(0) = 0$. Denote by $\Pi^h(\solu)$ the harmonic Poisson-extension of $\Pi(\solu)$ to $\R^2_+$, and with $\varphi^h$ the harmonic Poisson-extension of $\varphi \in C_c^\infty((-2,2))$ to $\R^2_+$.  An integration by parts with boundary data in $\R^2_+$ gives 
\[
\begin{split}
 \int_{\R} \Pi_{\ell j}(\solu)\, \partial_\nu \solW^j\ \varphi \equiv&\int_{\R \times \{0\}} \Pi^h_{\ell k}(\solu)\, P_{k j}\, \, \partial_\nu \solW^j\ \varphi^h \\
=&\int_{\R^2_+} \nabla \solW^j\cdot \nabla \brac{ \Pi^h_{\ell k}(\solu)\, P_{k j}\ \varphi^h} + \int_{\R^2_+} P_{k j}\lap_{\R^2} \solW^j\ \Pi^h_{\ell k}(\solu)\, \ \varphi^h.
\end{split}
\]
By the equation for $\solW$, \eqref{eq:Weq}, 
\[
\begin{split}
\int_{\R} \Pi_{\ell j}(\solu)\, \partial_\nu \solW^j\ \varphi=&\int_{\R^2_+} \nabla \solW^j\cdot \nabla \brac{ \Pi^h_{\ell k}(\solu)\, P_{k j}\ \varphi^h} + \int_{\R^2_+} P_{k j} \Omega_{ji}\cdot \nabla \solU^i\ \Pi^h_{\ell k}(\solu)\, \ \varphi^h\\
&+ \int_{\R^2_+} P_{k j} \Err^j(\solU) \ \Pi^h_{\ell k}(\solu)\, \ \varphi^h.
\end{split}
\]
With $\solU = \solV + \solW$ we arrive at
\[
\begin{split}
\int_{\R} \Pi_{\ell j}(\solu)\, \partial_\nu \solW^j\ \varphi=&\int_{\R} \brac{\err^\ell_1(\solU) + \err^\ell_2(\solU) + \err^\ell_3(\solU)+\err^\ell_4(\solU)}\, \varphi + \int_{\R^2_+} \Omega_{ji} \nabla \solV^i\ \Pi^h_{\ell k}(\solu)\, \ \varphi^h\\
 \end{split}
 \]
for $\err^\ell_1(\solU), \ldots,\err^\ell_4(\solU)$ distributions defined as
\[
 \begin{split}
\int_{\R} \err^\ell_1(\solU)\ \varphi &:= \int_{\R^2_+} \brac{\nabla P_{k i} + P_{k j} \Omega_{ji} }\cdot \nabla \solW^i \ \Pi^h_{\ell k}(\solu)\ \varphi^h,\\
\int_{\R} \err^\ell_2(\solU)\ \varphi &:= \int_{\R^2_+} P_{k j} \nabla \solW^j\cdot \nabla \brac{ \Pi^h_{\ell k}(\solu)\, \ \varphi^h},\\
\int_{\R} \err^\ell_3(\solU)\ \varphi &:= \int_{\R^2_+} P_{k j}\, \Err^j\ \Pi^h_{\ell k}(\solu)\, \ \varphi^h,\\
\int_{\R} \err^\ell_4(\solU)\ \varphi &:= \int_{\R^2_+} \brac{P_{k j}-\delta_{kj}}\, \Omega_{ji} \nabla \solV^i\ \Pi^h_{\ell k}(\solu)\, \ \varphi^h.\\
 \end{split}
\]
The antisymmetric boundary action term
\[
 \int_{\R^2_+} \Omega_{ji} \nabla \solV^i\ \Pi^h_{\ell k}(\solu)\, \ \varphi^h
\]
will be treated in Section~\ref{ss:boundaryaction}.

The following Lemmata~\ref{la:V:err1}, \ref{la:V:err2}, \ref{la:V:err3}, \ref{la:V:err4} show that $\err^\ell_1(\solU), \ldots,\err^\ell_4(\solU)$ satisfy the conditions~\ref{cond:err}.
\begin{lemma}\label{la:V:err1}
For any $\eps > 0$ there exists $R \in (0,1)$ so that whenever $2^{k_0} r < R$ for some $k_0 \in \N$: For any $x_0 \in (-1,1)$, $r \in (0,1)$ so that $I(x_0,r) \subset (-2,2)$ and for any $\varphi \in C_c^\infty(I(x_0,r))$,
\[
\begin{split}
\int_{\R} \varphi\ \err_1(\solU) \aleq&\ \eps\ \|\nabla \solW\|_{(2,\infty),B(2^{k_0}r,x_0)}
(\|\varphi\|_{\infty} + \|\lapv \varphi\|_{2})\\
&+ \tail{\sigma}{\|\nabla \solW\|_{(2,\infty)}}{x_0}{r}{k_0}\ (\|\varphi\|_{\infty} + \|\lapv \varphi\|_{2}).
\end{split}
\]
\end{lemma}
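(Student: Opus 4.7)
The plan is to use the gauge equation \eqref{eq:V:Pchoice} to expose a div-curl (Jacobian) structure in the integrand defining $\err_1^\ell(\solU)$, and then combine $\hardy$--$BMO$ duality with the off-diagonal decay of the harmonic extensions $\varphi^h$ and $\Pi^h(\solu)$ to localize the result on $B(x_0,2^{k_0}r)^+$ with a summable dyadic tail.

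First I would observe that the matrix
\[
 C_{km} := \brac{\nabla P\, P^T + P\Omega P^T}_{km}
\]
is antisymmetric (since $PP^T=I$ forces $(\nabla P)P^T$ to be antisymmetric, while $\Omega$ already is), divergence-free in $\R^2_+$ by \eqref{eq:V:Pchoice}, and bounded in $L^2$ by $\|\Omega\|_{2}$. On the simply connected half-plane the Poincar\'e lemma supplies $D_{km}=-D_{mk}\in\dot H^1(\R^2_+)$ with $C_{km}=\nabla^\perp D_{km}$ and $\|\nabla D\|_{2}\aleq \|\Omega\|_{2}$; the boundary condition $P\equiv I$ on $\R\times\{0\}$ from Proposition~\ref{pr:gaugenormal} lets me arrange $D$ to vanish at the boundary, which will matter for the tail estimates. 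Since $P^TP=I$,
\[
 \nabla P_{ki}+P_{kj}\Omega_{ji} = C_{km}\,P_{mi} = \nabla^\perp D_{km}\,P_{mi}.
\]
Combining this with the Leibniz rule $P_{mi}\nabla\solW^i = \nabla(P_{mi}\solW^i)-\solW^i\,\nabla P_{mi}$ and abbreviating $\Phi_{\ell k}:=\Pi^h_{\ell k}(\solu)\,\varphi^h$ splits the defining integral as
\[
 \int_{\R}\varphi\,\err_1^\ell = \int_{\R^2_+} \nabla^\perp D_{km}\cdot \nabla(P_{mi}\solW^i)\,\Phi_{\ell k} - \int_{\R^2_+} \solW^i\,\nabla^\perp D_{km}\cdot \nabla P_{mi}\,\Phi_{\ell k},
\]
exposing two Jacobians, each belonging to $\hardy(\R^2_+)$ by Coifman--Lions--Meyer--Semmes.

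Second, I would apply a Lorentz-refined $\hardy$--$BMO$/Wente inequality on the half-space to each piece, using $\|\nabla D\|_{2}+\|\nabla P\|_{2}\aleq \|\Omega\|_{2}$, the fact that $\|\solW\|_{\infty}$ is under control, and the $BMO$ bound $\|\Phi_{\ell k}\|_{BMO(\R^2_+)}\aleq \|\varphi\|_{\infty}+\|\lapv\varphi\|_{2}$ obtained from harmonic extension together with $\|\Pi\|_{\lip}\|\solu\|_{\infty}\aleq 1$. The resulting scale-invariant output is of the form $\|\Omega\|_{2}\,\|\nabla\solW\|_{(2,\infty)}\,(\|\varphi\|_{\infty}+\|\lapv\varphi\|_{2})$.

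Third, to produce the localized estimate demanded by Condition~\ref{cond:err}, I would insert a cutoff $\eta$ equal to $1$ on $B(x_0,2^{k_0}r)^+$ and supported in $B(x_0,2^{k_0+1}r)^+$, and split $\Phi_{\ell k}=\eta\,\Phi_{\ell k}+(1-\eta)\,\Phi_{\ell k}$. On the local piece $\|\Omega\|_{2,B(x_0,2^{k_0}r)^+}$ replaces $\|\Omega\|_{2}$ and is $\leq\eps$ by absolute continuity once $R$ is chosen small enough, while $\|\nabla(P\solW)\|_{(2,\infty)}$ localizes to $\|\nabla\solW\|_{(2,\infty),B(x_0,2^{k_0}r)^+}$ up to lower-order contributions from $\nabla P$ absorbed by a second Wente step. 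For the non-local piece, the Poisson-kernel decay
\[
 |\varphi^h(z)|+|z-x_0|\,|\nabla\varphi^h(z)|\aleq \brac{\frac{r}{|z-x_0|}}^{\sigma}\brac{\|\varphi\|_{\infty}+\|\lapv\varphi\|_{2}}
\]
for $z$ far from $\supp\varphi$, paired with an analogous decay for $\Pi^h(\solu)-\Pi(\solu)(x_0)$, gives after a dyadic decomposition over annuli $B(x_0,2^{k+1}r)\setminus B(x_0,2^k r)$ and one Wente-type estimate per annulus the series $\tail{\sigma}{\|\nabla\solW\|_{(2,\infty)}}{x_0}{r}{k_0}$.

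The main obstacle will be this last step: preserving the div-curl compensation across the cutoff while passing from the global $\dot H^1$-scale Wente estimate to its Lorentz-local version on $B(x_0,2^{k_0}r)^+$ with a genuinely summable tail. The antisymmetry of $D_{km}$ together with the boundary vanishing of $P-I$ from Proposition~\ref{pr:gaugenormal} is what ultimately absorbs the boundary terms produced by the Poincar\'e lemma and by integration by parts.
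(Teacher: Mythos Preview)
Your overall strategy---exploit the gauge condition $\div\Omega^P=0$ to uncover a div--curl structure and then apply a localized CLMS-type estimate with a dyadic tail---is the paper's strategy. But your execution takes an unnecessary detour that creates a genuine gap.

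The paper does not introduce a stream function $D$ for $\Omega^P$. It simply integrates by parts, using $\div\Omega^P=0$ and $\solW=0$ on $\R\times\{0\}$, to obtain
\[
 \int_{\R^2_+}\Omega^P_{ki}P_{ij}\cdot\nabla\solW^j\;\Pi^h_{\ell k}(\solu)\,\varphi^h
 \;=\;-\int_{\R^2_+}\Omega^P_{ki}\cdot\nabla\bigl(P_{ij}\,\Pi^h_{\ell k}(\solu)\,\varphi^h\bigr)\;\solW^j,
\]
and then applies the half-space div--curl lemma (Theorem~\ref{th:clmshalfspace}) with $F=\Omega^P$, $G=P\,\Pi^h(\solu)\,\varphi^h$, and $\solW$ in the BMO slot. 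This produces $\|\nabla\solW\|_{(2,\infty)}$ directly, which is exactly the quantity required; the off-diagonal decay of $\varphi^h$ from Lemma~\ref{la:harmextest} handles the tail, and the smallness of $\|\Omega^P\|_{2,B^+(x_0,2^{k_0}r)}$ yields the $\eps$.

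Your Leibniz splitting $P_{mi}\nabla\solW^i=\nabla(P_{mi}\solW^i)-\solW^i\nabla P_{mi}$ forces $\solW$ out of the BMO slot: in the first term you need $\|\nabla(P\solW)\|_{2}$, and in the second $\solW$ sits as a pointwise multiplier on the Jacobian $\nabla^\perp D\cdot\nabla P$. Both require the bound on $\|\solW\|_{\infty}$ that you explicitly invoke---but this is \emph{not} available. We have only $\solW=\solU-\solV$ with $\solU\in H^1(\R^2_+)$, so $\solW$ is a priori merely BMO in two dimensions, not $L^\infty$; boundedness is part of the regularity being proved. The paper's single integration by parts keeps $\solW$ standalone and avoids this entirely.

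A secondary issue: the claim that $P\equiv I$ on $\R\times\{0\}$ lets you take $D=0$ on the boundary is not justified. That would force $\partial_x D=(\Omega^P)^2=0$ on $\R\times\{0\}$, i.e.\ $\partial_t P+\Omega^2=0$ there, which has no reason to hold.
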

\begin{proof}
Let 
\[
 \Omega^P := \nabla P\, P^T + P \Omega P^T.
\]
Then we have
\[
\int_{\R} \err^\ell_1(\solU)\ \varphi = \int_{\R^2_+} \Omega^P_{ki} P_{ij}\cdot \nabla \solW^j \ \Pi^h_{\ell k}(\solu)\ \varphi^h,\\
\]
Integrating by parts, using that $\solW = 0$ on $\R \times \{0\}$,
\[
 \int_{\R} \err^\ell_1(\solU)\ \varphi = -\int_{\R^2_+} \Omega^P_{ki} \cdot \nabla \brac{P_{ij}\Pi^h_{\ell k}(\solu)\ \varphi^h}\ \solW^j.\\
\]
Since $\solW = 0$ on $\R \times \{0\}$ we may apply the div-curl lemma on the upper halfplane, Theorem~\ref{th:clmshalfspace}. Before we do so, observe that by Lemma~\ref{la:harmextest} for any large enough $k \geq k_0$,
\[
  \|\nabla  \brac{P\, \Pi^h(\solu)\, \varphi^h}\|_{2, B(x_0,2^{k+5} r) \backslash B(x_0,2^{k-5} r)} \aleq 2^{-k}\ (1+\|\nabla P\|_{2,\R^2_+}+\|\lapv \solu\|_{2,\R})\, \|\varphi\|_{\infty}.
\]
Moreover, by the estimates for the Poisson extension, $\|\varphi^h\|_{\infty,\R^2_+} \leq \|\varphi\|_{\infty}$ and $\|\nabla \varphi^h\|_{2,\R^2_+} \aleq \|\lapv \varphi \|_{2,\R}$, and thus
\[
  \|\nabla  \brac{P\, \Pi^h(\solu)\, \varphi^h}\|_{2, B(x_0,2^{k_0+5} r) } \aleq (1+\|\nabla P\|_{2,\R^2_+}+\|\lapv \solu\|_{2,\R})\, \brac{\|\varphi\|_{\infty}+\|\lapv \varphi\|_{2}}.
\]
Thus Theorem~\ref{th:clmshalfspace} implies,
\[
\begin{split}
 \int_{\R} \err^\ell_1(\solU)\ \varphi \aleq& \|\Omega^P_{ki}\|_{2,B(x_0,2^{k_0} r)}\ \|\nabla \solW\|_{(2,\infty),B(x_0,2^{k_0} r)}\  (\|\varphi\|_{\infty} + \|\lapv \varphi\|_{2})\\
 &+ \sum_{k = k_0}^\infty k\, 2^{-k}\,  \|\nabla \solW\|_{(2,\infty),B(x_0,2^{k} r)}\  (\|\varphi\|_{\infty} + \|\lapv \varphi\|_{2})
\end{split}
 \]
Observe that the constants may depend on $P$, $\Omega$, and $\solU$, but not on the radius $r>0$ the point $x_0 \in \R \times \{0\}$ or $k_0$. In particular, by absolute continuity of the integral, we find some $R > 0$ so that 
\[
  \sup_{x_0 \in \R \times \{0\}} \|\Omega^P_{ki}\|_{2,B(x_0,R)} < \eps.
\]
The lemma is proven.
\end{proof}

\begin{lemma}\label{la:V:err2}
For any $\eps > 0$ there exists $R \in (0,1)$ so that whenever $2^{k_0} r < R$: For any $x_0 \in (-1,1)$, $r \in (0,1)$ and $\varphi \in C_c^\infty(I(x_0,r))$,
\[
\begin{split}
 \int_{\R} \varphi\ \err_2(\solU)=
%  &  \int_{\R^2_+} P_{k j} \nabla \solW^j\cdot \nabla \brac{ \Pi^h_{\ell k}(\solu)\, \ \varphi^h}\\
\aleq & \eps \|\nabla \solW\|_{(2,\infty),B(2^{k_0}r,x_0)}
(\|\varphi\|_{\infty} + \|\lapv \varphi\|_{2})\\
&+ \tail{\sigma}{\|\nabla \solW\|_{(2,\infty)}}{x_0}{r}{k_0}\ (\|\varphi\|_{\infty} + \|\lapv \varphi\|_{2})
 \end{split}
 \]
\end{lemma}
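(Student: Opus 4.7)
The strategy is to mirror the architecture of the proof of Lemma~\ref{la:V:err1}, but with a crucial difference: the integrand $P_{kj}\nabla \solW^j \cdot \nabla(\Pi^h_{\ell k}(\solu)\varphi^h)$ no longer carries the div-curl structure afforded there by $\dv \Omega^P = 0$, so Theorem~\ref{th:clmshalfspace} cannot be applied directly. I would first expand by the Leibniz rule
\[
\nabla(\Pi^h_{\ell k}(\solu)\varphi^h) = (\nabla \Pi^h_{\ell k}(\solu))\varphi^h + \Pi^h_{\ell k}(\solu)\nabla \varphi^h,
\]
and decompose the half-plane dyadically as $\R^2_+ = B_0 \cup \bigcup_{k\geq k_0}A_k$ with $B_0 := B(x_0, 2^{k_0}r)^+$ and $A_k := (B(x_0, 2^{k+1}r)\setminus B(x_0, 2^k r))^+$.

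On each outer annulus $A_k$ with $k \geq k_0$, since $\varphi \in C_c^\infty(I(x_0, r))$, the harmonic extension $\varphi^h$ and its gradient decay dyadically away from $\supp \varphi$; Lemma~\ref{la:harmextest} delivers
\[
\|\nabla(\Pi^h(\solu)\varphi^h)\|_{L^2(A_k)} \aleq 2^{-\sigma k}(1 + \|\lapv \solu\|_2)(\|\varphi\|_\infty + \|\lapv \varphi\|_2).
\]
A Lorentz H\"older inequality in the form $L^{(2,\infty)}\times L^{(2,1)}$ combined with the embedding $L^2(A_k)\hookrightarrow L^{(2,1)}(A_k)$, whose logarithmic loss is absorbed by slightly decreasing $\sigma$, then produces upon summation over $k \geq k_0$ the tail contribution $\tail{\sigma}{\|\nabla \solW\|_{(2,\infty)}}{x_0}{r}{k_0}(\|\varphi\|_\infty+\|\lapv \varphi\|_2)$, exactly as in Lemma~\ref{la:V:err1}.

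The central half-ball $B_0$ is where the \emph{main obstacle} sits, since here the harmonic extensions do not decay and the $\eps$-smallness must come from absolute continuity of integrated quantities as $2^{k_0}r\to 0$. The plan is to integrate by parts, exploiting $\solW|_{\R\times\{0\}} = 0$ to annihilate the flat boundary trace, against a smooth cutoff $\eta$ equal to $1$ on $B(x_0, 2^{k_0-1}r)$ and supported in $B_0$; the commutator contribution carried by $\nabla \eta$ lives in the outer annular region and is absorbed into the tail by the previous step. With $F := \Pi^h_{\ell k}(\solu)\varphi^h$ and using that both $\Pi^h(\solu)$ and $\varphi^h$ are harmonic, the Laplacian $\lap F = 2\nabla \Pi^h_{\ell k}(\solu)\cdot\nabla \varphi^h$ has a Jacobian structure (via the harmonic conjugate of $\varphi^h$) and therefore lies in the local Hardy space $\hardy(\R^2_+)$ by the Coifman-Lions-Meyer-Semmes theorem. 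Combining $\hardy$-BMO duality with the absolute continuity of $\int_{B_0}|\nabla P|^2$ and $\int_{B_0}|\nabla \Pi^h(\solu)|^2$ as $2^{k_0}r\to 0$ then yields the desired $\eps$-prefactor. The most delicate point, which I expect to require the most care, is recovering $\|\nabla \solW\|_{L^{(2,\infty)}(B_0)}$ on the right-hand side instead of $\|\solW\|$ (which naturally arises after the integration by parts); I would accomplish this by a Hardy inequality for $\solW$ (vanishing on $\R\times\{0\}$) together with a duality/Lorentz interpolation step that transfers the smallness on $\nabla F$ back into the correct Lorentz norm of $\nabla \solW$.
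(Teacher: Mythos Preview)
Your opening premise --- that the integrand ``no longer carries the div-curl structure'' and hence Theorem~\ref{th:clmshalfspace} ``cannot be applied directly'' --- is precisely where you go astray. The div-curl structure is there, and the paper recovers it with a single integration by parts. Since $\solW = 0$ on $\R\times\{0\}$ and both $\varphi^h$ and $\Pi^h(\solu)$ are harmonic in $\R^2_+$, one has $\nabla\solW^j\cdot\nabla\varphi^h = \div(\solW^j\nabla\varphi^h)$ and $\nabla\solW^j\cdot\nabla\Pi^h_{\ell k}(\solu) = \div(\solW^j\nabla\Pi^h_{\ell k}(\solu))$. After splitting $\nabla(\Pi^h\varphi^h)$ by Leibniz and integrating by parts in each piece (the boundary terms vanish because $\solW=0$ there), one lands on
\[
-\int_{\R^2_+}\nabla\bigl(\Pi^h_{\ell k}(\solu)P_{kj}\bigr)\cdot\nabla\varphi^h\,\solW^j \;-\; \int_{\R^2_+}\nabla\bigl(\varphi^h P_{kj}\bigr)\cdot\nabla\Pi^h_{\ell k}(\solu)\,\solW^j.
\]
Now $\nabla\varphi^h$ and $\nabla\Pi^h_{\ell k}(\solu)$ are divergence-free vector fields, and $\solW$ vanishes on the flat boundary, so Theorem~\ref{th:clmshalfspace} applies to each term verbatim as in Lemma~\ref{la:V:err1}; the $\eps$-smallness comes from $\|\nabla P\|_{2,B^+(x_0,2^{k_0}r)}$ and $\|\nabla\Pi^h(\solu)\|_{2,B^+(x_0,2^{k_0}r)}$ by absolute continuity, and the tail from the annular decay of $\varphi^h$ via Lemma~\ref{la:harmextest}. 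That is the entire proof.

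Your workaround is not only much longer but also incomplete at the spot you yourself flag as delicate. After your integration by parts on the central ball you are left with $\solW$ (not $\nabla\solW$) paired against $\eta P\,\lap F$ and $\nabla(\eta P)\cdot\nabla F$. For the Jacobian piece $\lap F = 2\nabla\Pi^h\cdot\nabla\varphi^h$ you would need $\eta P\solW$ in $BMO$, but $\nabla(\eta P\solW)$ contains the cross-term $(\nabla P)\solW$, and $\solW\in\dot H^1$ with zero trace is not in $L^\infty$, so placing this product in $L^{(2,\infty)}$ is not automatic. Your proposed Hardy-inequality-plus-interpolation step is not spelled out and does not obviously close. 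The paper's route sidesteps all of this by never leaving the div-curl framework.
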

\begin{proof}
Since $\solW=0$ on $\R^1 \times \{0\}$, and $\varphi^h$ and $\Pi^h(\solu)$ are harmonic, integration by parts yields
\[
\int_{\R} \varphi\ \err_2(\solU)= -\int_{\R^2_+} \nabla \brac{\Pi^h_{\ell k}(\solu) P_{k j}} \cdot \nabla \varphi^h\ \solW^j -\int_{\R^2_+} \nabla \brac{\varphi^h P_{k j} } \cdot \nabla \Pi^h_{\ell k}(\solu)\ \solW^j  
\]
Observe that $\div(\nabla \varphi^h) = 0$ and $\div \nabla \Pi^h_{\ell k}(\solu) = 0$. We can then apply Theorem~\ref{th:clmshalfspace} and conclude as for Lemma~\ref{la:V:err1}.
\end{proof}

\begin{lemma}\label{la:V:err3}
For a uniform $\sigma > 0$ the following holds for any $x_0 \in \R$, any $r \in (0,\frac{1}{2})$ and any $\varphi \in C_c^\infty(I(x_0,r))$
\[
 \int_{\R} \err^\ell_3(\solU)\ \varphi \aleq r^{\sigma}\, \brac{\|\varphi\|_{L^\infty(\R)} + \|\lapv \varphi\|_{L^2(\R)}}\\
\]
\end{lemma}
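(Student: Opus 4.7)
The plan is to decompose $\Err = \Err_1 + \Err_2$ according to Condition~\ref{cond:Err} and bound each contribution to $\int_{\R^2_+} P_{kj}\, \Err^j\, \Pi^h_{\ell k}(\solu)\, \varphi^h$ by $r^{\sigma'}(\|\varphi\|_\infty + \|\lapv \varphi\|_2)$ for some uniform $\sigma' > 0$. Throughout I use that $P$ takes values in $SO(N)$, hence is $L^\infty$-bounded, together with the standard Poisson-extension estimates $\|\Pi^h(\solu)\|_\infty \aleq 1$, $\|\nabla \Pi^h(\solu)\|_{2,\R^2_+} \aleq \|\lapv \solu\|_{2,\R}$, $\|\varphi^h\|_\infty \leq \|\varphi\|_\infty$, and $\|\nabla \varphi^h\|_{2,\R^2_+} \aleq \|\lapv \varphi\|_{2,\R}$. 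For the $\Err_1$-piece, since $\Err_1 \in L^p(\R^2_+)$ for some $p > 1$ has compact support in some fixed $B(0,M)^+$, H\"older's inequality reduces matters to bounding $\|\varphi^h\|_{p', B(0,M)^+}$. Young's inequality applied slicewise in $t$ yields $\|p_t \ast \varphi\|_{p',\R} \leq \|\varphi\|_{p',\R} \leq (2r)^{1/p'} \|\varphi\|_\infty$, and integrating over $t \in (0,M)$ gives an $r^{1/p'}$-gain.

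For the $\Err_2$-piece, I introduce a smooth cutoff $\eta$ with $\eta \equiv 1$ on $B(x_0, r^{1/2}/2)^+$ and $\supp \eta \subset B(x_0, r^{1/2})^+$, $\|\nabla \eta\|_\infty \aleq r^{-1/2}$, and split $\varphi^h = \eta\varphi^h + (1-\eta)\varphi^h$. The near-field test function $\Phi_1 := \eta\, P\, \Pi^h(\solu)\, \varphi^h$ is supported in the boundary ball $B(x_0, r^{1/2})^+$, so the localized estimate in Condition~\ref{cond:Err} gives
\[
\int \Err_2\, \Phi_1 \aleq r^{\sigma/2}\, \|\nabla \Phi_1\|_{2,\R^2_+} \aleq r^{\sigma/2}\, (\|\varphi\|_\infty + \|\lapv \varphi\|_2),
\]
where the product rule together with the Poisson estimates above and the easy bound $\|\nabla \eta\|_\infty \|\varphi^h\|_{2, \supp \nabla \eta} \aleq \|\varphi\|_\infty$ delivers the gradient bound. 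For the far-field piece, the Poisson-kernel estimate $|\varphi^h(x,t)| \aleq r \|\varphi\|_\infty\, t/((x-x_0)^2 + t^2)$, valid once $|x-x_0| \geq 2r$ (which holds on $\supp (1-\eta)$ for $r$ small enough, the remaining range of $r$ being absorbed into constants), yields $\|\varphi^h\|_{\infty, \R^2_+ \setminus B(x_0, r^{1/2}/2)^+} \aleq r^{1/2}\|\varphi\|_\infty$ and, by direct integration of the analogous bound $|\nabla \varphi^h| \aleq r \|\varphi\|_\infty/((x-x_0)^2 + t^2)$, also $\|\nabla \varphi^h\|_{2, \R^2_+ \setminus B(x_0, r^{1/2}/2)^+} \aleq r^{1/2} \|\varphi\|_\infty$. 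Combining via the product rule, $\|\nabla((1-\eta)\, P\, \Pi^h(\solu)\, \varphi^h)\|_2 \aleq r^{1/2}(\|\varphi\|_\infty + \|\lapv \varphi\|_2)$. After multiplying by a further cutoff supported on a large ball containing $\supp \Err_2 \subset \supp \solU$ to enforce compact support of the test function, the global $\Err_2$-estimate in Condition~\ref{cond:Err} delivers the matching $r^{1/2}$-bound for the far part. Setting $\sigma' := \min(1/p', \sigma/2, 1/2)$ completes the proof.

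The main obstacle is reconciling the unbounded support of the Poisson extension $\varphi^h$ with the compact-support hypothesis in Condition~\ref{cond:Err}: a direct localization at scale $r$ is unavailable since $\varphi^h$ does not vanish outside $B(x_0, r)$. The intermediate cutoff scale $r^{1/2}$ is the key device: it is small enough (compared to $1$) that the localized $\Err_2$-estimate still yields a polynomial $r$-gain on the near region, and large enough (compared to $r$) that the Poisson kernel has decayed substantially on the far region, making the global $\Err_2$-estimate sufficient there.
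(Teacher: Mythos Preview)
Your proof is correct and, for the $\Err_2$-piece, follows exactly the paper's strategy: the same intermediate cutoff at scale $r^{1/2}$, the same localized $\Err_2$-bound on the near region, and the same Poisson-decay estimates (which the paper packages as Lemma~\ref{la:harmextest}) on the far region. Your remark about multiplying by a further large-scale cutoff to enforce compact support of the test function is a detail the paper leaves implicit but which is indeed needed.

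For the $\Err_1$-piece your route is genuinely simpler than the paper's. The paper also splits at scale $\sqrt{r}$ for $\Err_1$: it bounds the near contribution by $\sqrt{r}^{\,2/3}\|\varphi^h\|_\infty$ via H\"older against $\|\Err_1\|_{3/2}$, and the far contribution by $\|\Err_1\|_1\,\|\varphi^h\|_{L^\infty(\R^2_+\setminus B^+(x_0,\sqrt r))}\aleq r^{1/2}\|\varphi\|_\infty$. Your argument avoids any splitting: a single global H\"older on the fixed compact support of $\Err_1$, combined with the slicewise Young bound $\|p_t\ast\varphi\|_{p',\R}\le\|\varphi\|_{p',\R}\le (2r)^{1/p'}\|\varphi\|_\infty$ integrated over $t\in(0,M)$, directly yields the $r^{1/p'}$-gain. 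Both approaches are elementary, but yours is shorter and uses only the $L^p$-integrability of $\Err_1$ (not the additional $L^1$-bound the paper's far estimate needs).
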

\begin{proof}
We have to consider two cases. Firstly, assume that $\Err^j \in L^1\cap L^{\frac{3}{2}}(\R^2_+)$, then  
\[
 \int_{\R^2_+} P\, \Err\ \Pi^h(\solu)\, \ \varphi^h \aleq \sqrt{r}^{\frac{2}{3}}\ \|\varphi^h\|_{L^{\infty}(\R^2_+)}+\|\varphi^h\|_{L^{\infty}(\R^2_+ \backslash B^+(x_0,\sqrt{r}))}
\]
By Lemma~\ref{la:harmextest},
\[
\|\varphi^h\|_{L^{\infty}(\R^2_+ \backslash B^+(x_0,\sqrt{r}))} \aleq \sqrt{r}^{-1} \|\varphi\|_{L^1(\R)} \aleq r^{\frac{1}{2}}\ \|\varphi\|_{\infty}.
\]
Secondly, let us assume that for any $\Phi \in C^\infty(\overline{\R^2_+})$ with compact support
\[
 \int \Err_2 \Phi \aleq \|\nabla \Phi\|_{2,\R^2_+}
\]
and if $\supp \Phi \subset B(x_0,r)$ for some $x_0 \in \R \times \{0\}$, then
\[
 \int \Err_2 \Phi \aleq r\ \|\nabla \Phi\|_{2,\R^2_+}.
\]
Let $\eta_{B(x_0,\sqrt{r})} = \eta((\cdot-x_0)/\sqrt{r})$ for the usual bump-function $\eta$. Then
\[
 \int_{\R^2_+} P\, \Err\ \Pi^h(\solu)\, \ \varphi^h \aleq \sqrt{r} \|\nabla \brac{\eta_{B(x_0,\sqrt{r})} P\,\Pi^h(\solu)\, \varphi^h}\|_{2,\R^2_+}\ + \|\nabla \brac{\brac{1-\eta_{B(x_0,\sqrt{r})}} P\,\Pi^h(\solu)\, \varphi^h}\|_{2,\R^2_+}.
\]
Now
\[
 \|\nabla \brac{\eta_{B(x_0,\sqrt{r})} P\,\Pi^h(\solu)\, \varphi^h}\|_{2,\R^2_+} \aleq \|\varphi^h\|_{\infty} + \|\nabla \varphi^h\|_{2} \aleq \|\varphi\|_{\infty} + \|\lapv \varphi\|_{2}.
\]
Moreover, in view of Lemma~\ref{la:harmextest}, since $r < \frac{1}{2}$ and thus $\sqrt{r} > r$,
\[
\|\nabla \brac{\brac{1-\eta_{B(x_0,\sqrt{r})}} P\,\Pi^h(\solu)\, \varphi^h}\|_{2,\R^2_+} \aleq r^{-\frac{1}{2}}\ \|\varphi\|_{1} \aleq \sqrt{r} \|\varphi\|_{\infty}
\]
The constants depend on the fixed values $\|\lapv \solu\|_{2}$ and $\|\nabla P\|_{2}$. 
\end{proof}

\begin{lemma}\label{la:V:err4}
For any $\eps > 0$ there exists $R \in (0,1)$ so that whenever $2^{k_0} r < R$: For any $x_0 \in (-1,1)$, $r \in (0,1)$ and $\varphi \in C_c^\infty(I(x_0,r))$,
\[
\begin{split}
\int_{\R} \err^\ell_4(\solU)\ \varphi\aleq & \eps \brac{\|\lapv \solu\|_{(2,\infty),B(2^{k_0}r,x_0)}}
(\|\varphi\|_{\infty} + \|\lapv \varphi\|_{2})\\
&+ \tail{\sigma}{\|\lapv \solu\|_{(2,\infty)}}{x_0}{r}{k_0}\ (\|\varphi\|_{\infty} + \|\lapv \varphi\|_{2})
 \end{split}
 \]
\end{lemma}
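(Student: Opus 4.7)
The quantity $\err_4$ lacks the divergence-free/curl-free compensation that made $\err_1$ and $\err_2$ tractable, so the required smallness must come from two separate sources: (i) the absolute continuity of $\Omega\in L^2(\R^2_+)$, giving $\|\Omega\|_{L^2(B^+(x_0,R))}<\eps$ uniformly in $x_0\in\R\times\{0\}$ once $R$ is below some threshold $R(\eps)$; and (ii) the Poisson-kernel decay of the harmonic test extension, $|\varphi^h|\aleq 2^{-k}\|\varphi\|_{\infty}$ on the dyadic annulus $B^+(x_0,2^{k+1}r)\setminus B^+(x_0,2^k r)$ for $k\geq k_0$. The remaining factor $\Pi^h(\solu)$ is globally $L^\infty$-bounded since $\Pi\in L^\infty(\R^N)$ and the Poisson extension preserves $L^\infty$.

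First I will record those pointwise bounds via Lemma~\ref{la:harmextest}, then split $\R^2_+$ into the half-ball $B^+(x_0,2^{k_0}r)$ together with the dyadic annuli $\Sigma_k := B^+(x_0,2^{k+1}r)\setminus B^+(x_0,2^k r)$, $k\geq k_0$, and estimate $\err_4$ on each piece separately.

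On the near half-ball, a Lorentz H\"older pairing $L^{(2,1)}\cdot L^{(2,\infty)}\to L^1$ yields
\[
 \bigg|\int_{B^+(x_0,2^{k_0}r)} (P-I)\Omega\cdot\nabla\solV\ \Pi^h(\solu)\,\varphi^h\bigg| \aleq \|P-I\|_\infty\, \|\Omega\|_{L^2(B^+(x_0,2^{k_0}r))}\, \|\nabla\solV\|_{(2,\infty), B^+(x_0,2^{k_0}r)}\, \|\varphi\|_\infty,
\]
where the extra $L^{(2,1)}$ integrability needed for the coefficient is supplied by the Sobolev improvement $H^1(\R^2_+)\hookrightarrow L^q$ applied to $(P-I)\in H^1\cap L^\infty$ together with H\"older against $\Omega\in L^2$. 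Choosing $R(\eps)$ sufficiently small makes the $\Omega$-factor smaller than $\eps$. On each annulus $\Sigma_k$, the pointwise bound $|\varphi^h|\aleq 2^{-k}\|\varphi\|_\infty$ combined with Cauchy--Schwarz and the global bounds $\|\Omega\|_{L^2}+\|\nabla P\|_{L^2}<\infty$ produces a contribution $\aleq 2^{-k}\|\nabla\solV\|_{(2,\infty),\Sigma_k}\|\varphi\|_\infty$ that sums geometrically in $k\geq k_0$.

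Finally, the half-plane Lorentz norms of $\nabla\solV$ are translated into boundary Lorentz norms of $\lapv\solu$ by the Lorentz version of the Poisson-smoothing estimate,
\[
 \|\nabla\solV\|_{(2,\infty),B^+(x_0,\rho)} \aleq \|\lapv\solu\|_{(2,\infty),I(x_0,2\rho)}\ +\ \sum_{j\geq 1} 2^{-\sigma j}\|\lapv\solu\|_{(2,\infty), I(x_0,2^j\rho)},
\]
which, combined with the geometric sum from the annular pieces, reproduces exactly the tail $\tail{\sigma}{\|\lapv\solu\|_{(2,\infty)}}{x_0}{r}{k_0}$ on the right-hand side, while the near piece contributes $\eps\,\|\lapv\solu\|_{(2,\infty),I(x_0,2^{k_0}r)}(\|\varphi\|_\infty+\|\lapv\varphi\|_2)$.

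\textbf{Main obstacle.} The only delicate point is the Lorentz bookkeeping required to land in the \emph{weak} $L^{(2,\infty)}$ norm of $\lapv\solu$ on the right-hand side, rather than the stronger $L^2$ norm that a naive Cauchy--Schwarz would produce. This is what forces the use of the Lorentz H\"older pairing $(2,1)\cdot(2,\infty)\to 1$; supplying the missing $L^{(2,1)}$ integrability of the coefficient is the real technical work and is achieved via the Sobolev improvement of $P\in H^1(\R^2_+,SO(N))$ together with the vanishing trace $(P-I)|_{\R\times\{0\}}=0$, which yields a Poincar\'e--Friedrichs scale factor whenever a slight loss is incurred.
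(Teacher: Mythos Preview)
Your opening sentence --- that $\err_4$ ``lacks the divergence-free/curl-free compensation'' --- is precisely where the approach goes wrong, and the gap it creates is real. The Lorentz pairing you propose requires the coefficient $(P-I)\,\Omega\,\Pi^h(\solu)\,\varphi^h$ to lie in $L^{(2,1)}$, but you only have $\Omega\in L^2$; every other factor is merely in $L^\infty$. The ``Sobolev improvement'' of $(P-I)\in H^1\cap L^\infty$ gives you nothing beyond $L^\infty$ (which you already have from $P\in SO(N)$ pointwise), and multiplying an $L^2$ function by an $L^\infty$ function never lands in $L^{(2,1)}\subsetneq L^2$. So on the near half-ball you are stuck with
\[
\Big|\int_{B^+(x_0,2^{k_0}r)}(P-I)\,\Omega\cdot\nabla\solV\ \Pi^h(\solu)\,\varphi^h\Big|\ \aleq\ \|\Omega\|_{2,B^+(x_0,2^{k_0}r)}\ \|\nabla\solV\|_{2,B^+(x_0,2^{k_0}r)}\ \|\varphi\|_\infty,
\]
with the \emph{strong} $L^2$-norm of $\nabla\solV$ on the right. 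That is not what the lemma asserts, and more importantly it is useless for the downstream iteration (Proposition~\ref{pr:decay}), which needs the weak norm $\|\lapv\solu\|_{(2,\infty)}$ on the right.

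The compensation you dismissed is in fact available; it is just hidden. Hodge-decompose $\Omega\,\chi_{\R^2_+}=\nabla G+\nabla^\perp F$. Because $\solV$ is harmonic, $\nabla\solV$ is simultaneously curl-free and divergence-free: it is a gradient, and it is also $\nabla^\perp\tilde{\solV}$ for $\tilde{\solV}=p_t\ast\Hz\solu$. Hence $\nabla^\perp F\cdot\nabla\solV$ is a div-curl product, and so is $\nabla G\cdot\nabla\solV=\nabla G\cdot\nabla^\perp\tilde{\solV}$. Now the zero-trace condition $P-I=0$ on $\R\times\{0\}$ lets you apply the half-space div-curl estimate, Theorem~\ref{th:clmshalfspace}, exactly as in Lemma~\ref{la:V:err1}. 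The output is a local BMO-norm of $\solV$ (respectively $\tilde{\solV}$), which Proposition~\ref{pr:VtBMO} converts to $\|\lapv\solu\|_{(2,\infty)}$ plus a tail; for $\tilde{\solV}$ you pick up a Hilbert transform, removed by Proposition~\ref{pr:localHilbert}. The smallness factor is then $\|\nabla F\|_{2,B^+(x_0,2^{k_0}r)}$ or $\|\nabla G\|_{2,B^+(x_0,2^{k_0}r)}$, controlled by absolute continuity exactly as you intended for $\Omega$ itself.
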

\begin{proof}
By Hodge decomposition on $\R^2$, we find $F_{ji},G_{ji} \in \dot{H}^1(\R^2)$ so  that
\[
 \Omega_{ji}\chi_{\R^2_+} = \nabla G_{ji} + \nabla^\perp F_{ji}.
\]
Here
\[
 \nabla^\perp = \left ( \begin{array}{c} -\partial_2\\\partial_1 \end{array} \right ).
\]
Since $\brac{P_{k j}-\delta_{kj}} = 0$ on $\R \times \{0\}$ we can apply Theorem~\ref{th:clmshalfspace} to the term
\[
\int_{\R^2_+} \brac{P_{k j}-\delta_{kj}}\, \nabla^\perp F_{ji}\cdot \nabla \solV^i\ \Pi^h_{\ell k}(\solu)\, \ \varphi^h = \int_{\R^2_+} \nabla \brac{\brac{P_{k j}-\delta_{kj}}\, \Pi^h_{\ell k}(\solu)\, \ \varphi^h} \cdot \nabla^\perp F_{ji}\ \solV^i.
\]
As in in the proof of Lemma~\ref{la:V:err1}, additionally using Proposition~\ref{pr:VtBMO}, we then find
\[
\begin{split}
 &\int_{\R^2_+} \brac{P_{k j}-\delta_{kj}}\, \nabla^\perp F_{ji}\cdot \nabla \solV^i\ \Pi^h_{\ell k}(\solu)\, \ \varphi^h\\
 \aleq& 
 \|\nabla F\|_{2,B(x_0,2^{k_0} r)}\ \|\lapv \solu\|_{(2,\infty),B(x_0,2^{k_0} r)}\  (\|\varphi\|_{\infty} + \|\lapv \varphi\|_{2})\\
 &+ \tail{\sigma}{\|\lapv \solu\|_{(2,\infty)}}{x_0}{r}{k_0}\ (\|\varphi\|_{\infty} + \|\lapv \varphi\|_{2}).
\end{split}
 \]
For suitably small $r >0$ we conclude as in the proof of Lemma~\ref{la:V:err1}.

For the remaining term
\[
\int_{\R^2_+} \brac{P_{k j}-\delta_{kj}}\, \nabla G_{ji}\cdot \nabla \solV^i\ \Pi^h_{\ell k}(\solu)\, \ \varphi^h 
\]
we observe that $\nabla \solV = \nabla^\perp \tilde{\solV}$, since $\nabla \solV$ is divergence free. Indeed, one can compute that $\tilde{\solV} = p_t \ast \Hz \solu$, where $\Hz$ denotes the Hilbert transform of $\solu \in L^2\cap L^\infty(\R)$.
We apply Theorem~\ref{th:clmshalfspace}, and as above with the help of Proposition~\ref{pr:VtBMO} obtain
\[
\begin{split}
 &\int_{\R^2_+} \brac{P_{k j}-\delta_{kj}}\, \nabla G_{ji}\cdot \nabla \solV^i\ \Pi^h_{\ell k}(\solu)\, \ \varphi^h \\
 \aleq& 
 \|\nabla G\|_{2,B(x_0,2^{k_0} r)}\ \|\Hz\lapv \solu\|_{(2,\infty),B(x_0,2^{k_0} r)}\  (\|\varphi\|_{\infty} + \|\lapv \varphi\|_{2})\\
 &+ \tail{\sigma}{\|\Hz\lapv \solu\|_{(2,\infty)}}{x_0}{r}{k_0}\ (\|\varphi\|_{\infty} + \|\lapv \varphi\|_{2}).
\end{split}
 \]
Finally, to remove the Hilbert transform $\Hz$, we apply Proposition~\ref{pr:localHilbert} and obtain the claim.
\end{proof}

\subsection{The boundary action $\Omega \cdot \nabla \solV$}\label{ss:boundaryaction}
From the above considerations we have arrived at the following. 
\begin{equation}\label{eq:boundaryactionstart1}
 \int_{\R} \Pi_{\ell j}(\solu)\, \partial_\nu \solW^j\ \varphi = \int_{\R^2_+} \Pi^h_{\ell j}(\solu)\, \Omega_{ji}\cdot \nabla \solV^i \,  \varphi^h+ \int_{\R}\err\ \varphi,
\end{equation}
for some $\err$ satisfying the conditions~\ref{cond:err}.

Since $\solV$ is the harmonic extension of the boundary values $\solu$, the expression $\int_{\R^2_+} \Omega_{ji}\cdot \nabla \solV^i\ \varphi^h$ would be a nonlocal antisymmetric potential acting on $\solu$. However, we have the additional symmetric term $\Pi^h_{\ell j}(\solu)$.

Denote by $(\Pi^\perp(\solu))^h$ the harmonic Poisson-extension of $\Pi^\perp(\solu)=I-\Pi(\solu)$. Then
\begin{equation}\label{eq:boundaryactionstart2}
 \int_{\R^2_+} \Pi^h_{\ell j}(\solu)\, \Omega_{ji}\cdot \nabla \solV^i \,  \varphi^h = \int_{\R^2_+} \Pi^h_{\ell j}(\solu)\, \Omega_{ji}\,  \Pi^h_{ik}(\solu) \cdot\nabla \solV^k \,  \varphi^h + \int_{\R} \err_5 \varphi,
\end{equation}
where
\[
 \int_{\R} \err_5 \varphi := \int_{\R^2_+} \Pi^h_{\ell j}(\solu)\, \Omega_{ji}\, (\Pi^\perp_{ik}(\solu))^h\cdot \nabla \solV^k\,\varphi^h.
\]
The following proposition shows that $\err_5$ satisfies the conditions~\ref{cond:err}.

Observe that there is no reason to believe that $\Pi^\perp_{ik}(\solu) \nabla \solV = 0$ even close to the boundary $\R \times \{0\}$. What we know by Lemma~\ref{la:Piperplapvu} is that $\Pi^\perp_{ik}(\solu) \lapv \solu$ is well behaved. So our strategy for showing the following lemma is that up to commutators, which are in the realm of Theorem~\ref{th:extensioncomm}, $\Pi^\perp_{ik}(\solu) \nabla \solV$ is somehow comparable to $\Pi^\perp_{ik}(\solu) \lapv \solu$.
\begin{proposition}\label{pr:V:err5}
For any $\eps > 0$ there exists $R \in (0,1)$ so that whenever $2^{k_0} r < R$: For any $x_0 \in (-1,1)$ and $\varphi \in C_c^\infty(I(x_0,r))$,
\[
\begin{split}
\int_{\R} \varphi\ \err_5(\solU)\aleq & \eps\, \|\lapv \solu\|_{(2,\infty),B(2^{k_0}r,x_0)}\,
\|\varphi\|_{\infty}\\
&+ \tail{\sigma}{\|\lapv \solu\|_{(2,\infty)}}{x_0}{r}{k_0}\ \|\varphi\|_{\infty}\\
&+r^\sigma\, \|\varphi\|_{\infty,\R} 
 \end{split}
 \]
\end{proposition}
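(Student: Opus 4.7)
The plan is to proceed in parallel with the treatment of $\err_4$ in Lemma \ref{la:V:err4}: perform a Hodge decomposition of $\Omega$, apply the half-space div--curl lemma (Theorem \ref{th:clmshalfspace}) to each piece, and—crucially—use an extension--commutator estimate (Theorem \ref{th:extensioncomm}) together with the algebraic cancellation $\Pi^\perp(\solu)\Pi(\solu)=0$ to convert the critical factor $(\Pi^\perp_{ik}(\solu))^h\,\nabla\solV^k$ into something controlled by the boundary quantity $\Pi^\perp(\solu)\,\lapv \solu$, which is much better behaved by Lemma \ref{la:Piperplapvu}.

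Concretely, I would first write $\Omega_{ji}\chi_{\R^2_+} = \nabla G_{ji} + \nabla^\perp F_{ji}$ with $F_{ji}, G_{ji} \in \dot{H}^1(\R^2)$. By absolute continuity of the integral, for every $\eps>0$ there is $R\in(0,1)$ with $\sup_{x_0\in\R\times\{0\}} \brac{\|\nabla F\|_{2,B(x_0,R)}+\|\nabla G\|_{2,B(x_0,R)}}<\eps$. For the $\nabla^\perp F_{ji}$ contribution, integration by parts transfers the $\nabla^\perp$ onto the harmonic product $\Pi^h_{\ell j}(\solu)\,(\Pi^\perp_{ik}(\solu))^h\,\varphi^h$; since $\solV$ is the Poisson extension of $\solu$ and all auxiliary factors vanish or are harmonic on $\R^2_+$ with the right trace behaviour, Theorem \ref{th:clmshalfspace}, combined with the extension estimates of Lemma \ref{la:harmextest} and the BMO-type bound of Proposition \ref{pr:VtBMO}, gives exactly the desired localized estimate plus the dyadic tail $\tail{\sigma}{\|\lapv\solu\|_{(2,\infty)}}{x_0}{r}{k_0}$, as in Lemma \ref{la:V:err4}.

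For the $\nabla G_{ji}$-contribution, $\nabla G$ is only curl-free, so a direct div--curl argument is not available. Here I use $\nabla\solV = \nabla^\perp \tilde{\solV}$ with $\tilde{\solV}=p_t\ast \Hz\solu$, swapping the roles so that $\nabla^\perp\tilde\solV$ is divergence free; this allows another application of Theorem \ref{th:clmshalfspace}, with the Hilbert transform in the resulting boundary norm removed through Proposition \ref{pr:localHilbert}. The extra gain over Lemma \ref{la:V:err4}, which produces the small factor $\eps$, comes from the factor $(\Pi^\perp_{ik}(\solu))^h$: I would apply Theorem \ref{th:extensioncomm} to rewrite, modulo a controllable commutator, the bulk quantity $(\Pi^\perp_{ik}(\solu))^h\,\nabla\solV^k$ as (a Poisson-type extension of) $\Pi^\perp_{ik}(\solu)\,\lapv\solu^k$, and then use Lemma \ref{la:Piperplapvu} together with the quadratic estimate $\|\Pi^\perp(\solu)\lapv \solu\|_{2,(-2,2)}\aleq \|\lapv\solu\|_{(2,\infty)}\|\lapv\solu\|_{2}$ to extract the desired localized norm. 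The $r^\sigma\|\varphi\|_\infty$ term then simply collects the tail errors from the harmonic extensions of compactly supported boundary data, via Lemma \ref{la:harmextest}.

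The main obstacle is precisely Step 3: a naive estimate using only $\nabla\solV\in L^2$ is critical, and no pure div--curl/Hardy-space bookkeeping recovers the factor $\eps$. The whole estimate hinges on Theorem \ref{th:extensioncomm} being able to transfer the boundary identity $\Pi^\perp(\solu)\Pi(\solu)=0$ into a bulk commutator bound good enough to pair with Lemma \ref{la:Piperplapvu}; once this commutator is tamed, the rest of the proof is essentially a re-run of Lemmata \ref{la:V:err1}--\ref{la:V:err4}.
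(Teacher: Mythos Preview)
Your core instinct is right: the commutator estimate of Theorem~\ref{th:extensioncomm} (via Theorem~\ref{th:V:err5:Tcommie}) together with Lemma~\ref{la:Piperplapvu} is exactly the key. But there is a genuine gap in the way you try to run the $\err_4$--style Hodge/div--curl argument. In Lemma~\ref{la:V:err4} the half-space div--curl lemma (Theorem~\ref{th:clmshalfspace}) applied only because $(P-I)$ vanishes on $\R\times\{0\}$, which supplied the required zero-trace factor. Here none of $\Pi^h_{\ell j}(\solu)$, $(\Pi^\perp_{ik}(\solu))^h$, $\solV^k$, $\varphi^h$ has zero trace, and the matrix identity $\Pi\Pi^\perp=0$ does \emph{not} help: in the integrand the indices of $\Pi^h_{\ell j}$ and $(\Pi^\perp_{ik})^h$ are contracted through $\Omega_{ji}$, not with each other, so their boundary product $\Pi_{\ell j}(\solu)\Pi^\perp_{ik}(\solu)\varphi$ is in general nonzero. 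Hence the direct application of Theorem~\ref{th:clmshalfspace} to the Hodge pieces, ``as in Lemma~\ref{la:V:err4}'', does not go through.

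The paper organizes the argument differently. It first splits $\varphi^h=\eta_{k_0}\varphi^h+\sum_{k\geq k_0}\xi_k\varphi^h$. For the annular pieces $\xi_k\varphi^h$ \emph{does} vanish on $\R\times\{0\}$ (since $\supp\varphi\subset I(x_0,r)$ and $k\geq 2$), and there the Hodge/div--curl argument is legitimate---this is Lemma~\ref{la:V:err5:faraway}, which produces the tail. For the remaining local piece no div--curl structure is used at all: one writes $\nabla\solV=T(\lapv\solu)$, localizes $\lapv\solu$ via Lemma~\ref{la:V:err5:disjointT}, commutes $(\Pi^\perp(\solu))^h$ through $T$ using Theorem~\ref{th:V:err5:Tcommie}, and then estimates $\|T(\eta_{2k_0}\Pi^\perp(\solu)\lapv\solu)\|_{2,\R^2_+}\aleq\|\eta_{2k_0}\Pi^\perp(\solu)\lapv\solu\|_{2,\R}$ by boundedness of $T$ and Lemma~\ref{la:Piperplapvu}. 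The small factor $\eps$ comes simply from $\|\Omega\|_{2,B^+(x_0,2^{k_0}r)}$, not from any div--curl cancellation. In short: your commutator idea should be the \emph{main} argument for the local piece, not an add-on to a div--curl step that cannot be justified; the Hodge decomposition survives only on the far-away annuli, where the cutoff on $\varphi^h$ supplies the missing zero-trace factor.
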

Before we prove Proposition~\ref{pr:V:err5} we introduce the operator $T: L^2(\R) \to L^2(\R^2_+,\R^2)$ as
\[
 T^1f(x,t) :=  \partial_x \brac{p_t \ast \brac{\lapms{\frac{1}{2}} f}(x)} = p_t \ast \brac{\lapv \Hz f}(x)
\]
\[
 T^2f(x,t) :=  \partial_t \brac{p_t \ast \brac{\lapms{\frac{1}{2}} f}(x)} = -p_t \ast \brac{\lapv f}(x)
\]
In particular, we have the relation
\[
 \nabla \solV = T(\lapv \solu).
\]
In view of Lemma~\ref{la:lapvptboundedness} and the boundedness of the Hilbert transform $\Hz$ on $L^2$ we find that $T$ is indeed a bounded operator  from $L^2(\R)$ $L^2(\R^2_+,\R^2)$.
From Theorem~\ref{th:extensioncomm} we obtain in particular the following estimate, which we will use to compare $\brac{\Pi^\perp(\solu)}^h \nabla \solV$ with $T(\Pi^\perp(\solu) \lapv \solu)$.
\begin{theorem}\label{th:V:err5:Tcommie}
Let $f \in H^{\frac{1}{2}}(\R)$ and $g \in L^{(2,\infty)}(\R)$. Denoting then $f^h(x,t) := p_t\ast f(x)$ the Poisson extension of $f$ to $\R^2_+$ we have the following estimate.
\[
 \|f^h T(g) - T(fg)\|_{2,\R^2_+} \aleq \|\lapv f\|_{2,\R}\ \|g\|_{(2,\infty),\R}
\]
and
\[
 \|f^h T(g) - T(fg)\|_{2,\R^2_+} \aleq \|\lapv f\|_{(2,\infty),\R}\ \|g\|_{2,\R}
\]
\end{theorem}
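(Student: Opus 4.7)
The plan is to recognize both estimates as a direct consequence of the extension commutator bound Theorem~\ref{th:extensioncomm} (developed in Section~\ref{s:extcom}), after rewriting $T^j$ as convolution against a Poisson-type kernel on each horizontal slice. Using the semigroup identity $p_t = e^{-t\lapv}$ together with a short computation on the Fourier side, one writes
\[
 T^j g(x,t) = (\tilde K^j_t \ast g)(x), \qquad \tilde K^j_t(z) = t^{-1}\, \tilde K^j(z/t),
\]
with $\tilde K^j \in C^\infty \cap L^1 \cap L^\infty(\R)$ and $\int_\R \tilde K^j = 0$ (essentially the first partial derivatives of the Poisson kernel). Since the Poisson extension $f^h$ is itself $p_t \ast f$, the quantity to estimate reads
\[
 f^h(x,t)\, T^j g(x,t) - T^j(fg)(x,t) \;=\; (p_t \ast f)(x)\,(\tilde K^j_t \ast g)(x) \;-\; \tilde K^j_t \ast (fg)(x).
\]

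This is precisely the commutator shape handled by Theorem~\ref{th:extensioncomm}: a Poisson-convolution multiplied with a mean-zero convolution, minus the mean-zero convolution of the product. Feeding $p = p_t$ and $\kappa = \tilde K^j$ into that theorem yields the $L^2(\R^2_+)$ bound with the desired Lorentz right-hand sides. The two inequalities in the statement differ only in which factor is placed in the weak-$L^2$ slot; I would obtain the first directly from Theorem~\ref{th:extensioncomm}, and the second either from the symmetric version of that theorem (after interchanging the roles of the Poisson kernel and the mean-zero kernel in the kernel representation above, which is legitimate because $\lapv$ and $\partial_x$ factor through both slots) or by a short duality argument based on the Lorentz H\"older inequality \eqref{eq:hoelder} and the self-adjointness of $p_t$.

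The main obstacle is not in this reduction but in Theorem~\ref{th:extensioncomm} itself: the naive pointwise H\"older/Young estimate only gives $L^{(2,\infty)}$ control on the commutator, and one has to exploit the mean-zero cancellation of $\tilde K^j$ together with sharp pointwise decay of the Poisson kernel and its derivatives to upgrade this to a genuine $L^2(\R^2_+)$ bound valid up to the boundary $\R \times \{0\}$. This is the compensation phenomenon that ultimately produces the $\dot H^{1/2}$--weak-$L^2$ pairing on the right-hand side; once it is settled in Section~\ref{s:extcom}, the present corollary becomes bookkeeping.
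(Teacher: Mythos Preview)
Your reduction is right for the $T^2$-component: it is exactly the operator $T$ of Lemma~\ref{la:lapvptboundedness}, so Theorem~\ref{th:extensioncomm} applies verbatim, and this matches the paper. The gap is in how you treat $T^1$. You write ``feeding $p=p_t$ and $\kappa=\tilde K^j$ into that theorem'', but Theorem~\ref{th:extensioncomm} as stated and proved in Section~\ref{s:extcom} is \emph{not} a black box that accepts an arbitrary mean-zero kernel: it is formulated and established only for the specific kernel $\kappa=\lapv p$. To push your direct approach through for $T^1$ you would have to rerun Proposition~\ref{pr:extensioncomm:sharp} with the kernel $\Hz\lapv p$ in place of $\lapv p$, which in particular requires the Lipschitz-type pointwise decay (the analogue of \eqref{eq:la:ptest:2} used in the estimate of term $IV$) and square-function bounds for both $\Hz\lapv p$ and $\laps{\alpha}\Hz\lapv p$. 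None of this is covered by Lemma~\ref{la:ptest} as written, and you do not address it. (There is also a minor scaling slip: $T^j g(x,t) = t^{-1/2}(\tilde K^j)_t\ast g(x)$, not $(\tilde K^j)_t\ast g$; the missing $t^{-1/2}$ is precisely what turns the $L^2(\R^2_+)$ norm into a square-function $L^2(\R)$ norm.)

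The paper avoids this issue entirely by writing $T^1 g = T^2(\Hz g)$ and splitting
\[
 f^h T^1(g) - T^1(fg) = \bigl[f^h T^2(\Hz g) - T^2(f\,\Hz g)\bigr] + T^2\bigl(f\,\Hz g - \Hz(fg)\bigr).
\]
The first bracket is Theorem~\ref{th:extensioncomm} applied with $\Hz g$ in place of $g$ (using boundedness of $\Hz$ on $L^{(2,\infty)}$); the second is the $L^2$-boundedness of $T^2$ together with the Hilbert-transform commutator Theorem~\ref{th:Hzcomm}. This keeps everything inside the single-kernel framework already developed.

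One further remark: your detour through a ``symmetric version'' or duality for the second inequality is unnecessary. Theorem~\ref{th:extensioncomm} already carries the full Lorentz range $1/q_1+1/q_2=1/2$; with $n=1$, $\alpha=\tfrac12$ the choices $(q_1,q_2)=(2,\infty)$ and $(q_1,q_2)=(\infty,2)$ give the two inequalities directly.
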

\begin{proof}
We only prove the first claim, the second one is analogous.

Theorem~\ref{th:extensioncomm} is directly applicable to $T^2$. For $T^1$, note that 
\[
 |f^h T^1(g) - T^1(fg)| \leq |f^h T^2(\Hz(g))  - T^2(f\Hz(g))|+ |T^2(f\Hz(g)-\Hz(fg))|.
\]
So by the boundedness and commutator theorem, Theorem~\ref{th:extensioncomm}, for $T^2$
\[
 \|f^h T^1(g) - T^1(fg)\|_{2,\R^2_+} \aleq \|\lapv f\|_{2,\R}\ \|\Hz(g)\|_{(2,\infty),\R} + \|f\Hz(g)-\Hz(fg)\|_{2,\R}
\]
With boundedness and the commutator theorem, Theorem~\ref{th:Hzcomm}, for the Hilbert transform $\Hz$,
\[
 \|f^h T^1(g) - T^1(fg)\|_{2,\R^2_+} \aleq \|\lapv f\|_{2,\R}\, \|g\|_{(2,\infty),\R}.
\]
\end{proof}
Now we start gathering important estimates for Proposition~\ref{pr:V:err5}. 
For $x_0 \in \R \times \{0\}$, $r > 0$ let $\eta \in C_c^\infty(B(0,1))$ a typical bump function constantly one in $B(0,\frac{1}{2})$ and set
\begin{equation}\label{eq:etakxik}
 \eta_{k} := \eta((x_0-x)/{2^k r}),\ \xi_k := \eta_k - \eta_{k-1}.
\end{equation}
We can see these cutoff functions also as cutoff function on the real line $\R \times \{0\}$, simply by restriction.

First we estimate the situation where the support of the integral and the support of $\varphi^h$ are far away.
\begin{lemma}\label{la:V:err5:faraway}
Let $X \in L^2(\R^2_+)$. For any $x_0 \in \R$, $r \in (0,1)$ and $\varphi \in C_c^\infty(I(x_0,r))$, $k \geq 2$,
\[
 \int_{\R^2_+} X\cdot \nabla \solV\ \xi_k\, \varphi^h \aleq \|X\|_{2,\R^2_+}\, \|\varphi\|_{\infty}\, \sum_{\ell = k}^\infty 2^{-\sigma \ell} \|\lapv \solu\|_{(2,\infty),I(x_0,2^{\ell} r)}.
\]
\end{lemma}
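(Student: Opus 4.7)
\emph{Proof proposal.} The plan is to apply Cauchy--Schwarz to reduce matters to the $L^2(\R^2_+)$--estimate
\[
\int_{\R^2_+} X\cdot \nabla \solV\ \xi_k\, \varphi^h \leq \|X\|_{2,\R^2_+}\, \|\xi_k\,\varphi^h\,\nabla \solV\|_{2,\R^2_+},
\]
and then extract the stated tail by combining two independent sources of smallness on $\supp \xi_k \cap \R^2_+$: the off-support decay of $\varphi^h$ (because $\supp \varphi \subset I(x_0,r)$ lies at distance $\gtrsim 2^{k-2} r$ from $\supp \xi_k$ once $k \geq 2$), and the Poisson-kernel decay arising when $\nabla \solV$ is paired against the faraway dyadic pieces of $\lapv \solu$.

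First I would record the standard pointwise Poisson bound
\[
\xi_k(x)\,|\varphi^h(x,t)| \aleq \|\varphi\|_\infty\, \frac{r\,t}{(2^k r)^2 + t^2}
\qquad (x,t)\in \supp \xi_k \cap \R^2_+,
\]
which follows by direct estimation of $p_t \ast \varphi$ using $|x-z| \gtrsim 2^k r$ for $z \in \supp \varphi$ (the borderline case $k=2$ is handled with the trivial $L^\infty$-bound $|\varphi^h|\leq \|\varphi\|_\infty$).

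Second, using $\partial_t p_t = -\lapv p_t$ one has $\partial_t \solV = -p_t \ast \lapv \solu$, and for $\partial_x \solV$ the analogous representation via the conjugate Poisson kernel (equivalently, through the $L^{(2,\infty)}$-boundedness of the Hilbert transform applied to $T$ as in the proof of Theorem~\ref{th:V:err5:Tcommie}) gives
\[
|\nabla \solV(x,t)| \aleq \int \frac{t}{|x-y|^2 + t^2}\,|\lapv \solu(y)|\, dy.
\]
I then decompose $\lapv \solu = \sum_{\ell \geq k-1} g_\ell$ with $g_\ell := \lapv \solu\cdot \chi_{A_\ell}$ on the dyadic annulus $A_\ell := I(x_0, 2^{\ell+1} r)\setminus I(x_0, 2^\ell r)$ and $A_{k-1}:= I(x_0, 2^k r)$. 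On $\supp \xi_k$, the Lorentz--H\"older inequality applied annulus by annulus yields, for $\ell \geq k$,
\[
\int \frac{t}{|x-y|^2 + t^2}\, |g_\ell(y)|\, dy \aleq \frac{t\,(2^\ell r)^{1/2}}{(2^\ell r)^2 + t^2}\ \|\lapv \solu\|_{(2,\infty), I(x_0, 2^{\ell+1} r)},
\]
and the central piece $\ell = k-1$ is controlled by the $L^2$-boundedness of $T$.

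The last step is to feed these two pointwise bounds into $\|\xi_k \varphi^h \nabla \solV\|_{2,\R^2_+}$, use Minkowski in $\ell$, and integrate in $(x,t)$. The substitution $t = 2^k r\, s$, $x-x_0 = 2^k r\, \tau$ reduces the $(x,t)$-integral to a scale-invariant integral whose value decays geometrically in $\ell - k$ (splitting the $s$-integral at $s\sim 2^{\ell-k}$ produces the decay $4^{-(\ell-k)}$). Taking square roots and summing yields
\[
\|\xi_k\varphi^h\nabla\solV\|_{2,\R^2_+}
\aleq \|\varphi\|_\infty \sum_{\ell \geq k} 2^{-\sigma \ell}\, \|\lapv \solu\|_{(2,\infty), I(x_0, 2^\ell r)}
\]
for a uniform $\sigma > 0$, and combined with Cauchy--Schwarz this gives the lemma. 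The main obstacle that I anticipate is bookkeeping the exponents in the Lorentz--H\"older step and handling the Hilbert transform hidden in $\partial_x \solV$; invoking either the conjugate Poisson kernel or the $L^{(2,\infty)}$-boundedness of $\Hz$ resolves it without affecting the exponent $\sigma$.
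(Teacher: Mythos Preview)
Your approach via Cauchy--Schwarz and direct kernel estimates is genuinely different from the paper's, which instead exploits the div--curl structure: since $\Psi := \xi_k\varphi^h$ vanishes on $\R\times\{0\}$ (because $\supp\varphi$ and $\supp\xi_k\big|_{\R\times\{0\}}$ are disjoint for $k\geq 2$), one Hodge decomposes $X = \nabla G + \nabla^\perp F$, integrates by parts so that the div--curl pair acts against $\solV$ (resp.\ its harmonic conjugate $\tilde{\solV}$), and then the localized Coifman--Lions--Meyer--Semmes estimate on the half-space, Theorem~\ref{th:clmshalfspace}, places $\solV$ in $BMO$; finally Proposition~\ref{pr:VtBMO} together with Proposition~\ref{pr:localHilbert} converts $[\solV]_{BMO}$ into the desired $(2,\infty)$-tail.

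Your route has a genuine gap at the ``central piece'' $g_{k-1} = \chi_{I(x_0,2^k r)}\lapv\solu$. Invoking the $L^2$-boundedness of $T$ together with $\|\xi_k\varphi^h\|_{\infty} \aleq 2^{-k}\|\varphi\|_\infty$ gives at best
\[
\|\xi_k\varphi^h\,T(g_{k-1})\|_{2,\R^2_+} \aleq 2^{-k}\|\varphi\|_\infty\,\|\lapv\solu\|_{2,I(x_0,2^k r)},
\]
but the lemma demands $\|\lapv\solu\|_{(2,\infty),I(x_0,2^k r)}$ on the right, and $\|\cdot\|_{(2,\infty)} \leq \|\cdot\|_2$ goes the wrong way. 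Cauchy--Schwarz discards all cancellation in $\nabla\solV$, so no refinement of your pointwise kernel bounds can recover the Lorentz exponent here; the Hardy--$BMO$ duality coming from the div--curl structure is precisely the mechanism that lets $\lapv\solu$ land in $L^{(2,\infty)}$ rather than $L^2$. (A secondary issue: the identity ``$\partial_t p_t = -\lapv p_t$'' is false; what holds is $\partial_t(p_t\ast f) = -p_t\ast\laph f = -(\lapv p_t)\ast(\lapv f)$, so the relevant kernel is $|\lapv p_t(z)| \aleq t^{1/2}(t^2+|z|^2)^{-3/4}$, not the Poisson kernel itself. This does not affect your far-piece estimate but should be corrected.)
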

\begin{proof}
Observe that since $k \geq 2$ and $\supp \varphi \subset I(x_0,r)$ we have that $\Psi := \xi_k \varphi^h$ vanishes on $\R \times \{0\}$. So we are in a similar situation to Lemma~\ref{la:V:err4}. Arguing as there with Hodge decomposition, and using Theorem~\ref{th:clmshalfspace} we find
\[
 \int_{\R^2_+} X\cdot \nabla \solV\ \xi_k\, \varphi^h \aleq k 2^{-\sigma k} \|X\|_{2,\R^2_+} \|\varphi\|_{\infty}\ \brac{[\solV]_{BMO,B(x_0,2^{k+10} r)}+[\tilde{\solV}]_{BMO,B(x_0,2^{k+10} r)}}.
\]
With the help of Proposition~\ref{pr:VtBMO} and Proposition~\ref{pr:localHilbert} this implies
\[
 \int_{\R^2_+} X\cdot \nabla \solV\ \xi_k\, \varphi^h \aleq \sum_{\ell = k}^\infty 2^{-\sigma \ell} \|X\|_{2,\R^2_+} \|\varphi\|_{\infty}\ \|\lapv \solu\|_{(2,\infty),I(x_0,2^{\ell} r)}.
\]
\end{proof}

\begin{lemma}\label{la:V:err5:disjointT}
For $\ell \geq k+10$, for some $\sigma > 0$,
\[
 \|T(\xi_\ell f)\, \eta_k\|_{2,\R^2_+} \aleq 2^{-\sigma\brac{\ell-k}} \|\xi_\ell f\|_{(2,\infty),\R}
\]
and
\[
 \|T(\eta_k f)\, \xi_\ell\|_{2,\R^2_+} \aleq 2^{-\sigma\brac{\ell-k}} \|\eta_k f\|_{(2,\infty),\R}.
\]
Moreover, we also have the inhomogeneous versions
\[
 \|T(\eta_k f)\, \xi_\ell\|_{\infty,\R^2_+} \aleq 2^{-\sigma\brac{\ell-k}}\ (2^\ell r)^{-1} \|\eta_k f\|_{(2,\infty),\R}
\]
and
\[
 \|T(\eta_k f)\, \xi_\ell\|_{2,\R^2_+} \aleq 2^{-\sigma\brac{\ell-k}}\ (2^k r)^{-\frac{1}{2}} \|\eta_k f\|_{1,\R}
\]
\end{lemma}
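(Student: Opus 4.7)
The plan is to use the pointwise convolution representation of $T$ and exploit the decay of derivatives of the Poisson kernel in the disjoint-support regime. Unfolding the definition and using $\lapv \Hz = \partial_x$ on $\R$, one has $T^2 f = -\partial_t(p_t \ast f)$ and $T^1 f = \partial_x(p_t \ast f)$, so each component is a convolution in the $x$-variable with kernel $K^\alpha \in \{\partial_x p_t, -\partial_t p_t\}$ satisfying the uniform pointwise bound
\[
 |K^\alpha(z, t)| \aleq \frac{1}{z^2 + t^2}, \qquad (z,t) \in \R \times (0,\infty).
\]

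Because $\ell \geq k+10$, every $y \in \supp \xi_\ell$ is separated from every $x \in \supp \eta_k$ by a distance at least $c\, 2^\ell r =: c d$, and the same separation holds after interchanging the roles of the two cutoffs. Combining this with H\"older's inequality in Lorentz spaces, namely $\|g\|_{L^1(E)} \aleq |E|^{1/2}\, \|g\|_{(2,\infty)}$ for any set $E$ of finite measure, applied to $E = \supp g$, one obtains on the disjoint-support region the single pointwise estimate
\[
 |T g(x,t)| \aleq \frac{|\supp g|^{1/2}}{d^2 + t^2}\, \|g\|_{(2,\infty)}.
\]
This is the one formula from which all four bounds of the lemma are derived.

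For the two $L^2$ estimates, I would square the above, integrate $\int_0^\infty (d^2+t^2)^{-2}\, dt = c/d^3$ in the $t$-variable, then integrate in $x$ over the support of the outer cutoff -- of measure $\aleq 2^k r$ in the first case and $\aleq 2^\ell r$ in the second -- and finally collect the powers of $2^k r$ and $2^\ell r$ into a factor $2^{-\sigma(\ell-k)}$ for a uniform $\sigma > 0$. For the $L^\infty$ estimate, I would instead take the supremum over $(x,t) \in \supp \xi_\ell \times (0,\infty)$, use $\|\eta_k f\|_1 \aleq (2^k r)^{1/2}\, \|\eta_k f\|_{(2,\infty)}$, and rearrange to extract the claimed prefactor $(2^\ell r)^{-1}$. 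For the last estimate, which has $\|\eta_k f\|_1$ on the right, I would insert $\|\eta_k f\|_1$ directly into the pointwise bound -- skipping the Lorentz-H\"older step -- and integrate as before, picking up the explicit factor $(2^k r)^{-1/2}$. The main technical obstacle is the careful bookkeeping of the exponents of $2^k r$, $2^\ell r$ and $r$ at each step: since weak $L^2$ and Lebesgue measure scale differently and the kernel pointwise bound is sharp only in limited regimes, any miscount in a single estimate would propagate through the summations over $\ell$ and $k$ where this lemma is applied.
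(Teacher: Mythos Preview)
Your overall strategy---pointwise kernel bounds plus disjoint support plus Lorentz--H\"older---is the right idea, but the identity you invoke at the outset is false, and this propagates. On $\R$ one has $\laph\Hz=\pm\partial_x$ (symbol $|\xi|\cdot(-i\,\mathrm{sgn}\,\xi)=\mp i\xi$), not $\lapv\Hz=\partial_x$. Consequently $T^2f=-p_t\ast\lapv f=-(\lapv p_t)\ast f$ and $T^1f=(\lapv p_t)\ast\Hz f$; neither component of $T$ is $\nabla_{\R^2}(p_t\ast f)$. The correct pointwise kernel bound (this is Lemma~\ref{la:ptest} with $n=1$, $\alpha=\tfrac12$) is
\[
 |\lapv p_t(z)|\ \aleq\ (t^2+|z|^2)^{-3/4},
\]
not $(t^2+|z|^2)^{-1}$. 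With the correct exponent your scheme for $T^2$ still works: one gets $\|T^2(\xi_\ell f)\eta_k\|_{\infty}\aleq(2^\ell r)^{-3/2}\|\xi_\ell f\|_{1}$, then H\"older in $\R^2_+$ and in $\R$ recover the stated $L^2$ bound with $\sigma=1$ (or $\sigma=\tfrac12$ if you integrate $t\in(0,\infty)$ as you propose, using $\int_0^\infty(d^2+t^2)^{-3/2}\,dt\aeq d^{-2}$).

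The more serious issue is $T^1$. Because $T^1f=(\lapv p_t)\ast\Hz f$ and the Hilbert transform is nonlocal, $\Hz(\xi_\ell f)$ is not supported in $\supp\xi_\ell$, so the simple ``$|x-y|\geq c\,2^\ell r$'' separation argument is not available. The paper handles this by writing $T^1=T^2\circ\Hz$ and then decomposing $\Hz(\xi_\ell f)=\eta_k\Hz(\xi_\ell f)+\sum_j\xi_j\Hz(\xi_\ell f)$, using off-diagonal decay of $\Hz$ on disjoint cutoffs together with the already-proved $T^2$ estimate. Your argument does not account for this nonlocality and, as written, treats $T^1$ as if it had a compactly supported convolution structure, which it does not. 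An alternative fix in the spirit of your approach would be to prove directly that $\lapv q_t$ (with $q_t=\Hz p_t$ the conjugate Poisson kernel) satisfies the same $(t^2+|z|^2)^{-3/4}$ decay as $\lapv p_t$; this is true by the same proof as Lemma~\ref{la:ptest}, but it is not stated in the paper and you would need to supply it.
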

\begin{proof}
We only prove the first claim, the other ones follows analogously.

First we consider $T^2$. We apply Lemma~\ref{la:ptest}. Keep in mind that the cutoff function $\xi_\ell$ act on $\R \times \{0\}$ and the cutoff function $\eta_k$ acts on $\R^2_+$. However if $|(x,t) - (x_0,0)| \leq 2^k r$ and $|y-x_0| \geq 2^{\ell-2} r$ then $|x-y| \geq 2^\ell r$, since $\ell \geq k+10$. Thus, from Lemma~\ref{la:ptest},
\[
 \|T^2(\xi_\ell f)(x,t)\, \eta_k\|_{L^\infty(\R^2_+)} \aleq (2^\ell r)^{-\frac{3}{2}}\, \|\xi_\ell f\|_{1,\R} 
\]
Consequently, using H\"older inequality once on $\R^2_+$ and once on $\R$,
\[
 \|T^2(\xi_\ell f)(x,t)\, \eta_k\|_{2,\R^2_+} \aleq 2^{k-\ell}\,  \|\xi_\ell f\|_{(2,\infty),\R} 
\]
As for the estimate of $T^1$, 
\[
 \|T^1(\xi_\ell f)(x,t)\, \eta_k\|_{2,\R^2_+} \leq \|T^2(\eta_k \Hz(\xi_\ell f))(x,t)\, \eta_k\|_{2,\R^2_+}+\sum_{j=k}^\infty \|T^2(\xi_j \Hz(\xi_\ell f)) \eta_k\|_{2,\R^2_+}
\]
For the first term we apply the boundedness of $T^2$,
\[
 \|T^2(\eta_k \Hz(\xi_\ell f))(x,t)\, \eta_k\|_{2,\R^2_+} \aleq \|\eta_k \Hz(\xi_\ell f))(x,t)\|_{2,\R}
\]
Now we can use the disjoint support of $\eta_k$ and $\xi_\ell$ as functions on $\R \times \{0\}$ and find
\[
 \|\eta_k \Hz(\xi_\ell f))(x,t)\|_{2,\R^2_+} \aleq 2^{-\frac{1}{2}\brac{\ell-k}}\|\xi_\ell f\|_{(2,\infty),\R}.
\]
For the remaining term, since $\ell$ and $k$ are sufficiently far away, we split the sum
\[
 \sum_{j=k}^\infty = \sum_{j=k}^{k+4} + \sum_{j=k+5}^{\ell-5} + \sum_{j=\ell-4}^{\ell+4}+\sum_{j=\ell+5}^{\infty}.
\]
By boundedness of $T^2$ and the disjoint support of $\xi_j$ and $\xi_\ell$,
\[
 \sum_{j=k}^{k+4}  \|T^2(\xi_j \Hz(\xi_\ell f)) \eta_k\|_{2,\R^2_+} \aleq \sum_{j=k}^{k+4}  \|\xi_j \Hz(\xi_\ell f)\|_{2,\R} \aleq 2^{-\frac{1}{2} (\ell-k)}\, \|\xi_\ell f\|_{(2,\infty),\R}.
\]
By the disjoint support of $\xi_j$ and $\eta_k$, and again by the disjoint support of $\xi_j$ and $\xi_\ell$,
\[
\begin{split}
 \sum_{j=k+5}^{\ell-5}  \|T^2(\xi_j \Hz(\xi_\ell f)) \eta_k\|_{2,\R^2_+}\aleq& \sum_{j=k+5}^{\ell-5}  2^{-(j-k)}\, \|\xi_j \Hz(\xi_\ell f) \|_{(2,\infty),\R} \\
 \aleq& \sum_{j=k+5}^{\ell-5}  2^{-(j-k)}\, 2^{-\frac{1}{2}(\ell-j)}\, \|\xi_\ell f\|_{(2,\infty),\R}\\
 \aleq & 2^{-\frac{1}{2}(\ell-j)}\, \|\xi_\ell f\|_{(2,\infty),\R}.
\end{split}
 \]
By the disjoint support of $\xi_j$ and $\eta_k$ and boundedness of $\Hz$,
\[
 \sum_{j=\ell-4}^{\ell+4} \|T^2(\xi_j \Hz(\xi_\ell f)) \eta_k\|_{2,\R^2_+}\aleq 2^{-\frac{1}{2}(\ell-j)}\, \|\xi_\ell f\|_{(2,\infty),\R}.
\]
Finally, first by the disjoint support of $\xi_j$ and $\eta_k$ and then by the disjoint support of $\xi_j$ and $\xi_\ell$,
\[
 \sum_{j=\ell+5}^{\infty} \|T^2(\xi_j \Hz(\xi_\ell f)) \eta_k\|_{2,\R^2_+} \aleq \sum_{j=\ell+5}^{\infty}  2^{-(j-k)}\ 2^{-\frac{1}{2}(j-\ell)} \|\xi_\ell f\|_{(2,\infty),\R} \aleq 2^{-(\ell-j)}\, \|\xi_\ell f\|_{(2,\infty),\R}.
\]
The claim is proven, if we choose $\sigma = \frac{1}{2}$.
\end{proof}

Now we give
\begin{proof}[Proof of Proposition~\ref{pr:V:err5}]
In view of Lemma~\ref{la:V:err5:faraway},
\[
\begin{split}
 \left |\int_{\R^2_+} \Pi^h_{\ell j}(\solu)\, \Omega_{ji}\, (\Pi^\perp_{ik}(\solu))^h\cdot \nabla \solV^k\,\varphi^h\right | \aleq&\left |\int_{\R^2_+} \Pi^h_{\ell j}(\solu)\, \Omega_{ji}\, (\Pi^\perp_{ik}(\solu))^h\cdot \nabla \solV^k\,\eta_{k_0}\,\varphi^h \right |\\
 &+ \|\Omega\|_{2,\R^2_+}\, \|\varphi\|_{\infty} \sum_{k=k_0}^\infty 2^{-\sigma k} \|\lapv \solu\|_{(2,\infty),I(x_0,2^k r)}.
\end{split}
 \]
Using the representation $\nabla \solV = T(\lapv \solu)$ we have
\[
\begin{split}
 &\left |\int_{\R^2_+} \Pi^h_{\ell j}(\solu)\, \Omega_{ji}\, (\Pi^\perp_{ik}(\solu))^h\cdot T(\lapv \solu)\,\eta_{k_0}\,\varphi^h \right |\\
 \aleq&\left |\int_{\R^2_+} \Pi^h_{\ell j}(\solu)\, \Omega_{ji}\, (\Pi^\perp_{ik}(\solu))^h\cdot T(\eta_{2k_0}\lapv \solu)\,\eta_{k_0}\,\varphi^h \right |\\
 &+\|\Omega\|_{2,B^+(x_0,2^{k_0}r)}\, \|\varphi\|_{\infty,\R} \sum_{\ell = 2k_0}^\infty \|T(\xi_{\ell}\lapv \solu)\,\eta_{k_0}\|_{2,\R^2_+}.
\end{split}
 \]
By Lemma~\ref{la:V:err5:disjointT}, if $k_0$ is sufficiently large, we have found
\[
\begin{split}
 \left |\int_{\R^2_+} \Pi^h_{\ell j}(\solu)\, \Omega_{ji}\, (\Pi^\perp_{ik}(\solu))^h\cdot \nabla \solV^k\,\varphi^h\right | \aleq&
\|\Omega\|_{2,B^+(x_0,2^{k_0}r)}\, \|\varphi\|_{\infty,\R}\, \|(\Pi^\perp_{ik}(\solu))^h T(\eta_{2k_0}\lapv \solu)\|_{2,\R^2_+}\\
 &+ \tail{\sigma}{\|\lapv \solu\|_{(2,\infty)}}{x_0}{r}{k_0}\ \|\varphi\|_{\infty}.
\end{split}
 \]
Now we commute $(\Pi^\perp (\solu))^h$ and $T$, then with Theorem~\ref{th:V:err5:Tcommie}
\[
\begin{split}
  &\|(\Pi^\perp_{ik}(\solu))^h T(\eta_{2k_0}\lapv \solu)\|_{2,\R^2_+}\\
  \aleq& \|T(\eta_{2k_0}\Pi^\perp(\solu)\lapv \solu)\|_{2,\R^2_+}+\|\lapv \Pi(\solu)\|_{2,\R}\ \|\lapv \solu\|_{(2,\infty),I(x_0,2^{2k_0}r)}.
 \end{split}
 \]
With boundedness of $T$,
\[
  \|T(\eta_{2k_0}\Pi^\perp(\solu)\lapv \solu)\|_{2,\R^2_+} \aleq  \|\eta_{2k_0}\Pi^\perp(\solu)\lapv \solu\|_{2,\R}
\]
Now if $I(x_0,2^{2k_0}r) \subset (-1,1)$ and $r$ is small enough, by Lemma~\ref{la:Piperplapvu},
\[
\begin{split}
 &\|\eta_{2k_0}\Pi^\perp(\solu)\lapv \solu\|_{2,\R}\\
 \aleq& \|\lapv \solu\|_{2,\R}\ \brac{\|\lapv \solu\|_{(2,\infty), I(x_0,2^{3k_0}r)}+\tail{\sigma}{\|\lapv \solu\|_{(2,\infty)}}{x_0}{r}{k_0} + (2^{k_0}r)^{\sigma}}.
\end{split}
 \]
Together we have shown,
\[
\begin{split}
 &\left |\int_{\R^2_+} \Pi^h_{\ell j}(\solu)\, \Omega_{ji}\, (\Pi^\perp_{ik}(\solu))^h\cdot \nabla \solV^k\,\varphi^h\right | \\
 \aleq& \|\Omega\|_{2,B^+(x_0,2^{k_0}r)}\, \|\lapv \solu\|_{(2,\infty),I(x_0,2^{3k_0} r)}\, \|\varphi\|_{\infty}\\
 &+\tail{\sigma}{\|\lapv \solu\|_{(2,\infty)}}{x_0}{r}{k_0}\ \|\varphi\|_{\infty} + (2^{k_0}r)^{\sigma}\ \|\varphi\|_{\infty}.
\end{split}
 \]
So if $x_0 \in (-1,1)$ and $2^{k_0}r  \leq R$ for some $R$ so small that 
\[
  \sup_{y_0 \in \R^2_+} \|\Omega\|_{2,B^+(y_0,R)} < \eps,
\]
we have shown the claim.
\end{proof}
From \eqref{eq:boundaryactionstart1}, \eqref{eq:boundaryactionstart2} and Proposition~\ref{pr:V:err5} we have found 
\begin{equation}\label{eq:pisolwpart}
 \int_{\R} \Pi_{\ell j}(\solu)\, \partial_\nu \solW^j\ \varphi = \int_{\R^2_+} \Pi^h_{\ell j}(\solu)\, \Omega_{ji}\,  \Pi^h_{ik}(\solu) \cdot T[\lapv \solu^k] \,  \varphi^h + \int_{\R} \err \varphi,
\end{equation}
where $\err$ satisfies conditions~\ref{cond:err}.

We now observe that this can be written as an antisymmetric potential $\omega$ that satisfies the conditions~\ref{cond:omega}, namely we have
\[
 \int_{\R^2_+} \Pi^h_{\ell j}(\solu)\, \Omega_{ji}\cdot \nabla \solV^i \,  \varphi^h = \int_{\R} \omega^2_{\ell k}(\lapv \solu^k)(x)\ \varphi(x)\ dx+ \int_{\R} \err \varphi.
\]
Indeed, $\omega^2_{\ell k}$ is defined by
\begin{equation}\label{eq:defomega2}
\int_{\R} \omega^2_{\ell k}(f)(x)\ \varphi(x)\ dx :=  \int_{\R^2_+} \Pi^h_{\ell j}(\solu)\, \Omega_{ji}\,  \Pi^h_{ik}(\solu) \cdot T[f] \,  \varphi^h.
\end{equation}
Since $f \mapsto T(f)$ and $\varphi \mapsto \varphi^h$ are convolution operators, we find the representation
\[
\int_{\R} \omega^2_{\ell k}(f)(x)\ \varphi(x)\ dx := \int_{\R} \omega^2_{\ell k}(x,y) f(y)\ \varphi(x),
\]
for
\[
\begin{split}
 \omega^2_{\ell k}(x,y) :=& \int_{\R^2_+} \brac{\Pi^h_{\ell j}(\solu)\, \Omega^1_{ji}\,  \Pi^h_{ik}(\solu)}(z,t)\, \lapv(p_t)(z-y)\,  p_t(z-x)\, dz\, dt\\
 &-\int_{\R}\int_{\R^2_+} \brac{\Pi^h_{\ell j}(\solu)\, \Omega^1_{ji}\,  \Pi^h_{ik}(\solu)}(z,t)\, \lapv(p_t)(z-\tilde{y})\,  p_t(z-x)\, dz\ dt\, \frac{y-\tilde{y}}{|y-\tilde{y}|^2}d\tilde{y}.
\end{split}
 \]
Clearly, $\omega_{\ell k} = -\omega_{k\ell}$ since $\Pi^h(\solu)\, \Omega\,  \Pi^h(\solu)$ is an antisymmetric matrix. Indeed we have.
\begin{proposition}\label{pr:omega2correct}
$\omega^2$ as above satisfies the localization properties from condition~\ref{cond:omega}.
\end{proposition}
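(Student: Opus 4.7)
The plan is to verify the three localization conditions \eqref{eq:oa:1}--\eqref{eq:oa:3} for $\omega^2$. Observe first that the kernel $\omega^2_{\ell k}$ defined in \eqref{eq:defomega2} is of the structure of the second example listed after Condition~\ref{cond:omega}, namely
\[
\int\!\!\int \omega^2_{\ell k}(x,y)\, f(y)\, \varphi(x)\, dx\, dy = \int_{\R^2_+} \Theta_{\ell k}(z,t)\, T(f)(z,t)\, \varphi^h(z,t)\, dz\, dt,
\]
where $\Theta := \Pi^h(\solu)\,\Omega\,\Pi^h(\solu) \in L^2(\R^2_+)$ is antisymmetric, and the kernels defining $T$ have zero boundary average. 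The crucial features I exploit are: (i) by absolute continuity of the integral, for every $\eps>0$ there is $R>0$ with $\sup_{y_0 \in \R\times\{0\}}\|\Theta\|_{2,B^+(y_0,R)} \leq \eps$; (ii) $p_t \ast$ is an $L^\infty$-bounded averaging operator with good off-diagonal decay (Lemmas~\ref{la:ptest} and~\ref{la:harmextest}); (iii) the operator $T$ enjoys the commutator bounds of Theorem~\ref{th:V:err5:Tcommie} and the disjoint-support estimates of Lemma~\ref{la:V:err5:disjointT}.

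For \eqref{eq:oa:1} I first rewrite the bilinear form as
\[
\int g\, [\varphi, \omega^2_{\ell k}](f)\, dx
= \int_{\R^2_+} \Theta\, g^h\, [\varphi^h, T](f)
\;+\; \int_{\R^2_+} \Theta\, T(f)\, \bigl((g\varphi)^h - g^h\varphi^h\bigr).
\]
The commutator $[\varphi^h,T](f) = \varphi^h T(f) - T(\varphi f)$ is exactly what Theorem~\ref{th:V:err5:Tcommie} controls in $L^2(\R^2_+)$ by $\|\lapv\varphi\|_{2,\R}\,\|f\|_{(2,\infty),\R}$ (or the dual pairing). The Poisson commutator $(g\varphi)^h - g^h\varphi^h$ is a Wente-type three-commutator, whose $L^2 \cap L^\infty$ norm is estimated by products involving $\|\lapv g\|_2$ and $\|\lapv\varphi\|_2$ through the same arguments used in Lemma~\ref{la:V:err1}. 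Condition \eqref{eq:oa:2} admits an analogous rewriting, but there the compensation comes for free from the localization of $f \in C_c^\infty(I(x_0,r))$, so $T(f)$ is already spatially concentrated near $I(x_0,r)$ via Lemma~\ref{la:V:err5:disjointT}. Condition \eqref{eq:oa:3} is handled by using the second inequality of Theorem~\ref{th:V:err5:Tcommie}, which trades $f \in L^2$ against $\lapv\varphi \in L^{(2,\infty)}$.

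Next I extract the localization. Using the cutoffs $\eta_{k_0}, \xi_\ell$ from \eqref{eq:etakxik} I split $\Theta = \eta_{k_0}\Theta + \sum_{\ell \geq k_0}\xi_\ell \Theta$. On the local piece $\eta_{k_0}\Theta$, the $L^2$-norm is bounded by $\eps$ whenever $2^{k_0} r \leq R$, which combines with the $L^\infty \times L^2$ bounds from step one to produce the leading term $\eps\,\|f\|_{(2,\infty), I(x_0, 2^{k_0}r)}$ in \eqref{eq:oa:1} (and analogues for \eqref{eq:oa:2}, \eqref{eq:oa:3}). On each annular tail piece $\xi_\ell \Theta$, the support of $\varphi$ (and of $f$, in \eqref{eq:oa:2}) at scale $r$ is separated from the $\xi_\ell$-region at scale $2^\ell r$ by a factor $2^{\ell - k_0}$, so the off-diagonal estimates of Lemma~\ref{la:V:err5:disjointT} for $T$ and of Lemma~\ref{la:harmextest} for $p_t \ast$ give a gain of $2^{-\sigma(\ell-k_0)}$ at scale $\ell$. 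Summing the geometric series in $\ell \geq k_0$ reconstructs the tail norms $\tail{\sigma}{\|f\|_{(2,\infty)}}{x_0}{r}{k_0}$ appearing on the right-hand side of Condition~\ref{cond:omega}.

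The main obstacle will be bookkeeping: after the commutator decomposition of step one, each scale $\ell$ splits into several product pieces, and for each one must match the off-diagonal decay gained from spatial separation with the precise Lorentz scale demanded by the targeted inequality among \eqref{eq:oa:1}--\eqref{eq:oa:3}. The Hilbert-transform correction coming from the $T^1$-component of $T$ introduces an extra step, treated by localizing $\Hz f$ via Proposition~\ref{pr:localHilbert} before applying the off-diagonal bounds, exactly as in the proof of Theorem~\ref{th:V:err5:Tcommie}. Once the single-scale estimate is verified in each of the three formats, assembly over the annular decomposition yields the three localization inequalities and completes the proof.
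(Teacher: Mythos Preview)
Your decomposition and toolkit match the paper's: the same splitting
\[
\int_{\R^2_+}\Theta\cdot\bigl(T[f]\,(g\varphi)^h - T[f\varphi]\,g^h\bigr)
=\int_{\R^2_+}\Theta\cdot T[f]\,\bigl((g\varphi)^h-\varphi^h g^h\bigr)
+\int_{\R^2_+}\Theta\cdot\bigl(T[f]\,\varphi^h-T[f\varphi]\bigr)g^h,
\]
the same commutator bound Theorem~\ref{th:V:err5:Tcommie}, and the same off-diagonal estimates (Lemma~\ref{la:V:err5:disjointT}, Lemma~\ref{la:harmextest}). For \eqref{eq:oa:2} and \eqref{eq:oa:3} your outline is complete, since there the right-hand side carries only the global $\|f\|_{(2,\infty)}$ or $\|f\|_2$, and splitting $\Theta$ into $\eta_{k_0}\Theta+\sum_\ell\xi_\ell\Theta$ together with the support of $f$ (respectively $\varphi$) does the job.

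There is, however, a genuine gap in your treatment of \eqref{eq:oa:1}. That condition requires the \emph{localized-in-$f$} form $\eps\|f\|_{(2,\infty),I(x_0,2^{k_0}r)}+\tail{\sigma}{\|f\|_{(2,\infty)}}{x_0}{r}{k_0}$ on the right-hand side, i.e.\ a tail in $f$, not in $\Theta$. Splitting only $\Theta$ and then invoking the global commutator bound of Theorem~\ref{th:V:err5:Tcommie} yields at best $\eps\|f\|_{(2,\infty),\R}$ on the local piece, which is not the stated inequality. The paper achieves the $f$-localization by two distinct devices, one per commutator piece. For the Poisson-commutator term it observes that $T[f]=\nabla_{\R^2}\bigl(p_t\ast\lapms{\frac12}f\bigr)$ is both divergence- and curl-free in $\R^2_+$, while $(g\varphi)^h-\varphi^h g^h$ vanishes on $\R\times\{0\}$; hence the localized half-space div-curl estimate Theorem~\ref{th:clmshalfspace} (applied as in Lemma~\ref{la:V:err4}, with Proposition~\ref{pr:VtBMO}) produces the local-plus-tail in $f$ directly. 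Your phrasing ``$L^2\cap L^\infty$ norm is estimated'' for this piece misses this mechanism entirely. For the $T$-commutator term the paper follows the scheme of Proposition~\ref{pr:V:err5}: first restrict to $\eta_{k_0}$ via Lemma~\ref{la:V:err5:faraway}, then localize $f$ itself as $f=\eta_{2k_0}f+\sum_\ell\xi_\ell f$, use Lemma~\ref{la:V:err5:disjointT} to push the annular pieces of $f$ through $T$ (producing the $f$-tail), and only then apply Theorem~\ref{th:V:err5:Tcommie} to the local part of $f$. You list these tools, but without explicitly invoking one of these two localization devices your outline does not deliver \eqref{eq:oa:1}.
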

\begin{proof}
Firstly, the following estimate follows from the representation \eqref{eq:defomega2}, the boundedness of $T$ and the fact that $\|\varphi^h\|_{\infty,\R^2_+} \leq \|\varphi\|_{\infty,\R}$:
\[
 \int_{\R} \int_{\R}f(y)\, \varphi(x)\ \omega^2(x,y)\ dx\, dy\ \leq \|\Omega\|_{2,\R^2_+}\, \|f\|_{L^2(\R)}\, \|\varphi\|_{L^\infty(\R)}.
\]
As for the localization properties, fix $\eps > 0$. For some $k_0$ and $R> 0$ to be chosen below assume that $r \in (0,2^{-k_0} R)$. For simplicity denote by $\tilde{\Omega} := \Pi^h(\solu)\, \Omega\,  \Pi^h(\solu)$.

\underline{Proof of condition~\eqref{eq:oa:1}}. 
Assume that $\|g\|_{\infty} + \|\lapv g\|_{2} \leq 1$ and $\varphi \in C_c^\infty(I(x_0, r))$ with $\|\varphi\|_{\infty} + \|\lapv \varphi\|_{2} \leq 1$. Then from \eqref{eq:defomega2},
\[
\begin{split}
  &\int_{\R} \int_{\R} f(y)\, g(x)\, (\varphi(x)-\varphi(y))\, \omega^2_{kj} (x,y)\, dx\, dy\\
=&\int_{\R^2_+} \tilde{\Omega}  \cdot \brac{T[f] \,  \brac{g \varphi}^h - T[f\varphi] \,  g^h}\\
=&\int_{\R^2_+} \tilde{\Omega}  \cdot T[f]\ \brac{\brac{g \varphi}^h - \varphi^h\, g^h}+\int_{\R^2_+} \tilde{\Omega}  \cdot \brac{T[f] \,  \varphi^h  - T[f\varphi] }g^h.
\end{split}
\]
For the first term, recall that 
\[
 Tf(x,t) = \nabla_{\R^2} p_t \ast \lapms{\frac{1}{2}} f(x)
\]
In particular, $\div Tf = \curl Tf = 0$ in $\R^2_+$. On the other hand, $\brac{g \varphi}^h - \varphi^h\, g^h$ is zero on $\R \times \{0\}$. Thus we can proceed as above for Lemma~\ref{la:V:err4}, and obtain
\[
 \int_{\R^2_+} \tilde{\Omega}  \cdot T[f]\ \brac{\brac{g \varphi}^h - \varphi^h\, g^h} \aleq \|\Omega\|_{2,B^+(x_0,2^{k_0}r)}\ \|f\|_{(2,\infty),I(x_0,2^{k_0}r)} + \tail{\sigma}{\|f\|_{(2,\infty)}}{x_0}{r}{k_0}
\]
For the second term we argue as in the proof of Proposition~\ref{pr:V:err5}, and obtain again
\[
 \int_{\R^2_+} \tilde{\Omega}  \cdot \brac{T[f] \,  \varphi^h  - T[f\varphi] }g^h \aleq \|\Omega\|_{2,B^+(x_0,2^{k_0}r)}\ \|f\|_{(2,\infty),I(x_0,2^{k_0}r)} + \tail{\sigma}{\|f\|_{(2,\infty)}}{x_0}{r}{k_0}.
\]
This implies \eqref{eq:oa:1} if we choose $R$ so small so that 
\[
 \sup_{x_0 \in \R \times \{0\}} \|\Omega\|_{2,B^+(x_0,R)} < \eps.
\]
That is, condition~\eqref{eq:oa:1} is satisfied.

\underline{Proof of condition~\eqref{eq:oa:2}}. 
Assume $\|g\|_{\infty} + \|\lapv g\|_{2} \leq 1$, any $\|\varphi\|_{\infty} + \|\lapv \varphi\|_{2} \leq 1$, and $f \in C_c^\infty(B(x_0, r))$.
As above, from \eqref{eq:defomega2} we find
\[
\begin{split}
 &\int_{\R} \int_{\R} f(y)\, g(x)\, (\varphi(x)-\varphi(y))\, \omega_{kj} (x,y)\, dx\, dy\\
=&\int_{\R^2_+} \eta_{k_0}\tilde{\Omega}  \cdot T[f]\ \brac{\brac{g \varphi}^h - \varphi^h\, g^h}+\int_{\R^2_+} \eta_{k_0}\tilde{\Omega}  \cdot \brac{T[f] \,  \varphi^h  - T[f\varphi] }g^h\\
&+\sum_{k=k_0}^\infty \int_{\R^2_+} \xi_{k}\tilde{\Omega}  \cdot \brac{T[f]\brac{g \varphi}^h - T[f\varphi] g^h}.
\end{split}
 \]
where $\eta_k$ and $\xi_k$ are cutoff functions as above in \eqref{eq:etakxik}.
As above for condition~\eqref{eq:oa:1} we obtain
\[
 \int_{\R^2_+} \eta_{k_0}\tilde{\Omega}  \cdot T[f]\ \brac{\brac{g \varphi}^h - \varphi^h\, g^h} \aleq \|\Omega\|_{2,B^+(x_0,2^{k_0}r)}\ \|f\|_{(2,\infty)}.
\]
By Theorem~\ref{th:V:err5:Tcommie} we find
\[
 \int_{\R^2_+} \eta_{k_0}\tilde{\Omega}  \cdot \brac{T[f] \,  \varphi^h  - T[f\varphi] }g^h \aleq \|\Omega\|_{2,B^+(x_0,2^{k_0}r)}\ \|f\|_{(2,\infty)}.
\]
Since $\supp f \subset B(x_0,r)$, by Lemma~\ref{la:V:err5:disjointT} we have
\[
 \sum_{k=k_0}^\infty \int_{\R^2_+} \xi_{k}\tilde{\Omega}  \cdot \brac{T[f]\brac{g \varphi}^h - T[f\varphi] g^h} \aleq 2^{-\sigma k_0}\, \|\tilde{\Omega}\|_{2,\R^2_+}\, \|f\|_{(2,\infty)}.
\]
Thus we have shown
\[
 \int_{\R} \int_{\R} f(y)\, g(x)\, (\varphi(x)-\varphi(y))\, \omega_{kj} (x,y)\, dx\, dy \aleq \brac{\|\Omega\|_{2,B^+(x_0,2^{k_0}r)}+2^{-\sigma k_0}\, \|\tilde{\Omega}\|_{2,\R^2_+} } \|f\|_{(2,\infty)}.
\]
Choosing $k_0$ large enough so that 
\[
 2^{-\sigma k_0}\, \|\tilde{\Omega}\|_{2,\R^2_+} \leq \frac{\eps}{2}
\]
and then $R$ small enough, so that 
\[
 \sup_{x_0 \in \R^2_+} \|\tilde{\Omega}\|_{2,B(x_0,R)} < \frac{\eps}{2},
\]
we conclude that condition~\eqref{eq:oa:2} is satisfied.

\underline{Proof of condition~\eqref{eq:oa:3}}. 
Assume that $\|g\|_{\infty} + \|\lapv g\|_{2} \leq 1$, $\varphi \in C_c^\infty(I(x_0, r))$ is arbitrary and $f \in C_c^\infty(\R)$. This time, we write
\[
\begin{split}
 &\int_{\R} \int_{\R} f(y)\, g(x)\, (\varphi(x)-\varphi(y))\, \omega_{kj} (x,y)\, dx\, dy\\
=&\int_{\R^2_+} \eta_{k_0}\tilde{\Omega}  \cdot T[f]\ \brac{\brac{g \varphi}^h - \varphi^h\, g^h}+\int_{\R^2_+} \eta_{k_0}\tilde{\Omega}  \cdot \brac{T[f] \,  \varphi^h  - T[f\varphi] }g^h\\
&+\sum_{k=k_0}^\infty \int_{\R^2_+} \xi_{k}\tilde{\Omega}  \cdot \brac{T[f]\brac{g \varphi}^h - T[f\varphi] g^h}\\
\end{split}
 \]
Let $\varphi^e$ be the even reflection of $\varphi^h$ to $\R^2_-$. From \cite[Proposition 10.5.]{Lenzmann-Schikorra-commutators} we have
\begin{equation}\label{eq:bmoreflect}
 [\varphi^e]_{BMO,\R^2} \aleq [\varphi]_{BMO,\R}.
\end{equation}
Thus, from Proposition~\ref{pr:ptfgmptfptg} and the boundedness of $T$, we find
\[
 \int_{\R^2_+} \eta_{k_0}\tilde{\Omega}  \cdot T[f]\ \brac{\brac{g \varphi}^h - \varphi^h\, g^h} \aleq \|\Omega\|_{2,B(x_0,2^{k_0} r)}\ \|f\|_{2,\R}\ [\varphi]_{BMO,\R}
\]
From Theorem~\ref{th:V:err5:Tcommie},
\[
 \int_{\R^2_+} \eta_{k_0}\tilde{\Omega}  \cdot \brac{T[f] \,  \varphi^h  - T[f\varphi] }g^h \aleq \|\Omega\|_{2,B(x_0,2^{k_0} r)}\ \|f\|_{2,\R}\ \|\lapv \varphi\|_{(2,\infty),\R}
\]
Next, by boundedness of $T$, we have
\[
\begin{split}
 &\int_{\R^2_+} \xi_{k}\tilde{\Omega}  \cdot \brac{T[f]\brac{g \varphi}^h - T[f\varphi] g^h}\\
 \aleq& \|g\|_{\infty,\R} \|\Omega\|_{2,\R^2_+}\ \|f\|_{2,\R} \|\xi_k \brac{g \varphi}^h\|_{\infty} + \|g\|_{\infty,\R}\, \|\Omega\|_{2,\R^2_+}\ \|\xi_k T[f\varphi]\|_{2,\R^2_+}.
\end{split}
 \]
On the one hand, by Lemma~\ref{la:harmextest}, the support of $\varphi$ and Poincar\'e inequality,
\[
 \|\xi_k \brac{g \varphi}^h\|_{\infty} \aleq (2^{k} r)^{-1} \|g\varphi\|_{1,\R} \aleq 2^{-k} \|g\|_{\infty,\R}\ \|\lapv \varphi\|_{(2,\infty),\R}.
\]
On the other hand, by Lemma~\ref{la:V:err5:disjointT}, the support of $\varphi$ and Poincare inequality,
\[
\|\xi_k T[f\varphi]\|_{2,\R^2_+} \aleq 2^{-\sigma k} \, \|f\|_{2,\R}\, \|\lapv \varphi\|_{(2,\infty),\R}.
\]
In conclusion,
\[
 \sum_{k=k_0}^\infty \int_{\R^2_+} \xi_{k}\tilde{\Omega}  \cdot \brac{T[f]\brac{g \varphi}^h - T[f\varphi] g^h} \aleq 2^{-\sigma k_0} \|\Omega\|_{2}\, \|f\|_{2,\R}\ \|\lapv \varphi\|_{(2,\infty),\R}
\]
Choosing again $k_0$ sufficiently large and then $R$ sufficiently small so that 
\[
 \brac{\|\Omega\|_{2,B^+(x_0,2^{k_0}r)}+2^{-\sigma k_0}\, \|\tilde{\Omega}\|_{2,\R^2_+} } < \eps
\]
we conclude that \eqref{eq:oa:3} is satisfied.
\end{proof}

Finally, all the ingredients of Theorem~\ref{th:transform} are available.
\begin{proof}[Proof of Theorem~\ref{th:transform}]
Take \[\omega := \omega^1 + \omega^2\] where $\omega^1$ is from Lemma~\ref{la:piperplaphu} and $\omega^2$ is from Proposition~\ref{pr:omega2correct}. Then we have from \eqref{eq:solueq}, Lemma~\ref{la:piperplaphu}, and \eqref{eq:pisolwpart}
\[
 \laph \solu^i = \omega_{ij}(\lapv \solu^j) + \err^i(\solU) \quad \mbox{in $(-2,2)$}
\]
where $\omega$ is antisymmetric and satisfies condition~\ref{cond:omega} and $\err$ satisfies condition~\ref{cond:err}.
 This proves Theorem~\ref{th:transform}.
\end{proof}
\section{Regularity theory for systems with antisymmetric potential at the boundary: Proof of Theorem~\ref{th:mastereq}}\label{s:proofoftheoremmaster}
Assume that $\solU \in H^1(\R^2_+,\R^N)$ has compact support and has the trace $\solu = \solU \Big|_{\R \times \{0\}} \in L^\infty \cap H^{\frac{1}{2}}(\R,\R^N)$, which are solutions to \eqref{eq:transformedeqV} and \eqref{eq:trU}. 

That is 
\[
 \tag{\ref{eq:trU}} 
 \lap \solU^i  = \Omega_{ij}\cdot \nabla \solU^j + \Err^i(\solU) \quad \mbox{in }\R^2_+\\
\]
for some $\Omega_{ij} = -\Omega_{ij} \in L^2(\R^2_+,\R^2)$ and for some $\Err^i(\solU)$ satisfying the conditions~\ref{cond:Err} below.

\[
  \laph \solu^i = \omega_{ij}(\solu^j) + \err^i(\solU) \quad \mbox{in }(-2,2) \tag{\ref{eq:transformedeqV}}
\]
for a nonlocal, boundary antisymmetric potential $\omega_{ij} = -\omega_{ji}: \dot{H}^{\frac{1}{2}}(\R) \to L^1(\R)$ which is a linear operator given via
\[
  \int_{\R} \omega_{ij}(f)\ \varphi := \int_{\R}\int_{\R} \omega_{ij}(x,y)\, \lapv f(y)\ \varphi(x)\ dy\, dx
\]
whose kernel $\omega_{ij}(x,y)$ satisfies the localization conditions~\ref{cond:omega} below. Moreover $\err(\solU)$ satisfies the conditions~\ref{cond:err}.

With $\solV \in \dot{H}^1(\R^2_+)\cap L^\infty(\R^2_+)$, we denote the Poisson extension $\solV = \solu^h = p_t \ast \solu$, which satisfies
\[
 \begin{cases}
  \lap \solV = 0 \quad &\mbox{in $\R^2_+$}\\
  \solV = \solu \quad &\mbox{on $\R \times \{0\}$}\\
  \lim_{|(x,t)| \to \infty} \solV(x,t) = 0.
 \end{cases}
\]
Then $\solU = \solW + \solV$ for $\solW \in \dot{H}^1(\R^2_+)$ satisfying
\[
 \begin{cases}
  \lap \solW^i = \Omega_{ij}\cdot \nabla \solU^j + \Err^i(\solU) \quad &\mbox{in }\R^2_+\\
  \solW = 0 \quad &\mbox{on $\R \times \{0\}$}\\
  \lim_{|(x,t)| \to \infty} \solW(x,t) = 0.
 \end{cases}
\]
Let $x_0 \in (-1,1) \times \{0\}$, $r > 0$. We are going to prove a decay estimate for 
\[
 \mathscr{G}(x_0,r) := \|\nabla \solW\|_{(2,\infty),B^+(x_0,r)} + \|\lapv \solu\|_{(2,\infty),I(x_0,r)}.
\]
\begin{proposition}\label{pr:decay}
Let $\solU$, $\solW$, $\solu$ be as above. Then for any $\eps > 0$ there exists a radius $R \in (0,1)$, a constant $k_0 \in \N$ so that for any $x_0 \in (-1,1) \times \{0\}$ and any $r \in (0,2^{-k_0} R)$ it holds
\[
 \mathscr{G}(x_0,r) \leq \eps\, \mathscr{G}(x_0,2^{k_0} r) + r^\sigma + \sum_{k=k_0}^\infty 2^{-\sigma k}\, \mathscr{G}(x_0,2^{k}r).
\]
Here, $\sigma > 0$ is a uniform constant.
\end{proposition}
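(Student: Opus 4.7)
The plan is to prove the decay estimate by dualizing the two pieces of $\mathscr{G}(x_0,r)$ separately and, in each case, applying a gauge adapted to the antisymmetric potential so that the critical product $\Omega\cdot\nabla\solU$ (resp.\ $\omega(\lapv\solu)$) becomes a div-curl / compensation expression controlled by the half-space Coifman-Lions-Meyer-Semmes lemma (Theorem~\ref{th:clmshalfspace}). The small factor $\eps$ in front of $\mathscr{G}(x_0,2^{k_0}r)$ will come from absolute continuity of $\|\Omega\|_2$ and the localization hypotheses on $\omega$; the tail and $r^\sigma$ contributions will come from standard harmonic-extension estimates and conditions~\ref{cond:Err},~\ref{cond:err}.

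For the interior piece $\|\nabla\solW\|_{(2,\infty),B^+(x_0,r)}$, I would dualize against $L^{(2,1)}$ vector fields $\Phi$ supported in $B^+(x_0,r)$ and insert the interior Coulomb gauge $P\in\dot H^1(\R^2_+,SO(N))$ from Proposition~\ref{pr:gaugenormal}, which satisfies $\dv(\nabla P\,P^T+P\Omega P^T)=0$ and $P\equiv I$ on $\R\times\{0\}$. This rewrites the integrand so that $\Omega\cdot\nabla\solU$ is absorbed into a divergence-free quantity paired with a harmonically-extended test function. Cutting off on $B(x_0,2^{k_0}r)$ and further splitting $\solW=\solW_1+\solW_2$, where $\solW_1$ carries the localized source and vanishes on $\R\times\{0\}$ near $x_0$ and $\solW_2$ is harmonic carrying the tail data, Theorem~\ref{th:clmshalfspace} applied to $\solW_1$ produces the $\eps$-factor provided $R$ is chosen so small that $\|\Omega\|_{2,B^+(\cdot,R)}<\eps$ uniformly. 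The harmonic piece $\solW_2$ is estimated by Lemma~\ref{la:harmextest}, yielding the geometric tail $\sum_{k\geq k_0}2^{-\sigma k}\mathscr{G}(x_0,2^k r)$; the source $\Err$ contributes $r^\sigma$ by condition~\ref{cond:Err}; and the residual interior action of $\Omega$ on the harmonic extension $\solV$ — which is not killed by the interior gauge — is transferred to $\lapv\solu$ via the commutator estimate Theorem~\ref{th:V:err5:Tcommie}, thereby contributing only to the boundary term.

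For the boundary piece $\|\lapv\solu\|_{(2,\infty),I(x_0,r)}$, I would apply the optimal boundary gauge constructed in Section~\ref{s:optimalgauge}, i.e.\ a map adapted to $\omega$ that, after composition, represents $\omega_{ij}(\lapv\solu^j)$ as a sum of pairings whose structure matches the three hypotheses \eqref{eq:oa:1}--\eqref{eq:oa:3} on $\omega$. Dualizing against $\varphi\in C_c^\infty(I(x_0,r))$ with $\|\varphi\|_\infty+\|\lapv\varphi\|_2\leq 1$, condition \eqref{eq:oa:2} produces the principal $\eps\,\mathscr{G}(x_0,2^{k_0}r)$ term, condition \eqref{eq:oa:1} delivers the geometric tail, and condition \eqref{eq:oa:3} handles the pieces where the cutoff of $\varphi$ is awkward. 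The error $\err(\solU)$ contributes the remaining $r^\sigma$ and tail terms directly by condition~\ref{cond:err}.

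The main obstacle I expect is the simultaneous calibration of the scale parameters: $R$ and $k_0$ must be chosen so that absolute continuity of $\|\Omega\|_{2,B^+(x_0,R)}$, the localization hypotheses of $\omega$, and the smallness produced by the commutator estimates coupling $\solV$, $\solW$, and $\lapv\solu$ are \emph{all} bounded by the same target $\eps$, uniformly in $x_0\in(-1,1)\times\{0\}$. A secondary technical point is the Lorentz-space duality step: since the LHS is in $L^{(2,\infty)}$ one must test with $L^{(2,1)}$ functions supported in the small ball, and the resulting quantitative H\"older-type inequality \eqref{eq:hoelder} must be tracked carefully through the gauge transform so that the constants absorb into the $\eps$-smallness rather than degrading to a mere $O(1)$ bound. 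Once these scale choices are in place, summing the two dualized estimates yields the stated inequality.
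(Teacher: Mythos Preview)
Your overall strategy matches the paper's exactly: Proposition~\ref{pr:decay} is obtained by estimating the interior piece $\|\nabla\solW\|_{(2,\infty),B^+(x_0,r)}$ (Proposition~\ref{pr:West}) and the boundary piece $\|\lapv\solu\|_{(2,\infty),I(x_0,r)}$ (Proposition~\ref{pr:Vgoal}) separately, using the interior gauge $P$ of Proposition~\ref{pr:gaugenormal} and the nonlocal boundary gauge $p$ of Theorem~\ref{th:gaugenonlocal}, respectively, together with Theorem~\ref{th:clmshalfspace} and the conditions on $\Err,\err,\omega$.

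The tactical details differ in two places. For the interior piece the paper does \emph{not} split $\solW=\solW_1+\solW_2$; instead it dualizes $\|P\nabla\solW\|_{(2,\infty),B^+(x_0,r)}$ against a vector field $F\in L^{(2,1)}$ supported in $B^+(x_0,r)$ and Hodge-decomposes the \emph{test} field $F=\nabla\Phi+H$ with $\Phi|_{\R\times\{0\}}=0$ and $\dv H=0$ (Proposition~\ref{pr:hodgewithestimate}). The divergence-free part $H$ is disposed of by Theorem~\ref{th:clmshalfspace} using $\solW|_{\R\times\{0\}}=0$, and the $\nabla\Phi$-part absorbs the PDE after integration by parts; the tail is generated by the off-support decay of $\Phi$ and $H$ in Proposition~\ref{pr:hodgewithestimate}, not by Lemma~\ref{la:harmextest}. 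Your proposed harmonic splitting of $\solW$ would also work after an odd reflection (since $\solW_2$ has zero trace, Lemma~\ref{la:harmextest} as stated does not apply directly), but the paper's route avoids this extra step. Secondly, the residual $\Omega\cdot\nabla\solV$ in the interior estimate is handled not via the extension commutator Theorem~\ref{th:V:err5:Tcommie} but again by a Hodge decomposition of $\Omega$ combined with Theorem~\ref{th:clmshalfspace} (Lemma~\ref{la:West:2}, argued as in Lemma~\ref{la:V:err4}). For the boundary piece, the paper does not invoke \eqref{eq:oa:1}--\eqref{eq:oa:3} directly; those conditions are already baked into the gauge estimate \eqref{eq:gaugenonlocal:estimate} of Theorem~\ref{th:gaugenonlocal}, and it is that single estimate, together with Lemma~\ref{la:lhslaph} for the duality and the three-term Leibniz correction $H_{\frac{1}{2}}(p,\varphi)$, that yields Proposition~\ref{pr:Vgoal}.
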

Proposition~\ref{pr:decay} is a consequence of Propositions~\ref{pr:West} and \ref{pr:Vgoal} below. Propositions~\ref{pr:West} estimates the interior quantity $\|\nabla \solW\|_{(2,\infty)}$, and Proposition~\ref{pr:Vgoal} estimates the boundary quantity $\|\lapv \solu\|_{(2,\infty)}$.
\begin{proposition}\label{pr:West}
Let $\solU$, $\solW$ be as above. For any $\eps > 0$ there exists $R  \in (0,1)$ so that for any $x_0 \in \R \times \{0\}$ and $k_0 \geq 10$ and any $r < 2^{-k_0}R$ we have
\[
\begin{split}
\|\nabla \solW\|_{(2,\infty),B^+(x_0,r)}  \aleq& + \eps\ \brac{\|\nabla \solW\|_{(2,\infty),B(x_0,\Lambda r)}+\|\lapv \solu\|_{(2,\infty),B(x_0,\Lambda r)}}\\
&+  \sqrt{r}\ \brac{\|\Omega\|_{2,\R^2_+}+1}\, \|\nabla \solU\|_{2,\R^2_+}\\
&+ (1+\|\Omega\|_{2})\ \tail{\sigma}{\|\lapv \solu\|_{(2,\infty)}+\|\nabla \solW\|_{(2,\infty)}}{x_0}{R}{k_0}.
\end{split}
\]
Here, $\sigma$ is a uniform constant.
\end{proposition}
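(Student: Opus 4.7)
The plan is to use the interior gauge $P$ from Proposition~\ref{pr:gaugenormal} to recast the equation for $\solW$ in divergence form, and then dualize in the Lorentz scale to reduce matters to div-curl pairings that are handled by Theorem~\ref{th:clmshalfspace}. Writing $\Omega^P := \nabla P\, P^T + P \Omega P^T$, which is divergence-free in $\R^2_+$, the identity $\nabla P = \Omega^P P - P \Omega$ (obtained from $P P^T = I$ and the definition of $\Omega^P$) combined with $\lap \solW = \Omega \cdot \nabla \solU + \Err$ and $\nabla \solU = \nabla \solV + \nabla \solW$ yields the divergence-form identity
\[
 \div(P \nabla \solW) \;=\; \Omega^P \cdot (P \nabla \solW) \;+\; P\, \Omega \cdot \nabla \solV \;+\; P\, \Err.
\]
Since $P \equiv I$ on $\R \times \{0\}$ and $\solW$ vanishes there, $P \solW$ has zero trace on the boundary, so the div-curl theory of the half-plane applies.

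To extract the $L^{(2,\infty)}$ decay on $B^+(x_0,r)$, I would exploit the Lorentz duality $\|\nabla \solW\|_{(2,\infty), B^+(x_0, r)} \simeq \sup\{ \int (\nabla \solW)\cdot \phi : \phi \in C_c^\infty(B^+(x_0, r)),\ \|\phi\|_{(2,1)} \leq 1\}$, writing $\nabla \solW = P^T (P \nabla \solW)$, and pairing the above identity against a dual potential $\Phi$ solving an auxiliary half-plane Dirichlet problem with $\div$-data $P^T \phi$. Integrating by parts, the critical term $\int \Omega^P \cdot (P \nabla \solW)\, \Phi$ falls under a half-space div-curl estimate: after Hodge decomposing $\Omega^P = \nabla^\perp \xi$ in $\R^2_+$ and invoking Theorem~\ref{th:clmshalfspace}, one gets a bound involving $\|\Omega\|_{2, B(x_0,\Lambda r)} \cdot \|\nabla \solW\|_{(2,\infty), B(x_0,\Lambda r)}$, and by absolute continuity of $\|\Omega\|_2$ this factor is $\leq \eps$ once $R$ is chosen small enough. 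The term $\int P \Omega \cdot \nabla \solV\, \Phi$ is treated as in Lemma~\ref{la:V:err4}: Hodge decompose $\Omega \chi_{\R^2_+}$ on $\R^2$ into $\nabla G + \nabla^\perp F$, use $\nabla \solV = \nabla^\perp \tilde \solV$ where $\tilde \solV = p_t \ast \Hz \solu$, and apply Theorem~\ref{th:clmshalfspace} followed by Proposition~\ref{pr:VtBMO} and Proposition~\ref{pr:localHilbert} to extract $\eps\, \|\lapv \solu\|_{(2,\infty), B(x_0,\Lambda r)}$ plus tails in $\|\lapv \solu\|_{(2,\infty)}$. Finally, $\int P \Err\, \Phi$ splits along Condition~\ref{cond:Err}: the $L^p$ component $\Err_1$ with $p > 1$ gives the $\sqrt r\, (1 + \|\Omega\|_2)\, \|\nabla \solU\|_2$ contribution via H\"older in Lorentz spaces (gaining a power of $r$ from the measure of $B^+(x_0,\Lambda r)$), while $\Err_2$ contributes $r^\sigma$ via the pairing with $\nabla \Phi$.

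The tails over the dyadic annuli $B(x_0, 2^{k+1} r) \setminus B(x_0, 2^k r)$ for $k \geq k_0$ arise from localizing $\Phi$: its far-field behavior and the decay of Poisson-type kernels (Lemma~\ref{la:harmextest}) produce a contribution of order $2^{-\sigma k}\, \mathscr{G}(x_0, 2^k r)$, giving the tail sum $(1+\|\Omega\|_2)\, \tail{\sigma}{\|\lapv \solu\|_{(2,\infty)}+\|\nabla \solW\|_{(2,\infty)}}{x_0}{R}{k_0}$ in the stated form. The main obstacle I anticipate is executing Theorem~\ref{th:clmshalfspace} with the sharp Lorentz target so that $\|\nabla \solW\|_{(2,\infty)}$ (rather than $\|\nabla \solW\|_2$) appears on the right with a coefficient that can be made genuinely small; this requires simultaneously using the vanishing trace of $P \solW$ on $\R \times \{0\}$ and the divergence-freeness of $\Omega^P$, and it is precisely the joint role of these two properties that makes the boundary-adapted gauge of Proposition~\ref{pr:gaugenormal} (with $P \equiv I$ on the boundary) indispensable.
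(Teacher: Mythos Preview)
Your approach is essentially the paper's: take the interior gauge $P$ from Proposition~\ref{pr:gaugenormal}, dualize $\|\nabla \solW\|_{(2,\infty),B^+(x_0,r)}$ against an $L^{(2,1)}$ test field, and reduce to div--curl pairings controlled by Theorem~\ref{th:clmshalfspace}. The divergence-form identity you write, $\div(P\nabla\solW)=\Omega^P\cdot(P\nabla\solW)+P\Omega\cdot\nabla\solV+P\,\Err$, is exactly what the paper unpacks in Lemma~\ref{la:West:1}--\ref{la:West:3}, and your treatment of each right-hand term matches theirs.

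There is one omission. When you pass from the dual vector field $\phi$ to a scalar potential $\Phi$ ``solving an auxiliary half-plane Dirichlet problem with $\div$-data,'' you are implicitly Hodge decomposing the test field, and only the gradient part $\nabla\Phi$ pairs against $\div(P\nabla\solW)$. The divergence-free remainder $H$ must be handled separately: the paper does this in Lemma~\ref{la:West:IH} by writing $\int P\nabla\solW\cdot H=-\int\solW\,\nabla P\cdot H$ (using $\solW=0$ on $\R\times\{0\}$) and applying Theorem~\ref{th:clmshalfspace} to the div--curl pair $(\nabla P,H)$. Without this step the duality does not close. The far-field decay you need for the tails comes from the Green's-function estimates in Proposition~\ref{pr:hodgewithestimate} (not Lemma~\ref{la:harmextest}); also, there is no need to write $\Omega^P=\nabla^\perp\xi$, since Theorem~\ref{th:clmshalfspace} takes $\div\Omega^P=0$ directly.
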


\begin{proposition}\label{pr:Vgoal}
For any $\eps > 0$ there exists $R \in (0,1)$ so that whenever $2^{k_0} r < R$, $x_0 \in (-1,1)$ so that
\begin{equation}\label{eq:lapvuestgoal}
\begin{split}
 \|\lapv \solu\|_{(2,\infty),I(x_0,r)} \aleq& \eps \brac{\|\lapv \solu\|_{(2,\infty),I(x_0,2^{k_0}r)} + \|\lapv \solW\|_{(2,\infty),B^+(x_0,2^{k_0}r)}}\\
 &+ \tail{\sigma}{\|\lapv \solu\|_{(2,\infty)}+\|\nabla \solW\|_{(2,\infty)}}{x_0}{r}{k_0} + (2^{k_0}r)^\sigma.
\end{split}
 \end{equation}
Here, $\sigma$ is a uniform constant.
\end{proposition}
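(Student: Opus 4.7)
The plan is to prove \eqref{eq:lapvuestgoal} by a Lorentz-duality argument, substituting the boundary equation \eqref{eq:transformedeqV}, and then applying the optimal gauge constructed in Section~\ref{s:optimalgauge} to exploit the antisymmetry of $\omega$.

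First, I would use that $\|\lapv \solu\|_{(2,\infty),I(x_0,r)}$ is controlled (up to a far-field tail in $\|\lapv\solu\|_{(2,\infty)}$) by the supremum of $\int_{\R} \varphi\,\laph \solu$ over admissible test functions $\varphi \in C_c^\infty(I(x_0,r))$ with $\|\varphi\|_\infty + \|\lapv \varphi\|_2 \leq 1$. This is the natural dual class for the half-Laplacian and is precisely the class of test functions appearing in Conditions~\ref{cond:omega} and \ref{cond:err}. The reduction proceeds by splitting $\solu$ into a piece essentially localized near $I(x_0, 2^{k_0} r)$ and a far-field piece, applying Riesz-potential and Hardy-space mapping properties to the localized part and using the decay of $\lapms{1/2}$ to absorb the far field into $\tail{\sigma}{\|\lapv \solu\|_{(2,\infty)}}{x_0}{r}{k_0}$.

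Second, I would substitute $\laph \solu^i = \omega_{ij}(\lapv \solu^j) + \err^i(\solU)$ into this pairing. For the $\omega$-term I would invoke the gauge $P$ from Section~\ref{s:optimalgauge}: this is an $SO(N)$-valued map designed so that conjugation by $P$ turns $\omega$ into a commutator-like object whose contribution, after testing against $P\varphi$, is dominated by the three localization identities \eqref{eq:oa:1}, \eqref{eq:oa:2}, \eqref{eq:oa:3}. The antisymmetry $\omega_{ij}(x,y)=-\omega_{ji}(x,y)$ is the reason the symmetrized difference $\omega_{ij}(x,y)\,(\varphi(x)-\varphi(y))$ appears in those conditions and gives the $\eps$-smallness on $I(x_0, 2^{k_0} r)$, with the far field again absorbed into a $\tail{\sigma}{\cdot}{x_0}{r}{k_0}$ expression.

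Third, the error term $\err(\solU)$ is controlled directly by Condition~\ref{cond:err}, contributing the interior piece $\|\nabla \solW\|_{(2,\infty), B^+(x_0, 2^{k_0} r)}$, the coupled tail $\tail{\sigma}{\|\lapv \solu\|_{(2,\infty)} + \|\nabla \solW\|_{(2,\infty)}}{x_0}{r}{k_0}$, and the additive $(2^{k_0} r)^\sigma$. Choosing $R$ small enough so that the $L^2$-mass of the various potentials and gauges on balls of radius $R$ is below any prescribed threshold then closes the argument.

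The hard part is the second step: constructing a gauge for a nonlocal antisymmetric bilinear form and showing that it transforms $\omega$ into a commutator-like object to which the three localization properties of Condition~\ref{cond:omega} can be applied. Unlike the classical Rivi\`{e}re setting, where $\omega = \Omega \cdot \nabla$ with pointwise antisymmetric $\Omega$ and the gauge $P$ satisfies $\dv(\nabla P\,P^T + P\Omega P^T)=0$, the nonlocal analogue must interact with $\omega(x,y)$ through the symmetric difference $\varphi(x)-\varphi(y)$; the three conditions \eqref{eq:oa:1}--\eqref{eq:oa:3} are engineered precisely to make this work at the three relevant scales (smooth interior test functions, compactly supported $f$, and general $f\in L^2$). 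A secondary difficulty, already built into the statement, is that $\err(\solU)$ couples boundary and interior information, so the decay estimate for $\|\lapv \solu\|_{(2,\infty)}$ does not close independently of $\|\nabla \solW\|_{(2,\infty)}$; this is exactly why Proposition~\ref{pr:Vgoal} is combined with Proposition~\ref{pr:West} in the proof of Proposition~\ref{pr:decay}.
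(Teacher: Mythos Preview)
Your proposal is correct and follows essentially the same approach as the paper: Lorentz duality plus the nonlocal gauge of Section~\ref{s:optimalgauge} for the $\omega$-term, and Condition~\ref{cond:err} for $\err$. Two small points of execution: the paper applies the duality lemma (Lemma~\ref{la:lhslaph}) to the \emph{gauged} quantity $f=p_{ij}\lapv\solu^j$ (the boundary gauge is the lowercase $p\in\dot H^{1/2}(\R,SO(N))$ from Theorem~\ref{th:gaugenonlocal}, not the interior $P$), so that the combination $\lapv p_{ij}\,\lapv\solu^j + p_{ij}\,\omega_{jk}(\lapv\solu^k)$ appears directly and is handled by the gauge estimate \eqref{eq:gaugenonlocal:estimate}; and this manipulation produces an additional Leibniz defect $\int H_{1/2}(p_{ij},\varphi)\,\lapv\solu^j$, estimated separately in Lemma~\ref{la:Vgoal:3}, which you should not forget.
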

From Proposition~\ref{pr:decay} we obtain Theorem~\ref{th:mastereq} in a standard way.
\begin{proof}[Proof of Theorem~\ref{th:mastereq}]
Applying Proposition~\ref{pr:decay} on successively smaller radii we obtain for some $\tau = \tau(\eps,k_0,\sigma) > 0$ for any $x_0 \in (-1,1)\times \{0\}$ that for any $r > 0$,
\[
 \mathscr{G}(x_0,r) \leq C\ r^\tau,
\]
where $C$ depends on $\mathscr{G}(x_0,\infty)$. See for example \cite[Lemma A.8]{Blatt-Reiter-Schikorra-2016}. That is, for any $x_0 \in (-1,1)\times \{0\}$ and any $r > 0$,
\[
 \|\lapv \solu\|_{(2,\infty),I(x_0,r)}  + \|\nabla \solW\|_{(2,\infty),B^+(x_0,r)} \aleq r^\tau.
\]
This is a Morrey space condition, and estimates on Riesz potentials on Morrey spaces, see~\cite{Adams-1975}, implies that $\solu \in C^\alpha((-1,1))$ and $\solW \in C^\alpha(B^+(0,1))$. 

Continuity up to the boundary follows now from \cite{Mueller-Schikorra-2009}. H\"older continuity follows from a reflection argument:
Since $\solu$ is H\"older continuous, so is $\solV$. Thus, for any $x_0 \in (-1,1) \times \{0\}$,
\[
 \int_{B^+(x_0,r)} |\solV-(\solV)_{B^+(x_0,r)}|^2  \aleq r^{2+2\tau},
\]
and
\[
 \int_{B^+(x_0,r)} |\solW-(\solW)_{B^+(x_0,r)}|^2  \aleq r^{2+2\tau},
\]
Denote by $U^e$ the even reflection of $U$ across $\R \times \{0\}$. Then, since since $\solU = \solW + \solV$ in $\R^2_+$ for any $x_0 \in (-1,1) \times \{0\}$
\[
 \int_{B(x_0,r)} |\solU^e-(\solU^e)_{B(x_0,r)}|^2  \aleq r^{2+2\tau}.
\]
On the other hand, from the interior regularity theory due to Rivi\`{e}re, \cite{Riviere-2007}, we have
\[
 \int_{B(y_0,r)} |\solU-(\solU)_{B(y_0,r)}|^2 \aleq r^{2+2\tau} \quad \mbox{whenever $B(y_0,2r) \subset \R^2_+$}.
\]
For any $z_0 \in (-1,1) \times (0,\infty)$ and any $r > 0$ or $B(z_0,2r)\subset \R^2_+$ or $B(z_0,r) \subset B(\pi(z_0),5r)$, where $\pi(z_0)$ is the projection to $\R \times \{0\}$ of $z_0$. Thus, for any $z_0 \in (-1,1) \times \R$,
\[
\int_{B(z_0,r)} |\solU^e-(\solU^e)_{B(y_0,r)}|^2 \aleq r^{2+2\tau}
\]
By the characterization of H\"older spaces by Campanato spaces, see, e.g., \cite{Giaquinta-MI}, we have $\solU^e \in C^{\tau}((-1,1) \times \R)$, can consequently $\solU \in C^\tau\brac{(-1,1) \times [0,\infty)}$.
\end{proof}
\subsection{Interior decay estimate for W: Proof of Proposition~\ref{pr:West}}
Recall that $\solU = \solW + \solV$. Thus $\solW \in \dot{H}^1(\R^2_+)$ is a solution of
\begin{equation}\label{eq:West:Weq}
 \begin{cases}
  \lap \solW^i = \Omega_{ij}\cdot \nabla \solW^j + \Omega_{ij}\cdot \nabla \solV^j  + \Err^i(\solU) \quad &\mbox{in }\R^2_+\\
  \solW = 0 \quad &\mbox{on $\R \times \{0\}$}\\
  \lim_{|(x,t)| \to \infty} \solW(x,t) = 0.
 \end{cases}
\end{equation}
For the proof of Proposition~\ref{pr:West} we adapt carefully of the interior regularity theory of Rivi\`{e}re \cite{Riviere-2007}. Observe that his theory would be directly applicable to a solution $\tilde{W}$ of an equation of the form
\[
 \begin{cases}
  \lap \tilde{\solW}^i = \Omega_{ij}\cdot \nabla \tilde{\solW}^j  \quad &\mbox{in }\R^2_+\\
  \tilde{\solW} = 0 \quad &\mbox{on $\R \times \{0\}$}\\
  \lim_{|(x,t)| \to \infty} \tilde{\solW}(x,t) = 0.
 \end{cases}
\]
For our situation \eqref{eq:West:Weq}, however, we have two distortions on the right-hand side. On the one hand there is the (harmless) term $\Err^i(\solU)$. More importantly, in \eqref{eq:West:Weq} we have the term $\Omega \cdot \nabla \solV$, i.e. a boundary action term.

Nevertheless, it will suffice to essentially follow the interior arguments. From Proposition~\ref{pr:gaugenormal} we find an optimal gauge $P \in H^1(\R^2_+,SO(N))$ so that
\begin{equation}\label{eq:choicegauge}
 \dv (P \nabla P^T + P \Omega P) = 0 \quad \mbox{in $\R^2_+$}.
\end{equation}
and
\begin{equation}\label{eq:West:NPest}
 \|\nabla P\|_{2,\R^2_+} \aleq\|\Omega\|_{2,\R^2_+} .
\end{equation}
We choose $R \in (0,1)$ so that
\begin{equation}\label{eq:West:eps}
 \sup_{x \in \R^2_+} \|\nabla P\|_{2,B^+(x,R)} + \|\Omega\|_{2,B^+(x,R)} < \eps.
\end{equation}
In order to estimate $\nabla\solW$ we argue by duality. Namely, we find $F \in C^\infty(\R^2_+,\R^2)$ with support $\supp F \subset \overline{B^+(x_0,r)}$ so that $\|F\|_{(2,1)} \leq 1$ and so that
\[
 \|\nabla \solW\|_{(2,\infty),B^+(x_0,r)} \aeq \|P\, \nabla \solW\|_{(2,\infty),B^+(x_0,r)} \aleq \int_{\R^2_+} P\, \nabla \solW \cdot F.
\]
With Hodge decomposition we find $\Phi \in C^\infty(\overline{\R^2_+})$ and a vector field $H \in C^\infty(\overline{\R^2_+},\R)$
\[
 F = \nabla \Phi + H \quad \mbox{in $\R^2_+$},
\]
and we may assume that $\Phi = 0$ on $\R \times \{0\}$ and $\dv H = 0$ in $\R^2_+$. Moreover \begin{equation}\label{eq:West:Hest}\|H\|_{(2,1),\R^2_+} + \|\nabla \Phi\|_{(2,1),\R^2_+} \aleq 1.\end{equation} See Proposition~\ref{pr:hodgewithestimate}.

That is,
\begin{equation}\label{eq:West:nablaWest}
 \|\nabla \solW\|_{(2,\infty),B^+(x_0,r)} \aleq \int_{\R^2_+} P\, \nabla \solW \cdot H + \int_{\R^2_+} P\, \nabla \solW \cdot \nabla \Phi =: I_H + I_{\Phi}.
\end{equation}
First we treat $I_H$.
\begin{lemma}\label{la:West:IH}
For possibly smaller $R \in (0,1)$, all sufficiently large $k_0 \in \N$, all radii $r \in (0,2^{-k_0} R)$, and all $x_0 \in \R \times \{0\}$ we have
\[
 |I_H| \aleq \eps \|\nabla \solW\|_{(2,\infty),B^+{x_0,2^{k_0}r)}} + \|\Omega\|_{2}\ \tail{\sigma}{\|\nabla \solW\|_{(2,\infty)}}{x_0}{R}{k_0}.
\]
Here, $\sigma > 0$ is a uniform constant.
\end{lemma}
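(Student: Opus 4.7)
The plan is to estimate $I_H=\int_{\R^2_+}P\nabla\solW\cdot H$ by combining three structural ingredients. First, the optimal gauge $P$ from Proposition~\ref{pr:gaugenormal} satisfies $\dv(\nabla P\,P^T+P\Omega P^T)=0$ in $\R^2_+$ with $P=\id$ on $\R\times\{0\}$, and $\|\nabla P\|_{2,B^+(x_0,R)}\aleq \|\Omega\|_{2,B^+(x_0,R)}<\eps$ by \eqref{eq:West:eps}. Second, since $\dv H=0$ on the simply connected half-plane, Hodge decomposition provides a scalar potential $h$ with $H=\nabla^\perp h$ and $\|\nabla h\|_{(2,1),\R^2_+}\aleq \|H\|_{(2,1),\R^2_+}\aleq 1$. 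Third, $\solW=0$ on $\R\times\{0\}$, which will kill the boundary contribution along $\partial\R^2_+$ when integrating by parts.

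Next I would localize with cutoffs: let $\eta_{k_0}\in C_c^\infty(B(x_0,2^{k_0+1}r))$ be $\equiv 1$ on $B(x_0,2^{k_0}r)$, and $\xi_k=\eta_{k+1}-\eta_k$ be dyadic annular cutoffs for $k\ge k_0$. Split
\[
I_H=\int_{\R^2_+}P\nabla\solW\cdot\eta_{k_0}H+\sum_{k\ge k_0}\int_{\R^2_+}P\nabla\solW\cdot\xi_k H.
\]
On each annulus $\supp\xi_k$ (with $k\ge k_0$), since $F=\nabla\Phi+H$ is supported in $\overline{B^+(x_0,r)}\subset B^+(x_0,2^{k_0}r)$ and vanishes on $\supp\xi_k$, one has $H=-\nabla\Phi$ there. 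Decay estimates for the Hodge potential $\Phi$ (vanishing trace on $\R\times\{0\}$, right-hand side supported in a ball of radius $r$) yield $\|\nabla\Phi\|_{(2,1),\supp\xi_k}\aleq 2^{-\sigma k}$. Pairing via H\"older in Lorentz spaces and writing this annular contribution as a Jacobian integral against $\solW$ — so that $\|\nabla P\|_{2}\aleq\|\Omega\|_2$ enters as a prefactor — summing yields the tail bound $\|\Omega\|_{2}\,\tail{\sigma}{\|\nabla\solW\|_{(2,\infty)}}{x_0}{R}{k_0}$.

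For the central piece, I would write $\eta_{k_0}H=\nabla^\perp(\eta_{k_0}h)-h\,\nabla^\perp\eta_{k_0}$; the commutator involving $\nabla^\perp\eta_{k_0}$ is supported in the thin annulus $\supp\nabla\eta_{k_0}$ and is absorbed into the tail via $\|\nabla\eta_{k_0}\|_\infty\aleq(2^{k_0}r)^{-1}$ and Poincar\'e. The leading term $\int P\nabla\solW\cdot\nabla^\perp(\eta_{k_0}h)$ can be integrated by parts, using $\solW=0$ on $\R\times\{0\}$ to kill the boundary contribution, to obtain
\[
\int_{\R^2_+}\solW^j\,\bigl(\partial_1 P^{ij}\,\partial_2(\eta_{k_0}h)-\partial_2 P^{ij}\,\partial_1(\eta_{k_0}h)\bigr),
\]
namely a pairing of the Jacobian $\nabla^\perp P^{ij}\cdot\nabla(\eta_{k_0}h)$ against $\solW^j$. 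Extending $\solW$ by odd reflection across $\R\times\{0\}$ produces $\tilde\solW\in\dot H^1(\R^2)$, whose $\mathrm{BMO}$ seminorm on $B(x_0,2^{k_0+1}r)$ is $\aleq\|\nabla\solW\|_{(2,\infty),B^+(x_0,2^{k_0+1}r)}$ modulo tails by a Lorentz-Poincar\'e argument. Extending $P$ evenly and $\eta_{k_0}h$ by zero, the Jacobian lies in $\hardy(\R^2)$ with norm $\aleq\|\nabla P\|_{2,B^+(x_0,2^{k_0+1}r)}\|\nabla h\|_2$ by Coifman-Lions-Meyer-Semmes \cite{CLMS-1993}. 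Hardy-BMO duality then gives
\[
|I_H^{\rm near}|\aleq \|\nabla P\|_{2,B^+(x_0,R)}\,\bigl(\|\nabla\solW\|_{(2,\infty),B^+(x_0,2^{k_0+1}r)}+\tail{\sigma}{\|\nabla\solW\|_{(2,\infty)}}{x_0}{R}{k_0}\bigr),
\]
and smallness of $\|\nabla P\|_{2,B^+(x_0,R)}<\eps$ delivers the $\eps$-factor.

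The hard part will be executing the reflection/extension consistently: the Hodge decomposition of $H$ does not preserve the compact support of $F$, which forces the dyadic decomposition above, and the Hardy-BMO duality is naturally phrased on $\R^2$, so the boundary-crossing terms (odd reflection of $\solW$ versus even reflection of $P$ and zero extension of $\eta_{k_0}h$) must be handled cleanly. This is precisely where the half-space div-curl lemma Theorem~\ref{th:clmshalfspace} and the harmonic-extension estimates Lemma~\ref{la:harmextest} do the bookkeeping, and the absorption of commutator terms from $\nabla\eta_{k_0}$ into the tail is the most delicate step.
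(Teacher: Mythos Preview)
Your approach is correct in spirit and rests on the same mechanism as the paper's proof---the div-curl pairing of $\nabla P$ against $H$, tested with $\solW$ (which vanishes on $\R\times\{0\}$)---but your route is considerably more involved than necessary. The paper performs a single global integration by parts at the outset: since $\div H=0$ in $\R^2_+$ and $\solW=0$ on $\R\times\{0\}$, one has
\[
I_H=\int_{\R^2_+}P\,\nabla\solW\cdot H=-\int_{\R^2_+}\solW\,\nabla P\cdot H,
\]
and then applies Theorem~\ref{th:clmshalfspace} directly (with $F=H$, $G=P$, $\Phi=\solW$). That theorem already packages the dyadic localization, the reflection across $\R\times\{0\}$, and the Hardy--BMO duality; the only remaining inputs are $\|\nabla P\|_{2,B^+(x_0,2^{k_0}r)}<\eps$ from \eqref{eq:West:eps} and the annular decay $\|H\|_{2,B^+(x_0,2^{k+5}r)\setminus B^+(x_0,2^{k-5}r)}\aleq 2^{-k}$ from Proposition~\ref{pr:hodgewithestimate}.

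Your version instead introduces a stream function $h$, localizes first, and carries out the reflection and CLMS step by hand---essentially rederiving the content of Theorem~\ref{th:clmshalfspace}. This buys nothing extra, and it introduces subtle points you gloss over: extending $\eta_{k_0}h$ by zero across the boundary requires $h=0$ there, which is not guaranteed by the construction; and controlling the commutator $h\,\nabla^\perp\eta_{k_0}$ needs a Poincar\'e argument on the annulus with a suitable constant subtracted from $h$, together with the pointwise decay of $H$ away from $\supp F$, which you have not made explicit. Both issues can be repaired, but the paper's two-line reduction to Theorem~\ref{th:clmshalfspace} avoids them entirely. Note also that on the annuli a direct Lorentz--H\"older estimate with the decay of $H$ already gives the tail; there is no need to recast those pieces as Jacobians to recover the $\|\Omega\|_2$ prefactor.
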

\begin{proof}
Observe that $W$ has zero-boundary, and thus
\[
 \int_{\R^2_+} P\nabla \solW \cdot H =  \int_{\R^2_+} \solW\, \nabla P\cdot H. 
\]
Since $\div H = 0$ on $\R^2_+$, the term $\nabla P \cdot H$ is a div-curl quantity. In view of Theorem~\ref{th:clmshalfspace} we find
\[
\begin{split}
  |I_H| \aleq& \|H\|_{2,\R^2_+}\, \|\nabla P\|_{2,B^+(x_0,2^{k_0}r)} \|\nabla \solW\|_{(2,\infty),B^+{x_0,2^{k_0}r)}}\\
  &+ \|\nabla P\|_{2}\, \sum_{k=k_0}^\infty k \|H\|_{2,B^+(x_0,2^{k+5}r)\backslash B^+(x_0,2^{k-5}r)}\ \|\nabla \solW\|_{(2,\infty),B^+{x_0,2^{k+5}r)}}.
  \end{split}
\]
By \eqref{eq:West:Hest} and \eqref{eq:West:eps}
\[
 \|H\|_{2,\R^2_+} \|\nabla P\|_{2,B^+(x_0,2^{k_0}r)} \|\nabla \solW\|_{(2,\infty),B^+{x_0,2^{k_0}r)}} \aleq \eps\, \|\nabla \solW\|_{(2,\infty),B^+{x_0,2^{k_0}r)}}
\]
Moreover, in view of $\supp F \subset B^+(x_0,r)$, Proposition~\ref{pr:hodgewithestimate}, and with $\|F\|_{2} \aleq \|F\|_{(2,1)} \leq 1$,
\[
 \|H\|_{2,B^+(x_0,2^{k+5}r)\backslash B^+(x_0,2^{k-5}r)} \aleq 2^{-k}.
\]
Thus, since $k 2^{-k} \aleq 2^{-\frac{1}{2}k}$ for $k$ large enough and $\|\nabla P\|_{2} \aleq \|\Omega\|_{2}$,
\[
 |I_H| \aleq \eps \|\nabla \solW\|_{(2,\infty),B^+{x_0,2^{k_0}r)}} + \|\Omega\|_{2}\ \tail{\sigma}{\|\nabla \solW\|_{(2,\infty)}}{x_0}{R}{k_0}.
\]
This concludes the estimate of $I_H$.
\end{proof}
As for the estimate of $I_\Phi$, we have
\begin{lemma}\label{la:West:IPhi}
For possibly smaller $R \in (0,1)$, all sufficiently large $k_0 \in \N$, all radii $r \in (0,2^{-k_0} R)$, and all $x_0 \in \R \times \{0\}$ we have
\[
 |I_\Phi| \aleq \eps \|\nabla \solW\|_{(2,\infty),B^+{x_0,2^{k_0}r)}} + \tail{\sigma}{\|\nabla \solW\|_{(2,\infty)}}{x_0}{R}{k_0}.
\]
Here, $\sigma > 0$ is a uniform constant.
\end{lemma}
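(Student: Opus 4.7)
The approach mirrors the interior regularity argument of Rivi\`ere on $\R^2_+$, adapted to accommodate the two extra source terms in \eqref{eq:West:Weq} (the boundary action $\Omega\cdot\nabla\solV$ and the error $\Err$). The starting point is integration by parts: since both $\Phi$ and $\solW$ vanish on $\R\times\{0\}$, there is no boundary contribution and
\[
 I_\Phi \;=\; -\int_{\R^2_+} \Phi\cdot\div\bigl(P\nabla\solW\bigr).
\]
Expanding the divergence, plugging in the equation \eqref{eq:West:Weq}, and using the identity $PP^T=I$ valid for $P\in SO(N)$, I rewrite
\[
 \div(P\nabla\solW) \;=\; (\nabla P+P\Omega)\cdot\nabla\solW + P\Omega\cdot\nabla\solV + P\Err
 \;=\; M\cdot P\nabla\solW + P\Omega\cdot\nabla\solV + P\Err,
\]
where $M:=\nabla P\,P^T+P\Omega P^T$ is antisymmetric and, by the gauge choice \eqref{eq:choicegauge}, divergence-free on $\R^2_+$. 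Thus
\[
 I_\Phi \;=\; -\int_{\R^2_+}\Phi\,M\cdot P\nabla\solW \;-\; \int_{\R^2_+}\Phi\,P\Omega\cdot\nabla\solV \;-\; \int_{\R^2_+}\Phi\,P\Err.
\]

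First I would handle the main term $\int\Phi M\cdot P\nabla\solW$, which is where the gauge does its work. Using $\div M=0$ on $\R^2_+$ I write $M=\nabla^{\perp}g$ for a matrix-valued stream function $g$ (with a suitable normalization on $\R\times\{0\}$) and integrate by parts, transferring a derivative onto $\Phi$. Since $\Phi=0$ on the boundary, no boundary terms appear, and the remainder has the form of a half-space div--curl pairing between $\nabla\Phi\in L^{(2,1)}$ and $M\cdot P\nabla\solW$ (modulo a Wente-type commutator $g\Phi\nabla^\perp P\cdot\nabla\solW$, handled identically to $I_H$). Applying Theorem~\ref{th:clmshalfspace} together with the smallness \eqref{eq:West:eps} of $\|M\|_{2,B^+(x_0,R)}\lesssim\|\nabla P\|_2+\|\Omega\|_2$, and with the annular decay of $\nabla\Phi$ on $B^+(x_0,2^{k+5}r)\setminus B^+(x_0,2^{k-5}r)$ provided by Proposition~\ref{pr:hodgewithestimate}, produces exactly the structure
\[
 \Big|\!\int_{\R^2_+}\!\Phi M\cdot P\nabla\solW\Big|
 \;\aleq\;\eps\,\|\nabla\solW\|_{(2,\infty),B^+(x_0,2^{k_0}r)} + \tail{\sigma}{\|\nabla\solW\|_{(2,\infty)}}{x_0}{R}{k_0}.
\]

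For the boundary action $\int\Phi P\Omega\cdot\nabla\solV$, I would follow the Hodge-plus-BMO scheme already used for $\err_4$ in Lemma~\ref{la:V:err4}: decompose $\Omega\chi_{\R^2_+}=\nabla G+\nabla^\perp F$ on $\R^2$, use $\Phi=0$ on the boundary to justify a div--curl estimate on the half-space via Theorem~\ref{th:clmshalfspace}, and replace the resulting $BMO$ seminorms of $\solV$ and $\tilde\solV=p_t\ast\Hz\solu$ by $\|\lapv\solu\|_{(2,\infty)}$ through Proposition~\ref{pr:VtBMO} and Proposition~\ref{pr:localHilbert}. (This is the step that in fact contributes the $\|\lapv\solu\|_{(2,\infty)}$ piece appearing in Proposition~\ref{pr:West}.) Finally, the error term $\int\Phi P\Err$ is dispatched by Condition~\ref{cond:Err} applied to the compactly supported test function $\Phi P$, using $\|\nabla(\Phi P)\|_{2,\R^2_+}\aleq\|\nabla\Phi\|_{(2,1)}(1+\|\nabla P\|_2)\aleq 1$ and, for the localization factor $r^\sigma$, a cut-off split between $B^+(x_0,\sqrt r)$ and its complement, exactly as in the proof of Lemma~\ref{la:V:err3}.

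The main obstacle is the first piece: carrying out the stream-function integration by parts for $\int\Phi M\cdot P\nabla\solW$ cleanly on the half-space, with the dual test vector field $F$ supported in $\overline{B^+(x_0,r)}$ rather than compactly in $\R^2_+$. One has to check that the boundary terms generated by the Hodge potential $g$ vanish (or are absorbed), that the Wente commutator produced by the non-constant $P$ is controlled by the same $\eps\|\nabla\solW\|_{(2,\infty)}+\mathrm{tail}$ budget as $I_H$, and that the annular tail estimates on $\nabla\Phi$ combine with the $k\,2^{-k}\aleq 2^{-\sigma k}$ summability to give a uniform constant $\sigma>0$.
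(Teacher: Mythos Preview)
Your overall decomposition into three pieces and the handling of the second and third (the boundary action $\int\Phi\, P\Omega\cdot\nabla\solV$ via the scheme of Lemma~\ref{la:V:err4}, and the error $\int\Phi\, P\Err$ via Condition~\ref{cond:Err} with a $\sqrt r$-cutoff) match the paper's Lemmas~\ref{la:West:2} and~\ref{la:West:3} essentially verbatim. You are also right that these pieces contribute the $\|\lapv\solu\|_{(2,\infty)}$ and $r^\sigma$ terms that feed into Proposition~\ref{pr:West} even though they are not written explicitly in the statement of Lemma~\ref{la:West:IPhi}.

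The difference is in the main term $\int_{\R^2_+}\Phi\,M\cdot P\nabla\solW$, where you propose introducing a stream function $M=\nabla^\perp g$ and then wrestling with the resulting boundary terms and Wente commutators---which you yourself flag as the ``main obstacle''. The paper's approach (Lemma~\ref{la:West:1}) avoids this entirely: one integrates by parts directly in $\nabla\solW$, using only that $\solW=0$ on $\R\times\{0\}$ and $\div\Omega^P=0$, to obtain
\[
 \int_{\R^2_+}\Omega^P_{i\ell}\,P_{\ell k}\cdot\nabla\solW^k\,\Phi \;=\; -\int_{\R^2_+}\Omega^P_{i\ell}\cdot\nabla(P_{\ell k}\Phi)\,\solW^k,
\]
and then applies Theorem~\ref{th:clmshalfspace} with $F=\Omega^P$ (divergence-free), $G=P_{\ell k}\Phi$, and $\solW$ in the role of the zero-boundary BMO factor. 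The annular decay of $\nabla(P\Phi)$ comes from Proposition~\ref{pr:hodgewithestimate} just as for $I_H$. This sidesteps all of the obstacles you list: no stream function, no boundary term from $g$, no extra commutator. Your route could likely be pushed through, but the paper's is considerably shorter and requires no new ingredients.
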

\begin{proof}
In view of \eqref{eq:West:Weq},
\[
\begin{split}
 I_\Phi = \int_{\R^2_+} P_{ij}\nabla \solW^j \cdot \nabla \Phi =&-\int_{\R^2_+} (\nabla P_{ik} + P_{ij} \Omega_{jk}) \cdot \nabla \solW^k\ \Phi\\ 
 & -\int_{\R^2_+} P_{ij} \Omega_{jk} \cdot \nabla \solV^k \Phi \\
 & + \int_{\R^2_+} \Err^i(\solU) P_{ij}\Phi
 \end{split}
\]
The claim is now a a consequence of Lemma~\ref{la:West:1}, Lemma~\ref{la:West:2}, Lemma~\ref{la:West:3} below.
\end{proof}
\begin{lemma}\label{la:West:1}
For all sufficiently large $k_0 \in \N$, all radii $r \in (0,2^{-k_0} R)$, and all $x_0 \in \R \times \{0\}$ we have
\begin{equation}\label{eq:OmegaPWterm}
  |\int_{\R^2_+} (\nabla P_{ik} + P_{ij} \Omega_{jk}) \cdot \nabla \solW^k\ \Phi| \aleq \eps \|\nabla \solW\|_{(2,\infty),B^+(x_0,2^{k_0} r)} + \tail{\sigma}{\|\nabla \solW\|_{(2,\infty)}}{x_0}{R}{k_0}
\end{equation}
Here, $\sigma > 0$ is a uniform constant.
\end{lemma}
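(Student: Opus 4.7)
The plan is to imitate the argument already used for Lemma~\ref{la:V:err1}, the key observation being that the combination appearing in the integrand is precisely the gauged potential. Setting $\Omega^P := \nabla P\, P^T + P\Omega P^T \in L^2(\R^2_+,\R^2)$, we have by \eqref{eq:choicegauge} that $\dv \Omega^P = 0$ in $\R^2_+$; moreover $\Omega^P$ is antisymmetric in its matrix indices (because $P \in SO(N)$ and $\Omega$ is antisymmetric) and $\|\Omega^P\|_{2,\R^2_+} \aleq \|\Omega\|_{2,\R^2_+}$ by \eqref{eq:West:NPest}. A one-line computation using $PP^T = I$ gives the pointwise identity
\[
\nabla P_{ik} + P_{ij}\Omega_{jk} = \Omega^P_{im}\, P_{mk},
\]
so the integral in question equals $\int_{\R^2_+}\Omega^P_{im}\cdot \nabla \solW^k\, P_{mk}\Phi$.

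Because $\solW = 0$ on $\R\times \{0\}$ and $\dv \Omega^P = 0$, the product $\solW^k P_{mk}\Phi$ vanishes on the boundary, so integration by parts shows that $\int_{\R^2_+}\Omega^P_{im}\cdot \nabla(\solW^k P_{mk}\Phi) = 0$. Expanding with the product rule then yields the equivalent expression
\[
\int_{\R^2_+}(\nabla P_{ik}+P_{ij}\Omega_{jk})\cdot \nabla \solW^k\,\Phi = -\int_{\R^2_+}\Omega^P_{im}\cdot \nabla(P_{mk}\Phi)\,\solW^k,
\]
which is precisely of the divergence-free vs.\ gradient shape governed by the half-space div-curl theorem \ref{th:clmshalfspace}. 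I would apply that theorem dyadically on the balls $B^+(x_0,2^k r)$ (exactly as in the proof of Lemma~\ref{la:V:err1}) to obtain the local contribution
\[
\|\Omega^P\|_{2, B^+(x_0,2^{k_0} r)}\;\|\nabla \solW\|_{(2,\infty), B^+(x_0,2^{k_0} r)}\,\brac{\|P\Phi\|_\infty + \|\nabla(P\Phi)\|_2}
\]
together with a dyadic sum in $k \ge k_0$ over annular contributions, each carrying a factor $\|\nabla(P\Phi)\|_{2,B^+(x_0,2^{k+5}r)\setminus B^+(x_0,2^{k-5}r)}$. The factor $\|P\Phi\|_\infty + \|\nabla(P\Phi)\|_2$ is uniformly bounded by $\|\Phi\|_\infty + \|\nabla \Phi\|_{(2,1)} \aleq 1$ using Proposition~\ref{pr:hodgewithestimate} and the pointwise boundedness of $P \in SO(N)$.

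To produce the $\eps$-factor on the local term, I choose $R\in(0,1)$ small enough so that $\sup_{x}\|\Omega^P\|_{2,B^+(x,R)} < \eps$, which is possible by absolute continuity of the $L^2$ norm of $\Omega^P$. The annular sum is reorganized into $\tail{\sigma}{\|\nabla \solW\|_{(2,\infty)}}{x_0}{R}{k_0}$ after absorbing a geometric factor $k\,2^{-k} \aleq 2^{-\sigma k}$. The main obstacle I expect is obtaining the annular decay $\|\nabla(P\Phi)\|_{2,B^+(x_0,2^{k+5}r)\setminus B^+(x_0,2^{k-5}r)} \aleq 2^{-\sigma k}$: even though $F$ is supported in $B^+(x_0,r)$, the Hodge potential $\Phi$ is not, so I would use $\nabla \Phi = F - H$ with $H$ divergence-free and $\|H\|_{(2,1)}\aleq 1$, together with the fact that $\Phi$ solves a Dirichlet problem with source $\dv F$ supported in $B^+(x_0,r)$, to extract the required dyadic decay on annuli.
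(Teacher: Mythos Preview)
Your proposal is correct and follows essentially the same route as the paper: rewrite the integrand via $\Omega^P_{im}P_{mk}$, integrate by parts using $\dv\Omega^P=0$ and $\solW=0$ on $\R\times\{0\}$, then apply Theorem~\ref{th:clmshalfspace}. The annular decay you flag as the main obstacle is exactly what Proposition~\ref{pr:hodgewithestimate} supplies directly (since $\supp F\subset B^+(x_0,r)$ and $\|F\|_2\aleq 1$ give $|\nabla\Phi|\aleq (2^kr)^{-2}\|F\|_1$ and $|\Phi|\aleq (2^kr)^{-1}\|F\|_1$ on the $k$-th annulus, whence $\|\nabla(P\Phi)\|_{2,B^+(x_0,2^{k+5}r)\setminus B^+(x_0,2^{k-5}r)}\aleq 2^{-k}(1+\|\nabla P\|_2)$), so no additional argument is needed beyond invoking that proposition.
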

\begin{proof}
Denote by
\[\Omega^P_{i\ell} := \nabla P_{ik}\ P_{\ell k} + P_{ij} \Omega_{jk} P_{\ell k}.\]
By \eqref{eq:choicegauge}, $\div \Omega^P = 0$, and thus the fact that $\solW = 0$ on $\R \times \{0\}$ implies
\[
 |\int_{\R^2_+} (\nabla P_{ik} + P_{ij} \Omega_{jk}) \cdot \nabla \solW^k\ \Phi| = |\int_{\R^2_+} \Omega^P_{i\ell} \cdot  \nabla \brac{P_{\ell k}\Phi}\ \solW^k|
\]
By assumption, $\|\nabla \Phi\|_{(2,1),\R^2_+}$ and $\Phi \equiv 0$ on $\R \times \{0\}$. Sobolev embedding implies that
\[
 \|\Phi\|_{\infty,\R^2_+} \aleq 1.
\]
Consequently,
\[
 \|\nabla (P\Phi)\|_{2,B^+(x_0,2^{k_0} r} \aleq 1.
\]
Moreover, in view of Proposition~\ref{pr:hodgewithestimate} and $\supp F \subset B(x_0,r)$ as well as $\|F\|_{2,\R^2_+} \aleq 1$, for any $k$ large enough we have
\[
 \|\nabla (P\Phi)\|_{2,B^+(x_0,2^{k+5} r)\backslash B^+(x_0,2^{k-5} r)} \aleq 2^{-k} \brac{\|\nabla P\|_{2,\R^2_+} + 1}.
\]
Consequently, using again that $\div \Omega^P = 0$ and the fact that $\solW$ has zero boundary values on $\R \times \{0\}$, we are able to apply Theorem~\ref{th:clmshalfspace}, and find
\[
\begin{split}
 |\int_{\R^2_+} (\nabla P_{ik} + P_{ij} \Omega_{jk}) \cdot \nabla \solW^k\ \Phi| \aleq& \|\Omega\|_{2,B^+(x_0,2^{k_0} r)}\, \|\nabla \solW\|_{(2,\infty),B^+(x_0,2^{k_0} r)}\\
 &+\brac{1+\|\Omega\|_{2,\R^2_+}} \tail{\sigma}{\|\nabla \solW\|_{(2,\infty)}}{x_0}{R}{k_0}.
\end{split}
 \]
In view of \eqref{eq:West:eps} we conclude.
\end{proof}

\begin{lemma}\label{la:West:2}
For possibly smaller $R \in (0,1)$, all sufficiently large $k_0 \in \N$, all radii $r \in (0,2^{-k_0} R)$, and all $x_0 \in \R \times \{0\}$ we have
\begin{equation}\label{eq:West:st}
 |\int_{\R^2_+} P_{ij} \Omega_{jk} \cdot \nabla \solV^k\, \Phi| \aleq \eps \|\lapv \solu\|_{(2,\infty),I(x_0,2^{k_0} r)} + \tail{\sigma}{\|\lapv \solu\|_{(2,\infty)}}{x_0}{R}{k_0}.
\end{equation}
Here, $\sigma > 0$ is a uniform constant.
\end{lemma}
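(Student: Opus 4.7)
The plan is to adapt the argument of Lemma~\ref{la:V:err4} to the present setting. The structural feature we exploit is that $\Phi$ vanishes on $\R \times \{0\}$, so we may integrate $\nabla \solV$ by parts without picking up a boundary term and then apply the half-space div--curl theorem Theorem~\ref{th:clmshalfspace} with $P\Phi$ playing the role of the trace-vanishing factor. Neither $\solV$ nor $\Omega$ has a vanishing trace, which is exactly why this rearrangement is needed.

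First I would Hodge-decompose $\Omega_{jk}\chi_{\R^2_+} = \nabla G_{jk} + \nabla^\perp F_{jk}$ on $\R^2$ with $F_{jk}, G_{jk} \in \dot H^1(\R^2)$ and $\|\nabla F\|_{2,\R^2} + \|\nabla G\|_{2,\R^2} \aleq \|\Omega\|_{2,\R^2_+}$. For the curl part, $\div(\nabla^\perp F_{jk}) = 0$, so integrating $\nabla \solV$ by parts gives
\[
\int_{\R^2_+} P_{ij}\nabla^\perp F_{jk} \cdot \nabla \solV^k\, \Phi = -\int_{\R^2_+} \solV^k\, \nabla(P_{ij}\Phi) \cdot \nabla^\perp F_{jk},
\]
a div--curl pairing with trace-vanishing factor $P_{ij}\Phi$. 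For the gradient part, I exploit that $\solV$ is harmonic: then $\nabla \solV$ is simultaneously divergence- and curl-free, hence $\nabla \solV = \nabla^\perp \tilde \solV$ with $\tilde \solV(x,t) = p_t \ast \Hz \solu(x)$, i.e.\ the Poisson extension of $\Hz \solu$. Integrating $\nabla^\perp \tilde \solV$ by parts and using $\nabla^\perp \cdot \nabla G_{jk} = 0$ gives a second div--curl pairing
\[
\int_{\R^2_+} P_{ij}\nabla G_{jk} \cdot \nabla \solV^k\, \Phi = -\int_{\R^2_+} \tilde \solV^k\, \nabla^\perp(P_{ij}\Phi) \cdot \nabla G_{jk}.
\]
Again the trace-vanishing factor is $P_{ij}\Phi$.

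Applying Theorem~\ref{th:clmshalfspace} to each pairing reduces matters to estimating $[\solV]_{BMO}$ and $[\tilde \solV]_{BMO}$ on balls and on the successive dyadic annuli around $x_0$. Proposition~\ref{pr:VtBMO} gives $[\solV]_{BMO,B(x_0,\rho)} \aleq \|\lapv \solu\|_{(2,\infty),I(x_0,c\rho)}$, and since $\Hz$ commutes with $\lapv$ as Fourier multipliers of $|\xi|$, the same proposition combined with $\lapv \Hz \solu = \Hz \lapv \solu$ yields $[\tilde \solV]_{BMO,B(x_0,\rho)} \aleq \|\Hz \lapv \solu\|_{(2,\infty),I(x_0,c\rho)}$. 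The local Hilbert-transform estimate Proposition~\ref{pr:localHilbert} then trades $\Hz \lapv \solu$ for $\lapv \solu$ at the cost of a geometric tail that is absorbed into the stated $\mathrm{Tail}$. On the test-factor side, Proposition~\ref{pr:hodgewithestimate} together with $\supp F \subset B^+(x_0, r)$ and the Sobolev bound $\|\Phi\|_{\infty,\R^2_+} \aleq \|\nabla \Phi\|_{(2,1),\R^2_+} \aleq 1$ delivers the annular estimate $\|\nabla(P\Phi)\|_{2, B^+(x_0,2^{k+5}r)\setminus B^+(x_0,2^{k-5}r)} \aleq 2^{-k}(1 + \|\nabla P\|_{2,\R^2_+})$, while the near-zone contribution is made $\eps$-small by choosing $R$ so that \eqref{eq:West:eps} provides $\|\Omega\|_{2,B^+(x_0,R)} < \eps$, which in turn controls $\|\nabla F\|_{2,B^+(x_0,R)} + \|\nabla G\|_{2,B^+(x_0,R)}$.

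The main technical obstacle is the careful bookkeeping of the several geometric series in $k$: the BMO-norm of $\tilde \solV$ on an annulus is only accessible through the nonlocal Hilbert transform, and one must verify that the resulting sum still telescopes into the tail of $\|\lapv \solu\|_{(2,\infty)}$ with a uniform exponent $\sigma > 0$. This is precisely the content of Proposition~\ref{pr:localHilbert}, and summing over $k \geq k_0$ produces the advertised bound $\eps \|\lapv \solu\|_{(2,\infty),I(x_0,2^{k_0} r)} + \tail{\sigma}{\|\lapv \solu\|_{(2,\infty)}}{x_0}{R}{k_0}$.
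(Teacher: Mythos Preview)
Your approach is correct and follows exactly the route the paper indicates: adapt the argument of Lemma~\ref{la:V:err4}, using that $\Phi = 0$ on $\R \times \{0\}$ (in place of $P - I = 0$ there) so that $P\Phi$ is the trace-vanishing factor, Hodge-decompose $\Omega$, rewrite $\nabla \solV = \nabla^\perp \tilde{\solV}$ for the gradient piece, and then invoke Theorem~\ref{th:clmshalfspace} together with Proposition~\ref{pr:VtBMO} and Proposition~\ref{pr:localHilbert}. One small correction: the local smallness of $\|\nabla F_{jk}\|_{2,B^+(x_0,R)}$ and $\|\nabla G_{jk}\|_{2,B^+(x_0,R)}$ is not inherited from $\|\Omega\|_{2,B^+(x_0,R)} < \eps$ (the Hodge decomposition is nonlocal), but rather follows directly from the absolute continuity of their own $L^2$-integrals, which justifies a further shrinking of $R$ --- this is exactly how the paper concludes in Lemma~\ref{la:V:err4}.
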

\begin{proof}
The proof is analogous to Lemma~\ref{la:V:err4} and is a consequence of Theorem~\ref{th:clmshalfspace}. Observe that $\solV$ is harmonic and $\Phi = 0$ on $\R \times \{0\}$. 
\end{proof}

\begin{lemma}\label{la:West:3}
For all radii $r \in (0,\infty)$ and all $x_0 \in \R \times \{0\}$ we have
\[
 \int_{\R^2_+} \Err^i(\solU) P_{ij}\Phi \aleq r^\sigma
\]
Here, $\sigma > 0$ is a uniform constant.
\end{lemma}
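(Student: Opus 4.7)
The plan is to split $\Err=\Err_1+\Err_2$ using Condition~\ref{cond:Err}, and to split $P\Phi$ into a near and a far part via the cutoff $\eta=\eta_{B(x_0,\sqrt{r})}$ from \eqref{eq:etakxik}, in the spirit of Lemma~\ref{la:V:err3}. Throughout we use that $|P|\equiv 1$ pointwise (since $P\in SO(N)$), that $\|\nabla\Phi\|_{(2,1),\R^2_+}\aleq 1$ and $\Phi\equiv 0$ on $\R\times\{0\}$ from Proposition~\ref{pr:hodgewithestimate}, so that $\|\Phi\|_{\infty,\R^2_+}\aleq 1$ by Lorentz--Sobolev, and the dyadic annular decay $\|H\|_{2,B^+(x_0,2^{k+5}r)\setminus B^+(x_0,2^{k-5}r)}\aleq 2^{-k}$ on the Hodge remainder.

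For the near piece $P\Phi\,\eta$, which is supported in $B^+(x_0,\sqrt{r})$, the $\Err_1$-contribution is handled by H\"older's inequality:
\[
\left|\int \Err_1\, P\Phi\,\eta\right|\aleq \|\Err_1\|_p\,|B(x_0,\sqrt{r})|^{1/p'}\,\|P\Phi\|_\infty\aleq r^{1/p'},
\]
while the $\Err_2$-contribution uses the localized bound of Condition~\ref{cond:Err}, giving $\aleq (\sqrt{r})^\sigma\|\nabla(P\Phi\eta)\|_2$; distributing the gradient by the product rule and using $\|\nabla P\|_2\aleq\|\Omega\|_2$, $\|\nabla\Phi\|_2\aleq 1$, $\|\Phi\|_\infty\aleq 1$ and the standard $\|\nabla\eta\|_2\aleq 1$ (in two dimensions) bounds this by $(\sqrt{r})^\sigma$ times a constant.

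For the far piece $P\Phi\,(1-\eta)$, whose support lies in $\R^2_+\setminus B^+(x_0,\sqrt{r}/2)$, the key ingredient is a pointwise estimate
\[
|\Phi(z)|\aleq \frac{r}{|z-x_0|}\qquad\text{for }|z-x_0|\geq 2r,
\]
which in particular yields $|\Phi|\aleq\sqrt{r}$ on $\supp(1-\eta)$. I would derive this by extending $\Phi$ by odd reflection across $\R\times\{0\}$ to $\tilde\Phi\in H^1(\R^2)$; since $\Phi$ solves $\lap\Phi=\dv F$ on $\R^2_+$ with zero Dirichlet data, the extension satisfies $\lap\tilde\Phi=\dv\tilde F$ in $\R^2$ for the appropriately reflected $\tilde F\in L^2(\R^2)$ with $\supp\tilde F\subset B(x_0,r)$ and $\|\tilde F\|_2\aleq 1$. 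Integration by parts against the Newtonian potential of $\R^2$ gives $|\tilde\Phi(z)|\aleq \|\tilde F\|_2\cdot r/|z-x_0|$.

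Armed with this, the $\Err_1$-contribution satisfies $|\int \Err_1\,P\Phi(1-\eta)|\leq \|\Err_1\|_1\|P\Phi(1-\eta)\|_\infty\aleq \sqrt{r}$. The $\Err_2$-contribution is controlled by the global bound in Condition~\ref{cond:Err}, so $\aleq \|\nabla(P\Phi(1-\eta))\|_2$; expanding the derivative into the three terms $(\nabla P)\Phi(1-\eta)$, $P\Phi\nabla(1-\eta)$, and $P\nabla\Phi\,(1-\eta)$, the first two are $\aleq \sqrt{r}$ by combining the pointwise bound $|\Phi|\aleq\sqrt{r}$ on $\supp(1-\eta)$ with $\|\nabla P\|_2\aleq 1$ and $\|\nabla\eta\|_2\aleq 1$, while on $\supp(1-\eta)$ one has $\nabla\Phi=-H$ (since $F\equiv 0$ there), so the third is controlled by $\|H\|_{L^2(\R^2_+\setminus B^+(x_0,\sqrt{r}/2))}$, which by the dyadic annular estimates sums to $\aleq\sqrt{r}$. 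Collecting everything yields $\int \Err^i(\solU) P_{ij}\Phi\aleq r^\sigma$ for $\sigma=\min(1/p',1/2,\sigma_{\Err_2}/2)>0$.

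The main obstacle I anticipate is the pointwise decay of $\Phi$ away from $B^+(x_0,r)$: purely $L^2$-based arguments (Poincar\'e combined with annular decay of $\nabla\Phi$) only yield integrated smallness, which is insufficient to absorb the factor $\|\nabla P\|_2$ in the term $(\nabla P)\Phi(1-\eta)$. The odd-reflection plus Newtonian-potential representation is what upgrades these integrated estimates to the required uniform bound $|\Phi(z)|\aleq r/|z-x_0|$ that makes every far-field term small.
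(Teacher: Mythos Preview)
Your proposal is correct and follows essentially the same strategy as the paper: split $\Err=\Err_1+\Err_2$, split the test function $P\Phi$ into near and far parts at scale $\sqrt{r}$, and use pointwise decay of $\Phi$ away from $\supp F$ to handle the far piece. The only cosmetic differences are that the paper obtains the decay $|\Phi(z)|\aleq \|F\|_1/\dist(z,\supp F)$ (and likewise for $\nabla\Phi$) directly from the Green's function representation already recorded in Proposition~\ref{pr:hodgewithestimate}, rather than via your odd-reflection argument (which is equivalent), and the paper estimates $\|\nabla\Phi\|_{2,\R^2_+\setminus B^+(x_0,\sqrt r)}$ directly from that proposition instead of rewriting $\nabla\Phi=-H$ there.
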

\begin{proof}
By assumption, $\|\nabla \Phi\|_{(2,1),\R^2_+}$ and $\Phi \equiv 0$ on $\R \times \{0\}$. Sobolev embedding implies that
\[
 \|\Phi\|_{L^\infty(\R^2_+)} \aleq 1.
\]
Also recall that $P \in SO(N)$ almost everywhere, and thus $\|P\|_{\infty} \leq 1$.

Let $\tilde{\eta} := \eta(\brac{\cdot-x_0}/\sqrt{r})$ for a usual bump function $\eta\in C_c^\infty(B(0,2))$ constantly one $B(0,1)$. Then by conditions~\ref{cond:Err}, for some $p > 1$,
\[
\begin{split}
 \int_{\R^2_+} \Err^i(\solU) P_{ij}\Phi \aleq& \|\tilde{\eta}\Phi\|_{\frac{p}{p-1}} + \|(1-\tilde{\eta})\, |\Phi|\|_{\infty} +r^{\frac{\sigma}{2}} \|\tilde{\eta}\, \nabla( P_{ij}\, \Phi)\|_{2,\R^2_+}+r^{\frac{\sigma-1}{2}} \|\Phi\|_{2,B^+(x_0,2\sqrt{r})\backslash B^+(x_0,\sqrt{r})}\\
 &+ \|\nabla \brac{P_{ij}\, \Phi}\|_{2,\R^2_+ \backslash B^+(x_0,\sqrt{r})} + \sqrt{r}^{-1}\, \|\Phi\|_{2,B^+(x_0,2\sqrt{r})\backslash B^+(x_0,\sqrt{r})}
\end{split}
 \]
Now,
\[
 \|\tilde{\eta}\Phi\|_{\frac{p}{p-1}}  \aleq r^{\frac{p-1}{p}}\, \|\Phi\|_{L^\infty(\R^2_+)} \aleq r^{\frac{p-1}{p}}.
\]
Moreover, in view of Proposition~\ref{pr:hodgewithestimate} and with $\supp F \subset B^+(x_0,r)$ and $\|F\|_{2} \leq 1$,
\[
\|(1-\tilde{\eta})\, |\Phi|\|_{\infty} \aleq r^{-\frac{1}{2}}\ \|F\|_{1} \aleq r^{\frac{1}{2}}.
\]
Next, by from the above estimate we obtain in particular,
\[
 \|\Phi\|_{2,B^+(x_0,2\sqrt{r})\backslash B^+(x_0,\sqrt{r})} \aleq \sqrt{r}\|\Phi\|_{\infty,\R^2_+ \backslash B^+(x_0,\sqrt{r})} \aleq r,
\]
which implies
\[
 r^{\frac{\sigma-1}{2}} \|\Phi\|_{2,B^+(x_0,2\sqrt{r})\backslash B^+(x_0,\sqrt{r})} + \sqrt{r}^{-1}\, \|\Phi\|_{2,B^+(x_0,2\sqrt{r})\backslash B^+(x_0,\sqrt{r})} \aleq r^{\frac{\sigma+1}{2}} + r^{\frac{1}{2}}.
\]
Finally,
\[
 \|\nabla \brac{P_{ij}\, \Phi}\|_{2,\R^2_+ \backslash B^+(x_0,\sqrt{r})} \aleq \|\nabla P\|_{2,\R^2_+} \|\Phi\|_{\infty,\R^2_+ \backslash B^+(x_0,\sqrt{r})}  + \|\nabla \Phi\|_{2,\R^2_+ \backslash B^+(x_0,\sqrt{r})}, 
\]
which again in view of Proposition~\ref{pr:hodgewithestimate} implies
\[
 \|\nabla \brac{P_{ij}\, \Phi}\|_{2,\R^2_+ \backslash B^+(x_0,\sqrt{r})} \aleq r^{+\frac{1}{2}}.
\]
\end{proof}

\begin{proof}[Proof of Proposition~\ref{pr:West}]
This follows directly from Lemma~\ref{la:West:IH} and Lemma~\ref{la:West:IPhi}.
\end{proof}

\subsection{Boundary decay estimate for V: Proof of Proposition~\ref{pr:Vgoal}}
Recall that $\solu \in H^{\frac{1}{2}}(\R,\R^N)$ is a solution of
\begin{equation}\label{eq:laphupde}
 \laph \solu^i = \omega_{ij}(\lapv \solu^j) + \err^i(\solU) \quad \mbox{in (-2,2)}.
\end{equation}
Take $R \in (0,1)$ and $p \in \dot{H}^{\frac{1}{2}}(\R,SO(n))$ from Theorem~\ref{th:gaugenonlocal}. Choosing $R$ possibly even smaller, we may assume that
\begin{equation}\label{eq:lapvpsmall}
 \sup_{x_0 \in \R} \|\lapv p\|_{2,I(x_0,R)} \leq \eps.
\end{equation}
From now on, we assume that $x_0 \in (-1,1)$ and that for some $k_0 \in \N$ large enough it holds that $r \in (0,2^{-k_0} R)$.

Denote by
\[
 H_{\frac{1}{2}}(a,b) := \lapv (ab) - \lapv a\ b- a\lapv b.
\]
Then equation \eqref{eq:laphupde} implies that for any $\varphi \in C_c^\infty(-2,2)$,
\begin{equation}\label{eq:laphupdetr}
\begin{split}
 \int_{\R} p_{ij} \lapv \solu^j \lapv \varphi =& \int_{\R} \brac{\lapv p_{ij}\ \lapv \solu^j + p_{ij}\omega_{jk}(\lapv\solu^k)}\varphi\\
 &+ \int_{\R} p_{ij}\err^j(\solU)\, \varphi - \int_{\R} H_{\frac{1}{2}}(p_{ij},\varphi)\, \lapv \solu^j.
\end{split}
 \end{equation}
We have the following estimate, see e.g. \cite[Lemma C.1]{Schikorra-eps}.
\begin{lemma}\label{la:lhslaph}
For any $r > 0$, $x_0 \in \R$, $k_0 \geq 5$ and any $f \in L^2(\R)$ we find $\varphi \in C_c^\infty(I(2^{k_0}r,x_0))$,
\begin{equation}\label{la:lhs:infty21}
\|\varphi\|_{\infty,\R} + \|\lapv \varphi\|_{(2,1),\R} \leq 1
\end{equation}
so that for any 
\[
\begin{split}
\|f\|_{(2,\infty),I(x_0,r)}  \aleq \int_{\R} f\ \lapv \varphi  + \tail{\frac{1}{2}}{\|f\|_{(2,\infty)}}{x_0}{r}{k_0}.
\end{split}
\]
\end{lemma}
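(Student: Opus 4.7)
The plan is to combine Lorentz duality with a truncated Riesz potential. Since $L^{(2,\infty)}$ is the dual of $L^{(2,1)}$, there exists $g \in L^{(2,1)}(\R)$ with $\supp g \subset I(x_0,r)$ and $\|g\|_{(2,1)} \leq 1$ such that
\[
 \int_{\R} f\, g \;\geq\; \tfrac{1}{2}\,\|f\|_{(2,\infty),I(x_0,r)}.
\]
Morally I want a test function $\varphi$ satisfying $\lapv \varphi = g$, namely $\varphi = \lapms{\frac{1}{2}} g$. Recall that in one dimension $\lapms{\frac{1}{2}}$ is convolution with a constant multiple of $|x|^{-\frac{1}{2}}$, and $|x|^{-\frac{1}{2}} \in L^{(2,\infty)}(\R)$; by the Lorentz Young inequality \eqref{eq:young}, $\psi := \lapms{\frac{1}{2}} g$ satisfies $\|\psi\|_{\infty} \aleq \|g\|_{(2,1)} \leq 1$. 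However $\psi$ is not compactly supported, so I truncate: pick $\eta \in C_c^\infty(I(x_0,2^{k_0} r))$ with $\eta \equiv 1$ on $I(x_0,2^{k_0-1} r)$ and set $\varphi := c\, \eta\, \psi$, where the constant $c$ is chosen so that $\|\varphi\|_{\infty} + \|\lapv \varphi\|_{(2,1)} \leq 1$.

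The effect of truncation is captured by the commutator identity from the paper,
\[
 \lapv(\eta\, \psi) \;=\; \eta\, g \;+\; \psi\, \lapv \eta \;+\; H_{\frac{1}{2}}(\eta,\psi),
\]
using $\lapv \psi = g$. Since $\eta \equiv 1$ on $\supp g$ we have $\eta\, g = g$, and consequently
\[
 \int_{\R} f\, \lapv \varphi \;=\; c \int fg \;+\; c\!\int f\, \brac{\psi\,\lapv \eta + H_{\frac{1}{2}}(\eta,\psi)}.
\]
The first summand gives the desired lower bound $\ageq \|f\|_{(2,\infty),I(x_0,r)}$. For the second, the integrand vanishes on $I(x_0,2^{k_0-1} r)$ (or, in the case of $H_{\frac{1}{2}}$, enjoys fast decay away from $\supp \eta$ and $\supp g$ by pseudolocality of $\lapv$, cf. \cite[Lemma A.1]{Blatt-Reiter-Schikorra-2016}). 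Using the explicit pointwise decay $|\psi(y)|,\ |\lapv \eta(y)|,\ |H_{\frac{1}{2}}(\eta,\psi)(y)| \aleq r^{\frac{1}{2}} |y-x_0|^{-1}$ at distance $|y-x_0| \gg 2^{k_0} r$, and dyadic decomposition of $\R \setminus I(x_0, 2^{k_0-1}r)$ into annuli, each annular contribution is controlled by $2^{-\frac{1}{2} k}\,\|f\|_{(2,\infty), I(x_0,2^{k} r)}$, whose sum is precisely $\tail{1/2}{\|f\|_{(2,\infty)}}{x_0}{r}{k_0}$.

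The principal technical obstacle is the normalization step: verifying $\|\lapv \varphi\|_{(2,1)} \aleq 1$ requires estimating $\|\psi \lapv \eta\|_{(2,1)}$ and $\|H_{\frac{1}{2}}(\eta,\psi)\|_{(2,1)}$ in the genuine Lorentz scale, not merely in $L^2$. This calls for a careful commutator estimate exploiting that $\eta$ is smooth and that $\psi$ has an explicit integral representation from $g \in L^{(2,1)}(I(x_0,r))$; alternatively, both terms are bounded and have compact support of size $2^{k_0}r$, so one may pass to the $L^{(2,1)}$ bound via the scaling and Lorentz embedding \eqref{eq:lorentzembedding}. Once these bounds are in place, $C_c^\infty$ approximation of $\varphi$ is routine, and choosing $c$ as a fixed small universal constant closes the argument.
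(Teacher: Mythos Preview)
The paper does not prove this lemma; it simply refers to \cite[Lemma C.1]{Schikorra-eps}. Your approach --- duality to produce $g \in L^{(2,1)}$ supported in $I(x_0,r)$, set $\psi = \lapms{\frac{1}{2}} g$, then truncate by a cutoff $\eta$ at scale $2^{k_0}r$ --- is the standard one and is essentially what that reference does.

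That said, your sketch contains an error that needs repair. You assert that $\psi\,\lapv\eta$ vanishes on $I(x_0,2^{k_0-1}r)$; this is false, since $\lapv$ is nonlocal: for $x$ in the interior of $\{\eta\equiv 1\}$ one has $\lapv\eta(x) = c\int (1-\eta(y))\,|x-y|^{-3/2}\,dy \neq 0$. Your alternative route (``both terms are bounded and have compact support of size $2^{k_0}r$'') is likewise incorrect: neither $\psi\,\lapv\eta$ nor $H_{\frac{1}{2}}(\eta,\psi)$ is compactly supported. What \emph{is} true, and what makes the argument go through, is quantitative decay: $|\lapv\eta(x)| \aleq (2^{k_0}r)^{-1/2}$ on $I(x_0,2^{k_0-1}r)$ and $|\lapv\eta(x)| \aleq (2^{k_0}r)\,|x-x_0|^{-3/2}$ for $|x-x_0| \gg 2^{k_0}r$, while $|\psi(x)| \aleq r^{1/2}\,|x-x_0|^{-1/2}$ for $|x-x_0| \gg r$ (from $\supp g \subset I(x_0,r)$ and $\|g\|_1 \aleq r^{1/2}$). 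Combining these pointwise bounds with a dyadic decomposition in annuli of \emph{both} scales $2^j r$ ($j\leq k_0$) and $2^k r$ ($k\geq k_0$) gives the $L^{(2,1)}$-normalization as well as the tail bound; the bookkeeping is just more involved than your write-up suggests, and the inner annuli contribute terms that must be absorbed into the tail rather than literally vanishing.
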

\begin{proof}[Proof of Proposition~\ref{pr:Vgoal}]
We apply Lemma~\ref{la:lhslaph} to $f := p_{ij} \lapv \solu^j$, and in view of \eqref{eq:laphupdetr} we find
\[
\begin{split}
 \|\lapv \solu\|_{(2,\infty),I(x_0,r)} \aeq& \|p_{ij} \lapv \solu^j\|_{(2,\infty),I(x_0,r)} \\
 \aleq& \int \brac{\lapv p_{ij}\ \lapv \solu^j + p_{ij}\omega_{jk}(\lapv \solu^k)} \varphi\\
 &+ \int p_{ij}\err^j(\solU)\varphi - \int H_{\frac{1}{2}}(p_{ij},\varphi) \lapv \solu^j\\
 &+\tail{\sigma}{\|\lapv \solu\|_{(2,\infty)}}{x_0}{r}{k_0}\\
\end{split}
 \]
Proposition~\ref{pr:Vgoal} then follows from Lemma~\ref{la:Vgoal:1}, \ref{la:Vgoal:2}, and \ref{la:Vgoal:3} below.
\end{proof}
\begin{lemma}\label{la:Vgoal:1}
\[
\begin{split}
 &\int \brac{\lapv p_{ij}\ \lapv \solu^j + p_{ij}\omega_{jk}(\lapv \solu^k)} \varphi\\
 \aleq& \eps\, \|\lapv \solu\|_{(2,\infty),I(x_0,2^{k_0} r)} + \tail{\sigma}{\|\lapv \solu \|_{(2,\infty)}}{x_0}{r}{k_0}.
\end{split}
 \]
\end{lemma}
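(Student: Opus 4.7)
The plan is to exploit the gauge property of $p$ from Theorem~\ref{th:gaugenonlocal}, which is constructed precisely so that the combined operator $f \mapsto \lapv p_{ij}\lapv f + p_{ij}\omega_{jk}(f)$, after composition on the right with $p^T$, becomes a nonlocal antisymmetric potential that still satisfies Condition~\ref{cond:omega}. The first step will be to use $pp^T=I$ together with the half-Leibniz commutator
\[
H_{\frac{1}{2}}(a,b) := \lapv(ab) - a\,\lapv b - b\,\lapv a
\]
to rewrite the integrand as
\[
\int \varphi\, \tilde\omega_{i\ell}(\lapv\solu^\ell)\ +\ \int \varphi\, \err_{\mathrm{comm}},
\]
where the new kernel $\tilde\omega$ is pointwise antisymmetric, $\tilde\omega_{i\ell}(x,y)=-\tilde\omega_{\ell i}(x,y)$, by the choice of gauge, and $\err_{\mathrm{comm}}$ consists of $H_{\frac{1}{2}}$-commutators carrying at least one factor of $p$.

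For the main term $\int \varphi\,\tilde\omega_{i\ell}(\lapv\solu^\ell)$, I will dyadically decompose $\lapv\solu = \eta_{k_0}\lapv\solu + \sum_{k\geq k_0}\xi_k\lapv\solu$ with the cutoffs from \eqref{eq:etakxik} and apply the three localization properties \eqref{eq:oa:1}--\eqref{eq:oa:3} for $\tilde\omega$, combined with the duality test function of Lemma~\ref{la:lhslaph} satisfying $\|\varphi\|_{\infty}+\|\lapv\varphi\|_{(2,1)}\leq 1$. The near-diagonal contribution will be bounded by $\eps\,\|\lapv\solu\|_{(2,\infty),I(x_0,2^{k_0}r)}$ via \eqref{eq:oa:1}: antisymmetry of $\tilde\omega$ lets us insert the difference $\varphi(x)-\varphi(y)$, and the smallness \eqref{eq:lapvpsmall} supplies the factor $\eps$. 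Each annular piece $\xi_k \lapv\solu$ will be handled by the boundedness part of Condition~\ref{cond:omega}, yielding $2^{-\sigma k}\|\lapv\solu\|_{(2,\infty),I(x_0,2^k r)}$, which sums to the required tail.

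The commutator errors $\err_{\mathrm{comm}}$ will be estimated by the three-commutator bound of \cite[Theorem~7.1]{Lenzmann-Schikorra-commutators}, schematically
\[
\|H_{\frac{1}{2}}(a,b)\|_{L^1(\R)} \aleq \|\lapv a\|_{2,\R}\,\|\lapv b\|_{(2,\infty),\R},
\]
where again \eqref{eq:lapvpsmall} supplies the factor $\eps$ in a neighborhood of $x_0$, and the pseudo-locality of $\lapv$ (as in \cite[Lemma~A.1]{Blatt-Reiter-Schikorra-2016}) will be used to cast the far-field contribution as a tail term in $\|\lapv\solu\|_{(2,\infty)}$.

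The main obstacle I anticipate is maintaining compatibility between the three-commutator estimates, which naturally live in global function spaces, and the strictly local form of Condition~\ref{cond:omega}; $\omega$ is defined only through a distributional pairing rather than a pointwise kernel, so each $H_{\frac{1}{2}}$-commutator must be matched carefully against an appropriate dyadic piece of $\varphi$ or of $\lapv\solu$, so that the near-diagonal mass is genuinely bounded by $\eps$ and the remainder fits into the prescribed $\tail{\sigma}{\|\lapv\solu\|_{(2,\infty)}}{x_0}{r}{k_0}$ structure.
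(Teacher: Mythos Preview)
Your plan overlooks that the paper's proof is a one-line citation of estimate~\eqref{eq:gaugenonlocal:estimate} from Theorem~\ref{th:gaugenonlocal}: rewrite the integrand in kernel form
\[
\int\int\brac{\lapv p_{ik}(x)\,\delta_{xy} + p_{ij}(x)\,\omega_{jk}(x,y)}\,\lapv\solu^k(y)\,\varphi(x)\,dx\,dy
\]
and apply \eqref{eq:gaugenonlocal:estimate} with $f=\lapv\solu$. That estimate is precisely the content of the lemma; there is nothing further to do.

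Your route instead attempts to re-derive \eqref{eq:gaugenonlocal:estimate}, and in doing so misidentifies the key mechanism. You write that ``antisymmetry of $\tilde\omega$ lets us insert the difference $\varphi(x)-\varphi(y)$'', but matrix antisymmetry $\tilde\omega_{i\ell}=-\tilde\omega_{\ell i}$ has nothing to do with the spatial variables $x,y$; it cannot produce a factor $\varphi(x)-\varphi(y)$. In the actual proof of \eqref{eq:gaugenonlocal:estimate} (see the proof of Theorem~\ref{th:gaugenonlocal}), the spatial difference arises from the algebraic identity
\[
\int\int\brac{\lapv p\,\delta_{xy}+p\,\omega}f\varphi = \int f(y)\varphi(y)\,\omega^p(y)\,dy + \int\int f(y)\,p(x)\,(\varphi(x)-\varphi(y))\,\omega(x,y)\,dx\,dy,
\]
where $\omega^p$ is defined in \eqref{eq:omegapij}. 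The second term is handled by \eqref{eq:oa:1}; the first term requires the Euler--Lagrange estimate of Lemma~\ref{la:gaugeelest}, which is where the gauge choice actually enters. Your plan never invokes Lemma~\ref{la:gaugeelest} or the $L^{(2,1)}_{\mathrm{loc}}$ property of $\omega^p$, and without it the term $\int f\varphi\,\omega^p$ is not small---it is merely $O(\|\omega^p\|_2\|f\varphi\|_2)$, with no factor of $\eps$.

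Also, the antisymmetry of $\tilde\omega$ (up to $H_{\frac{1}{2}}$-commutators) comes from $p\in SO(N)$ and the antisymmetry of $\omega$, not from the minimality of $p$. The gauge choice buys you Lemma~\ref{la:gaugeelest}, not antisymmetry. If you want to proceed along your lines you must replace the antisymmetry step by a direct appeal to that lemma; but at that point you have reproduced the paper's proof of \eqref{eq:gaugenonlocal:estimate} and might as well cite it.
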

\begin{proof}
With the definition of $\omega$ we find
\[
 \begin{split}
 & \int \brac{\lapv p_{ij}\ \lapv \solu^j + p_{ij}\omega_{jk}(\lapv \solu^k)} \varphi\\
  =&\int\int \brac{\lapv p_{ik}(x)\ \delta_{xy}  +  p_{ij}(x)\omega_{jk}(x,y)}\ \lapv \solu^k(y)\ \varphi(x) dx\ dy.
  \end{split}
\]
By Theorem~\ref{th:gaugenonlocal} and \eqref{la:lhs:infty21} we therefore conclude
\[
\begin{split}
&\int \brac{\lapv p_{ij}\ \lapv \solu^j + p_{ij}\omega_{jk}(\lapv \solu^k)} \varphi\\
\aleq &\eps\, \|\lapv \solu\|_{(2,\infty),I(x_0,2^{k_0} r)} + \tail{\sigma}{\|\lapv \solu \|_{(2,\infty)}}{x_0}{r}{k_0}.
\end{split}
 \]
\end{proof} 
\begin{lemma}\label{la:Vgoal:2}
For $R$ possibly smaller, for any large enough $k_0 \in \N$ and any $r \in (0,2^{-k_0} R)$ the following holds for some uniform $\sigma > 0$.
\[
\begin{split}
 \left |\int_{\R}  \err^j(\solU)\, p_{ij}\varphi \right | \aleq& \eps\,  \brac{\|\lapv \solu\|_{(2,\infty),I(x_0,2^{2k_0} r)} + \|\nabla \solW\|_{(2,\infty),B(2^{2k_0}r,x_0)}}\\
 &+ \tail{\sigma}{\|\lapv \solu\|_{(2,\infty)}}{x_0}{r}{2k_0}+\tail{\sigma}{\|\nabla \solW\|_{(2,\infty)}}{x_0}{r}{2k_0}.\\
 &+\brac{2^{2k_0} r}^\sigma.
\end{split}
 \]
\end{lemma}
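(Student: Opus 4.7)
The strategy is to apply condition~\ref{cond:err} directly, componentwise, to the test function $\Phi := p_{ij}\varphi$, with ball $I(x_0, 2^{k_0} r)$ and inner scale $k_0$. Since the conclusion of condition~\ref{cond:err} for a ball of radius $\rho$ with inner scale $k_0$ involves norms on $I(x_0, 2^{k_0}\rho)$ and corresponding tails, taking $\rho := 2^{k_0} r$ automatically produces the scale $2^{2k_0} r$ and tails starting at $2k_0$ that appear in the statement. Throughout the plan, constants may depend on the global quantities $\|\lapv p\|_{2,\R}$, $\|\Omega\|_{2,\R^2_+}$, $\|\nabla P\|_{2,\R^2_+}$, and $\|\lapv \solu\|_{2,\R}$, but not on $x_0$, $r$, or $k_0$.

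The first step is to verify that $\Phi$ is an admissible test function for condition~\ref{cond:err}, up to a uniform constant. By Lemma~\ref{la:lhslaph}, $\supp \varphi \subset I(x_0, 2^{k_0} r)$, so $\supp \Phi \subset I(x_0, 2^{k_0} r)$ as well. Since $p(x) \in SO(N)$ pointwise a.e.\ and $\|\varphi\|_{\infty,\R} \leq 1$, we obtain $\|\Phi\|_{\infty,\R} \leq 1$. For the half-Laplacian norm we split
\[
 \lapv(p_{ij}\varphi) \;=\; p_{ij}\,\lapv\varphi \;+\; \varphi\,\lapv p_{ij} \;+\; H_{\frac{1}{2}}(p_{ij},\varphi),
\]
and estimate the three summands separately: the first by $\|p\|_{\infty}\,\|\lapv\varphi\|_{2,\R} \aleq \|\lapv\varphi\|_{(2,1),\R} \leq 1$; the second by $\|\varphi\|_{\infty,\R}\,\|\lapv p\|_{2,\R} \aleq 1$; and the commutator by the three-commutator estimate \cite[Theorem~7.1]{Lenzmann-Schikorra-commutators}, which yields
\[
 \|H_{\frac{1}{2}}(p_{ij},\varphi)\|_{2,\R} \;\aleq\; \|\lapv p\|_{(2,\infty),\R}\,\|\lapv \varphi\|_{(2,1),\R} \;\aleq\; 1.
\]
Consequently $\|\Phi\|_{\infty,\R} + \|\lapv \Phi\|_{2,\R} \leq C_0$ for a uniform constant $C_0$. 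Since $p \in \dot{H}^{\frac{1}{2}}\cap L^\infty$, one approximates $p$ in this norm by smooth functions and uses the linearity of $\err^j(\solU)$ in the test function to extend condition~\ref{cond:err} from $C_c^\infty$ to such compactly supported $H^{\frac{1}{2}}\cap L^\infty$ test functions.

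To match the hypothesis of condition~\ref{cond:err}, one further requires that $2^{k_0}(2^{k_0} r) = 2^{2k_0} r < R$; this is enforced by replacing $R$ by $R^{1/2}$ (equivalently, restricting to $r < 2^{-2k_0}R$), which is compatible with the statement's phrase ``for $R$ possibly smaller''. Applying condition~\ref{cond:err} to $\Phi/C_0$ with the ball $I(x_0, 2^{k_0}r)$ and inner scale $k_0$ then gives
\[
\begin{split}
 \left|\int_{\R}\err^j(\solU)\,p_{ij}\varphi\right|
 \;\aleq\;& \eps\,\brac{\|\lapv \solu\|_{(2,\infty),I(x_0, 2^{2k_0} r)} + \|\nabla \solW\|_{(2,\infty),B(x_0, 2^{2k_0} r)}} \\
 &+ \tail{\sigma}{\|\lapv \solu\|_{(2,\infty)}}{x_0}{r}{2k_0} + \tail{\sigma}{\|\nabla \solW\|_{(2,\infty)}}{x_0}{r}{2k_0} + (2^{2k_0} r)^\sigma,
\end{split}
\]
which is exactly the claimed bound.

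The only technical obstacle is the commutator estimate $\|H_{\frac{1}{2}}(p,\varphi)\|_{2,\R} \aleq 1$, which prevents a naive estimate $\|\lapv(p\varphi)\|_2 \leq \|p\lapv \varphi\|_2 + \|\varphi \lapv p\|_2$ from causing loss; this is handled cleanly by \cite[Theorem~7.1]{Lenzmann-Schikorra-commutators} together with the Lorentz embedding $\|\lapv\varphi\|_{2}\aleq \|\lapv\varphi\|_{(2,1)}$. Everything else is bookkeeping of uniform constants and matching of scales.
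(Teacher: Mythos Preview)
Your proposal is correct and follows essentially the same approach as the paper: apply condition~\ref{cond:err} to the test function $p_{ij}\varphi$, after checking its support lies in $I(x_0,2^{k_0}r)$ and that $\|p_{ij}\varphi\|_{\infty}+\|\lapv(p_{ij}\varphi)\|_{2,\R}$ is uniformly bounded. The paper compresses the Leibniz-rule verification into the single line $\|p_{ij}\varphi\|_{\infty}+\|\lapv(p_{ij}\varphi)\|_{2,\R}\aleq 1+\|\lapv p\|_{2}\aleq 1$, whereas you spell out the three-term split with the $H_{\frac12}$ commutator and add the density remark; these are just extra detail, not a different argument.
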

\begin{proof}
This follows from the condition on $\err$, condition~\ref{cond:err}, observing that $\supp \varphi  \subset I(x_0,2^{k_0}r)$ and
\[
 \|p_{ij}\varphi\|_{\infty} + \|\lapv \brac{p_{ij} \varphi}\|_{2,\R} \aleq 1+\|\lapv p\|_{2} \aleq 1.
\]
\end{proof}

\begin{lemma}\label{la:Vgoal:3}
For $R$ as above, for any large enough $k_0 \in \N$ and any $r \in (0,2^{-k_0} R)$ the following holds for some uniform $\sigma > 0$.
\[
 \left |\int_{\R} H_{\frac{1}{2}}(p_{ij},\varphi) \lapv \solu \right | \aleq \eps \|\lapv \solu\|_{(2,\infty),I(x_0,2^{2k_0})} + \tail{\sigma}{\|\lapv \solu\|_{(2,\infty)}}{x_0}{r}{2k_0}.
\]
\end{lemma}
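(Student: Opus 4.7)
The approach rests on the pointwise integral representation
\[
H_{\frac{1}{2}}(p,\varphi)(x) \;=\; c\int_{\R} \frac{(p(x)-p(y))(\varphi(x)-\varphi(y))}{|x-y|^{3/2}}\,dy,
\]
which follows by expanding each of the three terms in the definition of $H_{\frac{1}{2}}$ using the singular-integral formula for $\lapv$. I will split the outer $x$-integral into a local part over $I(x_0,2^{2k_0}r)$ and dyadic tails over the annuli $A_k:=I(x_0,2^{k+1}r)\setminus I(x_0,2^{k}r)$ for $k\ge 2k_0$, then exploit the smallness $\|\lapv p\|_{2,I(x_0,R)}\le\eps$ from \eqref{eq:lapvpsmall}, the pointwise bound $\|p\|_{\infty}\le 1$ (since $p$ takes values in $SO(N)$), and the compact support $\supp\varphi\subset I(x_0,2^{k_0}r)$.

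For each tail $x$-piece with $k\ge 2k_0$, the cutoff forces $\varphi(x)=0$ while $|x-y|\approx 2^{k}r$ uniformly in $y\in\supp\varphi$. Combined with $\|p\|_\infty\le 1$ this gives the pointwise bound
\[
|H_{\frac{1}{2}}(p,\varphi)(x)|\;\lesssim\;(2^{k}r)^{-3/2}\,\|\varphi\|_{1}\;\lesssim\; 2^{k_0-k}(2^{k}r)^{-1/2}\,\|\varphi\|_\infty.
\]
Lorentz H\"older on $A_k$ using $\|\chi_{A_k}\|_{(2,1)}\lesssim(2^{k}r)^{1/2}$, followed by summation in $k$, absorbs these contributions into $\tail{\sigma}{\|\lapv\solu\|_{(2,\infty)}}{x_0}{r}{2k_0}$ for any $\sigma<1$ (in fact $\sigma=\tfrac12$ works).

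For the local $x$-piece I invoke a localized three-commutator estimate in the spirit of \cite[Theorem 7.1]{Lenzmann-Schikorra-commutators}: roughly,
\[
\Bigl|\int_{\R} H_{\frac{1}{2}}(p,\varphi)\,\eta_{2k_0}\lapv\solu\Bigr|\;\lesssim\;\|\lapv p\|_{2,I(x_0,C\cdot 2^{2k_0}r)}\,\|\lapv\varphi\|_{(2,1)}\,\|\lapv\solu\|_{(2,\infty),I(x_0,2^{2k_0}r)} + (\text{tails}),
\]
which by \eqref{eq:lapvpsmall} together with $\|\lapv\varphi\|_{(2,1)}\le 1$ from \eqref{la:lhs:infty21} yields exactly $\eps\,\|\lapv\solu\|_{(2,\infty),I(x_0,2^{2k_0}r)}$; the residual tails are again subsumed into $\tail{\sigma}{\|\lapv\solu\|_{(2,\infty)}}{x_0}{r}{2k_0}$ by the pointwise argument of the previous paragraph applied to the relevant $y$-tail.

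The hard part will be this localized three-commutator estimate: the classical Lenzmann--Schikorra bound sees only the global $\dot H^{1/2}$ norm of $p$, whereas we need to extract the local smallness \eqref{eq:lapvpsmall}. I would handle this by exploiting $H_{\frac{1}{2}}(p,\varphi)=H_{\frac{1}{2}}(p-\bar p,\varphi)$ with $\bar p$ the average of $p$ over $I(x_0,C\cdot 2^{2k_0}r)$, then splitting $p-\bar p=\eta_{2k_0+C}(p-\bar p)+(1-\eta_{2k_0+C})(p-\bar p)$. The near piece satisfies $\|\lapv(\eta_{2k_0+C}(p-\bar p))\|_{2}\lesssim\eps$ via fractional Poincar\'e and pseudo-locality of $\lapv$ (the same toolbox used in Lemma~\ref{la:piperplaphu} and Lemmas~\ref{la:V:err1}--\ref{la:V:err4}), and can be fed into the global three-commutator bound; the far piece is estimated pointwise just as in the tail argument above. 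Assembling these pieces produces the stated inequality.
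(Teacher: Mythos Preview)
Your approach is correct and arrives at the same estimate, but by a more hands-on route than the paper. The paper's proof is essentially a one-line citation to \cite[Lemma~A.7]{Blatt-Reiter-Schikorra-2016}, which is already a \emph{localized} three-term commutator estimate of the form
\[
\Bigl|\int_{\R} H_{\frac12}(p,\varphi)\,\lapv\solu\Bigr|
\ \aleq\
\brac{\|\lapv p\|_{2,I(x_0,2^{2k_0}r)}+2^{-k_0}\|\lapv p\|_{2,\R}}\,\|\lapv\varphi\|_{2}\,\|\lapv\solu\|_{(2,\infty),I(x_0,2^{2k_0}r)}
+ \|\lapv p\|_{2}\,\|\lapv\varphi\|_{2}\,\tail{\sigma}{\|\lapv\solu\|_{(2,\infty)}}{x_0}{r}{2k_0},
\]
after which \eqref{eq:lapvpsmall} and largeness of $k_0$ make the first bracket $\aleq\eps$. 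In other words, the paper offloads all the localization work to an existing black box.

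What you do instead is effectively re-prove (a variant of) that black box: your outer $x$-tail argument exploiting $\varphi(x)=0$ and $\|p\|_\infty\le 1$ is clean and correct, and your local piece via $H_{\frac12}(p,\varphi)=H_{\frac12}(p-\bar p,\varphi)$ with a near/far split of $p-\bar p$ is the right idea. One caution: the key step $\|\lapv(\eta_{2k_0+C}(p-\bar p))\|_2\aleq\eps$ is naturally controlled by the \emph{local Gagliardo seminorm} $[p]_{H^{1/2}(I)}$ (via the standard cutoff computation plus fractional Poincar\'e), not directly by $\|\lapv p\|_{2,I}$ as in \eqref{eq:lapvpsmall}; these two localized quantities are related but not identical. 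Making your near-piece bound precise therefore either requires a separate smallness assumption on $[p]_{H^{1/2}(I)}$ (equally available by absolute continuity), or a further pseudo-locality step that passes from one to the other with a $2^{-\sigma k_0}\|\lapv p\|_{2,\R}$ correction --- and that further step is exactly what \cite[Lemma~A.7]{Blatt-Reiter-Schikorra-2016} packages. So your route is more self-contained and illustrates the mechanism, at the cost of re-deriving the cited lemma; the paper's route is shorter but less transparent.
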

\begin{proof}
This follows from the estimates of $H_{\frac{1}{2}}$, see, e.g., \cite[Lemma A.7.]{Blatt-Reiter-Schikorra-2016}, which imply
\[
\begin{split}
 &\left |\int_{\R} H_{\frac{1}{2}}(p_{ij},\varphi) \lapv \solu \right | \\
 \aleq& \brac{\|\lapv p\|_{2,I(x_0,2^{2k_0}r)}+2^{-k_0} \|\lapv p\|_{2,\R}}\, \|\lapv \varphi\|_{2,\R}\, \|\lapv \solu\|_{(2,\infty),I(x_0,2^{2k_0}r)}\\
 &+ \|\lapv p\|_{2,\R}\, \|\lapv \varphi\|_{2,\R}\, \tail{\sigma}{\|\lapv \solu\|_{(2,\infty)}}{x_0}{r}{2k_0}.
 \end{split}
\]
We conclude observing \eqref{eq:lapvuestgoal}.
\end{proof}
\section{Optimal gauge for nonlocal antisymmetric potentials}\label{s:optimalgauge}
For $D \subset \R^2$, take any orthonormal frame $e_i \in L^\infty\cap H^1(D,\R^N)$, $i=1,\ldots,K$ for some $K \leq N$. That is, assume that pointwise almost everywhere in $D$
\[
 \langle e_i , e_j \rangle_{\R^N} = \delta_{ij}.
\]
The moving frame technique by H\'elein \cite{Helein-1991}, see also \cite{Helein-book}, tells us, that one can transform each such frame $(e_i)_{i=1}^K$ into a different orthonormal frame $(\tilde{e}_i)_{i=1}^K$ so that pointwise almost everywhere
\[
 \operatorname{span} \left \{e_i,\ i=1, \ldots, K\right \} = \operatorname{span} \left \{\tilde{e}_i,\ i=1, \ldots, K \right \}
\]
and so that additionally, denoting by
\[
 \Omega_{ij} := \langle \nabla \tilde{e}_i, \tilde{e}_j,  \rangle_{\R^N} \in L^2(D,\R^2),
\]
we have
\begin{equation}\label{eq:divframezero}
 \div \brac{\langle \nabla \tilde{e}_i, \tilde{e}_j,  \rangle_{\R^N}} = 0 \quad \mbox{in $D$}.
\end{equation}
This is good news for the regularity theory of harmonic maps into manifolds: assuming the existence of an initial orthonormal tangent vector field of $\mfdM$, $\tau_i := \tau_i \in T\mfdM$, and setting $e_i := \tau_i \circ \solu$ one can find a new moving frame $\tilde{e}_i$ so that the harmonic map equation of $\solu$,
\begin{equation}\label{eq:harmmapeqfr}
 \lap \solu \perp T_\solu \mfdM \quad \mbox{in $D$}
\end{equation}
can be transformed into
\[
 \div \brac{\langle \tilde{e}_i,\nabla \solu \rangle_{\R^N} } \overset{\eqref{eq:harmmapeqfr}}{=} \langle \nabla \tilde{e}_i,\nabla \solu \rangle_{\R^N}  = \langle \nabla \tilde{e}_i,\tilde{e}_k \rangle_{\R^N}\ \langle \tilde{e}_k,\nabla \solu \rangle_{\R^N}.
\]
In view of \eqref{eq:divframezero}, the right-hand side of this equation has now a div-curl structure, up to the term $\tilde{e}_k$, and using the Hardy-space estimates of div-curl quantities by Coifman-Lions-Meyer-Semmes seminal \cite{CLMS-1993} one can obtain H\"older continuity of solutions.

In the celebrated work by Rivi\`{e}re \cite{Riviere-2007}, he discovered that by an adaption of Uhlenbeck's work on gauges \cite{Uhlenbeck-1982} the condition \eqref{eq:divframezero} can be obtained for any antisymmetric matrix $\Omega_{ij} = -\Omega_{ji} \in L^2(D,\R^2)$, and that, under a smallness condition on of $\|\Omega \|_{2,D}$ one find $P \in H^1(D,SO(N))$ so that 
\[
 \Omega^P := \nabla P\ P^T + P \Omega P^T
\]
satisfies 
\[
 \div(\Omega^P) = 0.
\]
Just as in the harmonic map case, this leads to a regularity theory for systems of the form \eqref{eq:rivieresystem}, which, as Rivi\`{e}re showed, is the general structure of many geometric equations, see also \cite{Riviere-Struwe-2008}.

One can also obtain $P \in H^1(D,SO(N))$ without the smallness assumption on $\|\Omega\|_{2}$, which was proven in \cite{Schikorra-2010} motivated by the arguments by H\'elein for moving frames \cite{Helein-1991,Chone-1995}. Indeed, we have the following.
\begin{proposition}\label{pr:gaugenormal}
Let $\Omega_{ij} \in L^2(\R^2_+)$, $1 \leq i,j \leq N$. Then there exists $P \in L^\infty(\R^2_+,SO(N))$, $\nabla P \in L^2(\R,\R^N)$, $P \equiv I$ on $\R \times \{0\}$ that minimizes
\[
 \mathscr{E}(P) := \int_{\R^2_+} |\nabla P + P \Omega|^2.
\]
If $\Omega_{ij} = - \Omega_{ji}$ almost everywhere for $1 \leq i,j \leq N$ then this minimizer satisfies
\begin{equation}\label{eq:divOmegaPzero}
 \dv (\nabla P\ P^T + P \Omega P^T) = 0 \quad \mbox{in $\R^2_+$}.
\end{equation}
\end{proposition}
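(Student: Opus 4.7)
The plan is to obtain the minimizer by the direct method of the calculus of variations, and then derive \eqref{eq:divOmegaPzero} from the first variation of $\mathscr{E}$ restricted to admissible perturbations that preserve the boundary condition and the $SO(N)$-constraint.

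For existence, let $\mathcal{A}$ denote the set of $P \in L^\infty(\R^2_+,SO(N))$ with $\nabla P \in L^2(\R^2_+)$ and boundary trace $P|_{\R \times \{0\}} = I$. The constant map $P \equiv I$ belongs to $\mathcal{A}$ with $\mathscr{E}(I) = \|\Omega\|_{L^2}^2 < \infty$, so $m := \inf_{\mathcal{A}} \mathscr{E}$ is finite. For a minimizing sequence $(P_k) \subset \mathcal{A}$ the pointwise constraint gives $\|P_k\|_{L^\infty} \le \sqrt{N}$ automatically, and the elementary bound $|\nabla P_k|^2 \le 2|\nabla P_k + P_k\Omega|^2 + 2|P_k\Omega|^2$ yields a uniform $L^2$-bound on $\nabla P_k$. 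After passing to a subsequence, $P_k \rightharpoonup P$ in $H^1_{loc}(\R^2_+)$ and $P_k \to P$ pointwise a.e.; the pointwise identity $P_k P_k^T = I$ persists in the limit, so $P$ takes values in $SO(N)$. Weak continuity of the trace preserves the condition $P = I$ on $\R \times \{0\}$. Since $(P_k)$ is uniformly bounded and converges a.e., dominated convergence gives $P_k \Omega \to P\Omega$ \emph{strongly} in $L^2$, and together with $\nabla P_k \rightharpoonup \nabla P$ weakly in $L^2$, weak lower semicontinuity of the $L^2$-norm yields $\mathscr{E}(P) \le m$; hence $P$ is a minimizer.

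For the Euler--Lagrange equation I would use the one-parameter family
\[
 P_\eps := e^{\eps A}\, P, \qquad A \in C_c^\infty(\R^2_+, so(N)).
\]
Since $e^{\eps A}(x) \in SO(N)$ pointwise and $P$ takes values in $SO(N)$, also $P_\eps(x) \in SO(N)$; and since $A$ is compactly supported in the open upper half-plane, $P_\eps = P = I$ on $\R \times \{0\}$. Thus $P_\eps \in \mathcal{A}$. Writing $M := \nabla P + P\Omega$ and using that $e^{\eps A}$ is orthogonal pointwise,
\[
 \nabla P_\eps + P_\eps \Omega = \nabla(e^{\eps A})\, P + e^{\eps A}\, M.
\]
Differentiating $\mathscr{E}(P_\eps)$ at $\eps = 0$ and using cyclicity of the trace one obtains
\[
 0 = \left.\tfrac{d}{d\eps}\right|_{\eps=0}\mathscr{E}(P_\eps) = 2\int_{\R^2_+} \langle \nabla A,\ M P^T\rangle.
\]
Integration by parts (with no boundary terms, as $A$ has compact support inside $\R^2_+$) gives
\[
 \int_{\R^2_+} \langle A,\ \div(MP^T)\rangle = 0 \qquad \forall\, A \in C_c^\infty(\R^2_+, so(N)),
\]
which forces the antisymmetric part of $\div(M P^T)$ to vanish distributionally in $\R^2_+$.

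To conclude I would use the antisymmetry of $\Omega$ to upgrade this to the full divergence-free condition. Differentiating $P P^T = I$ yields $\nabla P\, P^T = -(\nabla P\, P^T)^T$, so $\nabla P\, P^T \in so(N)$ almost everywhere. If in addition $\Omega = -\Omega^T$, then $(P\Omega P^T)^T = P\Omega^T P^T = -P\Omega P^T$, so $P\Omega P^T \in so(N)$ as well. Hence $MP^T = \nabla P\, P^T + P\Omega P^T$ is pointwise antisymmetric, its divergence is antisymmetric, and the Euler--Lagrange relation above forces $\div(\nabla P\, P^T + P\Omega P^T) = 0$, proving \eqref{eq:divOmegaPzero}.

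The delicate point I expect is the direct method on the unbounded domain $\R^2_+$: without a Poincaré-type control on $P - I$, the best one can hope for is weak convergence in $H^1_{loc}$, and the decisive step is upgrading to a.e.~convergence of the subsequence so that (i) the nonlinear pointwise constraint $P P^T = I$ passes to the limit, and (ii) $P_k \Omega \to P\Omega$ converges \emph{strongly} in $L^2$ (the functional is not convex in $P$, so weak lower semicontinuity is not automatic). Everything after that reduces to a standard perturbation computation, the key algebraic input being that antisymmetry of $\Omega$ is precisely what makes the antisymmetric-part Euler--Lagrange equation equivalent to the full divergence-free condition on $\nabla P\, P^T + P\Omega P^T$.
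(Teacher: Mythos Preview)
Your proposal is correct and follows essentially the same approach as the paper: existence via the direct method (uniform $L^\infty$-bound from the $SO(N)$-constraint, $L^2$-bound on $\nabla P_k$, weak $H^1_{loc}$-convergence plus a.e.\ convergence to pass the nonlinear constraint, and strong $L^2$-convergence of $P_k\Omega$ by dominated convergence for lower semicontinuity), followed by the Euler--Lagrange computation using the variation $P_\eps = e^{\eps A}P$ and the observation that antisymmetry of $\Omega$ makes $\nabla P\,P^T + P\Omega P^T$ pointwise antisymmetric. The only cosmetic difference is that the paper takes $A = \varphi\,\alpha$ with scalar $\varphi$ and constant $\alpha \in so(N)$, whereas you allow general $A \in C_c^\infty(\R^2_+, so(N))$; these are equivalent by density.
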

\begin{proof}
Clearly, 
\[
 \|\nabla P\|^2_{2,\R^2_+} \aleq \int_{\R^2_+} |\nabla P + P \Omega|^2 + \|\Omega\|_{2,\R^2_+}^2.
\]
Observe that $P \in SO(N)$ implies that $P$ is uniformly bounded. In particular, for any minimizing sequence $P_k \in \dot{H}^1\cap L^\infty (\R^2_+,SO(N))$ of $\mathscr{E}$ and for any compact set $K \subset \R^2_+$ we find a subsequence converging weakly in $H^1(K,SO(N))$ and strongly almost everywhere. By a diagonal argument and dominated convergence theorem we thus find a limit map $P \in \dot{H}^1\cap L^\infty(\R^2_+,SO(N))$ so that $P \equiv I$ on $\R \times \{0\}$ and so that
\[
 \int_{\R^2_+} \brac{\nabla P + P \Omega}: F = \lim_{k \to \infty} \int_{\R^2_+} \brac{\nabla P_k + P_k \Omega}: F \quad \mbox{for any $F \in C_c^\infty(\R^2_+,\R)$}.
\]
Here $A:B$ denotes the Hilbert-Schmidt scalar product of matrices. Therefore, by duality, $P$ is the minimizer of $\mathscr{E}$.

We compute the Euler-Lagrange equations \eqref{eq:divOmegaPzero}. For any $\Phi \in C_c^\infty(\R^2_+)$ and any constant antisymmetric matrix $\alpha \in so(N)$ we define 
\[
 P_\delta := e^{\delta \alpha \varphi} P.
\]
Observe that $P_\delta$ belongs to $SO(N)$ pointwise almost everywhere since $P \in SO(N)$. Thus, $P_\delta \in\dot{H}^1\cap L^\infty(\R^2_+,SO(N))$ and 
\begin{equation}\label{eq:Pdeltazero}
 \frac{d}{d\delta} \Big|_{\delta = 0} \mathscr{E}(P_\delta) = 0.
\end{equation}
Now compute
\[
 \frac{d}{d\delta} \Big|_{\delta =0} P_\delta  = \varphi \alpha P,
\]
and thus
\[
 \frac{d}{d\delta} \Big|_{\delta =0} \nabla P_\delta  = \nabla \varphi \alpha P + \varphi \alpha \nabla P.
\]
In particular,
\[
 \frac{d}{d\delta} \Big |_{\delta = 0} \nabla P_\delta + P_\delta \Omega = \nabla \varphi\, \alpha P + \varphi\, \alpha \brac{\nabla P + P \Omega}.
\]
Observe that 
\[
 \alpha \brac{\nabla P + P \Omega} : \brac{\nabla P + P \Omega} = 0
\]
by the antisymmetry of $\alpha$. Thus,
\[
 \frac{d}{d\delta} \Big |_{\delta = 0} |\nabla P_\delta + P_\delta \Omega |^2 = 2 \nabla \varphi\, \alpha P :\brac{\nabla P + P \Omega} = 2 \nabla \varphi\, \alpha :\brac{\nabla P\, P^T + P \Omega P^T}
\]
Plugging this into \eqref{eq:Pdeltazero} we have found that for any constant antisymmetric matrix $\alpha \in so(N)$,
\[
 \div(\alpha :\brac{\nabla P\, P^T + P \Omega P^T}) = 0 \quad \mbox{in $\R^2_+$}.
\]
Since $\Omega$ is antisymmetric, so is $\nabla P\, P^T + P \Omega P^T$. Consequently, we have found 
\[
 \div\brac{\nabla P\, P^T + P \Omega P^T}) = 0 \quad \mbox{in $\R^2_+$}.
\]
\end{proof}

\subsection{Nonlocal version}
For functions $\Omega_{ij}$ the theory of finding an optimal gauge has been generalized to other operators. Most notably, Da Lio  and Rivi\`{e}re \cite{DaLio-Riviere-1Dmfd} showed that by adapting the arguments of Uhlenbeck and Rivi\`ere \cite{Uhlenbeck-1982,Riviere-2007} for \emph{functions} $\omega_{ij} = -\omega_{ji} \in L^2(\R)$ one can find a map $p \in \dot{H}^{\frac{1}{2}}(\R,SO(N))$ so that 
\begin{equation}\label{eq:lapvpomegal21}
\lapv p + p\, \omega \in L^{(2,1)}(\R).
\end{equation}
This has been extended to various situations, see \cite{DaLio-nmfd,Riviere-2011,Schikorra-unp,Schikorra-eps,DaLio-Riviere-CAG,Mazowiecka-Schikorra-2017}. 

In our setting, $\omega$ is not a function, but is a nonlocal functional, $\omega_{ij}: \dot{H}^{\frac{1}{2}}(\R) \to L^1(\R)$ given as
\[
  \int_{\R} \omega_{ij}(f)\ \varphi := \int_{\R}\int_{\R} \omega_{ij}(x,y)\, f(y)\ \varphi(x)\ dy\, dx.
\]
Here $\omega_{ij}(x,y)$ is supposed to be measurable and to satisfy certain localization properties, namely conditions~\ref{cond:omega}. In particular, we assume
\begin{equation}\label{eq:normomega}
 \|\omega\| := \sup_{\|f_{ij}\|_{2,\R} \leq 1, \|\zeta\|_{\infty,\R} \leq 1}  \int_{\R} \int_{\R} \omega_{ij}(x,y)\, f_{ij}(y)\, \zeta(x) dx\ dy< \infty.
\end{equation}
We say that $\omega$ is antisymmetric if $\omega_{ij}(x,y) = -\omega_{ji}(x,y)$ almost everywhere for any $i,j = 1\ldots,N$.

In Proposition~\ref{pr:gaugenormal}, following the strategy from \cite{Schikorra-frame, Schikorra-eps}, we found the good gauge $P$ by minimizing the energy $\mathscr{E}(P) := \int_{\R^2_+} |\Omega^P|^2$, where $\Omega^P = \nabla P + P \Omega$. Here, the role of $\Omega^P$ is replaced by $\omega^p_{ij} \in L^2(\R)$ which for $p \in \dot{H}^{\frac{1}{2}}(\R,SO(N))$ is defined as
\begin{equation}\label{eq:omegapij}
 \omega^p_{ij}(y):= \lapv p_{ij}(y) + \int p_{ik}(x)\ \omega_{kj} (x,y) dx \in L^2(\R).
\end{equation}
Here, we obtain $p$ as a minimizer of the energy
\[
 \mathscr{E}(p) := \|\omega^p\|_{L^2(\R,\R^{n\times n})}^2
\]
\begin{theorem}\label{th:gaugenonlocal}
Assume that $\omega$ is antisymmetric and satisfies conditions~\ref{cond:omega}. Then there exists a $p \in L^\infty \cap \dot{H}^{\frac{1}{2}}(\R,SO(N))$ that minimizes $\mathscr{E}(p)$ 
in the class of maps $p \in \dot{H}^{\frac{1}{2}}(\R,SO(N))$.

For this minimizer $p$ we have
\[
 \omega^p_{ij} \in L^{(2,1)}_{loc}(\R).
\]
Moreover the following estimate holds: for any $\eps > 0$ we find $R \in (0,1)$ so that for any $\varphi \in C_c^\infty(I(x_0, r))$ and any $f \in L^2(\R)$, where $x_0 \in \R$ and $r \in (0,2^{-k_0} R)$ for a sufficiently large $k_0 \in \N$,
\begin{equation}\label{eq:gaugenonlocal:estimate}.
\begin{split}
 &\int_{\R} \int_{\R} \brac{\lapv p_{\ell j}(x) \delta_{xy}+   \, p_{\ell k}(x)\, \omega_{kj} (x,y)}\,f(y)\, \varphi(x)dx\, dy\\
 \aleq &\brac{\eps\, \|f\|_{(2,\infty),B(x_0,2^{k_0} r)} + \tail{\sigma}{\|f\|_{(2,\infty)}}{x_0}{r}{k_0}}\ \brac{\|\varphi\|_{\infty,\R} + \|\lapv \varphi\|_{2,\R}}.
\end{split}
 \end{equation}
Here $\sigma$ is a uniform constant.
\end{theorem}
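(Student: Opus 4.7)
The theorem is the off-diagonal nonlocal analogue of Proposition~\ref{pr:gaugenormal}, and the plan mirrors the moving-frame / gauge construction of H\'elein, Rivi\`ere and Da~Lio--Rivi\`ere \cite{Helein-1991,Riviere-2007,DaLio-Riviere-1Dmfd} in the variational form used in \cite{Schikorra-frame,Schikorra-eps}. The genuinely new issue is that $\omega$ acts as a kernel on $L^2(\R)$ rather than as a pointwise multiplier, so every pointwise identity has to be replaced by its weak counterpart and the usual absolute continuity of $\|\Omega\|_{2}^{2}$ has to be replaced by the testing conditions~\ref{cond:omega}.

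\emph{Existence.} Since $p\in SO(N)$ almost everywhere, $\|p\|_\infty\leq 1$, and the boundedness in \eqref{eq:normomega} gives the coercivity bound $\|\lapv p\|_{2,\R}\aleq \mathscr{E}(p)^{1/2}+\|\omega\|$. Thus any minimizing sequence is bounded in $\dot{H}^{\frac{1}{2}}\cap L^\infty$. Extracting a weak subsequence, combining with local Rellich compactness and a diagonal argument produces a pointwise a.e.\ limit $p\in SO(N)$. Weak lower semicontinuity for the $\lapv p$-part together with bounded convergence (using the $L^{2}\times L^{\infty}$-action of $\omega$) for the kernel-in-$p$ part shows that $p$ minimizes $\mathscr{E}$, exactly as in the proof of Proposition~\ref{pr:gaugenormal}.

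\emph{Euler--Lagrange and antisymmetric compensation.} Perturbing $p_\delta := e^{\delta\alpha\varphi}p$ for $\alpha\in so(N)$ and $\varphi\in C_c^\infty(\R)$ gives
\[
\frac{d}{d\delta}\Big|_{\delta=0}\omega^{p_\delta}_{ij}(y)\;=\;\alpha_{ik}\Bigl(\lapv(\varphi\,p_{kj})(y)+\int\varphi(x)\,p_{k\ell}(x)\,\omega_{\ell j}(x,y)\,dx\Bigr).
\]
Setting $\tfrac{d}{d\delta}\mathscr{E}(p_\delta)|_{\delta=0}=0$ and letting $\alpha$ range over $so(N)$ forces symmetry in $(i,\ell)$ of the bilinear form obtained by pairing $\omega^p_{ij}$ against the bracketed expression. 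Expanding $\lapv(\varphi p_{kj})=\varphi\lapv p_{kj}+p_{kj}\lapv\varphi+H_{\frac{1}{2}}(\varphi,p_{kj})$ and using the antisymmetry $\omega_{\ell j}=-\omega_{j\ell}$ recasts this symmetry as the nonlocal analogue of the identity $\dv(\nabla P\,P^T+P\Omega P^T)=0$: the ``antisymmetric part of $p\,\omega^p$'' is, modulo $H_{\frac{1}{2}}$-commutators and $\omega$-kernel remainders, annihilated by $\lapv$ against $C_c^\infty$. Feeding this into the three-commutator estimates of Lenzmann--Schikorra \cite{Lenzmann-Schikorra-commutators} and invoking the Hardy--BMO duality of Coifman--Lions--Meyer--Semmes \cite{CLMS-1993} then upgrades $\omega^p$ from $L^2$ to $L^{(2,1)}_{\loc}(\R)$; the kernel remainders are absorbed by the three localisation inequalities \eqref{eq:oa:1}--\eqref{eq:oa:3} applied to Littlewood--Paley-type cutoffs.

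\emph{Localised estimate and main obstacle.} The estimate \eqref{eq:gaugenonlocal:estimate} is obtained from the $L^{(2,1)}_{\loc}$-bound by a dyadic annular decomposition centred at $x_0$: one rewrites the left-hand side as a pairing of $f$ against $\omega^p\,\varphi$ plus a lower-order $H_{\frac{1}{2}}$-commutator, the in-ball contribution absorbs the factor $\eps$ via the absolute continuity $\sup_{x_0}\|\lapv p\|_{2,I(x_0,R)}<\eps$ (property~\eqref{eq:lapvpsmall}, true for $R$ small enough), and the dyadic tails are generated through the pseudo-locality of $\lapv$ (cf.\ \cite[Lemma~A.1]{Blatt-Reiter-Schikorra-2016}) together with the off-support decay encoded in \eqref{eq:oa:1}--\eqref{eq:oa:3}. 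The main technical obstacle is precisely the compensation step: in the local Rivi\`ere setting $\Omega^{P}P^{T}$ is pointwise antisymmetric and divergence-free is immediate, whereas in the present off-diagonal nonlocal setting the required antisymmetric quantity exists only in a weak testing sense and has to be extracted through $H_{\frac{1}{2}}$-commutators while simultaneously controlling the kernel $\omega(x,y)$ uniformly over admissible test functions via conditions~\ref{cond:omega}; once this is done, the dyadic bootstrap producing \eqref{eq:gaugenonlocal:estimate} follows the routine already used throughout Section~\ref{s:proofthtransform}.
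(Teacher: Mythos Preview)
Your strategy is correct and matches the paper's approach: variational existence via the direct method, Euler--Lagrange equations from the $SO(N)$-perturbation $p_\delta=e^{\delta\alpha\varphi}p$, antisymmetric cancellation of the diagonal $\omega^p\alpha\omega^p$-term, control of the remainders by $H_{\frac{1}{2}}$-commutator estimates together with the localisation conditions~\ref{cond:omega}, and the final dyadic decomposition for \eqref{eq:gaugenonlocal:estimate}. One small correction: the upgrade from $L^2$ to $L^{(2,1)}_{\loc}$ does not go through Hardy--BMO/CLMS duality but through Lorentz-space duality, namely the estimate $\int p_{\ell j}\omega^p_{ij}\lapv\varphi\aleq\eps\|\lapv\varphi\|_{(2,\infty)}$ for $\varphi$ with small support tested against $L^{(2,\infty)}$ (this is \cite[Lemma~C.1]{Schikorra-eps}); CLMS plays no role at this point.
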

Firstly, we have the following observations:
\begin{lemma}\label{la:observationsmatscrEnl}
$\mathscr{E}$ satisfies
\begin{equation}\label{eq:coercv}
 \mathscr{E}(p) \ageq \|\lapv p\|_{2,\R} - \|\omega\|. 
\end{equation}
and
\begin{equation}\label{eq:nlgauge:bound}
 0 \leq \inf_{p \in \dot{H}^{\frac{1}{2}}(\R,SO(N))} \mathscr{E}(p) \leq \|\omega\|.
\end{equation}
Also, $\mathscr{E}$ is sequentially lower semicontinuous with respect to the weak topology on $\dot{H}^{\frac{1}{2}}(\R,SO(N))$.

In particular, there exists $p \in \dot{H}^{\frac{1}{2}}(\R,SO(N))$ minimizing the energy $\mathscr{E}$ among all maps in $\dot{H}^{\frac{1}{2}}(\R,SO(N))$.
\end{lemma}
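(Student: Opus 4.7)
The four assertions decouple naturally. For the coercivity \eqref{eq:coercv}, I would apply the reverse triangle inequality in $L^2(\R, \R^{N \times N})$ to the definition \eqref{eq:omegapij}, reducing matters to upper bounding $\|T(p)\|_{L^2}$ where $T(p)(y) := \int_{\R} p(x)\,\omega(x,y)\,dx$. By $L^2$ duality and the pointwise bound $\|p\|_{L^\infty} \leq \sqrt{N}$ valid on $SO(N)$-valued maps, the admissibility condition \eqref{eq:normomega} yields $\|T(p)\|_{L^2} \aleq \|\omega\|$. The upper bound in \eqref{eq:nlgauge:bound} on $\inf \mathscr{E}$ then follows from testing with the constant map $p \equiv I$: indeed $\lapv I \equiv 0$ and $\omega^I_{ij}(y) = \int \omega_{ij}(x,y)\,dx$, whose $L^2$ norm is again controlled by $\|\omega\|$ via the same duality argument, applied now with $\zeta \equiv 1$ in the $L^\infty$ slot of \eqref{eq:normomega}.

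The substantive step is the weak lower semicontinuity. Let $p_k \rightharpoonup p$ weakly in $\dot{H}^{\frac{1}{2}}(\R, SO(N))$. Weak continuity of $\lapv : \dot{H}^{\frac{1}{2}} \to L^2$ is immediate. For the integral operator $T$, I would represent, for any test $\psi \in L^2(\R, \R^{N \times N})$, the pairing via Fubini as
\begin{equation*}
\int_{\R} T(p_k)(y) : \psi(y)\, dy = \int_{\R} p_k(x) : \Xi_\psi(x)\, dx,
\end{equation*}
with $\Xi_\psi$ the matrix-valued function obtained by integrating $\omega(x, \cdot)$ against $\psi$. The $L^\infty$--$L^1$ duality form of \eqref{eq:normomega} places $\Xi_\psi \in L^1(\R, \R^{N \times N})$ with $\|\Xi_\psi\|_{L^1} \aleq \|\omega\|\,\|\psi\|_{L^2}$. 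Since $\{p_k\}$ is uniformly bounded in $L^\infty$ (from the $SO(N)$ constraint) and the compact embedding $H^{\frac{1}{2}}_{\loc}(\R) \hookrightarrow L^2_{\loc}(\R)$ lets us extract a subsequence converging a.e.\ to $p$, dominated convergence gives $T(p_k) \rightharpoonup T(p)$ weakly in $L^2$. Hence $\omega^{p_k} \rightharpoonup \omega^p$ in $L^2$, and weak lower semicontinuity of $\|\cdot\|_{L^2}^2$ delivers $\mathscr{E}(p) \leq \liminf \mathscr{E}(p_k)$.

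Existence of a minimizer is then the direct method. For a minimizing sequence $\{p_k\}$, the coercivity \eqref{eq:coercv} combined with the finiteness of $\inf \mathscr{E}$ from \eqref{eq:nlgauge:bound} produces a uniform bound on $\|\lapv p_k\|_{L^2}$; together with the pointwise bound $\|p_k\|_{L^\infty} \leq \sqrt{N}$, this extracts a subsequence converging weakly in $\dot{H}^{\frac{1}{2}}$ to some $p$ and a.e.\ to the same $p$. Closedness of $SO(N)$ in $\R^{N \times N}$ ensures $p \in \dot{H}^{\frac{1}{2}} \cap L^\infty(\R, SO(N))$, and lower semicontinuity identifies $p$ as a minimizer.

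The main subtlety lies in the passage to the limit in the nonlocal term $T$: weak $\dot{H}^{\frac{1}{2}}$ convergence alone does not control $T(p)$, because the natural boundedness of $T$ is from $L^\infty$ into $L^2$ rather than from $\dot{H}^{\frac{1}{2}}$. The plan resolves this by exploiting the intrinsic $L^\infty$ bound on $SO(N)$-valued maps together with Rellich-based a.e.\ convergence to pass the limit through the duality pairing.
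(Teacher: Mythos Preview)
Your proposal is correct and follows essentially the same route as the paper. Both arguments use the reverse triangle inequality together with the duality bound from \eqref{eq:normomega} for coercivity, test with $p\equiv I$ for the upper bound, and handle lower semicontinuity by combining weak $L^2$ convergence of $\lapv p_k$ with a.e.\ convergence of $p_k$ (extracted via the $SO(N)$-induced $L^\infty$ bound and Rellich on compacta) to pass the nonlocal term through the duality pairing; the paper phrases this last step as ``Fatou, or by duality,'' which is exactly your weak-$L^2$ convergence of $\omega^{p_k}$.
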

\begin{proof}
By duality and in view of \eqref{eq:normomega} we have for any $p \in L^\infty \cap \dot{H}^{\frac{1}{2}}(\R,SO(N))$,
\[
 \|\omega^p\|_{L^2(\R)} = \sup_{f_{ij} \in C_c^\infty(\R), \|f_{ij}\|_{2,\R} \leq 1} \int_{\R} \omega^p_{ij}(y)\ f_{ij}(y) \geq \|\lapv p_{ij}\|_{2,\R} - \|\omega_{ij}\|.
\]
This shows \eqref{eq:coercv}. The second claim, \eqref{eq:nlgauge:bound}, is obvious by taking $p$ the identity, $p_{ij} := \delta_{ij}$.

As for lower semicontinuity, observe that, as in the local case, any bounded sequence $p_k \in \dot{H}^{\frac{1}{2}}(\R,SO(N))$ is for any compact $K \subset \R$ uniformly bounded in $H^{\frac{1}{2}}(K,\R^N)$, because $SO(N)$ is a compact manifold. Via a diagonal argument, up to taking a subsequence, we can assume that $p^k(y) \to p(y)$ pointwise almost everywhere in $\R$. In particular $p \in H^{\frac{1}{2}}(\R,SO(N))$. 
Also,
\[
 \mathscr{E}(p) \leq \liminf_{k \to \infty}\mathscr{E}(p_k)
\]
follows from the lemma of Fatou, or by duality, since we can write
\[
 \mathscr{E}(p) = \sup_{\|(f_{ij}) \|_{2,\R}\leq 1}\ \int_{\R} \lapv p_{ij}(y)\ f_{ij}(y)\ dy + \int_{\R} \int_{\R} p_{ik}(x)\, \omega_{kj}(x,y)\, f_{ij}(y)\ dy\ dx.
\]

With the observations of Lemma~\ref{la:observationsmatscrEnl}, we conclude the existence for a minimizer of $\mathscr{E}$ in the class $\dot{H}^{\frac{1}{2}}(\R,SO(N))$ by the direct method of the calculus of variations: any minimizing sequence has a subsequence which is weakly converging in $\dot{H}^{\frac{1}{2}}(\R,\R^{N \times N})$ and also almost everywhere converging to a map $p \in \dot{H}^{\frac{1}{2}}(\R,SO(N))$, which by sequential lower semicontinuity is the minimizer.
\end{proof}
Having a minimizer we can compute the Euler-Lagrange equations and find the following estimate.
\begin{lemma}\label{la:gaugeelest}
Let $p \in \dot{H}^{\frac{1}{2}}(\R,SO(N))$ be a minimizer of $\mathscr{E}$ in the class $\dot{H}^{\frac{1}{2}}(\R,SO(N))$. Then
\begin{equation}\label{eq:gaugeelest:2}
 \omega^p_{ij}(y) \in L^{(2,1)}_{loc}(\R).
\end{equation}
Moreover, for any $\eps > 0$ there exists $R > 0$ so that for any $\varphi \in C_c^\infty(I(x_0,r))$ where $x_0 \in \R$ and $r \in (0,R)$,
\begin{equation}\label{eq:gaugeelest:1}
 \int_{\R} p_{\ell j}(y)\, \omega^p_{ij}(y)\, \lapv \varphi(y) \leq C\ \eps\, \|\lapv \varphi\|_{(2,\infty),\R}.
\end{equation}
Here $C$ is a constant depending on the data $p$, $\omega$, but which is independent of $\varphi$ and $\eps$.
\end{lemma}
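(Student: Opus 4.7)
The plan is to derive the Euler--Lagrange equation for the minimiser $p$, use it together with the $SO(N)$-constraint $p p^T = I$ to decompose the test quantity
$Q_{i\ell}(\varphi) := \int_{\R} p_{\ell j}\,\omega^p_{ij}\,\lapv\varphi$
into its symmetric and antisymmetric parts in $(i,\ell)$, and estimate both by combining three-commutator bounds for $H_{\frac{1}{2}}(a,b) := \lapv(ab) - a\,\lapv b - b\,\lapv a$ with the localisation properties of $\omega$ encoded in Condition~\ref{cond:omega}.

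\textbf{Step 1 (Euler--Lagrange).} Since $e^{\delta A(x)} \in SO(N)$ whenever $A(x)\in so(N)$, the variation $p_\delta := e^{\delta A}p$ is admissible. Differentiating $\mathscr{E}(p_\delta)$ at $\delta = 0$ and specialising $A = \zeta\alpha$ with $\zeta \in C_c^\infty(\R)$ and constant $\alpha \in so(N)$, criticality together with the antisymmetry of $\alpha$ amounts to the vanishing of the antisymmetric part in $(i,m)$ of
\[
T_{im}(\zeta) \ := \ \int_{\R} \omega^p_{ij}(y)\,\lapv(\zeta p_{mj})(y)\,dy \ +\ \int_{\R}\!\!\int_{\R} \zeta(x)\,p_{mk}(x)\,\omega_{kj}(x,y)\,\omega^p_{ij}(y)\,dx\,dy.
\]

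\textbf{Step 2 (Compensation).} I expand
$Q_{i\ell}(\varphi) = \int \omega^p_{ij}\lapv(\varphi p_{\ell j}) - \int \omega^p_{ij}\varphi\lapv p_{\ell j} - \int \omega^p_{ij}H_{\frac{1}{2}}(\varphi, p_{\ell j})$
and insert the identity $\lapv p_{\ell j}(y) = \omega^p_{\ell j}(y) - \int p_{\ell k}(x)\omega_{kj}(x,y)\,dx$. Applying the Euler--Lagrange symmetry $T_{i\ell}(\varphi) = T_{\ell i}(\varphi)$ from Step~1, the $\omega^p\omega^p$-terms cancel in the antisymmetric part of $Q$, leaving
\[
Q_{i\ell} - Q_{\ell i} \ =\ \int\!\!\int \omega^p_{ij}(y)\bigl(\varphi(y)-\varphi(x)\bigr)p_{\ell k}(x)\omega_{kj}(x,y)\,dx\,dy - (i\leftrightarrow\ell) + (H_{\frac{1}{2}}\text{--commutators}).
\]
For the symmetric part, $p p^T = I$ gives $p_{\ell j}\lapv p_{ij} + p_{ij}\lapv p_{\ell j} = -H_{\frac{1}{2}}(p_{\ell j}, p_{ij})$, and swapping the dummy indices $(k,j)$ via $\omega_{kj} = -\omega_{jk}$ collapses the remaining $\omega$-contribution to the pure difference-in-$p$ integral
$\int\!\!\int\bigl(p_{\ell j}(y)p_{ik}(x) - p_{\ell j}(x)p_{ik}(y)\bigr)\omega_{kj}(x,y)\lapv\varphi(y)\,dx\,dy$,
which further splits as
$p_{\ell j}(y)\bigl(p_{ik}(x)-p_{ik}(y)\bigr) + p_{ik}(y)\bigl(p_{\ell j}(y)-p_{\ell j}(x)\bigr)$
against the kernel $\omega_{kj}(x,y)\lapv\varphi(y)$.

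\textbf{Step 3 (Estimates and main obstacle).} Every summand arising in Step~2 is either an $H_{\frac{1}{2}}$-commutator in $(\varphi,p)$ or $(p,p)$, or an $\omega$-integral of the form $\int\!\!\int f(y)g(x)\bigl(h(y)-h(x)\bigr)\omega_{kj}(x,y)\,dx\,dy$ with $h \in \{\varphi, p_{ik}, p_{\ell j}\}$. The three-commutator pieces are bounded via the estimates of \cite{Lenzmann-Schikorra-commutators}, with the smallness $\|\lapv p\|_{2,I(x_0,R)} \leq \eps$ from \eqref{eq:lapvpsmall} supplying the $\eps$-factor and the pseudo-locality of $\lapv$ producing the $\tail{\sigma}{\cdot}{x_0}{r}{k_0}$ contributions. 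The $\omega$-difference integrals with $h = \varphi$ fall under condition~\eqref{eq:oa:3} with $f = \omega^p_{ij}$ and $g$ a normalised multiple of $p_{\ell k}$, yielding $\aleq \eps\,\|\omega^p\|_{2}\,\|\lapv\varphi\|_{(2,\infty)}$; those with $h \in \{p_{ik}, p_{\ell j}\}$ are handled after a cutoff argument by conditions \eqref{eq:oa:1}--\eqref{eq:oa:2}. Claim~\eqref{eq:gaugeelest:2} then follows from \eqref{eq:gaugeelest:1} by duality between $L^{(2,1)}$ and $L^{(2,\infty)}$ on a dense subclass of test functions, in combination with the global bound $\omega^p \in L^2(\R)$ from Lemma~\ref{la:observationsmatscrEnl}. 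The main obstacle is ensuring in each resulting term that the $\eps$-smallness genuinely multiplies an $L^{(2,\infty)}$-type norm of $\lapv\varphi$ rather than the stronger $L^2$-norm, which is essential for the iteration of Section~\ref{s:proofoftheoremmaster} to close; this forces a careful matching of the three variants \eqref{eq:oa:1}--\eqref{eq:oa:3} of Condition~\ref{cond:omega} with the available commutator estimates.
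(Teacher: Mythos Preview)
Your proposal is correct and follows essentially the same route as the paper: derive the Euler--Lagrange identity from the variation $p_\delta=e^{\delta\alpha\varphi}p$, split $Q_{i\ell}$ into its symmetric part (handled via $pp^T=I$ and the antisymmetry $\omega_{kj}=-\omega_{jk}$) and its antisymmetric part (handled via the Euler--Lagrange symmetry), and then estimate the resulting $H_{\frac12}$-commutators and $\omega$-difference integrals using the three-commutator bounds together with the three localisation variants \eqref{eq:oa:1}--\eqref{eq:oa:3}. One small remark: you cite \eqref{eq:lapvpsmall} for the smallness $\|\lapv p\|_{2,I(x_0,R)}<\eps$, but that equation sits in Section~\ref{s:proofoftheoremmaster} and logically \emph{uses} the present lemma; the paper instead obtains this smallness directly inside the proof by absolute continuity of the integral (choosing $k_0$ large and then $R$ small), which is what you should invoke here.
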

\begin{proof}
By a duality argument, \eqref{eq:gaugeelest:2} follows from \eqref{eq:gaugeelest:1}, see, e.g., \cite[Lemma C.1]{Schikorra-eps}.

Let us prove \eqref{eq:gaugeelest:2}. We define an admissible variation $p_\delta$ of $p$ as in Proposition~\ref{pr:gaugenormal}: for arbitrary $\varphi \in C_c^\infty(\R)$ and an arbitrary constant antisymmetric matrix $\alpha \in so(N)$ we set
\[
 p_\delta := e^{\delta \alpha \varphi} p \in \dot{H}^{\frac{1}{2}}(\R,SO(N)).
\]
The minimality condition for $p$ implies that
\begin{equation}\label{eq:gaugeeq0}
 0 = \frac{d}{d\delta} \Big |_{\delta =0} \mathscr{E}(p_\delta) = 2\int_{\R} \omega^p_{ij}(y) \frac{d}{d\delta} \Big|_{\delta =0}\omega^{p_\delta}_{ij}(y)\ dy.
\end{equation}
We have $p_\delta = p +\delta \varphi \alpha p + O(\delta^2)$, and therefore from the definition \eqref{eq:omegapij} of $\omega^p$ we obtain
\[
  \frac{d}{d\delta} \Big|_{\delta =0}\omega^{p_\delta}_{ij}(y) = \alpha_{i\ell} \lapv (\varphi(y)\, \ p_{\ell j}(y)) + \alpha_{i\ell}\int  \varphi(x)\ p_{\ell k}(x)\, \omega_{kj} (x,y)\, dx.
\]
Denoting again the failure of the Leibniz rule for $\lapv$ by $H_{\frac{1}{2}}$,
\begin{equation}\label{eq:h12ab:gauge}
 H_{\frac{1}{2}}(a,b) := \lapv(ab) - \lapv a\, b - a\, \lapv b
\end{equation}
we find
\[
 \begin{split}
 \frac{d}{d\delta} \Big|_{\delta =0}\omega^{p_\delta}_{ij}(y) =& \alpha_{i\ell}\lapv \varphi(y)\, p_{\ell j}(y)\\
 &+\alpha_{i\ell} \brac{\varphi(y)\, \lapv p_{\ell j}(y) + \int  \varphi(x)\, p_{\ell k}(x) \omega_{kj} (x,y) dx}\\
 &+ \alpha_{i\ell}H_{\frac{1}{2}}(\varphi(y),p_{\ell j}(y)).
 \end{split}
\]
Observe that
\[
\begin{split}
  &\varphi(y)\, \lapv p_{\ell j}(y) + \int_{\R} \varphi(x)\, p_{\ell k}(x) \omega_{kj} (x,y) dx\\
  =& \varphi(y)\, \omega^p_{\ell j}(y) + \int_{\R} \brac{\varphi(x)-\varphi(y)}\, p_{\ell k}(x)\, \omega_{kj} (x,y) dx
\end{split}
  \]
Using, that by antisymmetry of $\alpha$, 
\[
\omega^p_{ij}(y)\, \alpha_{i\ell}\, \omega^p_{\ell j}(y) \equiv 0,
\]
we find that 
\[
\begin{split}
0= \int_{\R} \omega^{p}_{ij}(y)\, \frac{d}{d\delta} \Big|_{\delta =0}\omega^{p_\delta}_{ij}(y)\, dy 
=&\int_{\R} \omega^{p}_{ij}(y)\, \alpha_{i\ell}\lapv \varphi(y)\, p_{\ell j}(y)\, dy \\
 &+\int_{\R} \omega^{p}_{ij}(y)\, \alpha_{i\ell} \int_{\R} \brac{\varphi(x)-\varphi(y)}\, p_{\ell k}(x)\, \omega_{kj} (x,y) dx\, dy \\
 &+ \int_{\R} \omega^{p}_{ij}(y)\, \alpha_{i\ell}H_{\frac{1}{2}}(\varphi(y),p_{\ell j}(y))\, dy .
\end{split}
 \]
This holds for any antisymmetric matrix $\alpha \in so(N)$, therefore in view of \eqref{eq:gaugeeq0} we have for any $i,\ell = 1,\ldots,N$,
\[
\begin{split}
0=&\int_{\R} \omega^{p}_{ij}(y)\, \lapv \varphi(y)\, p_{\ell j}(y)\, dy - \int_{\R} \omega^{p}_{\ell j}(y)\, \lapv \varphi(y)\, p_{i j}(y)\, dy\\
 &+\int_{\R} \omega^{p}_{ij}(y)\, \int_{\R} \brac{\varphi(x)-\varphi(y)}\, p_{\ell k}(x)\, \omega_{kj} (x,y) dx\, dy \\
 &-\int_{\R} \omega^{p}_{\ell j}(y)\, \int_{\R} \brac{\varphi(x)-\varphi(y)}\, p_{ik}(x)\, \omega_{kj} (x,y) dx\, dy \\
 &+ \int_{\R} \omega^{p}_{ij}(y)\, H_{\frac{1}{2}}(\varphi(y),p_{\ell j}(y))\, dy\\
  &- \int_{\R} \omega^{p}_{\ell j}(y)\, H_{\frac{1}{2}}(\varphi,p_{ij})(y)\, dy.
\end{split}
 \]
That is, for any $\varphi \in C_c^\infty(\R)$
\[
\begin{split}
2\int_{\R} p_{\ell j}(y) \omega^{p}_{ij}(y)\, \lapv \varphi(y)\,  dy=I_{\ell i}(\varphi) + II_{\ell i}(\varphi) - II_{i \ell}(\varphi) + III_{\ell i}(\varphi) - III_{i\ell}(\varphi),
% =&+\int_{\R} \brac{p_{\ell j}(y)\, \omega^{p}_{ij}(y) + p_{i j}(y)\, \omega^{p}_{\ell j}(y)}\, \lapv \varphi(y)\, dy\\
%  &+\int_{\R}\int_{\R} \brac{\omega^{p}_{\ell j}(y)\,  p_{ik}(x)-\omega^{p}_{ij}(y)\,  \, p_{\ell k}(x)
%  }\, \omega_{kj} (x,y)\, \brac{\varphi(x)-\varphi(y)}\, dx\, dy \\
%  &+ \int_{\R} \brac{\omega^{p}_{\ell j}(y)\, H_{\frac{1}{2}}(\varphi,p_{ij})(y)- \omega^{p}_{ij}(y)\, H_{\frac{1}{2}}(\varphi(y),p_{\ell j}(y))}\, dy\\
\end{split}
 \]
where we set
\[
 I_{\ell i}(\varphi) := \int_{\R} \brac{p_{\ell j}(y)\, \omega^{p}_{ij}(y) + p_{i j}(y)\, \omega^{p}_{\ell j}(y)}\, \lapv \varphi(y)\, dy,
\]
\[
 II_{\ell i}(\varphi) := \int_{\R}\int_{\R} \omega^{p}_{\ell j}(y)\,  p_{ik}(x)\, \omega_{kj} (x,y)\, \brac{\varphi(x)-\varphi(y)}\, dx\, dy,
\]
and
\[
 III_{\ell i}(\varphi) = \int_{\R} \omega^{p}_{\ell j}(y)\, H_{\frac{1}{2}}(\varphi,p_{ij})(y)\, dy.
\]
In order to show the claim \eqref{eq:gaugeelest:1} we need to show that for any $\eps > 0$ we find some $R > 0$ so that 
\begin{equation}\label{eq:gaugeelest:goal}
 |I_{\ell i}(\varphi)|  + |II_{\ell i}(\varphi)| + |III_{\ell i}(\varphi)| \aleq \eps\, \|\lapv \varphi\|_{(2,\infty),\R}
\end{equation}
holds for any $\varphi \in C_c^\infty(I(x_0,r))$ for arbitrary $x_0 \in \R$ and $r \in (0,R)$.

We will choose below a large $k_0$ so that 
\begin{equation}\label{eq:nlgauge:k0choice}
 2^{-k_0 \sigma} \brac{\|\lapv p\|_{2,\R}+\|\omega^p\|_{2,\R}} < \frac{\eps}{2},
\end{equation}
and then $R$ sufficiently small so that
\begin{equation}\label{eq:nlgauge:Rchoice}
 \sup_{x_0 \in \R}\brac{\|\lapv p\|_{2,I(x_0,2^{k_0} R)} + \|\omega^p\|_{2,I(x_0,2^{k_0} R)}} < \frac{\eps}{2}.
\end{equation}
\underline{We begin to estimate $I_{\ell i}(\varphi)$.} Firstly, $\lapv (p_{\ell j} p_{ij}) =\lapv (\delta_{\ell i})= 0$ because $p \in SO(N)$ almost everywhere. Therefore, by the definition of $H_{\frac{1}{2}}$, see \eqref{eq:h12ab:gauge}, 
\[
  p_{\ell j}(y) \lapv p_{ij}(y) +p_{i j}(y)\lapv p_{\ell j}(y) = - H_{\frac{1}{2}}(p_{\ell j},p_{ij}).
\]
Moreover, the antisymmetry of $\omega$, $\omega_{kj}(x,y) = - \omega_{jk}(x,y)$, implies
\[
\int_{\R} p_{i j}(y)\, p_{\ell k}(x)\, \omega_{kj} (x,y)\, dx = - \int_{\R}  p_{i k}(y)\, p_{\ell j}(x)\, \omega_{kj} (x,y)\, dx.
\]
Thus,
\[
\begin{split}
 &\int p_{\ell j}(y)\, p_{ik}(x)\, \omega_{kj}(x,y) + p_{ij}(y)\, p_{\ell k}(x)\, \omega_{k j}(x,y)\ dx\\
%  =& \int \brac{p_{\ell j}(y)\, p_{ik}(x) -  p_{i k}(y)\, p_{\ell j}(x)}\, \omega_{kj} (x,y)\ dx\\
 =& -\int \brac{p_{\ell j}(x)-p_{\ell j}(y)}\, p_{ik}(x)\, \omega_{kj} (x,y)\ dx\\
 &+ \int \brac{p_{i k}(x)\, -  p_{i k}(y)}p_{\ell j}(x)\, \omega_{kj} (x,y)\ dx\\
\end{split}
 \]
Consequently, by the definition of $\omega^p$, see \eqref{eq:omegapij},  we find
\[
\begin{split}
|I_{\ell i}(\varphi)|  \aleq&  \int_{\R} |H_{\frac{1}{2}}(p,p)|\ |\lapv \varphi|\\
 &+\left |\int_{\R} \int_{\R} \brac{p_{\ell j}(x)- p_{\ell j}(y) }\, p_{ik}(x)\ \omega_{kj}(x,y)\ \lapv \varphi(y)\ dy\ dx \right |\\
 &+\left |\int_{\R} \int_{\R} \brac{p_{ik}(x)- p_{ik}(y) }\, p_{\ell j}(x)\ \omega_{kj}(x,y)\ \lapv \varphi(y)\ dy\ dx \right |\\
\end{split}
 \]
Using the estimates for $H_{\frac{1}{2}}$, see e.g. \cite[Lemma A.7.]{Blatt-Reiter-Schikorra-2016}, and the support of $\varphi \in C_c^\infty(I(x_0,r))$, for any $k_0$ sufficiently large,
\[
\begin{split}
 &\int_{\R} |H_{\frac{1}{2}}(p,p)|\ |\lapv \varphi|\\
 \aleq &\brac{\|\lapv p\|_{2,\R} \|\lapv p\|_{2,I(x_0,2^{k_0} r)} + 2^{-k_0 \sigma} \|\lapv p\|_{2,\R}^2 } \|\lapv \varphi\|_{(2,\infty)}.
\end{split}
 \]
In view of \eqref{eq:nlgauge:Rchoice} and \eqref{eq:nlgauge:k0choice} we conclude that 
\[
 \int_{\R} |H_{\frac{1}{2}}(p,p)|\ |\lapv \varphi| \aleq \eps\, \|\lapv \varphi\|_{(2,\infty)}.
 \]
Next, letting again $\eta \in C_c^\infty(B(0,1))$ a typical bump function constantly one in $B(0,\frac{1}{2})$ and denoting
\[
 \eta_{k} := \eta((x_0-x)/{2^k r}),
\]
we have
\[
\begin{split}
 &\left |\int_{\R} \int_{\R} \brac{p_{\ell j}(x)- p_{\ell j}(y) }\, p_{ik}(x)\ \omega_{kj}(x,y)\ \lapv \varphi(y)\ dy\ dx \right |\\
 \aleq&\left |\int_{\R} \int_{\R} \brac{p_{\ell j}(x)- p_{\ell j}(y) }\, p_{ik}(x)\ \omega_{kj}(x,y)\ \eta_{k_0} \lapv \varphi(y)\ dy\ dx \right |\\
 &+\left |\int_{\R} \int_{\R} \brac{p_{\ell j}(x)- p_{\ell j}(y) }\, p_{ik}(x)\ \omega_{kj}(x,y)\ (1-\eta_{k_0}) \lapv \varphi(y)\ dy\ dx \right |\\
\end{split}
 \]
For the first term, from \eqref{eq:oa:2}, possibly choosing a larger $k_0$ and a smaller $R$,
 \[
\left |\int_{\R} \int_{\R} \brac{p_{\ell j}(x)- p_{\ell j}(y) }\, p_{ik}(x)\ \omega_{kj}(x,y)\ \eta_{k_0} \lapv \varphi(y)\ dy\ dx \right | \aleq \brac{\eps + 2^{-k_0 \sigma}}\, \|\lapv \varphi\|_{(2,\infty),\R}.
\]
For the second term, in view of the definition of $\|\omega\|$ in \eqref{eq:normomega} and $\mathcal{E}(p) \aleq \|\omega\|$ 
 \[
 \begin{split}
  &\left |\int_{\R} \int_{\R} \brac{p_{\ell j}(x)- p_{\ell j}(y) }\, p_{ik}(x)\ \omega_{kj}(x,y)\ (1-\eta_{k_0}) \lapv \varphi(y)\ dy\ dx \right |\\ 
  \aleq& \|\omega\| \|(1-\eta_{k_0}) \lapv \varphi\|_{2,\R}.
 \end{split}
 \]
By the disjoint support of $(1-\eta_{k_0})$ and $\varphi$ we have, see, e.g. \cite[Lemma A.1]{Blatt-Reiter-Schikorra-2016},
\[
 \|(1-\eta_{k_0}) \lapv \varphi\|_{2,\R}. \aleq 2^{-k_0 \sigma}\ \|\lapv \varphi\|_{(2,\infty),\R}.
\]
Thus, in view of \eqref{eq:nlgauge:Rchoice} and \eqref{eq:nlgauge:k0choice}, for large enough $k_0$ and small $R$,
\[
 \left |\int_{\R} \int_{\R} \brac{p_{\ell j}(x)- p_{\ell j}(y) }\, p_{ik}(x)\ \omega_{kj}(x,y)\ \lapv \varphi(y)\ dy\ dx \right | \aleq \eps\, \|\omega\|\, \|\lapv \varphi\|_{(2,\infty),\R}.
\]
This concludes the estimate of $|I_{\ell i}(\varphi)|$.

\underline{The estimate for $II_{\ell i}(\varphi)$} follows from directly from condition \eqref{eq:oa:3}.

\underline{We estimate $III_{\ell i}(\varphi)$.}
Recall our notation of cutoff functions. For a typical bump function $\eta \in C_c^\infty(B(0,1))$ which is constantly one in $B(0,\frac{1}{2})$ we denote
\[
 \eta_{k} := \eta((x_0-x)/{2^k r}),\ \xi_k := \eta_k - \eta_{k-1}.
\]
Then
\[
 |III_{\ell i}(\varphi)| \aleq \|\omega^{p}_{\ell j}\|_{2,I(x_0,2^{k_0}r)}\, \|H_{\frac{1}{2}}(\varphi,p_{ij})(y)\|_{2,\R} + \|\omega^{p}_{\ell j}\|_{2} \sum_{k=k_0}^\infty \|\xi_k H_{\frac{1}{2}}(\varphi,p_{ij})\|_{2}
\]
Now, by the estimates for $H_{\frac{1}{2}}$, see e.g. \cite[Theorem 7.1.]{Lenzmann-Schikorra-commutators},
\[
 \|H_{\frac{1}{2}}(\varphi,p_{ij})(y)\|_{2,\R} \aleq \|\lapv \varphi\|_{(2,\infty),\R}\ \|\lapv p\|_{2,\R}.
\]
Moreover, by the disjoint support of $\varphi$ and $\xi_k$, see e.g. \cite[Lemma A.7.]{Blatt-Reiter-Schikorra-2016}
\[
 \|\xi_k H_{\frac{1}{2}}(\varphi,p_{ij})\|_{2} \aleq 2^{-\sigma k}\ \|\lapv \varphi\|_{(2,\infty),\R}\ \|\lapv p\|_{2,\R}.
\]
Consequently, in view of \eqref{eq:nlgauge:Rchoice} and \eqref{eq:nlgauge:k0choice},
\[
 |III_{\ell i}(\varphi)| \aleq \eps\|\lapv \varphi\|_{(2,\infty),\R}.
\]
\end{proof}
Now we can prove Theorem~\ref{th:gaugenonlocal}
\begin{proof}[Proof of Theorem~\ref{th:gaugenonlocal}]
Lemma~\ref{la:observationsmatscrEnl} ensures the existence of a minimizer $p$. The $L^{(2,1)}_{loc}$-integrability follows directly from Lemma~\ref{la:gaugeelest}. 

It remains to prove the estimate \eqref{eq:gaugenonlocal:estimate}. Let $\eps > 0$ be given, $r \in (0,2^{-k_0} R)$ for an $R>0$ and $k_0 \in \N$ to be chosen later.

Let $x_0 \in \R$ and $\varphi \in C_c^\infty(I(x_0, r))$ and $f \in L^2(\R)$ be arbitrary. With the definition \eqref{eq:omegapij} of $\omega^p$ we find
\[
\begin{split}
 &\int_{\R} \int_{\R} \brac{\lapv p_{\ell j}(x) \delta_{xy}+   \, p_{\ell k}(x)\, \omega_{kj} (x,y)}\,f(y)\, \varphi(x)\, dx\, dy\\
  =&\int_{\R} f(y)\ \varphi(y)\, \omega^p_{\ell j}(y)\ dy +\int_{\R} \int_{\R} f(y)\, p_{\ell k}(x)\, (\varphi(x)-\varphi(y)) \omega_{kj} (x,y)\, dx\, dy.
 \end{split}
\]
By condition~\eqref{eq:oa:1}, for $k_0$ large enough and $R$ small enough, we have
\[
\begin{split}
 &\int_{\R} \int_{\R} f(y)\, p_{\ell k}(x)\, (\varphi(x)-\varphi(y)) \omega_{kj} (x,y)\, dx\, dy\\
 \aleq &\brac{\eps\, \|f\|_{(2,\infty),B(x_0,2^{k_0} r)} + \tail{\sigma}{\|f\|_{(2,\infty)}}{x_0}{r}{k_0}}\ \brac{\|\varphi\|_{\infty,\R} + \|\lapv \varphi\|_{2,\R}}.
\end{split}
 \]
We rewrite the remaining term. Since $p \in SO(N)$ pointwise a.e., 
\[
 \int_{\R} f(y)\, \varphi(y)\, \omega^p_{\ell j}(y)\ dy = \int_{\R} f(y)\, \varphi(y)\,  p_{k j}(y)\ \brac{p_{k i}(y) \omega^p_{\ell i}(y)}\ dy.
\]
Recall our notation of cutoff functions. For a typical bump function $\eta \in C_c^\infty(I(0,1))$ which is constantly one in $I(0,\frac{1}{2})$ we denote
\[
 \eta_{k} := \eta((x_0-x)/{2^k r}),\ \xi_k := \eta_k - \eta_{k-1}.
\]
Also, recall that with $\lapms{\frac{1}{2}} \equiv (-\lap)^{-\frac{1}{4}}$ we denote the Riesz potential. Set
\[
 \phi_{k} := \eta_{k_0}\lapmv (f\, \varphi\,  p_{k j}),\quad \psi_{k} := \xi_{k}\lapmv (f\, \varphi\,  p_{k j}).
\]
Then
\[
  \left |\int_{\R} f(y)\, \varphi(y)\, \omega^p_{\ell j}(y)\ dy \right | \leq\left | \int_{\R} \lapv \phi_{k_0}\ p_{k i}(y)\, \omega^p_{\ell i}(y)\ dy \right | +\sum_{k=k_0}^\infty \|\omega^p\|_{2,\R}\ \|\lapv \psi_k\|_{2,\R}
\]
In view of Lemma~\ref{la:gaugeelest}
\[
 \left | \int_{\R} \lapv \phi_{k_0}\ p_{k i}(y)\, \omega^p_{\ell i}(y)\ dy \right | \aleq \eps\ \|\lapv \phi_{k_0}\|_{(2,\infty),\R}.
\]
So we have shown that
\[
  \left |\int_{\R} f(y)\, \varphi(y)\, \omega^p_{\ell j}(y)\ dy \right | \leq \eps\ \|\lapv \phi_{k_0}\|_{(2,\infty),\R} +\sum_{k=k_0}^\infty \|\omega^p\|_{2,\R}\ \|\lapv \psi_k\|_{2,\R}
\]
Now, in view of the fractional Leibniz rule, see e.g. \cite[Theorem 7.1.]{Lenzmann-Schikorra-commutators}, by Sobolev embedding, by H\"older inequality and the support of $\varphi$
\[
\begin{split}
 \|\lapv \phi_{k_0}\|_{(2,\infty)} \aleq & \|\lapv \eta_{k_0}\, \lapmv (f\, \varphi\,  p)\|_{(2,\infty),\R} + \|\eta_{k_0} f\, \varphi\,  p\|_{(2,\infty),\R}\\
 \aleq& (2^{k_0} r)^{-\frac{1}{2}} \|f\, \varphi\,  p\|_{1,\R} + \|\eta_{k_0} f\, \varphi\,  p\|_{(2,\infty),\R}\\
 \aleq& \|\varphi\|_{\infty,\R}\, \|f\|_{(2,\infty),\R}.
\end{split}
 \]
On the other hand, see, e.g., \cite[Lemma 3.6.]{Maalaoui-Martinazzi-Schikorra-2016},
\[
 \|\lapv \psi_k\|_{2,\R}\aleq 2^{-\sigma k} \|f\, \varphi\,  p\|_{(2,\infty),\R}\aleq 2^{-\sigma k}  \|\varphi\|_{\infty,\R}\, \|f\|_{(2,\infty),\R}.
\]
Thus we have shown
\[
  \left |\int_{\R} f(y)\, \varphi(y)\, \omega^p_{\ell j}(y)\ dy \right | \leq \eps\ \|\varphi\|_{\infty,\R}\, \|f\|_{(2,\infty),\R} +\|\omega^p\|_{2,\R}\, \sum_{k=k_0}^\infty 2^{-\sigma k}\ \|\varphi\|_{\infty,\R}\, \|f\|_{(2,\infty),\R}.
\]
This proves \eqref{eq:gaugenonlocal:estimate}. The proof of Theorem~\ref{th:gaugenonlocal} is finished.
\end{proof}

\section{Extension operators and commutators}\label{s:extcom}
Commutator estimates have played a crucial role for regularity theory for geometric equations. The most famous one might be the Coifman-Rochberg-Weiss commutator theorem.
\begin{theorem}[Coifman-Rochberg-Weiss \cite{CRW-1976}]\label{th:CRW}
Denote by $\Rz_i$ the $i$-th Riesz transform, $i =1,\ldots,n$. Then, for any $q \in [1,\infty]$,
\[
 \|f \Rz_i g -  \Rz_i (fg) \|_{(2,q),\R^n} \aleq [f]_{BMO}\ \|g\|_{(2,q)}.
\]
For a definition of $BMO$ we refer to Section~\ref{s:hardyhalf}.
\end{theorem}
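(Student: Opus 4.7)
The plan is to reduce the Lorentz-space statement to the classical $L^p$-version of the Coifman-Rochberg-Weiss theorem via real interpolation. The classical form, proved in \cite{CRW-1976}, asserts that for every $p \in (1,\infty)$ the commutator
\[
T_f g := f\,\Rz_i g - \Rz_i(fg)
\]
extends to a bounded linear operator on $L^p(\R^n)$ with operator norm $\|T_f\|_{L^p \to L^p} \leq C_p\, [f]_{BMO}$. This is the starting input and I would take it as a black box.

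Next I would fix any $p_0, p_1$ with $1 < p_0 < 2 < p_1 < \infty$ and let $\theta \in (0,1)$ be determined by $\tfrac{1}{2} = \tfrac{1-\theta}{p_0} + \tfrac{\theta}{p_1}$. By the Lions--Peetre real interpolation identity for Lebesgue spaces,
\[
\bigl(L^{p_0}(\R^n), L^{p_1}(\R^n)\bigr)_{\theta,q} = L^{(2,q)}(\R^n)
\]
with equivalent quasi-norms for every $q \in [1,\infty]$. Applying the real interpolation theorem to the single linear operator $T_f$, which by Step~1 is bounded on both endpoints with norms $\lesssim [f]_{BMO}$, yields
\[
\|T_f\|_{L^{(2,q)} \to L^{(2,q)}} \;\lesssim\; \bigl(C_{p_0}[f]_{BMO}\bigr)^{1-\theta}\bigl(C_{p_1}[f]_{BMO}\bigr)^{\theta} \;\lesssim\; [f]_{BMO},
\]
which is exactly the claimed estimate. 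The key point here is that the interpolation constant depends only on $p_0,p_1,\theta,q$ and multiplies the interpolated endpoint norms, so the dependence on $f$ remains linear in $[f]_{BMO}$.

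I expect the only genuine subtlety to be the bookkeeping at the endpoint $q=1$ and $q=\infty$, where $L^{(2,q)}$ is not reflexive and one has to interpret the real interpolation identity carefully (the $K$-functional form works in both cases). An alternative, more self-contained route would be via the Fefferman--Stein sharp-function inequality: one establishes the pointwise bound $M^{\#}(T_f g)(x) \lesssim [f]_{BMO}\, M(|g|^r)^{1/r}(x)$ for some $r > 1$, and then invokes the fact that both $M$ and the sharp-function control are bounded on $L^{(p,q)}$ for $p > 1$. Either route delivers the theorem, but the interpolation approach is shortest and the hardest step—proving the $L^p$ commutator bound itself—has already been quoted.
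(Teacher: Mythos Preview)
Your proposal is correct and follows the standard route: quote the classical $L^p$-boundedness from \cite{CRW-1976} and pass to Lorentz spaces by real interpolation of the linear operator $T_f$ between two Lebesgue endpoints. The argument is complete as written; the remark about $q=1$ and $q=\infty$ is well placed but not an obstruction, since the $K$-functional identification $(L^{p_0},L^{p_1})_{\theta,q}=L^{(2,q)}$ holds for all $q\in[1,\infty]$.

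There is, however, nothing to compare against: the paper does not give its own proof of this theorem. It is stated as a quoted result with a reference to \cite{CRW-1976} (and the surrounding section cites \cite{Lenzmann-Schikorra-commutators} for the related Theorem~\ref{th:Hzcomm}), and the paper immediately moves on to use it. Your interpolation argument is exactly how one would justify the Lorentz-space formulation from the classical $L^p$ statement, and is presumably what the author has in mind when writing ``$\aleq$'' without further comment.
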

If $f$ belongs to some Sobolev space, one can obtain the following statement that for some situations is stronger. For a proof we refer to \cite[Theorem 6.1.]{Lenzmann-Schikorra-commutators}.
\begin{theorem}\label{th:Hzcomm}
Let $1 \leq q, q_1,q_2 \leq \infty$ so that $\frac{1}{q} = \frac{1}{q_1} + \frac{1}{q_2}$. Then
\[
 \|f \Rz_i g -  \Rz_i (fg) \|_{(2,q),\R^n} \aleq \|(-\lap)^{\frac{n}{4}} f\|_{(2,q_1),\R^n}\ \|g\|_{(2,q_2),\R^n}.
\]
\end{theorem}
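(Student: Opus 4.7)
My plan is to reformulate $[f,\Rz_i]g$ as a bilinear singular integral in $h := \laps{n/2}f$ and $g$, and then bound it by Lorentz-space Hölder and Hardy-Littlewood-Sobolev type inequalities. Starting from the classical kernel representation
\[
[f,\Rz_i]g(x) \;=\; c_n\,\mathrm{p.v.}\!\int_{\R^n}\frac{\bigl(f(x)-f(y)\bigr)(x_i-y_i)}{|x-y|^{n+1}}\,g(y)\,dy
\]
and the identity $f = \lapms{n/2}h$, Fubini gives
\[
[f,\Rz_i]g(x) \;=\; \iint_{\R^n\times\R^n} \mathcal{K}(x,y,z)\,h(z)\,g(y)\,dy\,dz, \quad \mathcal{K}(x,y,z) \;=\; c_n\,\frac{\bigl(|x-z|^{-n/2}-|y-z|^{-n/2}\bigr)(x_i-y_i)}{|x-y|^{n+1}}.
\]

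Next I would partition the $(y,z)$-domain according to the relative sizes of $|x-y|$, $|x-z|$, $|y-z|$. In the off-diagonal regimes the mean value theorem gives $\big||x-z|^{-n/2}-|y-z|^{-n/2}\big|\aleq |x-y|\,\max(|x-z|,|y-z|)^{-n/2-1}$, which collapses $\mathcal{K}$ into a composition of a Riesz potential of order $n/2-1$ (acting on $h$ or on $g$) with a standard Calderón-Zygmund operator; each such composition is bounded on the prescribed Lorentz tuple by the Lorentz-space Hölder inequality~\eqref{eq:hoelder} together with Hardy-Littlewood-Sobolev, the scaling relation $1/q=1/q_1+1/q_2$ matching exactly. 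In the diagonal region $|x-y|\aeq|x-z|\aeq|y-z|$ the symbol is a genuine bilinear Calderón-Zygmund kernel of Coifman-Meyer type, to which a Lorentz-scale bilinear CZ theory applies.

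The main obstacle I expect is the endpoint Lorentz indices $q_1=\infty$ or $q_2=\infty$, where Calderón-Zygmund operators lose $L^\infty$ boundedness. One way forward is to interpolate from the interior cases $1<q_1,q_2<\infty$; another is to route through $BMO$, combining Theorem~\ref{th:CRW} with a suitable Lorentz-Sobolev embedding $\dot{W}^{n/2,(2,q_1)}\hookrightarrow BMO$. A cleaner structural alternative that I would ultimately prefer is to work via the harmonic extension $F(x,t):=p_t\ast f(x)$: one rewrites $[f,\Rz_i]g$ as a bilinear integral over $\R^{n+1}_+$ involving $\nabla F$ and the Poisson extension of $g$, and a Lorentz-refined Carleson-measure estimate for $t|\nabla F|^2\,dx\,dt$ absorbs all endpoint cases at once and bypasses the region analysis above.
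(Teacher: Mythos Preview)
The paper does not prove this statement; immediately after stating it, the text reads ``For a proof we refer to \cite[Theorem 6.1.]{Lenzmann-Schikorra-commutators}.'' There is therefore no in-paper argument to compare your proposal against.

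Your kernel-splitting plan is a legitimate route in principle, but the sketch elides the parts that carry the weight. In the off-diagonal regions the integral does not cleanly factor as ``Riesz potential of order $n/2-1$ composed with a CZ operator'': the region constraint couples $y$ and $z$, so one typically needs pointwise maximal-function bounds of the kind in Proposition~\ref{pr:fxmfymax} (i.e.\ $|f(x)-f(y)|\aleq |x-y|^{n/2}\bigl(\Max h(x)+\Max h(y)\bigr)$ with $h=\laps{n/2}f$) rather than a direct composition. The diagonal piece likewise cannot be dispatched by a one-line reference to ``Lorentz-scale bilinear CZ theory''; bilinear CZ bounds at the endpoints $q_1\in\{1,\infty\}$ or $q_2\in\{1,\infty\}$ are not automatic and must be recovered by real interpolation from interior strong-type estimates, as you yourself note. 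None of this is fatal, but as written it is a plan, not a proof.

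The harmonic-extension alternative you mention at the end is almost certainly the approach in the cited reference: the Lenzmann--Schikorra paper is built around Poisson-extension identities, and that route replaces the trilinear region analysis by square-function and Carleson-type estimates on $\R^{n+1}_+$, which handle the full Lorentz range uniformly. If you intend to write out a complete argument, that is the cleaner path and the one the paper is implicitly pointing you toward.
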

In this section we aim at finding a suitable generalization to Theorem~\ref{th:Hzcomm} where instead of Riesz transforms $\Rz_i$ we consider certain extension operators which take functions from $\R^n$ to functions on the upper halfspace $\R^{n+1}_+$. 

The extension operators we consider are the Poisson extension \[f \mapsto f^h := p_t \ast f,\] as well as their $\lapv$ derivative \[
f \mapsto Tf := \lapv f^h = (\lapv p_t)\ast f.
\] Here, $p$ denotes the Poisson kernel for $\R^{n+1}_+$,
\[
 p(x) = c\frac{1}{\brac{1+|x|^2}^{\frac{n+1}{2}}},
\]
and we denote $p_t(x) = t^{-n} p(x/t)$, that is
\[
 p_t(x) = c\frac{t}{\brac{t^2+|x|^2}^{\frac{n+1}{2}}}.
\]
A collection of estimates on the operator $p_t \ast$ acting on Sobolev functions can be found in~\cite{Lenzmann-Schikorra-commutators}. It is a well-known fact, that $p_t \ast$ is a bounded map from $\dot{H}^{\frac{1}{2}}(\R^n) \to \dot{H}^1(\R^{n+1}_+)$,
\[
 \|\nabla_{\R^{n+1}} \brac{p_t \ast f}\|_{2,\R^{n+1}_+} =  c \|\lapv f\|_{2,\R^{n}}.
\]
% Indeed, from the decay and harmonicity of $p_t \ast f$ in $\R^{n+1}_+$, the fact that $\frac{d}{dt} \Big|_{t =0} (p_t \ast f)= -c \laph f$, and integration by parts follows
% \[
%  \|\nabla_{\R^2} \brac{p_t \ast f}\|^2_{2,\R^{n+1}_+} = c\left |\int_{\R^n \times \{0\}} \partial_t (p_t \ast f)\ f\right | = \left |\int_{\R^n} \laph f\ f\right| =  \|\lapv f\|_{2,\R^{n}_+}.
% \]
The following is the corresponding ``zero order''-estimate, namely the operator $T := \lapv p_t\ast$ is a bounded map from $L^2(\R^n)$ to $L^2(\R^{n+1}_+)$.
\begin{lemma}\label{la:lapvptboundedness}
Denote for $f \in C_c^\infty(\R^n)$,
\[
 Tf(x,t) := p_t \ast \lapv f(x) \equiv \lapv (p_t)\ast f(x) \quad (x,t) \in \R^{n+1}_+.
\]
Then $T$ extends to a linear bounded operator $L^2(\R^n) \to L^{2}(\R^{n+1}_+)$, namely
\[
 \|Tf\|_{2,\R^{n+1}_+} \aleq \|f\|_{2,\R^{n}}.
\]
\end{lemma}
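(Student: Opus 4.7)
The plan is to reduce the claim to a direct Fourier/Plancherel computation in the $x$-variable. Recall that the Poisson kernel satisfies $\widehat{p_t \ast f}(\xi) = e^{-t|\xi|}\,\hat f(\xi)$, where the Fourier transform acts in the $\R^n$-slice, and that the operator $\lapv$ is the Fourier multiplier with symbol $|\xi|^{1/2}$. Since $T$ can be written as $Tf(x,t) = p_t \ast \lapv f(x)$, for any $f \in C_c^\infty(\R^n)$ and every fixed $t > 0$ we obtain the slice-wise identity
\[
\widehat{Tf(\cdot,t)}(\xi) \;=\; |\xi|^{\frac{1}{2}} \, e^{-t|\xi|} \, \hat f(\xi).
\]

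From here the argument is a one-line computation. Plancherel's theorem in the $x$-variable gives
\[
\|Tf(\cdot,t)\|_{L^2(\R^n)}^2 \;=\; \int_{\R^n} |\xi|\, e^{-2t|\xi|}\, |\hat f(\xi)|^2 \, d\xi.
\]
Integrating over $t \in (0,\infty)$ and switching the order of integration (all quantities are non-negative, so Fubini applies without fuss), the $t$-integral $\int_0^\infty e^{-2t|\xi|}\,dt = \tfrac{1}{2|\xi|}$ exactly cancels the factor $|\xi|$ and yields
\[
\|Tf\|_{L^2(\R^{n+1}_+)}^2 \;=\; \tfrac{1}{2} \int_{\R^n} |\hat f(\xi)|^2 \, d\xi \;=\; \tfrac{1}{2}\, \|f\|_{L^2(\R^n)}^2.
\]
This establishes the bound with the explicit constant $1/\sqrt{2}$ for smooth compactly supported $f$, and the extension to all of $L^2(\R^n)$ follows by density. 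There is no genuine obstacle here; the only point to observe is the algebraic miracle that the symbol $|\xi|$ of the $\tfrac{1}{2}$-Laplacian squared is precisely integrated out by the Poisson semigroup in $t$, which is the reason the ``zero-order'' estimate holds at all.
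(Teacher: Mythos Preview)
Your proof is correct and, for this lemma alone, is more direct than the paper's. The paper instead writes $Tf(x,t)=t^{-1/2}\kappa_t\ast f(x)$ with $\kappa:=\lapv p$, so that $\|Tf\|_{2,\R^{n+1}_+}=\|s_\kappa f\|_{2,\R^n}$ is exactly an $L^2$-norm of a Littlewood--Paley square function; boundedness then follows from standard square-function theory once one checks $\int\kappa=0$ and suitable decay (which they get from their pointwise estimates on $\lapv p_t$). Your Plancherel argument bypasses all of this and even yields the sharp constant $2^{-1/2}$. The trade-off is that the paper's detour is not wasted: the identification of $T$ with the square function $s_\kappa$ is precisely what drives the proof of the extension--commutator estimate (their Theorem on $T[fg]-g^h\,Tf$), where the vertical $L^2$ in $t$ has to interact with maximal and sharp maximal functions in $x$. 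Your Fourier proof settles the present lemma cleanly, but does not by itself furnish that machinery.
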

\begin{proof}
Denote by
\[
 \kappa := \lapv p,
\]
and set $\kappa_t = t^{-n} \kappa(\cdot/t)$. Then we can write
\[
 Tf(x,t) = t^{-\frac{1}{2}}\ \kappa_t \ast f(x).
\]
Thus, if we denote the square function with kernel $\kappa$ as
\[
 s_\kappa(f)(x) := \brac{\int_{0}^\infty |\kappa_t \ast f(x)|^2 \frac{dt}{t}}^{\frac{1}{2}},
\]
then
\[
 \|Tf\|_{2,\R^{n+1}_+} = \|s_\kappa(f)\|_{2,\R^n}.
\]
Observe that $\int \kappa = 0$ and that  $\kappa$ decays to zero sufficiently fast at infinity, see Lemma~\ref{la:ptest}, so that the theory of square functions is applicable, see \cite[Chapter I.C, \textsection 8.23, p.46]{Stein-Harmonic-Analysis}.
In particular we obtain
\[
 \|Tf\|_{2,\R^{n+1}_+} \aleq \|f\|_{2,\R^n}.
\]
\end{proof}

\subsection{Commutator estimates for extension operators}\label{ss:commies}
We want to study commutator estimates for extension operators. Recall that we denote by $f^h(x,t)$, $(x,t) \in \R^{n+1}_+$ the harmonic Poisson extension $p_t \ast f(x)$ of a function $f$ defined on $\R^n$. Observe that since the operator $()^h$ takes functions from $\R^n$ to functions on $\R^{n+1}_+$ the classical notion for commutators $[()^h,g](f)$ is meaningless, since $g\, f^h$ is not reasonably defined. Instead we will first consider the commutator
\[
 (fg)^h - f^h\, g^h.
\]
We then have the following estimate.
\begin{proposition}\label{pr:ptfgmptfptg}
\[
 \|(fg)^h - f^h\, g^h \|_{\infty,\R^{n+1}_+} \aleq [f]_{BMO}\ \|g\|_{\infty}.
\]
\end{proposition}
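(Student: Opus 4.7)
The plan is to rewrite the difference as a single integral against the Poisson kernel with the crucial mean-zero cancellation built in, and then estimate it pointwise via a dyadic annular decomposition.

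First I would observe that since $\int_{\R^n} p_t(x-y)\, dy = 1$, for every fixed $(x,t) \in \R^{n+1}_+$ the constant $f^h(x,t)$ satisfies
\[
 \int_{\R^n} p_t(x-y)\, f^h(x,t)\, g(y)\, dy = f^h(x,t)\, g^h(x,t).
\]
Subtracting this from $(fg)^h(x,t) = \int p_t(x-y) f(y) g(y)\, dy$ gives the identity
\[
 (fg)^h(x,t) - f^h(x,t)\, g^h(x,t) = \int_{\R^n} p_t(x-y)\, \bigl(f(y)-f^h(x,t)\bigr)\, g(y)\, dy.
\]
Taking absolute values yields
\[
 \bigl| (fg)^h - f^h g^h \bigr|(x,t) \leq \|g\|_\infty \int_{\R^n} p_t(x-y)\, \bigl|f(y)-f^h(x,t)\bigr|\, dy.
\]

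So the main step is the pointwise estimate
\begin{equation}\label{eq:BMOposisson}
 \int_{\R^n} p_t(x-y)\, \bigl|f(y)-f^h(x,t)\bigr|\, dy \aleq [f]_{BMO}.
\end{equation}
I would prove this with a standard dyadic decomposition. Set $A_0 := B(x,t)$ and $A_k := B(x,2^k t) \setminus B(x,2^{k-1} t)$ for $k \geq 1$. Using the explicit formula $p_t(z) = c\, t/(t^2+|z|^2)^{(n+1)/2}$, one has the uniform bound $p_t(x-y) \aleq 2^{-k(n+1)} t^{-n}$ on $A_k$, so that $\int_{A_k} p_t(x-y)\, dy \aleq 2^{-k}$. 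On the other hand, by the John--Nirenberg inequality / standard BMO bookkeeping,
\[
 \fint_{B(x,2^k t)} |f - f_{B(x,2^k t)}|\, dy \aleq [f]_{BMO}, \qquad |f_{B(x,2^k t)} - f_{B(x,t)}| \aleq (k+1)\, [f]_{BMO},
\]
and since $f^h(x,t)$ is itself a weighted average of $f$ on the dyadic annuli around $x$ at scale $t$, a similar telescoping argument gives $|f_{B(x,2^k t)} - f^h(x,t)| \aleq (k+1)\, [f]_{BMO}$. Splitting
\[
 |f(y)-f^h(x,t)| \leq |f(y) - f_{B(x,2^k t)}| + |f_{B(x,2^k t)} - f^h(x,t)|
\]
on $A_k$ and multiplying by the uniform bound on $p_t$ gives
\[
 \int_{A_k} p_t(x-y)\, |f(y)-f^h(x,t)|\, dy \aleq 2^{-k}(k+1)\, [f]_{BMO}.
\]
Summing over $k \geq 0$ yields \eqref{eq:BMOposisson} and completes the proof.

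The only genuine work is the dyadic annular estimate, and the only slightly delicate point there is the bound $|f_{B(x,2^k t)} - f^h(x,t)| \aleq (k+1)[f]_{BMO}$, which one obtains by writing $f^h(x,t) = \sum_k \int_{A_k} p_t(x-y) f(y)\, dy$, comparing each term to $f_{B(x,2^k t)}$ using the annular bound on $p_t$, and telescoping.
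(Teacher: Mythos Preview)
Your proof is correct and follows essentially the same route as the paper: write the difference as a single Poisson integral against $g$ times a centered version of $f$, pull out $\|g\|_\infty$, and reduce to the classical fact that the Poisson integral of $|f-\text{const}|$ at scale $t$ is controlled by $[f]_{BMO}$. The only cosmetic differences are that the paper centers at the ball average $(f)_{B(x,t)}$ rather than at $f^h(x,t)$ (and in fact, as literally written, swaps the roles of $f$ and $g$, which is harmless by symmetry of the expression), and cites \cite[Lemma~A.1]{Lenzmann-Schikorra-commutators} for the Poisson--BMO estimate instead of carrying out the dyadic annular computation you sketch.
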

\begin{proof}
W.l.o.g. the constant of the Poisson kernel $p$ is chosen so that 
\begin{equation}\label{eq:ptzeq0}
 \int_{\R^n} p_t(z)\ dz = 1.
\end{equation}
Fix any $(x,t) \in \R^{n+1}_+$, and denote by
\[
 \tilde{g}(z) := g(z)-(g)_{B(x,t)}.
\]
Then
\[
 (fg)^h(x,t) - f^h(x,t)\, g^h(x,t) = (f\tilde{g})^h(x,t) + f^h(x,t)\, \brac{(g)_{B(x,t)}  - g^h(x,t)}.
\]
Consequently, with \eqref{eq:ptzeq0},
\[
\begin{split}
 &\left |(fg)^h(x,t) - f^h(x,t)\, g^h(x,t)\right |\\
 \leq& \brac{\|f\|_{\infty,\R} + \|f^h\|_{\infty,\R^{n+1}_+}}\ \int_{\R^n} p_t(x-z)\ \left |g(z)-(g)_{B(x,t)} \right |\ dz.
\end{split}
 \]
Since $f^h$ is harmonic and $\lim_{|(x,t)| \to \infty} |f^h(x,t)| = 0$, by the maximum principle,
\[
 \|f^h\|_{\infty, \R^{n+1}_+} \aleq \|f\|_{\infty,\R^n}.
\]
Moreover, see \cite[Lemma A.1]{Lenzmann-Schikorra-commutators}, we have
\begin{equation}\label{eq:ptmmv}
 \left |\int_{\R^n} p_t(x-z)\, |g(z)-(g)_{B(x,t)}|\ dz \right |  \aleq [g]_{BMO}.
\end{equation}
The proof of Proposition~\ref{pr:ptfgmptfptg} is finished.
\end{proof}
Now we state our main commutator estimate with regard to the operator $T$ from Lemma~\ref{la:lapvptboundedness}. 
Again, $T$ takes functions on $\R^n$ into a function on $\R^{n+1}_+$, so writing a commutator $[T,g]$ does not make sense. Instead we consider the commutator-like expression 
\[
 T[fg] - g^h\, Tf,
\]
where $g^h$ is the harmonic Poisson extension $p_t \ast g$. 
\begin{theorem}\label{th:extensioncomm}
Let $T$ be the operator from Lemma~\ref{la:lapvptboundedness}, and denote by $g^h := p_t \ast g$ the harmonic Poisson extension of a function $g$ defined on $\R^n$.
Then
for any $\alpha \in (0,n)$, $q_1,q_2 \in [1,\infty]$ so that $\frac{1}{2} = \frac{1}{q_1} + \frac{1}{q_2}$.
\[
 \|T[fg] - g^h\, T[f]\|_{2,\R^{n+1}_+} \aleq \|\laps{\alpha} g\|_{(\frac{n}{\alpha},q_1)}\ \|f\|_{(2,q_2)}.
\]
\end{theorem}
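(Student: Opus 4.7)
The plan is to start from the identity
\[
F(x,t) := T[fg](x,t) - g^h(x,t)\,T[f](x,t) = \int_{\R^n} \kappa_t(x-y)\, f(y)\, \brac{g(y) - g^h(x,t)}\, dy,
\]
where $\kappa_t = \lapv p_t$. This identity holds because $T$ is convolution in the $x$-variable with $\kappa_t$ and $g^h(x,t)$ does not depend on the integration variable $y$; note that the cancellation $\int \kappa = 0$ used in Lemma~\ref{la:lapvptboundedness} is automatically built in, since we are free to subtract any constant in $y$ from $g(y)$.

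Next I will decompose $g(y)-g^h(x,t)$ by first writing $g(y)-g^h(x,t) = \int p_t(x-z)(g(y)-g(z))\,dz$ (using $\int p_t = 1$) and then applying the Riesz potential representation $g = c_\alpha I_\alpha h$ with $h := \laps{\alpha}g$, which yields
\[
g(y) - g(z) = c_\alpha \int_{\R^n} \brac{\frac{1}{|y-w|^{n-\alpha}} - \frac{1}{|z-w|^{n-\alpha}}}\, h(w)\, dw.
\]
Substituting produces a representation $F(x,t) = c_\alpha \int h(w)\,\mathcal{K}(x,t;w)\,dw$, in which the kernel $\mathcal{K}$ is a double convolution against $f$ involving $\kappa_t$, $p_t$, and the Riesz-kernel difference.

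The heart of the argument is a pointwise kernel bound of the form
\[
|F(x,t)| \;\aleq\; Mf(x)\cdot \mathcal{P}_\alpha[h](x,t),
\]
where $M$ is the Hardy--Littlewood maximal function and $\mathcal{P}_\alpha[h]$ is a Poisson/Littlewood--Paley-type functional whose square-function bound $\|\mathcal{P}_\alpha[h]\|_{L^2(\R^{n+1}_+)} \aleq \|h\|_{(n/\alpha,2)}$ is proved by the same argument as in Lemma~\ref{la:lapvptboundedness}. To establish this pointwise bound I would use the size estimates for $\kappa_t$ and $p_t$ coming from Lemma~\ref{la:ptest}, together with a dyadic decomposition of the $w$-integration according to the ratio $|x-w|/t$: in the near regime $|x-w| \lesssim t$ the non-integrable singularity of $|y-w|^{\alpha-n}$ is tamed by the cancellation $\int \kappa_t = 0$ combined with the symmetry between $y$ and $z$ in the difference $|y-w|^{\alpha-n}-|z-w|^{\alpha-n}$, while in the far regime $|x-w|\gtrsim t$ the Poisson decay of both $\kappa_t$ and $p_t$ supplies the needed extra $t$-powers. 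The $f$-integration at each scale reduces to a ball average, producing the $Mf(x)$ factor.

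Finally, combining the pointwise bound with the boundedness of $M$ on Lorentz spaces, the Lorentz Hölder inequality \eqref{eq:hoelder} (using $\frac{1}{2}=\frac{1}{q_1}+\frac{1}{q_2}$) in the mixed-norm space $L^{(2,q_2)}_x\,L^2_t$, and the embedding \eqref{eq:lorentzembedding} where needed, yields
\[
\|F\|_{L^2(\R^{n+1}_+)} \;\aleq\; \|Mf\|_{(2,q_2)}\,\|\mathcal{P}_\alpha[h]\|_{L^2_t L^{(\infty,q_1)}_x} \;\aleq\; \|f\|_{(2,q_2)}\,\|\laps{\alpha}g\|_{(n/\alpha,q_1)}.
\]
The main obstacle is the kernel estimate in the middle step: balancing the critical Riesz singularity $|y-w|^{\alpha-n}$ against the mean-zero mollifier $\kappa_t$ and the Poisson averager $p_t$ across dyadic scales of $|x-w|/t$, uniformly for $\alpha \in (0,n)$, without picking up logarithmic losses.
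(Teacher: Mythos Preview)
Your starting identity and the Riesz-potential expansion of $g(y)-g^h(x,t)$ are fine, and the approach is genuinely different from the paper's (which uses a sharp maximal function argument in the style of Torchinsky--Wang for square-function commutators). But the plan breaks down at two linked places.

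\textbf{The final H\"older step cannot close.} From a pointwise product bound $|F(x,t)|\leq Mf(x)\,\mathcal{P}_\alpha[h](x,t)$ you get
\[
\|F\|_{L^2(\R^{n+1}_+)}\ \leq\ \bigl\|\,Mf\cdot s(x)\,\bigr\|_{L^2(\R^n)},\qquad s(x):=\Big(\int_0^\infty |\mathcal{P}_\alpha[h](x,t)|^2\,dt\Big)^{1/2},
\]
and since $Mf$ is only in $L^{(2,q_2)}(\R^n)$, the Lorentz H\"older inequality forces $s\in L^{(\infty,q_1)}(\R^n)$. That is an $L^\infty$-type bound on a square function of $h=\laps{\alpha}g\in L^{(n/\alpha,q_1)}$; square functions are not bounded on $L^\infty$, and the exponent $n/\alpha$ is exactly the critical one for which $g$ lands in BMO rather than $L^\infty$. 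Your earlier assertion that $\|\mathcal{P}_\alpha[h]\|_{L^2(\R^{n+1}_+)}\aleq\|h\|_{(n/\alpha,2)}$ (an $L^2_{x,t}$ bound) is a different, much weaker statement than the $L^2_t L^{(\infty,q_1)}_x$ bound you invoke in the last display; the two are not interchangeable.

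\textbf{The pointwise factorization itself is not available.} Using $|g(y)-g^h(x,t)|\aleq (|x-y|+t)^\alpha\big(\Max h(x)+\Max h(y)\big)$ (Proposition~\ref{pr:fxmfymax}), the cross term with $\Max h(y)$ contributes $\int |\kappa_t(x-y)|\,|f(y)|\,\Max h(y)\,dy$, which is of the form $\Max(f\cdot \Max h)(x)$ up to scale factors and does \emph{not} split as $Mf(x)$ times a functional of $h$ alone. This is precisely why the paper passes to the sharp maximal function $\Max^\#$ of $v(x)=\big(\int_0^\infty|F(x,t)|^2\,dt\big)^{1/2}$: the decomposition $I+II+III+IV$ in Proposition~\ref{pr:extensioncomm:sharp} produces products like $\Max_{0,p}h(x_0)\cdot \Max_{\alpha,p'}(s_\kappa f)(x_0)$ in which \emph{both} factors live in finite-exponent spaces (via Lemma~\ref{la:maximalest}), and the Fefferman--Stein inequality $\|v\|_2\aleq\|\Max^\# v\|_2$ then closes the estimate. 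Without the sharp-maximal detour, the critical scaling forces one factor to $L^\infty$, which is exactly where the argument fails.
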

\subsection{Proof of Theorem~\ref{th:extensioncomm}}\label{s:proofth:extensioncomm}
We first need to fix the notation for a several maximal functions. By $\Max $ we denote the (uncentered) Hardy-Littlewood maximal function
\[
 \Max  f(x) = \sup_{B(y,r) \ni x} |B(y,r)|^{-1} \int_{B(y,r)} |f(z)|\ dz.
\]
By an abuse of notation, but for simplicity, we will not distinguish between finitely many repeated maximal functions, i.e. we will identify $\Max \Max  f$ with $\Max  f$.

We also have need the sharp maximal function,
\[
\Max ^\#f(x) := \sup_{B(y,r) \ni x} |B(y,r)|^{-1} \int_{B(y,r)} |f(z)-(f)_{B(y,r)}|\ dz,
\]
as well as the weighted maximal function, defined for $p \in [1,\infty)$ as
\[
 \Max _{\alpha,p}f(x) := \sup_{x \in B} \brac{|B|^{\frac{\alpha p}{n}-1} \int_B |f|^p}^{\frac{1}{p}}.
\]
Clearly,
\[
 \Max _{0,p}f(x) = \brac{\Max  |f|^p(x)}^{\frac{1}{p}}.
\]
Lastly, for a smooth kernel $\kappa \in L^1 \cap L^\infty(\R^n)$, $\int_{\R^n} \kappa = 0$ the square function $s_\kappa f$ is defined as
\begin{equation}\label{eq:squarefct}
 s_\kappa f(x) := \brac{\int_0^\infty |\kappa_t \ast f(x)|^2 \frac{dt}{t}}^{\frac{1}{2}}.
\end{equation}
Recall that we have the notation $\kappa_t(\cdot) := t^{-n} \kappa(\cdot/t)$.

It is well known (see, e.g., \cite{Bojarski-Hajlasz-1993, Hajlasz-1996}) that for any $p \in [1,\infty)$,
\[
 |f(x) - f(y)| \aleq |x-y|\ \brac{\Max |\nabla f|(x) + \Max |\nabla f|(y)}.
\]
The fractional version of this fact holds as well.
\begin{proposition}\label{pr:fxmfymax}
The following holds for any $\alpha \in (0,1)$ and for almost every $x,y \in \R^n$.
\begin{equation}\label{eq:pr:fxmfymax:1}
 |f(x)-f(y)| \aleq |x-y|^{\alpha}\ \brac{\Max (\laps{\alpha} f)(x)+\Max (\laps{\alpha} f)(y)},
\end{equation}
\begin{equation}\label{eq:pr:fxmfymax:2}
 \left |f(x) - (f)_{B(x,r)}| \right | \aleq r^{\alpha}\Max (\laps{\alpha} f)(x),
\end{equation}
and
\begin{equation}\label{eq:pr:fxmfymax:3}
 \left |f(x) - p_t \ast f(x) \right | \aleq t^{\alpha}\ \Max (\laps{\alpha} f)(x).
\end{equation}
\end{proposition}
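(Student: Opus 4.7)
The plan is to prove the three bounds simultaneously, reducing (\ref{eq:pr:fxmfymax:2}) and (\ref{eq:pr:fxmfymax:3}) to the pointwise Hölder estimate (\ref{eq:pr:fxmfymax:1}). The starting point is the Riesz potential representation
\[
 f(x) = c_{n,\alpha}\, I_\alpha(\laps{\alpha} f)(x) = c_{n,\alpha}\, \int_{\R^n} \frac{g(z)}{|x-z|^{n-\alpha}}\, dz, \qquad g := \laps{\alpha} f,
\]
which holds for $f$ in a suitable Schwartz-type class; by density it will suffice to argue there and pass to the limit.

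To prove (\ref{eq:pr:fxmfymax:1}), I write $f(x)-f(y) = c_{n,\alpha}\int (|x-z|^{\alpha-n}-|y-z|^{\alpha-n})\,g(z)\,dz$ and split $\R^n$ into three regions with $r := |x-y|$: the ball $B(x,2r)$, the ball $B(y,2r)$, and the complement. On each of the first two balls the two kernels are treated separately; in each case a dyadic decomposition into annuli of scale $2^{-k}r$ and the bound $\int_{B(x,\rho)}|g|\le |B(x,\rho)|\,\Max g(x)$ give a geometric series $\sum_k (2^{-k}r)^\alpha\Max g(x)\aleq r^\alpha \Max g(x)$ since $\alpha>0$. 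On the far region, the mean-value theorem gives $||x-z|^{\alpha-n}-|y-z|^{\alpha-n}|\aleq r\,|x-z|^{\alpha-n-1}$, and another dyadic decomposition into $\{2^k r\le|x-z|<2^{k+1}r\}$, $k\ge 1$, produces $r\sum_{k\ge 1}(2^k r)^{\alpha-1}\Max g(x)\aleq r^\alpha\Max g(x)$, now using $\alpha-1<0$. Summing the three contributions gives (\ref{eq:pr:fxmfymax:1}).

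To pass from (\ref{eq:pr:fxmfymax:1}) to (\ref{eq:pr:fxmfymax:2}), I write $f(x)-(f)_{B(x,r)} = \fint_{B(x,r)}(f(x)-f(y))\,dy$ and apply (\ref{eq:pr:fxmfymax:1}) pointwise:
\[
 |f(x)-(f)_{B(x,r)}| \aleq r^\alpha \Max g(x) + r^\alpha \fint_{B(x,r)}\Max g(y)\,dy \aleq r^\alpha \Max g(x),
\]
where the last step uses the elementary bound $\fint_{B(x,r)}\Max g \le \Max(\Max g)(x) \aleq \Max g(x)$ (absorbing the double maximal via our abuse-of-notation convention, or a genuine $L^\infty$ bound of $\Max$ on its own range).

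For (\ref{eq:pr:fxmfymax:3}) I proceed in the same spirit with the Poisson kernel in place of a ball average:
\[
 |f(x)-p_t\ast f(x)| \le \int p_t(x-y)\,|f(x)-f(y)|\,dy \aleq \int p_t(x-y)\,|x-y|^\alpha (\Max g(x)+\Max g(y))\,dy.
\]
The $\Max g(x)$ term factors out and gives $t^\alpha \Max g(x)$ once one notes $\int p_t(z)|z|^\alpha\,dz\aleq t^\alpha$. The $\Max g(y)$ term is decomposed into $\{|x-y|\le t\}$, where $p_t\aleq t^{-n}$ and a direct computation yields $t^\alpha \Max(\Max g)(x)\aleq t^\alpha \Max g(x)$, and dyadic annuli $\{2^{k-1}t\le|x-y|<2^k t\}$, $k\ge 1$, where $p_t(x-y)\aleq t/(2^k t)^{n+1}$ so each annular piece contributes $\aleq 2^{k\alpha}t^\alpha\cdot 2^{-k}\Max g(x)$; the geometric series in $2^{k(\alpha-1)}$ converges since $\alpha<1$. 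The only genuine care-point is the use of $\Max\circ\Max\aleq\Max$ to collapse the iterated maximals, which is the main recurrent subtlety; everything else is a disciplined dyadic bookkeeping exercise.
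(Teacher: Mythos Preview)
Your proof is correct and follows essentially the same route as the paper: Riesz potential representation plus a near/far dyadic split for \eqref{eq:pr:fxmfymax:1} (the paper packages the dyadic sums into Lemma~\ref{la:fxmfymaxest}, you do them inline), with \eqref{eq:pr:fxmfymax:2} and \eqref{eq:pr:fxmfymax:3} obtained by averaging against the ball or the Poisson kernel. Your flagged concern about $\Max\circ\Max\aleq\Max$ is exactly the convention the paper adopts explicitly just before the proposition (identifying finitely iterated maximal functions with a single $\Max$); the paper invokes this same convention when it says \eqref{eq:pr:fxmfymax:2} ``is a consequence of \eqref{eq:pr:fxmfymax:1}'', and for \eqref{eq:pr:fxmfymax:3} it simply cites Stein rather than writing out the argument you give.
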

For the proof of Proposition~\ref{pr:fxmfymax} we use the following Lemma.
\begin{lemma}\label{la:fxmfymaxest}
Let $\alpha \in (0,1)$ and $\Lambda > 0$. Then for almost every $x,y \in \R^n$,
\begin{equation}\label{eq:xmzllambda}
 |x-y|^{-\alpha} \int_{|x-z| < \Lambda |x-y|} |x-z|^{\alpha-n}\, |\laps{\alpha} f(z)|\ dz \aleq \Lambda^{\alpha}\, \Max \laps{\alpha} f(x),
\end{equation}
and
\begin{equation}\label{eq:xmzglambda}
 |x-y|^{1-\alpha} \int_{|x-z| > \Lambda |x-y|} |x-z|^{\alpha-1-n}\, |\laps{\alpha} f(z)|\ dz \aleq \Lambda^{\alpha-1}\, \Max \laps{\alpha} f(x).
\end{equation}
\end{lemma}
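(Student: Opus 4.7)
The plan is to prove both estimates by a standard dyadic annular decomposition around $x$, and on each annulus use the defining bound of the Hardy--Littlewood maximal function. Throughout, write $r := |x-y|$ and $g := \laps{\alpha} f$ for brevity.

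For \eqref{eq:xmzllambda}, I would decompose the ball $\{|x-z| < \Lambda r\}$ into the dyadic annuli
\[
A_k := \left\{ 2^{-k-1}\Lambda r \le |x-z| < 2^{-k}\Lambda r \right\}, \quad k \in \mathbb{N}_0,
\]
each contained in $B(x, 2^{-k}\Lambda r)$. On $A_k$ we have $|x-z|^{\alpha-n} \aleq (2^{-k}\Lambda r)^{\alpha-n}$, and by definition of $\Max$,
\[
\int_{A_k} |g(z)|\, dz \le \int_{B(x, 2^{-k}\Lambda r)} |g(z)|\, dz \aleq (2^{-k}\Lambda r)^n\, \Max g(x).
\]
Combining these two bounds and summing the resulting geometric series,
\[
\sum_{k=0}^\infty \int_{A_k} |x-z|^{\alpha-n} |g(z)|\, dz \aleq \Max g(x) \sum_{k=0}^\infty (2^{-k}\Lambda r)^{\alpha} \aleq (\Lambda r)^{\alpha}\, \Max g(x),
\]
where convergence uses $\alpha > 0$. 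Dividing by $r^{\alpha}$ yields \eqref{eq:xmzllambda}.

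For \eqref{eq:xmzglambda}, I would carry out the symmetric outer decomposition: set
\[
A_k := \left\{ 2^{k}\Lambda r \le |x-z| < 2^{k+1}\Lambda r \right\}, \quad k \in \mathbb{N}_0,
\]
so $A_k \subset B(x, 2^{k+1}\Lambda r)$, and on $A_k$, $|x-z|^{\alpha-1-n} \aleq (2^k\Lambda r)^{\alpha-1-n}$. Exactly as above, the maximal-function bound gives
\[
\int_{A_k} |x-z|^{\alpha-1-n} |g(z)|\, dz \aleq (2^k\Lambda r)^{\alpha-1}\, \Max g(x),
\]
and summing over $k \ge 0$ produces a convergent geometric series because now $\alpha - 1 < 0$, yielding $(\Lambda r)^{\alpha-1}\Max g(x)$. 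Multiplying by $r^{1-\alpha}$ gives \eqref{eq:xmzglambda}.

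The argument is essentially routine; the only point worth emphasizing is that the condition $0<\alpha<1$ is exactly what makes the two geometric series—one from below, one from above—converge, with the cross-over at the scale $\Lambda|x-y|$ encoding the balance. No further ingredients beyond the definition of $\Max$ are required, so I do not anticipate any genuine obstacle.
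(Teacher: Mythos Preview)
Your proof is correct and follows essentially the same dyadic-shell decomposition as the paper's own argument: both split the integration region into annuli at scales $2^{k}\Lambda|x-y|$, bound the weight $|x-z|^{\alpha-n}$ (respectively $|x-z|^{\alpha-1-n}$) on each shell, invoke the maximal function to control the mass of $|\laps{\alpha} f|$ on the containing ball, and sum the resulting geometric series using $\alpha>0$ and $\alpha<1$ respectively. The only difference is cosmetic indexing (you run $k\ge 0$ with radii $2^{-k}\Lambda r$ for the inner estimate, the paper runs $k\le 0$ with radii $2^{k}\Lambda r$).
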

\begin{proof}
Regarding \eqref{eq:xmzllambda}, we split the integral and use the definition of the maximal function. Then
\[
\begin{split}
  &|x-y|^{-\alpha} \int_{|x-z| < \Lambda |x-y|} |x-z|^{\alpha-n}\, |\laps{\alpha} f(z)|\ dz \\
  \aeq &\sum_{k = -\infty}^0|x-y|^{-\alpha}\, \int_{|x-z| \aeq  2^k\Lambda |x-y|} |x-z|^{\alpha-n}\, |\laps{\alpha} f(z)|\ dz\\
  \aeq &\sum_{k = -\infty}^0 (2^k \Lambda)^{\alpha}\, \brac{2^k \Lambda |x-y|}^{-n} \int_{|x-z| \aeq  2^k\Lambda |x-y|} |\laps{\alpha} f(z)|\ dz \\
  \aleq &\sum_{k = -\infty}^0 (2^k \Lambda)^{\alpha}\, \Max \laps{\alpha} f(x) \aeq \Lambda^{\alpha}\, \Max \laps{\alpha} f(x). 
\end{split}
 \]
Similarly, regarding \eqref{eq:xmzglambda}, we compute
\[
\begin{split}
  &|x-y|^{\alpha-1} \int_{|x-z| > \Lambda |x-y|} |x-z|^{\alpha-1-n} \, |\laps{\alpha} f(z)|\ dz \\
%   \aeq &\sum_{k = 0}^\infty \int_{|x-z| \aeq  2^k\Lambda |x-y|} \frac{|x-z|^{\alpha-1-n} }{|x-y|^{\alpha-1}}\ |\laps{\alpha} f(z)|\ dz \\
%   \aeq &\sum_{k = 0}^\infty |x-y|^{1-\alpha} (2^k |x-y|)^{\alpha-1-n} \int_{|x-z| \aeq  2^k\Lambda |x-y|}  \ |\laps{\alpha} f(z)|\ dz \\
    \aleq &\sum_{k = 0}^\infty (2^k \Lambda)^{\alpha-1}\ \Max \laps{\alpha} f(x) \aeq \Lambda^{\alpha-1}\, \Max \laps{\alpha} f(x). \\
\end{split}
 \]
Lemma~\ref{la:fxmfymaxest} is proven.
\end{proof}

\begin{proof}[Proof of Proposition~\ref{pr:fxmfymax}]
The second claim \eqref{eq:pr:fxmfymax:2} is a consequence of \eqref{eq:pr:fxmfymax:1}.

\eqref{eq:pr:fxmfymax:3} can be proven by hand arguing similar to Lemma~\ref{la:fxmfymaxest}, but it holds in general for a large class of radial kernels, see \cite[II.2, (16), p.57]{Stein-Harmonic-Analysis}. 

From the remaining claims \underline{we first establish \eqref{eq:pr:fxmfymax:1}}. By the definition of the Riesz potential $\lapms{\alpha} = (-\lap)^{-\frac{\alpha}{2}}$,
\[
 f(x) = \lapms{\alpha} \laps{\alpha} f(x) \equiv c\, \int_{\R^n} |x-z|^{\alpha-n}\, \laps{\alpha} f(z)\ dz.
\]
Consequently,
\[
  \frac{|f(x)-f(y)|}{|x-y|^{\alpha}} \aleq \int_{\R^n} \frac{\left | |x-z|^{\alpha-n} - |y-z|^{\alpha-n} \right |}{|x-y|^{\alpha}}\ |\laps{\alpha} f(z)|\ dz.
\]
We distinguish three regimes in the latter integral. The case where $|x-y|$ is relatively small compared to $|x-z|$ and $|y-z|$, the case where $|y-z|$ is relatively small, and the case where $|x-z|$ is relatively small.

More precisely we decompose $\R^n = A(x,y) \cup B(x,y)$, for
\[
 A(x,y) := \left \{z \in \R^n: \quad |x-y| \leq \frac{1}{2} |x-z| \mbox{ or } |x-y| \leq \frac{1}{2} |y-z| \right \}, 
\]
\[
 B(x,y) := \left \{z \in \R^n: \quad  |x-y| > \frac{1}{2} |x-z| \mbox{ and } |x-y| > \frac{1}{2} |y-z| \right \} ,
\]
In view of \eqref{eq:xmzllambda} we find
\[
\begin{split}
 &\int_{B(x,y)} \frac{\left | |x-z|^{\alpha-n} - |y-z|^{\alpha-n} \right |}{|x-y|^{\alpha}}\ |\laps{\alpha} f(z)|\ dz\\
 \leq& |x-y|^{-\alpha}\, \int_{|x-z| < 2|x-y|} |x-z|^{\alpha-n} \ |\laps{\alpha} f(z)|\ dz\\
 &+  |x-y|^{-\alpha}\, \int_{|y-z| < 2|x-y|} |y-z|^{\alpha-n} \ |\laps{\alpha} f(z)|\ dz\\
 \aleq&\Max  \laps{\alpha} f(x) + \Max  \laps{\alpha} f(y).
 \end{split}
\]
On the other hand, if $|x-y| < \frac{1}{2} |x-z|$ and $|y-x| < \frac{1}{2} |y-z|$, then $|x-z| \aeq |y-z|$ and consequently
\[
 \frac{\left | |x-z|^{\alpha-n} - |y-z|^{\alpha-n} \right |}{|x-y|^{\alpha}} \aleq |x-z|^{\alpha-n-1} |x-y|.
\]
This time we argue with \eqref{eq:xmzglambda} to find
\[
\begin{split}
 &\int_{A(x,y)} \frac{\left | |x-z|^{\alpha-n} - |y-z|^{\alpha-n} \right |}{|x-y|^{\alpha}}\ |\laps{\alpha} f(z)|\ dz\\
 \leq& |x-y|^{1-\alpha}\, \int_{|x-z| > 2|x-y|} |x-z|^{\alpha-1-n} \ |\laps{\alpha} f(z)|\ dz\\
 \aleq&\Max  \laps{\alpha} f(x).
 \end{split}
\]
This establishes \eqref{eq:pr:fxmfymax:1}.
\end{proof}
The main estimate needed for the proof of Theorem~\ref{th:extensioncomm} is contained in the following proposition.
\begin{proposition}\label{pr:extensioncomm:sharp}
Let $T$ be the operator from Lemma~\ref{la:lapvptboundedness}, and denote by $g^h := p_t \ast g$ the harmonic Poisson extension of a function $g$ defined on $\R^n$.
Let 
\[
 v(x):= \brac{\int_{0}^\infty \left |T[fg](x,t) - g^h(x,t)\, T[f](x,t) \right |^2\ dt}^{\frac{1}{2}}.
\]
then for any $\alpha \in (0,1)$, any $x_0 \in \R^n$, and for any $p, q \in (1,\infty)$
\[\begin{split}
  \Max ^\# v(x_0) \aleq&  \Max _{0,p}\laps{\alpha}g(x_0)\ \Max _{\alpha,p'} (s_\kappa f)(x_0) 
+\Max _{0,p}\laps{\alpha}g(x_0)\ \Max _{0,p'} (s_{\tilde{\kappa}} \lapms{\alpha}f)(x_0)\\
 &+ \Max _{0,qp}\Max  \laps{\alpha}g(x_0)\, \Max _{\alpha,qp'} f(x_0)\\
\end{split}
  \]
Here, $\kappa= \lapv p$ and $\tilde{\kappa} := \laps{\alpha} \kappa$ where $p$ is the Poisson kernel. 
\end{proposition}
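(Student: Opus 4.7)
The plan is to recast $v$ as a vertical square function and to control it by two commutator-like pieces, each handled via Proposition~\ref{pr:fxmfymax}. Using the scaling $T[f](x,t)=t^{-1/2}\kappa_t\ast f(x)$ (with $\kappa=\lapv p$ and $\kappa_t(y)=t^{-n}\kappa(y/t)$), I would rewrite
\[
 v(x)^2\;=\;\int_0^\infty |G(x,t)|^2\,\frac{dt}{t},\qquad G(x,t):=\kappa_t\ast(fg)(x)-g^h(x,t)\,\kappa_t\ast f(x),
\]
and decompose $G=G_1+G_2$ with
\[
 G_1(x,t)=\int\kappa_t(x-y)\bigl[g(y)-g(x)\bigr]f(y)\,dy,\quad G_2(x,t)=\bigl[g(x)-g^h(x,t)\bigr]\,\kappa_t\ast f(x).
\]
This separates the local commutator $G_1$ (featuring the pointwise value $g(x)$) from the Poisson-extension defect $G_2$.

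For $G_2$, Proposition~\ref{pr:fxmfymax}\eqref{eq:pr:fxmfymax:3} gives $|g(x)-g^h(x,t)|\aleq t^\alpha \Max(\laps\alpha g)(x)$. A direct Fourier computation shows the identity $t^\alpha\kappa_t\ast f=\tilde\kappa_t\ast\lapms\alpha f$ (with $\tilde\kappa=\laps\alpha\kappa$ mean-zero), so that
\[
 \brac{\int_0^\infty |G_2|^2\,\frac{dt}{t}}^{1/2}\;\aleq\;\Max(\laps\alpha g)(x)\,\cdot\,s_{\tilde\kappa}(\lapms\alpha f)(x).
\]
Averaging this pointwise bound over a ball $B\ni x_0$ via H\"older with exponents $(p,p')$ and invoking $L^p$-boundedness of $\Max$ will produce the second summand in the claim.

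The analysis of $G_1$ will be the technical heart. Proposition~\ref{pr:fxmfymax}\eqref{eq:pr:fxmfymax:1} yields
\[
 |G_1(x,t)|\aleq t^\alpha\,\brac{\Max\laps\alpha g(x)\,(\rho_t\ast|f|)(x)+\bigl(\rho_t\ast(|f|\Max\laps\alpha g)\bigr)(x)},
\]
where $\rho(z):=|\kappa(z)|\,|z|^\alpha$. The key difficulty is that $\rho$ carries no cancellation, so the bare vertical integral $\int_0^\infty t^{2\alpha}|\rho_t\ast\cdot|^2\,dt/t$ diverges at infinity; we must exploit that we are only bounding $\Max^\#v(x_0)$ and may subtract a ball-dependent constant. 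Fix $B=B(\bar x,r)\ni x_0$ and split $\int_0^\infty=\int_0^{2r}+\int_{2r}^\infty$ together with $f=f\chi_{3B}+f\chi_{(3B)^c}$. For $t\geq 2r$, the kernel $\rho_t$ is essentially constant on $B$, so this piece contributes only to the averaged value $(v)_B$; the residual oscillation, after restoring the $\kappa$-cancellation, is handled by the classical $L^p$-boundedness of $s_\kappa$ together with Proposition~\ref{pr:fxmfymax}\eqref{eq:pr:fxmfymax:2}, producing the first summand $\Max_{0,p}\laps\alpha g(x_0)\,\Max_{\alpha,p'}(s_\kappa f)(x_0)$. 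For $t<2r$, the non-local contribution $f\chi_{(3B)^c}$ is again essentially constant over $B$, while the local part $f\chi_{3B}$ is dominated pointwise, after H\"older with exponents $(q,q')$, by $\Max\laps\alpha g(x)\cdot\Max_{\alpha,qp'}f(x)$; this is the third summand.

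The main obstacle will be this last step: the absence of cancellation in $\rho(z)=|\kappa(z)|\,|z|^\alpha$ rules out a direct square-function treatment, forcing a four-way bilinear localization (in $t$ versus $r$ and in $f\chi_{3B}$ versus $f\chi_{(3B)^c}$) and delicate tracking of which factor of $\Max\laps\alpha g$ is routed to which side. The two independent H\"older pairs $(p,p')$ and $(qp,qp')$ in the claim are the signature of these two independent localizations, and matching them to the precise three-term structure of the bound will be the main bookkeeping challenge.
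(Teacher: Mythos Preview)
Your decomposition $G=G_1+G_2$ is sound, and your treatment of $G_2$ is exactly the paper's piece $II$. The difference lies entirely in how you handle $G_1$, and there the paper takes a sharper route that avoids the difficulty you flag.

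The paper never introduces the non-cancelling kernel $\rho(z)=|\kappa(z)|\,|z|^\alpha$. Instead of pivoting at $g(x)$, it pivots at the ball average $(g)_{B(x_0,r)}$ and writes $\tilde g=g-(g)_B$, then splits your $G_1$ into three pieces: $I(y,t)=\tilde g(y)\,T[f](y,t)$, $III(y,t)=T[\chi_{10B}\,f\tilde g](y,t)$, and $IV(y,t)=T[\chi_{(10B)^c}\,f\tilde g](y,t)$. For $I$ and $III$, the bound $|\tilde g(y)|\aleq r^\alpha(\Max\laps\alpha g(y)+\Max\laps\alpha g(x_0))$ pulls the $\alpha$-gain out as a scalar factor $r^\alpha$, so the remaining $t$-integral is a genuine square function with the original mean-zero kernel $\kappa$; no cancellation is ever lost, and these pieces give the first and third summands directly. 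For $IV$, the paper does \emph{not} split in $t$: it estimates the oscillation $IV(y_1,t)-IV(y_2,t)$ via Minkowski and the kernel smoothness $|\lapv p_t(y_1-z)-\lapv p_t(y_2-z)|\aleq |y_1-y_2|\,(t^2+|y_1-z|^2)^{-(n+3/2)/2}$, then integrates $\int_0^\infty(\cdot)^2\,dt$ in closed form to get $|y_1-z|^{-n-1}$, and finishes with a dyadic sum over annuli.

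Your proposed route through $\rho_t$ has two concrete issues. First, for $\alpha\geq 1/2$ the kernel $\rho(z)=|\kappa(z)|\,|z|^\alpha$ fails to be in $L^1(\R^n)$ (since $|\kappa(z)|\aleq|z|^{-n-1/2}$ at infinity), so the convolution $\rho_t\ast|f|$ is not even defined for general $f$; the proposition is stated for all $\alpha\in(0,1)$. Second, and more seriously, the phrases ``restoring the $\kappa$-cancellation'' for $t>2r$ and ``essentially constant over $B$'' for $f\chi_{(3B)^c}$ are exactly where the work lies, and carrying them out forces you back to a kernel-oscillation estimate of the form $|\kappa_t(y_1-z)-\kappa_t(y_2-z)|$ --- which is precisely the paper's piece-$IV$ computation. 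Once you do that, the $t$-splitting becomes unnecessary: the full integral $\int_0^\infty$ converges directly. So your outline is not wrong, but it postpones the decisive estimate to a step you have only gestured at, and the cleaner path is to pivot at $(g)_B$ from the start.
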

\begin{proof}
The proof follows some ideas from commutator estimates on (interior) square functions, also called Lusin functions, see \cite{Torchinsky-Wang-1990,Chen-Ding-2009,GuliyevOmarovaSawano-2015}. This is responsible for the $L^2$-norm to appear on the left-hand side of the estimate of Theorem~\ref{th:extensioncomm}, cf. Remark~\ref{r:commiel2}.

Fix $x_0 \in \R^n$. For some $r > 0$ and $x \in \R^n$ so that $x_0 \in B(x,r)$ we want to find an estimate for
\[
 |B(x,r)|^{-1} \int_{B(x,r)} |v(y) - (v)_{B(x,r)}|\ dy.
\]
Set 
\[
 \tilde{g}(z) := g(z) - (g)_{B(x_0,r)}.
\]
For arbitrary $t \in (0,\infty)$, we split
\begin{equation}\label{eq:Tfgcomsplit}
 T[fg](y,t) - g^h(y,t)\, T[f](y,t) = -I(y,t) + II(y,t) + III(y,t) + IV(y,t),
\end{equation}
where
\[
\begin{split}
 I(y,t) :=& \tilde{g}(y)\, T[f](y,t),\\
 II(y,t) :=& \brac{g(y) - g^h(y,t)}\, T[f](y,t),\\
 III(y,t) :=& T[\chi_{B(x_0,10r)}f \tilde{g}](y,t),\\
 IV(y,t) :=& T[\chi_{\R^n \backslash B(x_0,10r)}f \tilde{g}](y,t).\\
  \end{split}
\]
We begin with the \underline{estimate of $I(y,t)$}.
We have
\[
 \brac{\int_{0}^\infty |I(y,t)|^2 dt}^{\frac{1}{2}} \aleq |\tilde{g}(y)|\, \brac{\int_{0}^\infty |T[f](y,t)|^2 dt}^{\frac{1}{2}}
\]
Recall that for $\kappa := \lapv p$ we can write the $t$-integral as square function \eqref{eq:squarefct},
\[
 \brac{\int_{0}^\infty |Tf(y)|^2 dt}^{\frac{1}{2}} = s_\kappa f(y).
\]
On the other hand by Proposition~\ref{pr:fxmfymax},
\[
 |\tilde{g}(y)| = |g(y) - (g)_{B(x_0,r)}| \aleq r^{\alpha}\brac{\Max  (\laps{\alpha}g)(y)+\Max  (\laps{\alpha}g)(x_0)}.
\]
Consequently, using H\"older inequality we find that for any $p > 1$,
\begin{equation}\label{eq:Iyt}
 \mvint_{B(x_0,r)} \brac{\int_{0}^\infty |I(y,t)|^2 dt}^{\frac{1}{2}}\ dy\aleq \Max _{0,p}\laps{\alpha}g(x_0)\ \Max _{\alpha,p'} (s_\kappa f)(x_0).
\end{equation}
\underline{As for $II(y,t)$} observe that by Proposition~\ref{pr:fxmfymax},
\[
 |g(y)-g^h(y,t)| \leq t^{\alpha}\Max (\laps{\alpha}g)(y).
\]
Consequently,
\[
\brac{\int_{0}^\infty |II(y,t)|^2 dt}^{\frac{1}{2}} \aleq \Max (\laps{\alpha}g)(y)\ \brac{\int_{0}^\infty |t^{\alpha}\kappa_t\ast f(y)|^2 \frac{dt}{t}}^{\frac{1}{2}}.
\]
Now with the Riesz potential $\lapms{\alpha} = (-\lap)^{-\frac{\alpha}{2}}$ we can write
\[
 t^{\alpha}\kappa_t \ast f = (\laps{\alpha}\kappa)_t \ast \lapms{\alpha} f.
\]
Therefore, for $\tilde{\kappa} := \laps{\alpha} \kappa$, we have found that for any $p \in (1,\infty)$
\begin{equation}\label{eq:IIyt}
 \mvint_{B(x_0,r)} \brac{\int_{0}^\infty |II(y,t)|^2 dt}^{\frac{1}{2}}\ dy\aleq \Max _{0,p}\laps{\alpha}g(x_0)\ \Max _{0,p'} (s_{\tilde{\kappa}} \lapms{\alpha}f)(x_0).
\end{equation}
Next we estimate \underline{estimate $III(y,t)$}, and have for any $q \geq 1$ by H\"older inequality
\[
 \mvint_{B(x_0,r)} \brac{\int_{0}^\infty |II(y,t)|^2 dt}^{\frac{1}{2}}\ dy \aleq r^{-\frac{n}{q}}\, \left \|s_\kappa \brac{f\chi_{B(x_0,10r)} \tilde{g}} \right \|_{q,\R^n}.
\]
Since the square function $s_\kappa$ is a bounded operator on $L^q(\R^n)$ whenever $q \in (1,\infty)$, we find for such $q$,
\[
 \mvint_{B(x_0,r)} \brac{\int_{0}^\infty |II(y,t)|^2 dt}^{\frac{1}{2}}\ dy \aleq r^{-\frac{n}{q}}\ \|f\chi_{B(x_0,10r)} \tilde{g}\|_{q,\R^n} = r^{-\frac{n}{q}}\ \|f\, \tilde{g}\|_{q,B(x_0,10r)}.
\]
In view of the definition of $\tilde{g}$ and Proposition~\ref{pr:fxmfymax}
\[
 \|f\, \tilde{g}\|_{q,B(x_0,10r)} \aleq r^{\alpha}\brac{\|f\, \Max \laps{\alpha} g\|_{q,B(x_0,10r)} + \|f\|_{q,B(x_0,10r)}\, \Max \laps{\alpha} g(x_0)}.
\]
In particular, for any $p > 1$,
\[
 r^{-\frac{n}{q}}\ \|f\, \tilde{g}\|_{q,B(x_0,10r)} \aleq \Max _{0,qp}\Max  \laps{\alpha}g(x_0)\, \Max _{\alpha,qp'} f(x_0)  + \Max \laps{\alpha} g(x_0)\, \Max _{\alpha,q}f(x_0).
\]
That is,
\begin{equation}\label{eq:IIIyt}
 \mvint_{B(x_0,r)} \brac{\int_{0}^\infty |III(y,t)|^2 dt}^{\frac{1}{2}}\ dy\aleq  \Max _{0,qp}\Max  \laps{\alpha}g(x_0)\, \Max _{\alpha,qp'} f(x_0)\  + \Max \laps{\alpha} g(x_0)\, \Max _{\alpha,q}f(x_0).
\end{equation}

% This is bounded, if we choose $p$ and $q$ so that $\frac{n}{\alpha} > qp'$ and $2 > qp>q$.

\underline{It remains to treat $IV(y,t)$}.
Recall the Minkowski-inequality
\[
 \brac{\int_{0}^\infty \brac{\int_{\R^n} f(x,t)\ dx}^{2}\ dt}^{\frac{1}{2}} \aleq \int_{\R^n} \brac{\int_{0}^\infty f(x,t)^2\ dt}^{\frac{1}{2}}\ dx.
\]
Thus, for $y_1, y_2 \in B(x_0,r)$, with the definition of $Tf = \lapv p_t \ast f$,
\[
\begin{split}
 &\brac{\int_{0}^\infty |IV(y_1,t)-IV(y_2,t)|^2\, dt}^{\frac{1}{2}} \\
 \aleq  &
 \int_{\R^n \backslash B(x_0,10r)} \brac{\int_{0}^\infty  \brac{ \lapv p_t(y_1-z) -\lapv p_t(y_2-z)}^2\, dt}^{\frac{1}{2}}\ |\tilde{g}(z)|\ |f(z)|\ dz.
\end{split}
 \]
Observe that for $z \in \R^n \backslash B(x_0,10r)$ and $y_1,y_2 \in B(x_0,r)$ we have $|y_1-z| \aeq |y_2-z|$. Thus, in view of Lemma~\ref{la:ptest}, 
\[
 |\lapv p_t(y_1-z) -\lapv p_t(y_2-z)| \aleq |y_1-y_2|\, \brac{t^2 + |y_1-z|^2}^{-\frac{n+\frac{3}{2}}{2}}.
\]
Moreover,
\[
  \brac{\int_{0}^\infty \brac{t^2 + |y_1-z|^2}^{-\brac{n+\frac{3}{2}}}\ dt}^{\frac{1}{2}} \aeq |y_1-z|^{-n-1}.
\]
Consequently we have shown that for any $y_1, y_2 \in B(x_0,r)$
\begin{equation}\label{eq:IVest1}
 \brac{\int_{0}^\infty |IV(y_1,t)-IV(y_2,t)|^2\, dt}^{\frac{1}{2}} \aleq r\, \int_{\R^n \backslash B(x_0,10r)}  |y_1-z|^{-n-1}\ |\tilde{g}(z)|\ |f(z)|\, dz.
\end{equation}
We split this integral,
\[
 \int_{\R^n \backslash B(x_0,10r)}  |y_1-z|^{-n-1}\ |\tilde{g}(z)|\ |f(z)|\, dz  \aleq \sum_{\ell =2}^\infty (2^\ell r)^{-n-1} \| f\, \tilde{g} \|_{1,B(x_0,2^\ell r)}.
 \]
With the definition of $\tilde{g}$,
\[
 \|f \tilde{g}\|_{1,B(x_0,2^\ell r)} \aleq \|f\, \brac{g - (g)_{B(x_0,2^\ell r)}}\|_{1,B(x_0,2^\ell r)}  + \sum_{k=1}^\ell \|f\|_{1,B(x_0,2^\ell r)}\ \left |(g)_{B(x_0,2^kr)} - (g)_{B(x_0,2^{k-1}r)}\right |  
\]
On the one hand, for any $p > 1$ we have by H\"older inequality and in view of Proposition~\ref{pr:fxmfymax},
\[
 \|f\, \brac{g - (g)_{B(x_0,2^\ell r)}}\|_{1,B(x_0,2^\ell r)} \aleq \brac{2^{\ell}r}^{n}\ \Max _{0,p}\laps{\alpha}g(x_0)\, \Max _{\alpha,p'} f(x_0).
\]
On the other hand, observe that in view of Proposition~\ref{pr:fxmfymax}
\[
 \left |(g)_{B(x_0,2^kr)} - (g)_{B(x_0,2^{k-1}r)}\right |  \aleq (2^k r)^{\alpha}\Max  \laps{\alpha}g(x_0)
\]
and
\[
 \|f\|_{1,B(x_0,2^\ell r)} \aleq (2^\ell r)^{n-\alpha} \Max _{\alpha,1} f(x_0)
\]
and thus, since $\sum_{k=1}^\ell 2^{-\alpha(\ell-k)} \aleq 1$,
\[
 \sum_{k=1}^\ell \|f\|_{1,B(x_0,2^\ell r)}\ \left |(g)_{B(x_0,2^kr)} - (g)_{B(x_0,2^{k-1}r)}\right |   \aleq (2^\ell r)^{n} \Max  \laps{\alpha}g(x_0)\, \Max _{\alpha,1} f(x_0).
\]
Plugging these estimates into \eqref{eq:IVest1}, using that $\sum_{\ell =2}^\infty 2^{-\ell}  \aleq 1$, we have shown that for any $p > 1$,
\begin{equation}\label{eq:IVyt}
\begin{split}
 &\mvint_{B(x_0,r)}\mvint_{B(x_0,r)}\brac{\int_{0}^\infty |IV(y_1,t)-IV(y_2,t)|^2\, dt}^{\frac{1}{2}}\, dy_1\, dy_2\\
 \aleq& \Max _{0,p}\laps{\alpha}g(x_0)\, \Max _{\alpha,p'} f(x_0) +   \Max  \laps{\alpha}g(x_0)\, \Max _{\alpha,1} f(x_0).
 \end{split}
\end{equation}
\underline{We can now conclude} as follows. From the definition of $v$ in the statement of the proposition and the decomposition \eqref{eq:Tfgcomsplit} we find
\[
 \begin{split}
 &\mvint_{B(x_0,r)}\mvint_{B(x_0,r)}|v(y_1)-v(y_2)|dy_1\, dy_2\\
 \aleq& \mvint_{B(x_0,r)}\brac{\brac{\int_{0}^\infty |I(y,t)|^2\ dt}^{\frac{1}{2}} + \brac{\int_{0}^\infty |II(y,t)|^2\ dt}^{\frac{1}{2}} + \brac{\int_{0}^\infty |III(y,t)|^2\ dt}^{\frac{1}{2}}}\, dy\\
 &+ \mvint_{B(x_0,r)}\mvint_{B(x_0,r)}\brac{\int_{0}^\infty |IV(y_1,t)-IV(y_2,t)|^2\ dt}^{\frac{1}{2}} dy_1\, dy_2.
 \end{split}
\]
Thus, with the estimates \eqref{eq:Iyt}, \eqref{eq:IIyt}, \eqref{eq:IIIyt}, and \eqref{eq:IVyt} we obtain that for any $p, q > 1$,
\[
 \begin{split}
 &\mvint_{B(x_0,r)}\mvint_{B(x_0,r)}|v(y_1)-v(y_2)|dy_1\, dy_2\\
 \aleq& \Max _{0,p}\laps{\alpha}g(x_0)\ \Max _{\alpha,p'} (s_\kappa f)(x_0) 
+\Max _{0,p}\laps{\alpha}g(x_0)\ \Max _{0,p'} (s_{\tilde{\kappa}} \lapms{\alpha}f)(x_0)\\
 &+ \Max _{0,qp}\Max  \laps{\alpha}g(x_0)\, \Max _{\alpha,qp'} f(x_0) + \Max \laps{\alpha} g(x_0)\, \Max _{\alpha,q}f(x_0)\\
 &+  \Max  \laps{\alpha}g(x_0)\, \Max _{\alpha,1} f(x_0).
 \end{split}
\]
Observe that 
\[
 \Max  \laps{\alpha}g(x_0)\, \Max _{\alpha,1} f(x_0) + \Max \laps{\alpha} g(x_0)\, \Max _{\alpha,q}f(x_0) \aleq \Max _{0,qp}\Max  \laps{\alpha}g(x_0)\, \Max _{\alpha,qp'} f(x_0).
\]
Taking the supremum in $r>0$ we conclude.
\end{proof}
Finally, we need the following Lorentz-space estimates.
\begin{lemma}\label{la:maximalest}
Let $r \in (1,\infty)$. For $p_1,p_2 \in (r,\infty)$, $\alpha \in (0,n)$ so that $\frac{\alpha}{n} = \frac{1}{p_2}-\frac{1}{p_1}$. Then, for any $q \in [1,\infty]$,
\begin{equation}\label{eq:maximalest:1}
 \|\Max _{\alpha,r} f\|_{(p_1,q),\R^n} \aleq \|f\|_{(p_2,q),\R^n}
\end{equation}
In particular, for any $p_1 \in (r,\infty)$ and any $q \in[1,\infty]$,
\begin{equation}\label{eq:maximalest:2}
 \|\Max _{0,r} f\|_{(p_1,q),\R^n} \aleq \|f\|_{(p_1,q),\R^n}
\end{equation}
\end{lemma}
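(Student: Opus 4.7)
The strategy is to reduce the weighted maximal function to a standard fractional maximal operator and then invoke the Lorentz-space version of the Hardy–Littlewood–Sobolev inequality. The key observation is the algebraic identity
\[
 \Max_{\alpha,r} f(x) = \sup_{x\in B} |B|^{\alpha/n}\brac{\frac{1}{|B|}\int_B |f|^r}^{\frac{1}{r}} = \brac{M_{\alpha r}(|f|^r)(x)}^{\frac{1}{r}},
\]
where $M_\beta g(x) := \sup_{x\in B} |B|^{\beta/n}\,\frac{1}{|B|}\int_B |g|$ is the standard fractional maximal operator. Thus the weighted operator $\Max_{\alpha,r}$ is simply a power of a classical fractional maximal operator acting on $|f|^r$.

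First I would record the (classical) boundedness of $M_\beta$ on Lorentz spaces: for any $a\in(1,n/\beta)$ and any $s\in(0,\infty]$,
\[
 \|M_\beta g\|_{(a^\ast,s),\R^n} \aleq \|g\|_{(a,s),\R^n}, \qquad \frac{1}{a^\ast}=\frac{1}{a}-\frac{\beta}{n}.
\]
This follows from the pointwise domination $M_\beta g\aleq \lapms{\beta}|g|$ together with the Hardy–Littlewood–Sobolev theorem for the Riesz potential $\lapms{\beta}$ on Lorentz spaces (or, alternatively, by real interpolation between the weak and strong $L^p$ bounds for $M_\beta$); for $\beta=0$ this reduces to the Lorentz-space boundedness of the Hardy–Littlewood maximal operator. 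I will apply this with the parameters $a=p_2/r$, $s=q/r$, and $\beta=\alpha r$. The hypothesis $p_2\in(r,\infty)$ guarantees $a>1$, and the identity $\frac{1}{p_2}-\frac{1}{p_1}=\frac{\alpha}{n}$ translates to $\frac{1}{a}-\frac{\beta}{n}=\frac{r}{p_1}$, so $a^\ast=p_1/r$ as required.

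Next I would use the scaling identity for Lorentz (quasi-)norms: for $g\ge 0$ one has $(g^r)^*(t)=(g^*(t))^r$, and a direct change of variables in the defining integral gives
\[
 \|g^r\|_{(a,s),\R^n} = \|g\|_{(ra,rs),\R^n}^{\,r}, \qquad a\in[1,\infty),\ s\in(0,\infty].
\]
Combining the displayed maximal estimate with this identity yields
\[
\begin{split}
 \|\Max_{\alpha,r} f\|_{(p_1,q),\R^n}
 &= \|M_{\alpha r}(|f|^r)\|_{(p_1/r,\,q/r),\R^n}^{\frac{1}{r}}\\
 &\aleq \||f|^r\|_{(p_2/r,\,q/r),\R^n}^{\frac{1}{r}} = \|f\|_{(p_2,q),\R^n},
\end{split}
\]
which is \eqref{eq:maximalest:1}. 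Specializing to $\alpha=0$ (so $p_1=p_2$) and using instead the Lorentz-space boundedness of the Hardy–Littlewood maximal operator $M=M_0$ gives \eqref{eq:maximalest:2}.

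The only real subtlety is that when $q<r$ the second Lorentz index $q/r$ is strictly less than one, so $L^{(p_1/r,q/r)}$ is only a quasi-normed space. The cited boundedness of $M_\beta$ on $L^{(a,s)}$ nevertheless remains valid for $s\in(0,\infty]$ (this is the main point one must cite carefully), and the scaling identity between $\|g^r\|_{(a,s)}$ and $\|g\|_{(ra,rs)}$ is just an equality of integrals, independent of whether one has a norm or a quasi-norm. Once this is in place the argument is routine; this observation is what I expect to be the only nontrivial step in the write-up.
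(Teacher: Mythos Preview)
Your argument is correct. The paper's proof is shorter and organized differently: it simply cites Chanillo's lemma for the strong-type $L^{p_2}\to L^{p_1}$ bound of $\Max_{\alpha,r}$ and then invokes quasilinearity of $\Max_{\alpha,r}$ together with real interpolation (Grafakos, Theorem~1.4.19) to pass to all $q\in[1,\infty]$. In particular, the interpolation is applied directly to the operator $\Max_{\alpha,r}$, with no reduction step.

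Your route is more hands-on: you use the identity $\Max_{\alpha,r}f=(M_{\alpha r}(|f|^r))^{1/r}$ to reduce everything to the classical fractional maximal operator $M_\beta$, then combine the Lorentz-space mapping properties of $M_\beta$ with the power-scaling identity for Lorentz quasinorms. This is a genuine alternative and arguably more self-contained, since the only black box is the Lorentz boundedness of $M_\beta$ (via $M_\beta\aleq \lapms{\beta}$ and O'Neil, or via Marcinkiewicz interpolation from the weak-type endpoints), rather than Chanillo's result. The price is exactly the technicality you flagged: when $q<r$ the second Lorentz index $q/r$ drops below~$1$, so you must be sure to cite a version of the $M_\beta$ bound valid for $L^{(a,s)}$ with $s\in(0,\infty]$; this does hold by real interpolation of the weak-type bounds, but it is the one place a careless reference could leave a gap. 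The paper's approach sidesteps this by interpolating $\Max_{\alpha,r}$ itself and keeping $q\ge 1$ throughout.
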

\begin{proof}
The second claim \eqref{eq:maximalest:2} is just the first claim \eqref{eq:maximalest:1} for $\alpha = 0$.

From \cite[Lemma 2]{Chanillo-1982} we obtain that
\[
 \|\Max _{\alpha,r} f\|_{p_1,\R^n} \aleq \|f\|_{p_2,\R^n}
\]
Because $\Max _{\alpha,r}$ is quasilinear, \eqref{eq:maximalest:1} follows for all $q \in [1,\infty]$ by interpolation, see \cite[Theorem 1.4.19.]{GrafakosC}.
\end{proof}

\begin{proof}[Proof of Theorem~\ref{th:extensioncomm}]
We may assume without loss of generality that $\alpha \in (0,\frac{1}{2})$, and thus in particular $\frac{n}{\alpha} > 2$. Indeed, if that is not the case we prove the claim for $\beta < \frac{1}{2}$ and observe that for $\beta < \frac{1}{2} \leq \alpha < n$, by Sobolev embedding we have
\[
 \|\laps{\beta} g\|_{(\frac{n}{\beta},q_1)} \aleq \|\laps{\alpha} g\|_{(\frac{n}{\alpha},q_1)}.
\]
So let $\alpha \in (0,\frac{1}{2})$, fix $q_1, q_2 \in [1,\infty]$ so that $\frac{1}{q_1} + \frac{1}{q_2} = \frac{1}{2}$. Also we pick in Proposition~\ref{pr:extensioncomm:sharp} $p \in (2,\frac{n}{\alpha})$ and $q>1$ so that $pq < \frac{n}{\alpha}$ and $p'q < \frac{2n}{n-2\alpha}$.

Now we estimate the terms on the right-hand side of the estimate of Proposition~\ref{pr:extensioncomm:sharp}.

Both, $\kappa$ and $\tilde{\kappa}$, satisfy the kernel condition for the square function estimates, and thus from \cite[Chapter I.C, \textsection 8.23, p.46]{Stein-Harmonic-Analysis}, for any $r \in (1,\infty)$,
\begin{equation}\label{eq:skappaest}
 \|s_\kappa h\|_{(r,q_2),\R^n}+\|s_{\tilde{\kappa}} h\|_{(r,q_2),\R^n} \aleq \|h\|_{(r,q_2),\R^n}.
\end{equation}
Moreover, recall that from the Sobolev inequality for Lorentz spaces we have
 \begin{equation}\label{eq:lorentzsobolev}
 \|\lapms{\alpha}f \|_{(\frac{2n}{n-2\alpha},q_2),\R^n} \aleq \|f\|_{(2,q_2),\R^n}.
\end{equation}
By H\"older inequality, Lemma~\ref{la:maximalest}, and \eqref{eq:skappaest} we find
\[
\begin{split}
 &\|\Max _{0,p}\laps{\alpha}g\ \Max _{\alpha,p'} (s_\kappa f) \|_{2,\R^n}\\
 \aleq& \|\Max _{0,p}\laps{\alpha}g\|_{(\frac{n}{\alpha},q_1),\R^n}\, \|\Max _{\alpha,p'} (s_\kappa f) \|_{(\frac{2n}{n-2\alpha},q_2),\R^n}\\
 \aleq& \|\laps{\alpha}g\|_{(\frac{n}{\alpha},q_1),\R^n}\, \|f\|_{(2,q_2),\R^n}.
\end{split}
 \]
By the same argument,
\[
 \|\Max _{0,qp}\Max  \laps{\alpha}g\, \Max _{\alpha,qp'} f\|_{2,\R^n} \aleq \|\laps{\alpha}g\|_{(\frac{n}{\alpha},q_1),\R^n}\, \|f\|_{(2,q_2),\R^n}.
\]
Using additionally \eqref{eq:lorentzsobolev} we have
\[
\begin{split}
 \|\Max _{0,p}\laps{\alpha}g\ \Max _{0,p'} (s_{\tilde{\kappa}} \lapms{\alpha}f)\|_{2,\R^n} \aleq& \|\laps{\alpha}g\|_{(\frac{n}{\alpha},q_1),\R^n}\, \|\lapms{\alpha}f \|_{(\frac{2n}{n-2\alpha},q_2),\R^n}\\
 \aleq & \|\laps{\alpha}g\|_{(\frac{n}{\alpha},q_1),\R^n}\, \|f \|_{(2,q_2),\R^n}.
\end{split}
 \]
Thus, for $v$ as in Proposition~\ref{pr:extensioncomm:sharp} we have shown
\begin{equation}\label{eq:mhvest}
 \|\Max ^\#v\|_{2,\R^n}. \aleq \|\laps{\alpha}g\|_{(\frac{n}{\alpha},q_1),\R^n}\, \|f \|_{(2,q_2),\R^n}.
\end{equation}
On the other hand, 
\begin{equation}\label{eq:TfvL2}
 \|T[fg] - g^h\, T[f]\|_{2,\R^{n+1}_+} = \|v\|_{2,\R^n}. 
\end{equation}
Moreover, by \cite[IV, 2.2, Theorem 2, p.148]{Stein-Harmonic-Analysis}, we have
\begin{equation}\label{eq:vmhv}
 \|v\|_{2,\R^n} \aleq \|\Max ^\#v\|_{2,\R^n}.
\end{equation}
Together, \eqref{eq:mhvest}, \eqref{eq:TfvL2} and \eqref{eq:vmhv} imply
\[
 \|T[fg] - g^h\, T[f]\|_{2,\R^{n+1}_+} \aleq \|\laps{\alpha}g\|_{(\frac{n}{\alpha},q_1),\R^n}\, \|f \|_{(2,q_2),\R^n}.
\]
Therefore, Theorem~\ref{th:extensioncomm} is proven.
\end{proof}
\begin{remark}\label{r:commiel2}
Observe that \eqref{eq:TfvL2} is true only for $L^2$, and fails for $L^p$ with $p \neq 2$. Consequently, it is at least dubious that there holds an $L^p$-version of Theorem~\ref{th:extensioncomm} of the form
\[
 \|T[fg] - g^h\, T[f]\|_{p,\R^{n+1}_+} \aleq \|\laps{\alpha} g\|_{(\frac{n}{\alpha},q_1)}\ \|f\|_{(p,q_2)}
\]
whenever $p \neq 2$. This is also related to the fact that $\|\nabla f\|_{2,\R^{n+1}_+} \aeq \|\lapv f\|_{2,\R^n}$, but $\|\nabla f\|_{p,\R^{n+1}_+} \not \aeq \|\lapv f\|_{p,\R^{n+1}_+}$ for $p \neq 2$. On the other hand, 
our arguments readily imply the following $L^p(\R^n)$-type version
\[
 \brac{\int_{\R^n} \brac{\int_0^\infty \brac{T[fg](x,t) - g^h(x,t)\, T[f](x,t)}^2 dt}^{\frac{p}{2}}\ dx}^{\frac{1}{p}} \aleq \|\laps{\alpha} g\|_{(\frac{n}{\alpha},q_1)}\ \|f\|_{(p,q_2)}.
\]
\end{remark}
\appendix
\section{Hardy space, div-curl quantities, and estimates on the halfspace}\label{s:hardyhalf}
Fix a kernel $\kappa \in C_c^\infty(B(0,1))$, $\int \kappa =1$ and denote by $\kappa_t(z):= t^{-n}\kappa(z/t)$.

A function $f \in L^1(\R^n)$ belongs to the Hardy space $\hardy(\R^n)$ if and only if
\[
 \sup_{t > 0}|\kappa_t \ast f| \in L^1(\R^n), 
\]
and the Hardy-space norm $\|\cdot\|_{\hardy}$ is given by
\[
 \|f\|_{\hardy(\R^n)} := \|f\|_{1,\R^n} + \| \sup_{t > 0}|\kappa_t \ast f|\|_{1,\R^n}.
\]
Different choices of $\kappa$ give equivalent norms of Hardy spaces. The interested reader is referred to the excellent survey on Hardy spaces and their implications for elliptic equations by Semmes, \cite{Semmes-1994}. 

The Hardy space $\hardy$ is important for the regularity theory for critical geometric equations, because of their duality-relation with $BMO$: the following Hardy-BMO-inequality holds
\begin{equation}\label{eq:hardybmodual}
 \int_{\R^n} f\ g \aleq \|f\|_{\hardy(\R^n)}\ [g]_{BMO}.
\end{equation}
Here the space BMO is defined by its norm
\[
 [g]_{BMO} := \sup_{B(x,r) \subset \R^n} r^{-n} \int_{B(x,r)} |g-(g)_{B(x,r)}|.
\]
Observe that by Sobolev-Poincar\'e embedding,
\[
 [g]_{BMO} \aleq \|\nabla g\|_{(n,\infty),(\R^n)},
\]
and more generally for any $\alpha > 0$,
\[
 [g]_{BMO} \aleq \|\laps{\alpha} g\|_{(\frac{n}{\alpha},\infty),(\R^n)},
\]

We state the celebrated result by Coifman, Lions, Meyer, Semmes \cite{CLMS-1993}, this is also very related to the Coifman-Rochberg-Weiss theorem, Theorem~\ref{th:CRW} . For proofs via harmonic extensions we refer to \cite{Lenzmann-Schikorra-commutators}.
\begin{theorem}[Coifman-Lions-Meyer-Semmes]\label{th:clms}
Let $F \in L^2(\R^n,\R^n)$ and $\nabla G \in L^2(\R^n)$. If $\div F = 0$ in $\R^n$, then for any $\Phi \in BMO$ 
\begin{equation}\label{eq:clmsglobal}
 \int_{\R^n} F \cdot \nabla G\ \Phi \aleq \|F\|_{2,\R^n}\ \|\nabla G\|_{2,\R^n}\ [\Phi]_{BMO}
\end{equation}
Moreover, we have the following localization. For any $r > 0$ and any $x_0 \in \R^n$
\begin{equation}\label{eq:clmslocal}
\begin{split}
 \int_{\R^n} F \cdot \nabla G\ \Phi \aleq& \|F\|_{2,B(x_0,r)}\ \|\nabla G\|_{2,B(x_0,r)}\ \|\nabla \Phi\|_{(2,\infty),B(x_0,r)}\\
 &+ \sum_{k=1}^\infty k\, \|F\|_{2,B(x_0,2^{k+5} r) \backslash B(x_0,2^{k-5} r)}\ \|\nabla G\|_{2,B(x_0,2^{k+5} r) \backslash B(x_0,2^{k-5} r)}\ \|\nabla \Phi\|_{(2,\infty),B(x_0,2^{k+5} r)}.
 \end{split}
\end{equation}
\end{theorem}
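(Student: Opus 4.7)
The plan is to prove the global estimate \eqref{eq:clmsglobal} first via the Hardy--BMO duality \eqref{eq:hardybmodual}, and then derive the localized version \eqref{eq:clmslocal} by a dyadic decomposition of $\Phi$.

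\textbf{Global estimate.} The strategy is to establish that the div-curl quantity belongs to the Hardy space with
\[
 \|F\cdot \nabla G\|_{\hardy(\R^n)} \aleq \|F\|_{2,\R^n}\|\nabla G\|_{2,\R^n}
\]
whenever $\div F = 0$. Granting this, \eqref{eq:clmsglobal} is immediate from \eqref{eq:hardybmodual}. To obtain the Hardy bound I would follow the harmonic-extension approach of \cite{Lenzmann-Schikorra-commutators}. Let $u = F^h := p_t \ast F$ and $v = G^h := p_t \ast G$; since convolution with $p_t$ commutes with $\div_x$, the divergence-free condition persists as $\div_x u(\cdot, t) = 0$ for each $t > 0$, and both $u,v$ are harmonic on $\R^{n+1}_+$. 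The characterization of $\hardy$ via the maximal function $\sup_t |p_t\ast(F\cdot \nabla G)|$ reduces matters to estimating this quantity in $L^1$. The div-free structure allows one to rewrite $F\cdot \nabla G = \div(GF)$ and transfer the derivative onto the Poisson kernel: $p_t\ast(F\cdot \nabla G) = \sum_i \partial_i p_t \ast (GF^i)$. Using that $\partial_i p_t$ is a Schwartz kernel of mean zero and scale $t$, combined with square-function / non-tangential maximal function estimates on $F$ and $\nabla G$, a Cauchy--Schwarz-type argument then yields the $L^1$-bound by $\|F\|_2 \|\nabla G\|_2$. The essential point is that the divergence-free structure cancels the otherwise non-integrable leading behaviour of the product.

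\textbf{Local estimate.} For \eqref{eq:clmslocal} I would decompose $\Phi$ via smooth cutoffs centered at $x_0$. Let $\eta_k$ be a cutoff with $\eta_k \equiv 1$ on $B(x_0, 2^{k-5}r)$ and $\supp \eta_k \subset B(x_0, 2^{k+5}r)$, and set $\xi_0 := \eta_0$, $\xi_k := \eta_k - \eta_{k-1}$ for $k\geq 1$. Write $\Phi = \sum_{k \geq 0}(\Phi - c_k)\xi_k$ with $c_k := (\Phi)_{B(x_0, 2^{k+5}r)}$; since $\div F = 0$ (after density), constants contribute nothing. On each annular scale, the global estimate applied to $F\chi_{A_k}$, $G$ and $\Phi_k:=(\Phi - c_k)\xi_k$ gives a bound
\[
 \aleq \|F\|_{2, A_k}\, \|\nabla G\|_{2, A_k}\, [\Phi_k]_{BMO},
\]
and Sobolev--Poincaré yields $[\Phi_k]_{BMO} \aleq \|\nabla \Phi\|_{(n,\infty), A_k}$ where $A_k := B(x_0, 2^{k+5}r)\setminus B(x_0, 2^{k-5}r)$. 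The factor $k$ appearing in \eqref{eq:clmslocal} arises precisely from the telescoping comparison $|c_k - c_0| \aleq k\, [\Phi]_{BMO}$ used to piece together a global cutoff function, which is unavoidable in such annular decompositions.

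\textbf{Main obstacle.} The essential technical difficulty lies in the div-curl cancellation underlying the global Hardy-space bound. A naive pointwise bound $|F\cdot \nabla G| \leq |F| |\nabla G|$ only places $F \cdot \nabla G$ in $L^1$ (via Cauchy--Schwarz), which is strictly larger than $\hardy(\R^n)$. Upgrading this to a genuine $\hardy$-estimate requires either the original CLMS argument via rearrangements and atomic decomposition, or the harmonic-extension/commutator approach of \cite{Lenzmann-Schikorra-commutators}, both of which delicately exploit $\div F = 0$. Once this is in hand, the localization step is largely bookkeeping: carefully tracking the support of each cutoff, collecting the Sobolev--Poincaré comparison constants, and summing the geometric series against the logarithmic growth $k$ of the constants $c_k$.
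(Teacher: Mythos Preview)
Your global argument is fine and matches the paper's: both invoke the CLMS Hardy-space bound $\|F\cdot\nabla G\|_{\hardy}\aleq\|F\|_2\|\nabla G\|_2$ and then pair with $\Phi$ via \eqref{eq:hardybmodual}.

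The local argument, however, has a real gap. You propose to apply the global estimate to $F\chi_{A_k}$, $G$, $\Phi_k$ on each annulus, but $F\chi_{A_k}$ is no longer divergence-free: $\div(F\chi_{A_k})$ picks up a term $F\cdot\nabla\chi_{A_k}$ supported on $\partial A_k$, so \eqref{eq:clmsglobal} is simply not available for that pair. If instead you keep the full $F$ and pair with $\Phi_k$, the global estimate only yields $\|F\|_{2,\R^n}\|\nabla G\|_{2,\R^n}[\Phi_k]_{BMO}$, which does not localize the $L^2$-norms of $F$ and $\nabla G$ to the annuli at all. So the localization is not ``bookkeeping'': the div-curl structure and the cutoff do not commute, and a further idea is required.

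The paper's route is to cut the product $F\cdot\nabla G$ rather than $\Phi$. The key technical step (Lemma~\ref{la:localclmsest}) shows directly that for a cutoff $\xi$ supported in an annulus at scale $R$,
\[
\Big\|\sup_{t\in(0,R)}\big|\kappa_t\ast(\xi\,F\cdot\nabla G)\big|\Big\|_{1}\ \aleq\ \|F\|_{2,\text{annulus}}\,\|\nabla G\|_{2,\text{annulus}},
\]
by integrating by parts \emph{inside} the convolution (using $\div F=0$ globally, so no boundary term appears) and then Poincar\'e--Sobolev; because $\supp\kappa_t\subset B(0,t)$ with $t<R$, only annular values of $F$ and $\nabla G$ enter. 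Lemma~\ref{la:localtoglobalhardy} then subtracts a suitable constant $\lambda_k$ to upgrade this truncated maximal bound to a genuine $\hardy$-estimate for $\xi_k(F\cdot\nabla G-\lambda_k)$. One finally pairs each such piece with $\eta_{k+1}(\Phi-(\Phi)_1)$ via \eqref{eq:hardybmodual}, and the factor $k$ indeed arises from telescoping $(\Phi)_\ell-(\Phi)_{\ell-1}$, as you anticipated. The ingredient your proposal is missing is precisely this local Hardy-space estimate for the cut-off div-curl product.
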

In \cite{CLMS-1993} it was shown that
\[
 F \cdot \nabla G \in \hardy(\R^n),
\]
which, in view of \eqref{eq:hardybmodual}, readily implies \eqref{eq:clmsglobal}. For \eqref{eq:clmslocal} we need the following adaption of their argument.
\begin{lemma}\label{la:localclmsest}
Let $\eta \in C_c^\infty(B(x_0,6R))$, and $\xi \in C_c^\infty(B(x_0,6R) \backslash B(x_0,5R))$ with \[
\int \eta \aeq \int \xi \aeq R^n,\mbox{ and } \|\nabla \eta\|_{\infty} + \|\nabla \xi\|_{\infty} \aleq R^{-1}.
\] 
Then for $F$, $G$ as in Theorem~\ref{th:clms},
\[
 \left \| \sup_{t \in (0,R)} |\kappa_t \ast \brac{\eta\, F \cdot \nabla G}|\right \|_{1,\R^n} \aleq \|F\|_{2,B(x_0,8R)}\ \|\nabla G\|_{2,B(x_0,8R)}
\]
and
\[
 \left \| \sup_{t \in (0,R)} |\kappa_t \ast \brac{\xi\, F \cdot \nabla G}|\right \|_{1,\R^n} \aleq \|F\|_{2,B(x_0,8R)\backslash B(x_0,3R)}\ \|\nabla G\|_{2,B(x_0,8R)\backslash B(x_0,3R)}.
\]
\end{lemma}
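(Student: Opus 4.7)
The plan is to decompose $\eta\,F\cdot\nabla G$ into a genuine div-curl piece, to which the global CLMS theorem applies, plus a lower-order scalar remainder that I can control by a direct maximal-function estimate.

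First, since $\div F=0$, replacing $G$ by $\tilde G:=G-(G)_{B(x_0,8R)}$ leaves $F\cdot\nabla G$ unchanged, and Poincar\'e gives $\|\tilde G\|_{2,B(x_0,8R)}\lesssim R\,\|\nabla G\|_{2,B(x_0,8R)}$. A Leibniz computation then yields the identity
\[
\eta\,F\cdot\nabla G \;=\; F\cdot\nabla(\eta\tilde G)\;-\;(\nabla\eta\cdot F)\,\tilde G,
\]
both terms being supported in $B(x_0,6R)$. For the annular version with $\xi$, I instead subtract the mean of $G$ over $B(x_0,8R)\setminus B(x_0,3R)$ and apply Poincar\'e on that annulus; the rest of the argument is identical.

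For $F\cdot\nabla(\eta\tilde G)$, I want to apply Coifman--Lions--Meyer--Semmes, which globally yields $\|F\cdot\nabla(\eta\tilde G)\|_{\mathcal H^1}\lesssim\|F\|_2\,\|\nabla(\eta\tilde G)\|_2$ (and hence the desired bound on $\|\sup_t|\kappa_t*\cdot|\|_1$ via the grand-maximal characterization of $\mathcal H^1$). To localize the $\|F\|_2$ factor, I use that $\nabla(\eta\tilde G)$ is supported in $B(x_0,6R)$ and replace $F$ by a divergence-free extension $F'\in L^2(\R^n)$ with $F'\equiv F$ on $B(x_0,7R)$ and $\|F'\|_2\lesssim\|F\|_{2,B(x_0,8R)}$: in dimension two one writes $F=\nabla^\perp\psi$ with $\psi$ Poincar\'e-normalized, cuts off $\psi$ by a bump equal to $1$ on $B(x_0,7R)$ and vanishing outside $B(x_0,8R)$, and sets $F':=\nabla^\perp(\chi\psi)$; in general dimension one uses a vector potential. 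Since $F'\cdot\nabla(\eta\tilde G)=F\cdot\nabla(\eta\tilde G)$, global CLMS applied to $F'$ now gives the correct localized bound.

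For the remainder $h:=(\nabla\eta\cdot F)\tilde G$, supported in $B(x_0,6R)$, the pointwise bound $|\kappa_t|\le c\,t^{-n}\chi_{B(0,t)}$ yields $\sup_{t<R}|\kappa_t*h|\le c\,\Max h$, so that for any $q\in(1,\infty)$
\[
\|\Max h\|_{1,B(x_0,7R)}\;\lesssim\;R^{n(1-1/q)}\|h\|_{q}.
\]
I pick $q$ slightly above $1$ and $p_1$ with $\tfrac{1}{p_1}+\tfrac12=\tfrac1q$, so that H\"older gives $\|h\|_q\lesssim\|\nabla\eta\|_\infty\,\|\tilde G\|_{p_1,B(x_0,8R)}\,\|F\|_{2,B(x_0,8R)}$, and Sobolev--Poincar\'e provides $\|\tilde G\|_{p_1,B(x_0,8R)}\lesssim R^{n/p_1-n/2+1}\|\nabla G\|_{2,B(x_0,8R)}$. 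The total power of $R$ is $n(1-\tfrac1q)-1+\tfrac{n}{p_1}-\tfrac{n}{2}+1 = n\bigl(1-\tfrac1q+\tfrac{1}{p_1}-\tfrac12\bigr)=0$, exactly by the H\"older relation $\tfrac1q=\tfrac{1}{p_1}+\tfrac12$, and the required estimate follows. The main obstacle I anticipate is making the div-free truncation in the second step quantitative in general dimension: writing $F=\mathrm{curl}\,A$ for a vector potential with $\|\nabla A\|_{2,B(x_0,8R)}\lesssim\|F\|_{2,B(x_0,8R)}$ by Hodge/Poincar\'e, the cutoff $\chi A$ produces a commutator $\nabla\chi\wedge A$ that has to be absorbed using Poincar\'e on the annulus where $\nabla\chi$ lives, which is where the factor $R$ matches the $R^{-1}$ coming from $\nabla\chi$.
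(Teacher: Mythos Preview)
Your argument is correct, but the route is genuinely different from the paper's. The paper never invokes the global Coifman--Lions--Meyer--Semmes theorem as a black box. Instead, it reproduces the CLMS pointwise mechanism directly: for fixed $y_0$ and $t\in(0,R)$, it integrates by parts using $\div F=0$ to write
\[
\kappa_t\ast(\xi\,F\cdot\nabla G)(y_0)=-\int\nabla\bigl(\kappa_t(y_0-z)\,\xi(z)\bigr)\cdot F(z)\,\bigl(G(z)-(G)_{B(y_0,t)}\bigr)\,dz,
\]
bounds this by $t^{-n-1}\int_{B(y_0,t)}|F|\,|G-(G)_{B(y_0,t)}|$, and then applies H\"older together with Poincar\'e--Sobolev (with exponents $p,q\in(1,2)$) to dominate by $\bigl(\Max_R|F|^p\bigr)^{1/p}\bigl(\Max_R|\nabla G|^q\bigr)^{1/q}$. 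The support observation that $\kappa_t\ast(\xi\,F\cdot\nabla G)$ vanishes outside an annulus for $t<R$, plus the maximal theorem on $L^{2/p}$ and $L^{2/q}$, finishes the estimate.

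Your decomposition $\eta\,F\cdot\nabla G = F\cdot\nabla(\eta\tilde G)-(\nabla\eta\cdot F)\tilde G$ is cleaner in that it reduces to a single call to global CLMS plus an elementary remainder, but it purchases this modularity at the cost of the divergence-free localization of $F$. In two dimensions this is painless via the stream function, as you note; for the annular cutoff $\xi$ the same device works (the annulus is not simply connected, but the global stream function on $\R^2$ restricts, and Poincar\'e on the annulus controls the mean-subtracted version). In higher dimensions the vector-potential construction you sketch is standard but more technical. The paper's direct argument sidesteps this entirely and is uniform in dimension, which is why it is the more economical choice here.
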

\begin{proof}
We only provide a proof for the $\xi$-case, the arguments for $\eta$ are the same.

Let $y_0 \in \R^n$ and $t \in (0,R)$. 
Integrating by parts and with $\div F = 0$,
\[
\begin{split}
 \kappa_t \ast \brac{\xi\, F \cdot G}(y_0)=-\int_{\R^n} \nabla \brac{\kappa_t(y_0-z)\ \xi(z)}\ F(z)\cdot (G(z)-(G)_{B(y_0,t)})\ dz,
 \end{split}
 \]
and thus for any $t \in (0,R)$,
\[ 
\begin{split}
\kappa_t \ast \brac{\xi\, F \cdot G}(y_0) \aleq&t^{-n-1} \int_{B(y_0,t)}|F(z)|\ |G(z)-(G)_{B(y_0,t)}|.
\end{split}
 \]
Pick any $p,q \in (1,2)$ so that $\frac{nq}{n-q} + \frac{1}{p} = 1$. Then H\"older and Poincar\'e-Sobolev inequality imply
\[
 |\kappa_t \ast \brac{\xi\, F \cdot G}(y_0)| \aleq \brac{t^{-n}\int_{B(y_0,t)}|F|^p}^{\frac{1}{p}}\ \brac{t^{-n}\int_{B(y_0,t)}|\nabla G|^q}^{\frac{1}{q}}
\]
% If we denote the restricted Hardy-Littlewood maximal function by
\[
 \Max _R f := \sup_{t \in (0,R)} t^{-n}\int_{B(y_0,t)}|f|^p,
\]
we have found
\[
 \sup_{t \in (0,R)} |\kappa_t \ast \brac{\xi\, F \cdot G}(y_0)| \aleq \brac{\Max _R |F|^p(x_0)}^{\frac{1}{p}}\ \brac{\Max _R |\nabla G|^q(x_0)}^{\frac{1}{q}}
\]
On the other hand observe that $\kappa_t \ast \brac{\xi\, F \cdot G}(y_0) = 0$ for any $t \in (0,R)$ and any $y_0 \not \in B(x_0,7R) \backslash B(x_0,4R)$.

Consequently, with the maximal theorem, we conclude
\[
 \left \|\sup_{t \in (0,R)} |\kappa_t \ast \brac{\xi\, F \cdot G}(y_0)|\right \|_{1,\R^n} \aleq \|F\|_{2,B(x_0,8R)\backslash B(x_0,3R)}\ \|\nabla G\|_{2,B(x_0,8R)\backslash B(x_0,3R)}
\]
\end{proof}

\begin{lemma}\label{la:localtoglobalhardy}
For some $R > 0$ assume that $\eta$ and $\xi$ are given as in Lemma~\ref{la:localclmsest}.

If $g = \eta f$ or $g = \xi f \in L^1(\R^n)$ satisfies
\[
 \left \|\sup_{t \in (0,R)} |\kappa_t \ast g| \right \|_{1,\R^n} \leq \Lambda,
\]
then there is $\lambda > 0$ so that 
\begin{equation}\label{eq:lambdaest}
 |\lambda| \aleq R^{-n}\|g\|_{1}
\end{equation}
and
\[
 h := \eta (f-\lambda) \quad \mbox{or} \quad h := \xi (f-\lambda)
\]
belongs to the Hardy space with
\[
 \|h\|_{\hardy} \aleq \|g\|_{L^1} + \Lambda.
\]
\end{lemma}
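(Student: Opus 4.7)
The hypothesis controls the truncated maximal function $\sup_{t\in(0,R)}|\kappa_t\ast g|$, whereas the Hardy-space norm needs control of $\sup_{t>0}|\kappa_t\ast h|$. Since $g$ is compactly supported around $x_0$ with diameter $\aleq R$, the issue is purely the large-$t$ tail, and the far-field behavior of $y_0$. The plan is to kill this tail by choosing $\lambda$ so that $\int h=0$, then exploit smoothness of $\kappa$ to obtain integrable decay.

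First I would choose
\[
 \lambda := \frac{\int g}{\int \eta} \qquad \text{(resp.\ }\lambda:=\tfrac{\int g}{\int \xi}\text{)}.
\]
Since $\int\eta\aeq R^n$ (resp.\ $\int\xi\aeq R^n$) by hypothesis, \eqref{eq:lambdaest} is immediate, and by construction the resulting $h=g-\lambda\eta$ (resp.\ $g-\lambda\xi$) is supported in $B(x_0,6R)$, has $L^1$-norm bounded by $\|g\|_1$, and satisfies $\int h=0$.

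Next I would split the Hardy-space norm according to whether $y_0$ lies in the near field $B(x_0,8R)$ or the far field $\R^n\setminus B(x_0,8R)$, and further split $\sup_{t>0}$ into $t\in(0,R)$ and $t\geq R$.

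In the near field with $t\in(0,R)$, I bound
\[
 \sup_{t\in(0,R)}|\kappa_t\ast h|\leq \sup_{t\in(0,R)}|\kappa_t\ast g|+|\lambda|\sup_{t\in(0,R)}|\kappa_t\ast\eta|.
\]
Integrated over $B(x_0,8R)$, the first term yields $\Lambda$ by hypothesis, while the second is bounded by $|\lambda|\,\|\kappa\|_1\|\eta\|_\infty\,|B(x_0,8R)|\aleq |\lambda|R^n\aleq\|g\|_1$.

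In the near field with $t\geq R$, the trivial pointwise estimate $|\kappa_t\ast h(y_0)|\leq t^{-n}\|\kappa\|_\infty\|h\|_1\leq R^{-n}\|h\|_1$ integrated over $B(x_0,8R)$ contributes $\aleq\|h\|_1\aleq\|g\|_1$.

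The main obstacle, and the key step, is the far field $|y_0-x_0|>8R$. Here support considerations force $\kappa_t\ast h(y_0)=0$ unless $t\geq t_0:=|y_0-x_0|-6R$, which in particular exceeds $R$. For such $t$, using $\int h=0$ and the Lipschitz continuity of $\kappa$, I would estimate
\[
 \kappa_t\ast h(y_0)=t^{-n}\int\bigl[\kappa\tfrac{y_0-z}{t}-\kappa\tfrac{y_0-x_0}{t}\bigr]h(z)\,dz,
\]
yielding
\[
 |\kappa_t\ast h(y_0)|\leq t^{-n-1}\|\nabla\kappa\|_\infty\cdot 6R\cdot\|h\|_1\aleq R\,t_0^{-n-1}\|h\|_1.
\]
Consequently
\[
 \int_{|y_0-x_0|>8R}\sup_{t>0}|\kappa_t\ast h(y_0)|\,dy_0\aleq \|h\|_1\,R\int_{R}^\infty r^{n-1}r^{-n-1}\,dr\aleq\|h\|_1\aleq\|g\|_1.
\]
Together with the trivial bound $\|h\|_1\leq\|g\|_1+|\lambda|\|\eta\|_1\aleq\|g\|_1$, these four pieces assemble to $\|h\|_{\hardy}\aleq\|g\|_1+\Lambda$, as required. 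The hard part really is the far-field/tail estimate, which is precisely the reason that the mean-subtraction by $\lambda$ cannot be avoided: without it, the pointwise bound for $\kappa_t\ast h(y_0)$ in the far field is only $\aleq t^{-n}\|h\|_1$, and the resulting radial integral diverges logarithmically.
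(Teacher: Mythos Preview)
Your proof is correct and follows essentially the same approach as the paper: choose $\lambda$ so that $\int h=0$, control $\sup_{t\in(0,R)}|\kappa_t\ast h|$ by splitting off $\kappa_t\ast g$, and then use the cancellation $\int h=0$ together with the Lipschitz bound on $\kappa$ to get the integrable $|y_0-x_0|^{-n-1}$ decay in the far field. The only cosmetic differences are your choice of cut-off radius ($8R$ versus the paper's $20R$) and that for the near-field case $t\geq R$ you use the crude bound $|\kappa_t\ast h|\leq t^{-n}\|\kappa\|_\infty\|h\|_1$ rather than already invoking cancellation there; both work equally well.
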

\begin{proof}
This follows from the arguments in \cite[Proposition 1.92]{Semmes-1994}. We repeat the proof for the sake of completeness, again restricting our attention to the case of $\xi$, only.
Set
\[
 \lambda := \brac{\int \xi}^{-1}\ \int \xi f.
\]
Clearly, \eqref{eq:lambdaest} is satisfied. Moreover,
\begin{equation}\label{eq:clmshgest}
 \|h\|_{1,\R^n} \aleq \|g\|_{1,\R^n}.
\end{equation}

Firstly, we have
\[
 |\kappa_t \ast h |(y_0) \leq |\kappa_t \ast g|(y_0) + \|g\|_{1,\R^n}\ |\kappa_t| \ast |\xi|(y_0),
\]
and consequently,
\[
 \|\sup_{t \in (0,R)} |\kappa_t \ast h | \|_{1,\R^n} \aleq \Lambda + \|g\|_{1,\R^n}.
\]
On the other hand, since by the choice of $\lambda$ we have $\int h = 0$,
\begin{equation}\label{eq:kappatasth}
 |\kappa_t \ast h(y_0)| \leq \int_{\R^n} |\kappa_t(y_0-z) - \kappa_t(y_0-x_0)|\ h(z)\ dz \aleq t^{-n-1}\, R\, \|h\|_{1,\R^n}.
\end{equation}
Consequently,
\[
 \|\sup_{t \in (R,\infty)} |\kappa_t \ast h| \|_{1,B(x_0,20R)} \aleq \|h\|_{1,\R^n}.
\]
Moreover, since $\kappa_t \ast h(y_0) = 0$ for $\dist(y_0,\supp \xi) > t$ from \eqref{eq:kappatasth} we find for any $y_0 \in \R^n \backslash B(x_0,20R)$
\[
  \sup_{t \in (R,\infty)} |\kappa_t \ast h(y_0)| = \sup_{t \in (\dist(y_0,\supp \xi),\infty)} |\kappa_t \ast h(y_0)| \aleq |y_0-x_0|^{-n-1}\, R\, \|h\|_{1,\R^n}.
\]
Integrating this give
\[
 \|\sup_{t \in (R,\infty)} |\kappa_t \ast h| \|_{1,\R^n \backslash B(x_0,20R)} \aleq \|h\|_{1,\R^n}.
\]
Altogether, we find
\[
 \|\sup_{t \in (0,\infty)} |\kappa_t \ast h| \|_{1,\R^n} \aleq \|h\|_{1,\R^n} + \|g\|_{1,\R^n}+\Lambda,
\]
which in view of \eqref{eq:clmshgest} implies the claim.
\end{proof}

\begin{proof}[Proof of \eqref{eq:clmslocal}]
Let $\eta \in C_c^\infty(B(0,1))$ a typical bump function constantly one in $B(0,\frac{1}{2})$ and set
\[
 \eta_{k} := \eta((x_0-x)/{2^k r}),\ \xi_k := \eta_k - \eta_{k-1}.
\]
Also we denote
\[
 (f)_k := (f)_{B(x_0,2^{k} r)}.
\]
By $\div F = 0$, \[\int F\cdot \nabla G\, (\Phi)_1 = 0.\] Thus for any $(\lambda_k)_{k=0}^\infty \subset \R$, we may write
\[
\begin{split}
 \int_{\R^n} F \cdot \nabla G\ \Phi =& \int_{\R^n} \eta_0 (F \cdot \nabla G-\lambda_0)\ \brac{\Phi -(\Phi)_1} + \lambda_0 \int_{\R^n} \eta_0 \brac{\Phi -(\Phi)_1}\\
  &+\sum_{k=1}^\infty \int_{\R^n} \xi_k (F \cdot \nabla G-\lambda_k)\ (\Phi-(\Phi)_1)+ \sum_{k=1}^\infty \lambda_k \int_{\R^n} \xi_k (\Phi-(\Phi)_1).
\end{split}
 \]
By Lemma~\ref{la:localclmsest} and Lemma~\ref{la:localtoglobalhardy} we can choose $\lambda_k \in \R$, $k=0,1\ldots$, so that
\[
 \|\eta_0 (F \cdot \nabla G-\lambda_0)\|_{\hardy} + r^n |\lambda_0| \aleq \|F\|_{2,B(x_0,8r)}\ \|\nabla G\|_{2,B(x_0,8r)},
\]
and
\[
 \|\xi_k (F \cdot \nabla G-\lambda_k)\|_{\hardy} + (2^k r)^n |\lambda_k|  \aleq \|F\|_{2,B(x_0,2^{k+5}r) \backslash B(x_0,2^{k-5}r)}\ \|\nabla G\|_{2,B(x_0,2^{k+5}r) \backslash B(x_0,2^{k-5}r)}.
\]
Using the Hardy-BMO inequality \eqref{eq:hardybmodual} and the fact that $\eta_0 \equiv \eta_0 \eta_1$ as well as $\xi_k \equiv \xi_k \eta_{k+1}$,
\[
\begin{split}
 \left |\int_{\R^n} F \cdot \nabla G\ \Phi \right | 
 \aleq &\|F\|_{2,B(x_0,8r)}\ \|\nabla G\|_{2,B(x_0,8r)}\ [\eta_1(\Phi -(\Phi)_1)]_{BMO} \\
 &+ \sum_{k=1}^\infty \|F\|_{2,B(x_0,2^{k+5}r) \backslash B(x_0,2^{k-5}r)}\ \|\nabla G\|_{2,B(x_0,2^{k+5}r) \backslash B(x_0,2^{k-5}r)}. 
 [\eta_{k+1}(\Phi-(\Phi)_1)]_{BMO}\\
\end{split}
\]
By Poincar\'e-Sobolev embedding 
\[
 [\eta_1(\Phi -(\Phi)_1)]_{BMO} \aleq \|\nabla \Phi\|_{(2,\infty),B(x_0,4r)},
\]
Moreover,
\[
\begin{split}
 [\eta_{k+1}(\Phi-(\Phi)_1)]_{BMO} \aleq& [\eta_{k+1}(\Phi-(\Phi)_{k+1})]_{BMO} + \sum_{\ell = 2}^k | (\Phi)_{\ell}-(\Phi)_{\ell-1}|\\
  \aleq& k\  \|\nabla \Phi\|_{(2,\infty),B(x_0,2^{k+1}r)}.
\end{split}
 \]
The claim follows 
\end{proof}

\subsection{div-curl quantities on the half-space $\R^2_+$}
For notational convenience, we restrict our attention to the two-dimensional half-space $\R^2_+$.
\begin{theorem}\label{th:clmshalfspace}
Let $F \in L^2(\R^2_+,\R^2)$ and $G \in H^1(\R^2_+)$. If $\div F = 0$ in $\R^2_+$, that is
\[
 \int_{\R^2_+} F\cdot \nabla \Phi = 0  \quad \mbox{for any $\Phi \in C_c^\infty(\R^2_+)$}.
\]
Let $\nabla \Phi \in L^2(\R^2_+)$. Then if $\Phi = 0$ on $\R \times \{0\}$ in the sense of traces or $G =0$ on $\R \times \{0\}$ in the sense of traces, then
\begin{equation}\label{eq:clmsglobalhs}
 \int_{\R^2_+} F \cdot \nabla G\ \Phi \aleq \|F\|_{2,\R^2_+}\ \|\nabla G\|_{2,\R^2_+}\ \|\nabla \Phi\|_{(2,\infty),\R^2_+}.
\end{equation}
We also have the following localized estimate. For any $x_0 \in \R \times \{0\}$, $r > 0$
% \begin{equation}\label{eq:clmshslocal1}
%  \int_{\R^2_+} F \cdot \nabla G\ \Phi \aleq \|F\|_{2,\R^2_+}\ \sum_{k=0}^\infty k\ \|\nabla G\|_{2,B^+(x_0,2^{k}r)}\ \|\nabla \Phi\|_{(2,\infty),B^+(x_0,2^{k}r)}
% \end{equation}
% and
\begin{equation}\label{eq:clmshslocal2}
\begin{split}
 \int_{\R^2_+} F \cdot \nabla G\ \Phi \aleq& \|F\|_{2,B^+(x_0,r)}\ \|\nabla G\|_{2,B^+(x_0,r)}\ [ \Phi]_{BMO,B^+(x_0,r)}\\
 &+ \sum_{k=1}^\infty k\ \|F\|_{2,B^+(x_0,2^{k+5}r) \backslash B^+(x_0,2^{k-5}r)}\, \|\nabla G\|_{2,B^+(x_0,2^{k+5}r) \backslash B^+(x_0,2^{k-5}r)}\ [\Phi]_{BMO,B^+(x_0,2^{k+5}r)}.
\end{split}
 \end{equation}
Here, setting $\tilde{\Phi} := \eta_{B(x_0,R)} (\Phi-(\Phi)_{B(x_0,r)})$ where $\eta_{B(x_0,R)}(\cdot) = \eta((\cdot-x_0)/R)$ for the usual bump function $\eta$, we denote
\[
 [\Phi]_{BMO,B^+(x_0,R)} := [\tilde{\Phi}]_{BMO}.
\]
In particular, we have
\[
\begin{split}
 \int_{\R^2_+} F \cdot \nabla G\ \Phi \aleq& \|F\|_{2,B^+(x_0,r)}\ \|\nabla G\|_{2,B^+(x_0,r)}\ \|\nabla \Phi\|_{(2,\infty),B^+(x_0,r)}\\
 &+ \sum_{k=1}^\infty k\ \|F\|_{2,B^+(x_0,2^{k+5}r) \backslash B^+(x_0,2^{k-5}r)}\, \|\nabla G\|_{2,B^+(x_0,2^{k+5}r) \backslash B^+(x_0,2^{k-5}r)}\ \|\nabla \Phi\|_{(2,\infty),B^+(x_0,2^{k+5}r)}.
\end{split}
 \]
\end{theorem}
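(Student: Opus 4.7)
The plan is to reduce the half-space estimate to the interior Coifman--Lions--Meyer--Semmes estimate of Theorem~\ref{th:clms} by a reflection argument across $\R\times\{0\}$, using the boundary condition on either $\Phi$ or $G$ to cancel the boundary terms that would otherwise obstruct $\div$-freeness of the extension. Concretely, denote by $r(x,t)=(x,-t)$ the reflection, and extend $F=(F_1,F_2)$ to all of $\R^2$ by odd reflection of the tangential component and even reflection of the normal component:
\[
 \tilde F(x,t) := (-F_1(x,-t),\, F_2(x,-t)) \quad \text{for } t<0.
\]
A direct integration-by-parts computation gives, for any $\Psi\in C_c^\infty(\R^2)$,
\[
 \int_{\R^2} \tilde F\cdot\nabla\Psi = \int_{\R^2_+} F\cdot\nabla(\Psi-\Psi\circ r).
\]
Since $\Psi-\Psi\circ r$ vanishes on $\R\times\{0\}$ in the trace sense, it lies in $\dot H^1_0(\R^2_+)$, and a density argument combined with $\div F=0$ in $\R^2_+$ shows $\div\tilde F=0$ on all of $\R^2$.

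Next, depending on the boundary hypothesis, I extend $G$ and $\Phi$ with matching parities so that the product has the correct symmetry. If $G=0$ on $\R\times\{0\}$, I take $\tilde G$ the odd extension and $\tilde \Phi$ the even extension; if $\Phi=0$ on $\R\times\{0\}$, I take $\tilde \Phi$ the odd extension and $\tilde G$ the even extension. In either case a direct parity check shows
\[
 \tilde F\cdot\nabla\tilde G\cdot\tilde\Phi\big|_{\R^2_-} = (F\cdot\nabla G\cdot\Phi)\circ r,
\]
so that $\int_{\R^2}\tilde F\cdot\nabla\tilde G\,\tilde\Phi = 2\int_{\R^2_+} F\cdot\nabla G\,\Phi$. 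Moreover $\|\tilde F\|_{2,\R^2}=\sqrt2\,\|F\|_{2,\R^2_+}$ and $\|\nabla\tilde G\|_{2,\R^2}=\sqrt2\,\|\nabla G\|_{2,\R^2_+}$, while $\nabla\tilde\Phi$ has the same pointwise modulus as $\nabla\Phi$ after reflection, so $\|\nabla\tilde\Phi\|_{(2,\infty),\R^2}\aleq\|\nabla\Phi\|_{(2,\infty),\R^2_+}$ and, by Poincar\'e--Sobolev in Lorentz scale, $[\tilde\Phi]_{BMO}\aleq\|\nabla\Phi\|_{(2,\infty),\R^2_+}$.

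With these pieces in place, the global bound \eqref{eq:clmsglobalhs} follows by applying \eqref{eq:clmsglobal} directly to the triple $(\tilde F,\tilde G,\tilde\Phi)$. For the localised version \eqref{eq:clmshslocal2}, I note that for $x_0\in\R\times\{0\}$ the two-dimensional ball $B(x_0,\rho)$ reflects onto itself and $B(x_0,\rho)=B^+(x_0,\rho)\cup r(B^+(x_0,\rho))$, so every $L^2$-norm appearing in \eqref{eq:clmslocal} picks up a factor $\sqrt2$ over $B^+(x_0,\rho)$ or the corresponding $\R^2_+$-annulus. The local BMO quantity on the half-ball is handled by taking the cutoff $\eta_{B(x_0,2^{k+5}r)}$ and the mean $(\Phi)_{B(x_0,r)}$ in the definition of $[\Phi]_{BMO,B^+(x_0,2^{k+5}r)}$, reflecting them symmetrically (the cutoff and the constant are unchanged by reflection because $x_0\in\R\times\{0\}$), so the resulting reflected function satisfies $[\widetilde{\eta(\Phi-(\Phi)_{B(x_0,r)})}]_{BMO,\R^2}\aleq[\Phi]_{BMO,B^+(x_0,2^{k+5}r)}$.

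The main technical obstacle is the verification that $\tilde F$ is genuinely divergence-free across the interface $\R\times\{0\}$: in principle $F$ need not have a well-defined normal trace $F_2|_{\R\times\{0\}}$, and the reflection identity above requires that $\div F=0$ on $\R^2_+$ in a sense strong enough to pair with test functions $\Psi-\Psi\circ r$ that do not have compact support in the open half-space. This is resolved by the density of $C_c^\infty(\R^2_+)$ in $\dot H^1_0(\R^2_+)$ combined with the $L^2$-bound on $F$, which allows one to pass from the interior distributional definition to zero-trace test functions. The other delicate point is to ensure that in the odd-extension case for $\Phi$ the BMO seminorm of $\tilde\Phi$ is still controlled by $\|\nabla\Phi\|_{(2,\infty),\R^2_+}$; this follows from the Lorentz-Sobolev embedding $\|\nabla f\|_{(2,\infty)}\ageq [f]_{BMO}$ applied to the globally defined reflection $\tilde\Phi\in\dot W^{1,(2,\infty)}(\R^2)$.
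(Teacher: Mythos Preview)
Your proposal is correct and follows essentially the same reflection-to-$\R^2$ strategy as the paper: extend $F$ with odd tangential and even normal component so that $\div\tilde F=0$ globally, extend $G$ and $\Phi$ with suitable parities, and invoke the full-space Theorem~\ref{th:clms}. The only difference is cosmetic: the paper extends the zero-trace function (whichever of $\Phi$ or $G$ vanishes on $\R\times\{0\}$) by zero to $\R^2_-$, whereas you extend it by odd reflection; the paper therefore gets $\int_{\R^2}\tilde F\cdot\nabla\tilde G\,\tilde\Phi=\int_{\R^2_+}F\cdot\nabla G\,\Phi$ directly, while you pick up a harmless factor $2$. Your justification that $\tilde F$ is divergence-free across the interface (via density of $C_c^\infty(\R^2_+)$ in $H^1_0(\R^2_+)$ tested against $\Psi-\Psi\circ r$) is in fact more explicit than what the paper writes, which simply asserts the implication.
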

\begin{proof}
If $\Phi = 0$ on $\R \times \{0\}$ We extend $\Phi$ by zero to $\R^2_-$ and reflect $G$ and $F$. That is,
\[
 \tilde{G}(x,t) := G(x,|t|),
\]
and
\[
 \tilde{F}(x,t) := \left (\begin{array}{c} \frac{t}{|t|} F^1(x,|t|)\\ F^2(x,|t|)\end{array} \right ).
\]
Observe that $\div F = 0$ in $\R^2_+$ implies $\div \tilde{F} = 0$ in $\R^2$. 

If instead $G$ is zero on $\R \times \{0\}$ we extend $G$ by zero and $\Phi$ evenly, and otherwise proceed the same way.

By Theorem~\ref{th:clms},
\[
 \int_{\R^2_+} F \cdot \nabla G\ \Phi = \int_{\R^2} \tilde{F}\cdot \nabla \tilde{G}\ \Phi \aleq \|F\|_{2,\R^2_+}\ \|\nabla G\|_{2,\R^2_+}\ [\Phi]_{BMO}.
\]
That is the global estimate \eqref{eq:clmsglobalhs} follows Poincare-Sobolev embedding,
\[
 [\Phi]_{BMO} \aleq \|\nabla \Phi\|_{(2,\infty),\R^2}= \|\nabla \Phi\|_{(2,\infty),\R^2_+}.
\]
The localized estimate \eqref{eq:clmshslocal2}, follows directly from Theorem~\ref{th:clms}.
\end{proof}

\section{Hodge decomposition on the half-space}
On any star-shaped domain, Hodge decomposition tells us that a vector-field $F$ can be decomposed into a sum of a gradient part and divergence free part.

\begin{proposition}\label{pr:hodgewithestimate}
Let $F \in C^\infty(\overline{\R^{2}_+},\R^2)$ with compact support. Then we find a smooth function $\varphi \in C^\infty(\overline{\R^2_+})$ and a smooth vector field $H \in C^\infty(\overline{\R^2_+},\R^2)$ satisfying
\[
 \varphi\Big|_{\R \times \{0\}} \equiv 0,
\]
and
\[
 \div H = 0 \quad \mbox{in $\R^2_+$},
\]
and so that
\[
 F = \nabla \varphi + H \quad \mbox{in $\R^2_+$}.
\]
We also have the following estimates for any $p \in (1,\infty)$, $q \in [1,\infty]$
\[
 \|\nabla \varphi\|_{(p,q),\R^2_+} + \|H\|_{(p,q),\R^2_+}\aleq \|F\|_{(p,q),\R^2_+}.
\]
We also have the following localizing estimates:
For any $k \in \N_0$ and any $x \in \R^2_+$ with $\dist(x,\supp F) > \Lambda$,
\[
\begin{split}
 |\nabla^k\varphi(x)| \aleq \Lambda^{-k-1} \|F\|_{1,\R^2_+}.
\end{split}
 \]
and
 \[
\begin{split}
 |\nabla^{k} H| \aleq \Lambda^{-k-2} \|F\|_{1,\R^2_+}.
\end{split}
 \]
In particular, setting 
\[
 A_\Lambda := \left \{x \in \R^2_+:\  \dist(x,\supp F)>\Lambda \right \}
\]
for any $p > 2$,
\[
 \|\varphi\|_{p,A_\Lambda} \leq \Lambda^{\frac{2}{p}-1} \|F\|_{1,\R^2_+}
\]
and for any $p > 1$,
\[
 \|H\|_{p,A_\Lambda} + \|\nabla \varphi\|_{p,A_\Lambda} \leq \Lambda^{\frac{2}{p}-2} \|F\|_{1,\R^2_+}
\]
\end{proposition}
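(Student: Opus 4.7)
\textbf{Proof plan for Proposition~\ref{pr:hodgewithestimate}.} The plan is to solve the Dirichlet problem $\Delta \varphi = \div F$ in $\R^2_+$ with $\varphi = 0$ on $\R\times\{0\}$, and then set $H := F - \nabla\varphi$; this immediately makes $H$ divergence-free with the correct boundary vanishing of $\varphi$. To get an explicit formula, I will use the Green's function of the half-space for $-\Delta$ with Dirichlet data, namely
\[
 G(x,y) = -\frac{1}{2\pi}\log|x-y| + \frac{1}{2\pi}\log|x-y^\ast|, \qquad y^\ast := (y_1,-y_2),
\]
which vanishes on $\R\times\{0\}$. Since $F$ is smooth and compactly supported, integration by parts is admissible and the boundary terms disappear thanks to $G(x,\cdot)|_{\R\times\{0\}}=0$, so
\[
 \varphi(x) = -\int_{\R^2_+} G(x,y)\,\div F(y)\,dy = \int_{\R^2_+} \nabla_y G(x,y)\cdot F(y)\,dy.
\]
Smoothness of $\varphi$ and $H$ on $\overline{\R^2_+}$ follows from this representation together with standard elliptic regularity, and $\Delta \varphi = \div F$ inside $\R^2_+$ is the classical Green identity, giving $\div H = 0$.

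\textbf{$L^p$ and Lorentz bounds.} Differentiating the representation above, $\nabla \varphi$ is obtained from $F$ by a kernel whose singular part is that of the Newton potential second derivative together with its reflection across $\R\times\{0\}$. Equivalently, extending $F$ to $\R^2$ by odd reflection of the first component and even reflection of the second (or vice versa, depending on the divergence), one sees that $\varphi$ coincides with the odd extension of the Newtonian potential of the divergence of the reflected field, hence $\nabla \varphi$ is the image of $F$ under a bounded matrix-valued Calder\'on--Zygmund operator on $\R^2$, restricted to $\R^2_+$. Classical CZ theory yields $\|\nabla \varphi\|_{p,\R^2_+}\aleq\|F\|_{p,\R^2_+}$ for $p\in(1,\infty)$; the Lorentz-space bound $\|\nabla\varphi\|_{(p,q),\R^2_+}\aleq \|F\|_{(p,q),\R^2_+}$ then follows by real interpolation between two such Lebesgue estimates. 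The same bound for $H=F-\nabla\varphi$ is immediate by the triangle inequality in Lorentz norms.

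\textbf{Far-field pointwise estimates.} For $x\in A_\Lambda$ (i.e., $\dist(x,\supp F)>\Lambda$), we exploit the fact that for $y\in\supp F$ and $x,y\in\overline{\R^2_+}$ one has $|x-y^\ast|\geq |x-y|>\Lambda$. Differentiating the Green representation $k$ times in $x$ places the derivatives onto the kernel, yielding $|\nabla^k \nabla_y G(x,y)|\aleq |x-y|^{-k-1}+|x-y^\ast|^{-k-1}\aleq \Lambda^{-k-1}$. Substituting into the integral representation of $\varphi$ gives $|\nabla^k \varphi(x)|\aleq \Lambda^{-k-1}\|F\|_{1,\R^2_+}$. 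Since $F\equiv 0$ on $A_\Lambda$, we have $\nabla^k H = -\nabla^{k+1}\varphi$ on $A_\Lambda$, which improves the decay by one order and produces $|\nabla^k H(x)|\aleq \Lambda^{-k-2}\|F\|_{1,\R^2_+}$, as claimed. The integrated $L^p(A_\Lambda)$ estimates follow by H\"older inequality on the annular shells $\{x:\dist(x,\supp F)\approx 2^j\Lambda\}$, which have area $\aleq (2^j\Lambda)^2$, using the respective pointwise decays and summing the resulting geometric series (requiring $p>2$ for $\varphi$ and $p>1$ for $H,\nabla\varphi$, which is precisely the threshold at which the sums $\sum_j (2^j\Lambda)^{2/p-1}$, resp.\ $\sum_j(2^j\Lambda)^{2/p-2}$, converge).

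\textbf{Main difficulty.} There is no serious analytic obstacle; the only point requiring care is justifying that the Green's function representation genuinely yields the smooth Hodge components with vanishing trace, so that the integration by parts producing $\varphi(x)=\int \nabla_y G\cdot F$ is legal and the boundary terms drop out. Compact support and smoothness of $F$ take care of all convergence issues, and the identity $|x-y^\ast|\geq|x-y|$ for $x,y\in\overline{\R^2_+}$ (which is just $(x_2+y_2)^2\geq(x_2-y_2)^2$) underlies all the far-field kernel estimates.
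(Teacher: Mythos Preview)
Your proposal is correct and follows essentially the same route as the paper: solve the Dirichlet problem $\Delta\varphi=\div F$, $\varphi|_{\R\times\{0\}}=0$ via the half-space Green's function, set $H=F-\nabla\varphi$, and read off the far-field decay by differentiating the kernel $\nabla_y G(x,y)$ and using $|x-y^\ast|\geq|x-y|>\Lambda$. You in fact supply more detail than the paper on two points the paper leaves implicit: the Lorentz bound $\|\nabla\varphi\|_{(p,q)}\aleq\|F\|_{(p,q)}$ (which you correctly attribute to Calder\'on--Zygmund theory plus real interpolation) and the integrated $L^p(A_\Lambda)$ estimates via dyadic shells.
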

\begin{proof}
The Greens function on $\R^{2}_+$ is given by
\[
 G(x,y) := \log(|x-y|)-\log(|x^\ast-y|),
\]
where for $x = (x',t) \in \R^2_+$, $x^\ast$ denotes the reflected point over $\R \times \{0\}$, that is $x^\ast = (x',t)^\ast = (x',-t)$ .

Green's representation formula then tells us that we can find a solution $\varphi$
\[
\begin{cases}
  \lap \varphi = \dv F \quad &\mbox{in $\R^2_+$}\\
  \varphi = 0 \quad &\mbox{in $\R \times \{0\}$}.
\end{cases}
\]
Thus $H:=F-\nabla \varphi$ is divergence free. For a dimensional constant $c \in \R$, $\varphi$ is given by
\[
\begin{split}
 \varphi(x) :=& c \int_{\R^2_+} \nabla_y G(x,y)\ F(y)\ dy\\
 =& c \int_{\R^2_+} \brac{\frac{x-y}{|x-y|^2} - \frac{x^\ast -y}{|x^\ast -y|^2}}\cdot F(y)\ dy.
\end{split}
 \]
For $x \not \in \supp F$,
\[
\begin{split}
 |\nabla^k\varphi(x)| \aleq \int_{\R^2_+} |x-y|^{-1-k} |F(y)|\ dy.
\end{split}
 \]
In particular, if $\dist (x,\supp F) \geq \Lambda > 0$,
\[
\begin{split}
 |\nabla^k\varphi(x)| \aleq \Lambda^{-k-1} \|F\|_{L^1(\R^2_+)}.
\end{split}
 \]
As for the estimate observe that
\[
 \nabla^k H = \nabla^k F - \nabla^{k+1} \varphi
\]
and $\nabla^k F(x) = 0$ for $x \not \in \supp F$.
\end{proof}

\section{Localization estimates}
Even if $\supp f \subset B(x_0,r)$ there is no way to estimate the size of the support of $\Hz f$. However, the further away from the support of $f$ we estimate $\mathcal{H}f$ the smaller is the influence of $f$. Indeed, for $\dist(x,\supp f) \aeq \Lambda$,
\[
 |\mathcal{H}f(x)| = c|\int_{\R} \frac{x-z}{|x-z|^2} f(z)\, dz| \aleq \Lambda^{-1} \|f\|_{1,\R}.
\]
From this estimate, one obtains 
\begin{proposition}\label{pr:localHilbert}
Let $\Hz$ be the Hilbert transform. Then for any $x_0 \in \R$, $R > 0$
\[
 \|\Hz f\|_{(2,\infty),I(x_0,R)} \aleq \|f\|_{(2,\infty),I(x_0,2^{k_0} R)}+ \tail{\sigma}{\|f\|_{(2,\infty)}}{x_0}{R}{k_0}.
\]
Here $\sigma$ is a uniform constant.
\end{proposition}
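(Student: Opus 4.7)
\textbf{Proof plan for Proposition~\ref{pr:localHilbert}.} The strategy is the standard near-far decomposition suggested by the preceding paragraph of the excerpt: on the near scale, the Hilbert transform is bounded; on the far scales, the explicit kernel decay $|x-z|^{-1}$ beats the $L^1$-mass of $f$. First, fix $x_0 \in \R$, $R > 0$, and $k_0 \geq 2$. Decompose
\[
 f = f_0 + \sum_{k=k_0}^\infty f_k, \qquad f_0 := \chi_{I(x_0, 2^{k_0} R)}\, f,\quad f_k := \chi_{I(x_0, 2^{k+1} R) \setminus I(x_0, 2^{k} R)}\, f \ (k \geq k_0),
\]
so that $\Hz f = \Hz f_0 + \sum_{k \geq k_0} \Hz f_k$ and estimate each piece separately using the subadditivity (up to a constant) of the Lorentz quasi-norm $\|\cdot\|_{(2,\infty)}$.

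For the local piece $\Hz f_0$, apply the boundedness of the Hilbert transform on the Lorentz space $L^{(2,\infty)}(\R)$ (which follows from boundedness on $L^2(\R)$ and real interpolation, cf.\ \cite{GrafakosC}):
\[
 \|\Hz f_0\|_{(2,\infty),I(x_0,R)} \leq \|\Hz f_0\|_{(2,\infty),\R} \aleq \|f_0\|_{(2,\infty),\R} = \|f\|_{(2,\infty), I(x_0, 2^{k_0} R)}.
\]
This yields the first term on the right-hand side of the claim.

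For the far pieces, fix $k \geq k_0$. Since $\supp f_k \subset I(x_0, 2^{k+1}R) \setminus I(x_0, 2^{k} R)$ and $x \in I(x_0, R)$, we have $|x-z| \geq 2^{k-1} R$ for any $z \in \supp f_k$, so
\[
 |\Hz f_k(x)| = c\left| \int_{\R} \frac{f_k(z)}{x-z}\, dz \right| \aleq (2^{k} R)^{-1}\, \|f_k\|_{1,\R}.
\]
By H\"older's inequality in Lorentz spaces \eqref{eq:hoelder} and the identity $\|\chi_{A}\|_{(2,1)} \aeq |A|^{1/2}$,
\[
 \|f_k\|_{1,\R} = \|f_k \cdot \chi_{I(x_0, 2^{k+1}R)}\|_{1,\R} \aleq (2^{k} R)^{1/2}\, \|f\|_{(2,\infty), I(x_0, 2^{k+1} R)}.
\]
Combining these two displays gives $\|\Hz f_k\|_{\infty, I(x_0,R)} \aleq (2^{k} R)^{-1/2} \|f\|_{(2,\infty), I(x_0, 2^{k+1} R)}$. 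Finally, converting this uniform bound on an interval of length $2R$ to a Lorentz norm costs a factor $R^{1/2}$:
\[
 \|\Hz f_k\|_{(2,\infty), I(x_0,R)} \aleq R^{1/2}\, \|\Hz f_k\|_{\infty, I(x_0,R)} \aleq 2^{-k/2}\, \|f\|_{(2,\infty), I(x_0, 2^{k+1} R)}.
\]

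Summing over $k \geq k_0$, the geometric factor $2^{-k/2}$ absorbs the mild blow-up from iterating the quasi-triangle inequality for $\|\cdot\|_{(2,\infty)}$, and we recognize the resulting series as the tail $\tail{\sigma}{\|f\|_{(2,\infty)}}{x_0}{R}{k_0}$ with $\sigma = 1/2$. Adding this to the estimate for $\Hz f_0$ proves the claim. The only mild subtlety is the handling of the quasi-norm subadditivity in the infinite sum, which is absorbed into $\sigma$ by choosing, e.g., $\sigma = 1/4$ if needed; otherwise the argument is entirely routine once the decomposition is in place.
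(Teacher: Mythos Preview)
Your proof is correct and follows exactly the approach the paper indicates: the paper does not write out a detailed proof but only records the pointwise kernel decay $|\Hz f(x)| \aleq \Lambda^{-1}\|f\|_{1}$ for $\dist(x,\supp f)\aeq\Lambda$ and states that the proposition follows from it. Your near--far decomposition, use of $L^{(2,\infty)}$-boundedness of $\Hz$ on the local piece, and conversion of the far-piece $L^\infty$ bound via $\|f_k\|_1 \aleq (2^kR)^{1/2}\|f\|_{(2,\infty),I(x_0,2^{k+1}R)}$ is precisely the intended argument fleshed out.
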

A further quasi-local estimate involving the harmonic extension is the following.
\begin{proposition}\label{pr:VtBMO}
Let $\solu \in \dot{H}^{\frac{1}{2}}(\R,\R^N)$ and $\solV$ the Poisson extension of $\solu$ to $\R^2_+$, $\solV(x,t) := p_t\ast\solu(x)$.

Let $\solV^e$ be the even reflection of $\solV$ to $\R^2_-$. Then for any $k_0 \geq 10$ and for any $x_0 \in \R \times \{0\}$, any $R > 0$
\[
 \left [\eta_{B(x_0,R)} \brac{\solV^e -(\solV^e)_{B(x_0,R)}} \right ]_{BMO,\R^2} \aleq \|\lapv \solu\|_{(2,\infty),B(x_0,2^{k_0}R)} + \tail{\sigma}{\|\lapv \solu\|_{(2,\infty)}}{x_0}{R}{k_0}.
\]
Here $\sigma > 0$ is a uniform constant.
\end{proposition}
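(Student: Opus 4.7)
The plan is to reduce the BMO seminorm to a Lorentz $(2,\infty)$ gradient bound via the standard Poincar\'e--Sobolev embedding $[f]_{BMO(\R^2)}\aleq \|\nabla f\|_{(2,\infty),\R^2}$, then exploit the representation $\nabla \solV = T(\lapv \solu)$ together with quasi-local estimates for $T$ to obtain the tail decomposition. Setting $\tilde\Phi:=\eta_{B(x_0,R)}\brac{\solV^e-(\solV^e)_{B(x_0,R)}}$ and applying the product rule,
\[
\nabla\tilde\Phi = \nabla\eta_{B(x_0,R)}\brac{\solV^e-(\solV^e)_{B(x_0,R)}} + \eta_{B(x_0,R)}\,\nabla\solV^e.
\]
For the first summand, $|\nabla \eta_{B(x_0,R)}|\aleq R^{-1}$ has support in an annulus contained in $B(x_0,2R)$, so by the Lorentz-version of the Poincar\'e inequality
\[
\left\|\nabla\eta_{B(x_0,R)}\brac{\solV^e-(\solV^e)_{B(x_0,R)}}\right\|_{(2,\infty),\R^2} \aleq \|\nabla\solV^e\|_{(2,\infty),B(x_0,2R)}.
\]
Thus it suffices to bound $\|\nabla\solV^e\|_{(2,\infty),B(x_0,2R)}$, since $\|\eta_{B(x_0,R)}\nabla\solV^e\|_{(2,\infty)}$ is dominated by the same quantity.

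Since $|\nabla\solV^e(x,t)|\aeq |\nabla\solV(x,|t|)|$ almost everywhere, one can work directly on $\R^2_+$. Using $\nabla\solV = T(\lapv\solu)$ from Section~\ref{ss:boundaryaction}, split with the cutoffs $\eta_k,\xi_k$ from \eqref{eq:etakxik}:
\[
\lapv\solu = \eta_{k_0}\lapv\solu + \sum_{k=k_0}^\infty \xi_k\lapv\solu.
\]
The local piece is handled by $L^{(2,\infty)}$-boundedness of $T$, which follows from Lemma~\ref{la:lapvptboundedness} combined with real interpolation:
\[
\|T(\eta_{k_0}\lapv\solu)\|_{(2,\infty),B^+(x_0,2R)} \aleq \|\lapv\solu\|_{(2,\infty),I(x_0,2^{k_0+1}R)}.
\]
For the tail pieces, the argument of Lemma~\ref{la:V:err5:disjointT} applies: for $(x,t)\in B^+(x_0,2R)$ and $y\in\supp\xi_k$ one has $|(x,t)-(y,0)|\aeq 2^k R$, so Lemma~\ref{la:ptest} gives $|T(\xi_k\lapv\solu)(x,t)|\aleq (2^k R)^{-3/2}\|\xi_k\lapv\solu\|_{1,\R}$, which combined with $\|\xi_k\lapv\solu\|_{1}\aleq (2^k R)^{1/2}\|\lapv\solu\|_{(2,\infty),I(x_0,2^{k+1}R)}$ and the volume $|B^+(x_0,2R)|\aeq R^2$ yields
\[
\|T(\xi_k\lapv\solu)\|_{(2,\infty),B^+(x_0,2R)} \aleq 2^{-\sigma k}\,\|\lapv\solu\|_{(2,\infty),I(x_0,2^{k+1}R)}.
\]
Summing over $k\ge k_0$ produces exactly the tail term $\tail{\sigma}{\|\lapv\solu\|_{(2,\infty)}}{x_0}{R}{k_0}$.

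The main obstacle lies in promoting the Lorentz quasi-locality estimate for $T$ from the target space $L^2$ (as in Lemma~\ref{la:V:err5:disjointT}) to the target space $L^{(2,\infty)}$ uniformly across annuli; however, since the kernel bound from Lemma~\ref{la:ptest} is pointwise and the support of $T(\xi_k\lapv\solu)$ is not restricted but its $L^\infty$-size on $B^+(x_0,2R)$ is explicitly controlled by inverse powers of $2^k R$, the passage $L^\infty(B^+(x_0,2R))\hookrightarrow L^{(2,\infty)}(B^+(x_0,2R))$ via $|B^+(x_0,2R)|^{1/2}$ is harmless and produces a summable geometric series with the desired constant $\sigma>0$. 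A minor secondary point is handling the distributional jump of $\partial_t\solV^e$ across $\R\times\{0\}$: since $\solV^e$ is continuous, no Dirac mass appears and the even reflection is fine in the weak sense, so the Poincar\'e--Sobolev step goes through.
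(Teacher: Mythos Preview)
Your reduction to $\|\nabla\solV^e\|_{(2,\infty),B(x_0,2R)}$ via the Poincar\'e--Sobolev embedding $[f]_{BMO}\aleq\|\nabla f\|_{(2,\infty)}$ is fine, and your treatment of the tail pieces via the pointwise kernel bound of Lemma~\ref{la:ptest} and the embedding $L^\infty(B^+)\hookrightarrow L^{(2,\infty)}(B^+)$ is correct. The gap is in the local piece. You assert that $T:L^{(2,\infty)}(\R)\to L^{(2,\infty)}(\R^2_+)$ follows from Lemma~\ref{la:lapvptboundedness} ``combined with real interpolation'', but that lemma supplies only the single endpoint $L^2(\R)\to L^2(\R^2_+)$; there is no second endpoint to interpolate against. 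In fact $T$ does \emph{not} map $L^p(\R)\to L^p(\R^2_+)$ for $p\neq 2$: writing $Tf(x,t)=t^{-1/2}\kappa_t\ast f(x)$, the quantity $\int_0^\infty t^{-p/2}\|\kappa_t\ast f\|_{L^p(\R)}^p\,dt$ is a Besov-type seminorm, not $\|f\|_p^p$. This is exactly the phenomenon flagged in Remark~\ref{r:commiel2}: $\|\nabla(p_t\ast g)\|_{p,\R^2_+}\not\aeq\|\lapv g\|_{p,\R}$ for $p\neq 2$. So the step $\|T(\eta_{k_0}\lapv\solu)\|_{(2,\infty),B^+(x_0,2R)}\aleq\|\lapv\solu\|_{(2,\infty),I(x_0,2^{k_0+1}R)}$ is unjustified.

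The paper avoids this by not passing through the two-dimensional Lorentz norm of $\nabla\solV$ at all. Instead it bounds the BMO seminorm, via a Fubini--H\"older argument with an exponent $p\in(1,2)$, by the \emph{one-dimensional} Lorentz norm
\[
\left\|\Big(\int_0^{3R}|\nabla\solV(\cdot,t)|^2\,dt\Big)^{1/2}\right\|_{(2,\infty),I(x_0,3R)},
\]
i.e.\ by the $L^{(2,\infty)}(\R)$-norm of the square function $s_\kappa$ (and its Hilbert-transformed variant). Now the square function $s_\kappa$ \emph{is} bounded on $L^p(\R)$ for every $1<p<\infty$, so real interpolation legitimately yields $s_\kappa:L^{(2,\infty)}(\R)\to L^{(2,\infty)}(\R)$, and the local piece follows. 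The tail pieces are then handled by Lemma~\ref{la:V:err5:disjointT} exactly as you do. To repair your argument you would need to insert this reduction to the one-dimensional square function before invoking any Lorentz-space boundedness.
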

\begin{proof}
By Poincar\'e and Poincar\'e-Sobolev inequality, for any $p \in (1,2)$,
\[
\begin{split}
 &\left [\eta_{B(x_0,R)} \brac{\solV^e -(\solV^e)_{B(x_0,R)}} \right ]_{BMO(\R^2)}\\
%  \aleq& \sup_{y_0 \in \R^2,\rho>0} R^{-1}\rho^{-1}\int_{B(y_0,\rho)\cap B(x_0,R)} |\brac{\solV^e -(\solV^e)_{B(x_0,R)}}| + \sup_{y_0 \in \R^2,\rho>0} \rho^{-1}\int_{B(y_0,\rho) \cap B(x_0,R)} |\nabla \solV^e|\\
 \aleq& \sup_{B(y,\rho) \subset B(x_0,2R)} \rho^{\frac{2}{p'}-1}\brac{\int_{h \in [-\rho,\rho]^2} |\nabla \solV^e(y_0+h)|^p\ dh}^{\frac{1}{p}}.
\end{split}
 \]
Now by Fubini and H\"older inequality,
\[
 \brac{\int_{h \in [-\rho,\rho]^2} |\nabla \solV^e(y_0+h)|^p\ dh}^{\frac{1}{p}} \aleq \rho^{1-\frac{2}{p'}} \left \|\brac{\int_{-\rho}^\rho |\nabla \solV^e (y_0 + (\cdot ,t))|^2\ dt}^{\frac{1}{2}}\right \|_{(2,\infty),I(0,\rho)},
\]
and thus, since $x_0 \in \R \times \{0\}$
\[
\begin{split}
 \left [\eta_{B(x_0,R)} \brac{\solV^e -(\solV^e)_{B(x_0,R)}} \right ]_{BMO(\R^2)}
%  \aleq& \sup_{y_0 \in \R^2,\rho>0} R^{-1}\rho^{-1}\int_{B(y_0,\rho)\cap B(x_0,R)} |\brac{\solV^e -(\solV^e)_{B(x_0,R)}}| + \sup_{y_0 \in \R^2,\rho>0} \rho^{-1}\int_{B(y_0,\rho) \cap B(x_0,R)} |\nabla \solV^e|\\
 \aleq& \left \|\brac{\int_{0}^{3R} |\nabla \solV (\cdot,t))|^2\ dt}^{\frac{1}{2}}\right \|_{(2,\infty),I(x_0,3R)}.
\end{split}
 \]
As for the proof of Proposition~\ref{pr:V:err5} we set
\[
 T^1f(x,t) :=   \lapv p_t \ast \Hz f(x)
\]
\[
 T^2f(x,t) :=  -\lapv p_t \ast f(x).
\]
Then,
\[
 \nabla \solV = T(\lapv \solu) = T(\eta_{k_0}\lapv \solu)+\sum_{\ell = k_0}^\infty T(\xi_{\ell}\lapv \solu),
\]
where $\eta \in C_c^\infty(B(0,1))$ is a typical bump function constantly one in $B(0,\frac{1}{2})$ and 
\[
 \eta_{k} := \eta((x_0-x)/{2^k r}),\ \xi_k := \eta_k - \eta_{k-1}.
\]
Then we have
\[
\begin{split}
 &\left [\eta_{B(x_0,R)} \brac{\solV^e -(\solV^e)_{B(x_0,R)}} \right ]_{BMO(\R^2)}\\
 \aleq& \left \|\brac{\int_{0}^{\infty} |T[\eta_{k_0}\lapv \solu](\cdot,t)|^2\ dt}^{\frac{1}{2}}\right \|_{(2,\infty),\R} + \sum_{\ell =k_0}^\infty \|\eta_{4R}\, T(\xi_\ell\lapv \solu)\|_{2,\R^2_+}
\end{split}
 \]
For the first term, we write $T^1f = t^{-\frac{1}{2}} \kappa_t \ast \Hz f$ and $T^2f = t^{-\frac{1}{2}} \kappa_t \ast \Hz f$, then the square function estimate, see  \cite[Chapter I.C, \textsection 8.23, p.46]{Stein-Harmonic-Analysis}, implies
\[
 \left \|\brac{\int_{0}^{\infty} |T[\eta_{k_0}\lapv \solu]|^2\ dt}^{\frac{1}{2}}\right \|_{(2,\infty),\R} \aleq \|\eta_{k_0} \lapv \solu \|_{(2,\infty),\R}.
\]
For the remaining term observe that $\supp \xi_\ell \times \{0\}$ and $\supp \eta_{4R}$ are disjoint. Therefore, by Lemma~\ref{la:V:err5:disjointT}
\[
\|\eta_{4R}\, T(\xi_\ell\lapv \solu)\|_{2,\R^2_+}\aleq 2^{-\sigma\ell} \|\xi_\ell \lapv \solu\|_{(2,\infty),\R}.
\]
This concludes the proof.
\end{proof}

\section{On Poisson-type extension operators}
Recall that $p$ denotes the Poisson kernel
\[
 p(x) = \frac{1}{\brac{1+|x|^2}^{\frac{n+1}{2}}},
\]
and $p_t(x) = t^{-n} p(x/t)$, that is
\[
 p_t(x) = \frac{t}{\brac{t^2+|x|^2}^{\frac{n+1}{2}}}.
\]
In this section we prove a few, probably well-known, estimates on $p_t$. 
\begin{lemma}\label{la:ptest}
For $\alpha \in [0,1)$, $t > 0$, $z \in \R^n$ we have
\begin{equation}\label{eq:la:ptest:1}
 |\laps{\alpha} p_t(z)| \aleq \brac{t^2 + |z|^2}^{-\frac{n+\alpha}{2}},
\end{equation}
and
\begin{equation}\label{eq:la:ptest:2}
 |\laps{\alpha} \nabla p_t(z)| \aleq  \brac{t^2 + |z|^2}^{-\frac{n+\alpha+1}{2}}
\end{equation}
\end{lemma}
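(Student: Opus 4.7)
The plan is to reduce to the case $t=1$ by the standard scaling of the fractional Laplacian: since $p_t(z) = t^{-n}p(z/t)$ and $\laps{\alpha}[f(\cdot/t)](z) = t^{-\alpha}(\laps{\alpha}f)(z/t)$, one obtains $\laps{\alpha}p_t(z) = t^{-n-\alpha}(\laps{\alpha}p)(z/t)$ and similarly $\laps{\alpha}\nabla p_t(z) = t^{-n-\alpha-1}(\laps{\alpha}\nabla p)(z/t)$. Thus both claims \eqref{eq:la:ptest:1} and \eqref{eq:la:ptest:2} reduce to showing, for the unit-scale Poisson kernel $p$ and its gradient, the respective pointwise bounds $|\laps{\alpha}p(y)|\aleq (1+|y|^2)^{-(n+\alpha)/2}$ and $|\laps{\alpha}\nabla p(y)|\aleq (1+|y|^2)^{-(n+\alpha+1)/2}$.

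The case $\alpha=0$ is immediate from the explicit formulas for $p$ and $\nabla p$. For $\alpha\in(0,1)$ I would use the singular-integral representation in its symmetric form
\[
\laps{\alpha}f(y) = c_{n,\alpha}\int_{\R^n}\frac{2f(y)-f(y+h)-f(y-h)}{|h|^{n+\alpha}}\,dh.
\]
For $|y|\le 2$ the claim is trivial since $\laps{\alpha}p$ is smooth and bounded (both the singular part and tail of the integral are integrable thanks to the Schwartz-type decay of $p$). For $|y|\ge 2$ I would split the integral at $|h|=|y|/2$. On the near region $|h|\le|y|/2$ the ball $B(y,|y|/2)$ lies entirely in $\{|z|\ge |y|/2\}$, on which $|D^2 p|\aleq |y|^{-n-3}$; a second-order Taylor expansion gives $|2p(y)-p(y+h)-p(y-h)|\aleq |h|^2 |y|^{-n-3}$, and integration against $|h|^{-n-\alpha}$ produces a term $\aleq |y|^{-n-\alpha-1}$. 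On the far region $|h|>|y|/2$ the $2p(y)$ piece contributes $p(y)\cdot|y|^{-\alpha}\aleq |y|^{-n-1-\alpha}$, while the $p(y\pm h)$ pieces contribute at most $|y|^{-n-\alpha}\,\|p\|_{L^1}$. Summing yields the desired $|y|^{-n-\alpha}$ decay.

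For the gradient estimate \eqref{eq:la:ptest:2} the identical argument applies with $\nabla p$ in place of $p$, using $|\nabla p(z)|\aleq (1+|z|^2)^{-(n+2)/2}$ and the corresponding decay for $D^3 p$; the near/far split then produces $|\laps{\alpha}\nabla p(y)|\aleq (1+|y|^2)^{-(n+\alpha+1)/2}$. (In fact the argument gives a slightly faster decay, but this suffices.) The only subtlety is the absolute convergence of the singular integral near $h=0$; this is precisely why one uses the symmetric second-difference form, which cancels the first-order term and renders the near piece integrable for all $\alpha\in(0,1)$. No deeper obstacle is anticipated.
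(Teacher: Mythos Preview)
Your scaling reduction and your proof of \eqref{eq:la:ptest:1} are correct. They differ from the paper's route: you use the symmetric second-difference representation of $\laps{\alpha}$ together with a single near/far split at $|h|=|y|/2$, whereas the paper uses the first-difference form and decomposes the domain of integration into three regions according to the relative sizes of $|z-y|$, $1+|z|$ and $1+|y|$. Your version is a bit more economical.

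For \eqref{eq:la:ptest:2}, however, there is a genuine gap. If one runs the identical far-region estimate with $\nabla p$ in place of $p$, the ``$p(y\pm h)$ pieces'' become
\[
\int_{|h|>|y|/2}\frac{|\nabla p(y+h)|+|\nabla p(y-h)|}{|h|^{n+\alpha}}\,dh \;\aleq\; |y|^{-n-\alpha}\,\|\nabla p\|_{L^1(\R^n)},
\]
which is only of order $|y|^{-n-\alpha}$, not the required $|y|^{-n-\alpha-1}$. This bound is sharp for the absolute-value integral: when $|h+y|\le 1$ one has $|h|\sim|y|$ while $\int_{|w|\le1}|\nabla p(w)|\,dw\gtrsim 1$, so the contribution is genuinely $\gtrsim|y|^{-n-\alpha}$. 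Your parenthetical remark that ``the argument gives a slightly faster decay'' is therefore incorrect for this term; the missing factor $|y|^{-1}$ can only come from cancellation, which taking absolute values destroys.

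The paper circumvents this by not applying $\laps{\alpha}$ to $\nabla p$ componentwise, but by using an integral representation of the composite operator $\laps{\alpha}\nabla$ of order $1+\alpha$,
\[
\laps{\alpha}\nabla f(z)=\int_{\R^n}\bigl(f(y)-f(z)-(y-z)\cdot\nabla f(z)\bigr)\,\frac{k(z-y)}{|z-y|^{n+1+\alpha}}\,dy,
\]
with $k$ bounded and zero-homogeneous. The first-order Taylor subtraction built into this kernel is precisely what supplies the extra power of $|z-y|^{-1}$ in the far region. If you prefer to stay within your framework you would have to exploit $\int_{\R^n}\nabla p=0$ explicitly in the far-region integral, and a single subtraction of the constant $|y|^{-n-\alpha}$ from $|h|^{-n-\alpha}$ does not suffice, since their difference is still of size $|y|^{-n-\alpha}$ on the whole annulus $|h|\sim|y|$; a finer decomposition is needed.
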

\begin{proof}
It suffices to prove \eqref{eq:la:ptest:1} and \eqref{eq:la:ptest:2} for $t =1$.

We treat \eqref{eq:la:ptest:1} first. We have 
\[
\begin{split}
 \laps{\alpha} p_1(z) =& \int_{\R^n} \frac{\frac{1}{\brac{1 + |z|^2}^{\frac{n+1}{2}}} - \frac{1}{\brac{1 + |y|^2}^{\frac{n+1}{2}}}}{|z-y|^{n+\alpha}}\ dy\\
 \end{split}
 \]
We split the integral
\[
\begin{split}
%  &\int_{\R^n} \frac{\frac{1}{\brac{1+ |z|^2}^{\frac{n+1}{2}}} - \frac{1}{\brac{1 + |y|^2}^{\frac{n+1}{2}}}}{|z-y|^{n+\alpha}}\ dy\\
%  =&\int_{\R^n} \frac{\brac{1 + |y|^2}^{\frac{n+1}{2}} - \brac{1 + |z|^2}^{\frac{n+1}{2}} }{|z-y|^{n+\alpha}}\ \brac{1 + |y|^2}^{-\frac{n+1}{2}}\ \brac{1+ |z|^2}^{-\frac{n+1}{2}}\ dy\\
 \laps{\alpha} p_1(z)=&\int_{A(z)\dot{\cup} B(z)\dot{\cup} C(z)} \frac{\brac{1 + |y|^2}^{\frac{n+1}{2}} - \brac{1 + |z|^2}^{\frac{n+1}{2}} }{|z-y|^{n+\alpha}}\ \brac{1 + |y|^2}^{-\frac{n+1}{2}}\ \brac{1+ |z|^2}^{-\frac{n+1}{2}}\ dy,
 \end{split}
 \]
where
\[
 A(z) := \left \{y \in \R^n: \quad |z-y| \leq \frac{1}{2} (1+|z|) \mbox{ or } |z-y| \leq \frac{1}{2} (1+|y|) \right \}, 
\]
\[
 B(z) := \left \{y \in \R^n: \quad |z-y| > \frac{1}{2} (1+|z|) \mbox{ and } |z-y| > \frac{1}{2} (1+|y|) \mbox{ and } |z| \leq |y|\right \} ,
\]
and
\[
 C(z) := \left \{y \in \R^n: \quad |z-y| > \frac{1}{2} (1+|z|) \mbox{ and } |z-y| > \frac{1}{2} (1+|y|) \mbox{ and } |z| > |y|\right \}.
\]
Firstly, we observe that $y \in A(z)$ implies $1+|z| \aeq 1+|y|$, and consequently $1+|z|^2 \aeq 1+|y|^2$ and $|z-y| \aleq \sqrt{1+|z|^2}$. Thus, with the mean value theorem,
\[
\begin{split}
&\int_{A(z)} \frac{\left |\brac{1 + |y|^2}^{\frac{n+1}{2}} - \brac{1 + |z|^2}^{\frac{n+1}{2}} \right |}{|z-y|^{n+\alpha}}\ \brac{1 + |y|^2}^{-\frac{n+1}{2}}\ \brac{1+ |z|^2}^{-\frac{n+1}{2}}\ dy\\
\aleq &\int_{B(z,C\sqrt{1+|z|^2})} |z-y|^{-n+1-\alpha}\  \brac{1+ |z|^2}^{-\frac{n+2}{2}}dy\\
\aeq& \brac{1+ |z|^2}^{-\frac{n+1+\alpha}{2}}.\\
\end{split}
\]
Secondly, if $y \in B(z)$, $|z-y| \ageq \sqrt{1+|z|^2}$ and
\[
\begin{split}
 &\frac{\left |\brac{1 + |y|^2}^{\frac{n+1}{2}} - \brac{1 + |z|^2}^{\frac{n+1}{2}} \right |}{|z-y|^{n+\alpha}}\ \brac{1 + |y|^2}^{-\frac{n+1}{2}}\ \brac{1+ |z|^2}^{-\frac{n+1}{2}}\\ 
 \aleq &|z-y|^{-n-\alpha}\  \brac{1+ |z|^2}^{-\frac{n+1}{2}}.
\end{split}
 \]
Consequently,
\[
\begin{split}
&\int_{B(z)} \frac{\left |\brac{1 + |y|^2}^{\frac{n+1}{2}} - \brac{1 + |z|^2}^{\frac{n+1}{2}} \right |}{|z-y|^{n+\alpha}}\ \brac{1 + |y|^2}^{-\frac{n+1}{2}}\ \brac{1+ |z|^2}^{-\frac{n+1}{2}}\ dy\\
\aleq &\int_{\R^n \backslash B(z,c\sqrt{1+|z|^2})} |z-y|^{-n-\alpha}\  \brac{1+ |z|^2}^{-\frac{n+1}{2}}\, dy\\
\aeq& \brac{1+ |z|^2}^{-\frac{n+1+\alpha}{2}}.\\
\end{split}
\]
Lastly, if $y \in C(z)$, $|z-y| \ageq \sqrt{1+|z|^2}$ and thus
\[
\begin{split}
 &\frac{\left |\brac{1 + |y|^2}^{\frac{n+1}{2}} - \brac{1 + |z|^2}^{\frac{n+1}{2}} \right |}{|z-y|^{n+\alpha}}\ \brac{1 + |y|^2}^{-\frac{n+1}{2}}\ \brac{1+ |z|^2}^{-\frac{n+1}{2}}\\ 
 \aleq &|z-y|^{-n-\alpha}\  \brac{1+ |y|^2}^{-\frac{n+1}{2}} \aleq \brac{1+ |z|^2}^{-\frac{n+\alpha}{2}}\  \brac{1+ |y|^2}^{-\frac{n+1}{2}}.
\end{split}
 \]
Consequently,
\[
\begin{split}
&\int_{C(z)} \frac{\left |\brac{1 + |y|^2}^{\frac{n+1}{2}} - \brac{1 + |z|^2}^{\frac{n+1}{2}} \right |}{|z-y|^{n+\alpha}}\ \brac{1 + |y|^2}^{-\frac{n+1}{2}}\ \brac{1+ |z|^2}^{-\frac{n+1}{2}}\ dy\\
\aleq &\int_{B(0,|z|)} \brac{1+ |z|^2}^{-\frac{n+\alpha}{2}}\  \brac{1+ |y|^2}^{-\frac{n+1}{2}} dy\\
\aeq& \brac{1+ |z|^2}^{-\frac{n+\alpha}{2}}.\\
\end{split}
\]
This settles \eqref{eq:la:ptest:1}.

For \eqref{eq:la:ptest:2} we choose the following integral representation for the operator $\laps{\alpha}\nabla$, 
\[
 \laps{\alpha} \nabla f(z) = \int_{\R^n} (f(y) - f(z) - (y-z)\cdot\nabla f(z)) \frac{k(|z-y|)}{|z-y|^{n+\alpha+1}}\ dz. 
\]
Here $k \in C^\infty(\R^n \backslash \{0\},\R^n)$ is a zero-homogeneous bounded function, c.f. \cite[Proposition 2.4.7, Proposition 2.4.8]{GrafakosC}.
Recall that
\[
 \nabla p_1(z) = -(n+1)\, \frac{z}{\brac{1+|z|^2}^{\frac{n+3}{2}}}
\]
and
\[
 |\nabla^2 p_1(z)| \aleq \frac{1}{\brac{1+|z|^2}^{\frac{n+3}{2}}}
\]
Take $A(z)$, $B(z)$, $C(z)$ as above.
For $y \in A(z)$ we have $1+|z|^2 \aeq 1+|y|^2$, for all $y \in A(z)$,
\[
 \int_{A(z)} (p_1(y) - p_1(z) - (y-z)\nabla p_1(z)) \frac{1}{|z-y|^{n+\alpha+1}}\ k(|z-y|) \aleq \frac{1}{\brac{1+|z|^2}^{\frac{n+2+\alpha}{2}}}. 
\]
For $y \in B(z) \cup C(z)$, $|y-z| \aleq \sqrt{1+|z|^2}$ and thus
\[
 \int_{B(z) \cup C(z)} \left |(y-z)\nabla p_1(z) \right | \frac{1}{|z-y|^{n+\alpha+1}}\ k(|z-y|)\ dy \aleq \frac{1}{\brac{1+|z|^2}^{\frac{n+2+\alpha}{2}}}.
\]
Also, for $y \in B(z)$,
\[
  \left |\frac{1}{\brac{1+|y|^2}^{\frac{n+1}{2}}}-\frac{1}{\brac{1+|z|^2}^{\frac{n+1}{2}}} \right | \aleq \frac{1}{\brac{1+|z|^2}^{\frac{n+1}{2}}}
\]
Consequently,
\[
 \int_{B(z)} \left |p_1(y) - p_1(z) \right |\frac{1}{|z-y|^{n+\alpha+1}}\ k(|z-y|) \aleq \frac{1}{\brac{1+|z|^2}^{\frac{n+2+\alpha}{2}}}.
\]
For $y \in C(z)$,
\[
 \left |p_1(y) - p_1(z) \right |\frac{1}{|z-y|^{n+\alpha+1}}\ k(|z-y|) \aleq \frac{1}{\brac{1+|z|^2}^{n+1+\alpha}}\ \brac{1+|y|^2}^{-\frac{n+1}{2}},
 \]
and consequently,
\[
 \int_{C(z)} \left |p_1(y) - p_1(z) \right |\frac{1}{|z-y|^{n+\alpha+1}}\ k(|z-y|) \aleq \frac{1}{\brac{1+|z|^2}^{n+1+\alpha}}.
\]
This establishes \eqref{eq:la:ptest:2}.
\end{proof}
As a consequence of the decay estimates in Lemma~\ref{la:ptest} we obtain the following estimates for the harmonic extension $\varphi^h = p_t \ast f$ in points in $\R^{n+1}_+$ which are away from the support of $f$ in $\R^n$.
\begin{lemma}\label{la:harmextest}
Let $\varphi^h(x,t) = p_t \ast \varphi(x)$ the harmonic extension of $\varphi \in L^1(\R^n)$ to $\R^{n+1}_+$. Then if $(x,t) \in \R^{n+1}_+$ so that $\dist((x,t),\supp \varphi \times \{0\}) > \Lambda > 0$, we have
\begin{equation}\label{la:harmextest:1}
 |\varphi^h(x,t)| \aleq \Lambda^{-n} \|\varphi\|_{1,\R^n},
\end{equation}
\begin{equation}\label{la:harmextest:2}
 |\nabla \varphi^h(x,t)| \aleq \Lambda^{-n-1} \|\varphi\|_{1,\R^n}.
\end{equation}
In particular, for any $p > \frac{n+1}{n}$,
\begin{equation}\label{la:harmextest:3}
 \|\varphi^h\|_{p,\R^{n+1}_+ \backslash B(\supp \varphi \times \{0\},\Lambda)} \aleq \Lambda^{\frac{n+1}{p}-n} \|\varphi\|_{1,\R^n},
\end{equation}
and for any $p > 1$,
\begin{equation}\label{la:harmextest:4}
 \|\nabla \varphi^h\|_{p,\R^{n+1}_+ \backslash B(\supp \varphi \times \{0\},\Lambda)} \aleq \Lambda^{\frac{n+1}{p}-n-1} \|\varphi\|_{1,\R^n}.
\end{equation}
\end{lemma}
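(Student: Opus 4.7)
The plan is to reduce everything to the kernel estimates from Lemma~\ref{la:ptest} combined with a radial computation.

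For the pointwise bounds \eqref{la:harmextest:1} and \eqref{la:harmextest:2}, I would first record the trivial bound $p_t(z)=t(t^2+|z|^2)^{-(n+1)/2}\leq (t^2+|z|^2)^{-n/2}$ and, applying Lemma~\ref{la:ptest} with $\alpha=0$, the bound $|\nabla p_t(z)|\aleq (t^2+|z|^2)^{-(n+1)/2}$. The assumption $\dist((x,t),\supp\varphi\times\{0\})>\Lambda$ means that for every $y\in\supp\varphi$ one has $\sqrt{t^2+|x-y|^2}>\Lambda$, so the kernel estimates become $p_t(x-y)\leq\Lambda^{-n}$ and $|\nabla p_t(x-y)|\aleq\Lambda^{-n-1}$. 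Substituting into $\varphi^h(x,t)=\int p_t(x-y)\varphi(y)\,dy$ and its gradient and using $\|\varphi\|_{1,\R^n}$ yields \eqref{la:harmextest:1} and \eqref{la:harmextest:2} immediately.

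For the $L^p$ estimates \eqref{la:harmextest:3} and \eqref{la:harmextest:4}, set $A:=\R^{n+1}_+\backslash B(\supp\varphi\times\{0\},\Lambda)$. The key observation is that if $(x,t)\in A$ and $y\in\supp\varphi$, then $\dist((x,t),(y,0))\geq\dist((x,t),\supp\varphi\times\{0\})>\Lambda$, so $A\subset\R^{n+1}_+\backslash B((y,0),\Lambda)$ for every $y\in\supp\varphi$. Minkowski's integral inequality together with translation invariance gives
\[
 \|\varphi^h\|_{p,A}\leq \int|\varphi(y)|\,\|p_t(x-y)\|_{L^p(A)}\,dy \leq \|\varphi\|_{1,\R^n}\,\|p_t(x)\|_{L^p(\R^{n+1}_+\backslash B(0,\Lambda))},
\]
and identically for $\nabla\varphi^h$ with $\nabla p_t$ in place of $p_t$. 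Thus it suffices to estimate these two $L^p$-norms of the Poisson kernel and its gradient on the exterior of a ball.

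Passing to spherical coordinates $(x,t)=\rho(\omega,\tau)$ with $(\omega,\tau)\in S^n_+$, the pointwise bounds from the first paragraph reduce the first integral to $\int_\Lambda^\infty\rho^{n-pn}\,d\rho$, which converges precisely when $p>(n+1)/n$ and equals $\aleq\Lambda^{n+1-pn}$; the $p$-th root produces the exponent $(n+1)/p-n$ in \eqref{la:harmextest:3}. The analogous integral for $\nabla p_t$ reduces to $\int_\Lambda^\infty\rho^{n-p(n+1)}\,d\rho$, which converges iff $p>1$ and equals $\aleq\Lambda^{n+1-p(n+1)}$, yielding the exponent $(n+1)/p-n-1$ in \eqref{la:harmextest:4}. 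I expect no real obstacle here: the only nontrivial point is the convergence condition on $p$ in each radial integral, which is exactly the source of the stated ranges of admissible $p$, and the rest is bookkeeping.
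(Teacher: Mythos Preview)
Your proof is correct. For the pointwise bounds \eqref{la:harmextest:1} and \eqref{la:harmextest:2} you argue exactly as the paper does: bound the kernel and its gradient uniformly on the support of $\varphi$ and integrate against $|\varphi|$.

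For the $L^p$ estimates \eqref{la:harmextest:3} and \eqref{la:harmextest:4} your route differs from the paper's. The paper splits the exterior region into dyadic annuli $B(\supp\varphi\times\{0\},2^k\Lambda)\setminus B(\supp\varphi\times\{0\},2^{k-1}\Lambda)$, applies the pointwise bounds on each annulus, and sums using H\"older together with the volume of each annulus. You instead use Minkowski's integral inequality to reduce to the $L^p$-norm of $p_t$ (respectively $\nabla p_t$) on $\R^{n+1}_+\setminus B(0,\Lambda)$, which you compute radially. Both approaches are elementary; yours has the minor advantage that it avoids estimating $|B(\supp\varphi\times\{0\},2^k\Lambda)|$, which in the paper's argument tacitly requires control on the diameter of $\supp\varphi$ relative to $\Lambda$ (harmless in all the paper's applications, where $\varphi$ has compact support). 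The paper's dyadic version, on the other hand, makes the mechanism behind the exponent thresholds $p>(n+1)/n$ and $p>1$ visible as the summability of a geometric series.
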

\begin{proof}
Assume that $(x,t) \in \R^{n+1}_+$ is so that 
\[\dist((x,t),\supp \varphi \times \{0\}) > \Lambda.\]
Then, by a direct computation, for any $y \in \supp \varphi$ we have
\[
 |p_t(x-y)|\aleq \Lambda^{-n},
\]
\[
 |\partial_x p_t (x-y)| \aleq \Lambda^{-n-1},
\]
and
\[
 |\partial_t p_t (x-y)| \aleq \Lambda^{-n-1}.
\]
This proves \eqref{la:harmextest:1} and \eqref{la:harmextest:2}, since for such $x$,
\[
 |\varphi^h(x,t)| = \left |\int_{\R^n} p_t(x-y)\ \varphi(y)\ dy\right | \aleq \Lambda^{-n} \|\varphi\|_{1,\R^n},
\]
and
\[
 |\nabla_{\R^2} \varphi^h(x,t)| \aleq \left |\int_{\R^n} \partial_x p_t (x-y)\ \varphi(y)\ dy\right | + \left |\int_{\R^n} \partial_t p_t (x-y)\ \varphi(y)\ dy\right | \aleq \Lambda^{-n-1} \|\varphi\|_{1,\R^n}.
\]
The $L^p$-estimates \eqref{la:harmextest:3} and \eqref{la:harmextest:4} follow by splitting the support of the $L^p$-norm
\[
 \|f\|_{p,\R^{n+1}_+ \backslash B(\supp \varphi \times \{0\},\Lambda)} \aleq \sum_{k=1}^\infty \|f\|_{p, B(\supp \varphi \times \{0\},2^{k} \Lambda) \backslash B(\supp \varphi \times \{0\},2^{k-1} \Lambda)}.
\]
and then apply H\"older inequality and then \eqref{la:harmextest:1} or \eqref{la:harmextest:2}, respectively.
\end{proof}

\section{Estimates on nonlocal operators and orthogonal projections}
\begin{lemma}\label{la:Piperplapvu}
For $\solu: \R \to \R^N$ so that $u(x) \in \mathcal{N}$ for almost every $x \in (-4,4)$. For any $r \in (0,1)$ and $x_0 \in (-1,1)$, 
\[
\begin{split}
  \|\Pi^\perp(\solu) \lapv \solu\|_{2,I(x_0,r)} \aleq&  \|\lapv \solu\|_{2,I(x_0,2^{k_0} r)}\, \|\lapv \solu\|_{(2,\infty),I(x_0,2^{k_0} r)}\\
 &+ \|\lapv \solu\|_{2,\R}\ \tail{\sigma}{\|\lapv \solu\|_{(2,\infty)}}{x_0}{r}{k_0} + r^{\frac{1}{2}}.
 \end{split}
 \]
\end{lemma}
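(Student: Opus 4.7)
The plan is to bypass the manifold structure by using a purely algebraic identity that converts the integrand $|\solu(x) - \solu(y)|^2/|x-y|^{3/2}$ into a three-term commutator of the form $H_{\frac{1}{2}}(\solu, \solu)$, and then invoke a localised Lorentz-refined commutator estimate.

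First (pointwise reduction via geometry), I will start from the singular-integral representation $\lapv \solu(x) = c\,\mathrm{p.v.}\int_{\R}(\solu(x)-\solu(y))|x-y|^{-3/2}\,dy$, valid a.e.\ since $\solu \in H^{\frac{1}{2}}$. Multiplying by $\Pi^\perp(\solu(x))$ and using the geometric bound
\[
|\Pi^\perp(v)(v-w)| \le C_{\mathcal N}|v-w|^2, \qquad v, w \in \mathcal N,
\]
which follows from a second-order Taylor expansion of the smooth nearest-point projection $\pi: B_\delta(\mathcal N) \to \mathcal N$ at $v$ (together with compactness of $\mathcal N$ to cover the case $|v-w|\ge\delta$), the integrand is pointwise bounded by $|\solu(x)-\solu(y)|^2/|x-y|^{3/2}$ whenever $\solu(x), \solu(y) \in \mathcal N$. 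Since this is guaranteed only for $y \in (-4,4)$, I split the $y$-integral there. For $y \in \R\setminus(-4,4)$ and $x \in I(x_0,r) \subset (-2,2)$ one has $|x-y|\ge 2$, so this ``far'' piece is pointwise bounded by $C\|\solu\|_\infty^2$, contributing the $r^{\frac{1}{2}}$ summand after $L^2(I(x_0,r))$-integration.

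Second (algebraic identity), set $F(x) := \int_{\R}|\solu(x)-\solu(y)|^2|x-y|^{-3/2}\,dy$. Expanding
\[
|\solu(x)-\solu(y)|^2 = 2\solu(x)\cdot(\solu(x)-\solu(y)) - (|\solu(x)|^2 - |\solu(y)|^2)
\]
and integrating against $|x-y|^{-3/2}$ gives the two pieces $(2/c)\,\solu(x)\cdot\lapv \solu(x)$ and $(1/c)\,\lapv(|\solu|^2)(x)$; combined with the defining relation $\lapv(|\solu|^2) = 2\solu\cdot\lapv\solu + H_{\frac{1}{2}}(\solu,\solu)$, this yields the pointwise identity
\[
F(x) = -c'\,H_{\frac{1}{2}}(\solu,\solu)(x) \qquad \text{a.e.,}
\]
which, crucially, does \emph{not} require any manifold constraint. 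Inserting it into the previous step and absorbing the $\R\setminus(-4,4)$-piece of $F$ similarly into the $r^{\frac{1}{2}}$ term reduces the lemma to proving a localised commutator bound of the form
\[
\|H_{\frac{1}{2}}(\solu,\solu)\|_{L^2(I(x_0,r))} \aleq \|\lapv\solu\|_{L^2(I(x_0,2^{k_0}r))}\,\|\lapv\solu\|_{L^{(2,\infty)}(I(x_0,2^{k_0}r))} + \|\lapv\solu\|_{L^2(\R)}\,\text{Tail}.
\]

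Establishing this localised Lorentz-refined commutator estimate will be the main analytic obstacle, since the naive global three-commutator estimate only places $H_{\frac{1}{2}}(\solu,\solu)$ in $L^1$ rather than $L^2$. The route I would take is by duality: test $H_{\frac{1}{2}}(\solu,\solu)$ against an arbitrary $\varphi \in L^{(2,1)}(I(x_0,r))$ of unit norm, decompose $\solu$ dyadically around $x_0$ using the annular cut-offs $\eta_k, \xi_k$ already employed in Section~\ref{s:proofoftheoremmaster}, and bound each resulting bilinear piece with the Lorentz-refined three-commutator estimates of Lenzmann-Schikorra~\cite[Theorem~7.1]{Lenzmann-Schikorra-commutators}, placing the $L^{(2,\infty)}$-norm on the factor localised near the test-function support and the $L^2$-norm on the other. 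Interactions across disjoint dyadic shells decay geometrically by the pseudo-locality of $\lapv$ and $H_{\frac{1}{2}}$, cf.\ \cite[Lemma~A.1, Lemma~A.7]{Blatt-Reiter-Schikorra-2016}, which produces the tail with a uniform exponent $\sigma > 0$.
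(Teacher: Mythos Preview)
Your proposal is correct and follows essentially the same approach as the paper: the geometric bound $|\Pi^\perp(v)(v-w)|\aleq |v-w|^2$, the identification of $\int |\solu(x)-\solu(y)|^2\,|x-y|^{-3/2}\,dy$ with $-c^{-1}H_{\frac{1}{2}}(\solu,\solu)$, and then a localised Lorentz estimate for $H_{\frac{1}{2}}$.

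Two minor remarks. First, the paper handles the far piece $y\notin(-4,4)$ slightly differently, using Young's inequality to absorb it back into the full $H_{\frac{1}{2}}$-term plus a constant; your direct $L^\infty$ bound is equally valid and arguably cleaner. Second, the localised $L^2$ bound on $\|H_{\frac{1}{2}}(\solu,\solu)\|_{2,I(x_0,r)}$ that you flag as ``the main analytic obstacle'' and propose to obtain by duality is precisely the content of \cite[Lemma~A.7]{Blatt-Reiter-Schikorra-2016}, which the paper simply cites; your sketch (dyadic shells, pseudo-locality, Lorentz-refined three-commutators) is in fact how that lemma is proved, so you are not doing anything new there, just unpacking the reference.
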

\begin{proof}
For $x,y \in (-2,2)$ we have, for example by \cite[Lemma A.1.]{Mazowiecka-Schikorra-2017},
\[
 |\Pi^\perp(\solu(x)) (\solu(x)-\solu(y))|\aleq |\solu(x)-\solu(y)|^2.
\]
In particular, for any $x \in (-2,2)$ we find
\[
 |\Pi^\perp(\solu(x)) \lapv \solu(x)| \aleq \int_{\R} \frac{|\solu(x)-\solu(y)|^2}{|x-y|^{\frac{3}{2}}}\ dy + \int_{\R \backslash (-4,4)} \frac{|\solu(x)-\solu(y)|}{1+|y|^{\frac{3}{2}}}\ dy.
\]
Moreover, with Young inequality, again for any $x \in (-2,2)$,
\[
 \int_{\R \backslash (-4,4)} \frac{|\solu(x)-\solu(y)|}{1+|y|^{\frac{3}{2}}}\ dy \aleq \int_{\R \backslash (-4,4)} \frac{1+|\solu(x)-\solu(y)|^2}{1+|y|^{\frac{3}{2}}}\ dy \aleq 1 + \int_{\R} \frac{|\solu(x)-\solu(y)|^2}{|x-y|^{\frac{3}{2}}}\ dy.
\]
That is, for any $x \in (-2,2)$,
\[
 |\Pi^\perp(\solu(x)) \lapv \solu(x)| \aleq 1+\int_{\R} \frac{|\solu(x)-\solu(y)|^2}{|x-y|^{\frac{3}{2}}}\ dy.
\]
Now observe that with the notation
\[
 H_{\frac{1}{2}}(a,b) := \lapv (ab) - a\lap b-\lapv a\ b,
\]
we have
\[
 \int_{\R} \frac{|\solu(x)-\solu(y)|^2}{|x-y|^{2}}\ dy = c\left |H_{\frac{1}{2}}(\solu,\solu) \right |.
\]
Localizing the fractional Leibniz rule, see, e.g., \cite[Lemma A.7.]{Blatt-Reiter-Schikorra-2016}, 
\[
\begin{split}
 \|H_{\frac{1}{2}}(\solu,\solu)\|_{2,I(x_0,r)} \aleq& \|\lapv \solu\|_{2,I(x_0,2^{k_0} r)}\, \|\lapv \solu\|_{(2,\infty),I(x_0,2^{k_0} r)}\\
 &+ \|\lapv \solu\|_{2,\R}\ \tail{\sigma}{\|\lapv \solu\|_{(2,\infty)}}{x_0}{r}{k_0}.
\end{split}
 \]

\end{proof}

\section*{Acknowledgment}
A.S. would like to thank E. Kuwert for the reference to Douglas' work \cite{Douglas-1931}. The author is supported by the German Research Foundation (DFG) through grant no.~SCHI-1257-3-1. He receives funding from the Daimler and Benz foundation. A.S. is Heisenberg fellow.

\bibliographystyle{amsplain}
\bibliography{bib}%

\end{document}